\documentclass[10pt,a4paper,oneside]{article}
\usepackage[english]{babel}
\usepackage{a4wide,amsmath,amsthm,amssymb,url,graphicx,xspace,algorithm,algorithmic,pgf}
\setlength{\textheight}{24cm}
\setlength{\textwidth}{17cm}
\setlength{\oddsidemargin}{-0.7cm}
\setlength{\evensidemargin}{-0.7cm}

\usepackage[latin1]{inputenc}
\usepackage{enumitem}
\usepackage{multirow}
\usepackage{hyperref}
\usepackage{tabularx}
\usepackage{tikz}
\usepackage{cases}
\usepackage{allrunes}
\usepackage{bm}

\def\F{\mathbb{F}}
\def\Z{\mathbb{Z}}

\def\A{\mathcal{A}}
\def\S{\mathcal{S}}

\DeclareMathOperator{\PG}{PG}
\DeclareMathOperator{\GL}{GL}
\DeclareMathOperator{\PGL}{PGL}
\DeclareMathOperator{\PGammaL}{P{\Gamma}L}
\DeclareMathOperator{\AG}{AG}
\DeclareMathOperator{\Aut}{Aut}

\DeclareMathOperator{\Tr}{Tr}

\newtheorem{theorem}{Theorem}[section]

\newtheorem{lemma}[theorem]{Lemma}

\newtheorem{corollary}[theorem]{Corollary}

\theoremstyle{definition}
\newtheorem{definition}[theorem]{Definition}
\newtheorem{remark}[theorem]{Remark}
\newtheorem{construction}{Construction}
\newtheoremstyle{dotless}{}{}{\itshape}{}{\bfseries}{}{ }{}

  \theoremstyle{dotless}

\newcommand{\comments}[1]{}

\author{ Maarten De Boeck \thanks{This author is supported by the BOF (Special Research Fund) of Ghent University}  \and Geertrui Van de Voorde \thanks{This author is a postdoctoral fellow of the Research Foundation -- Flanders (FWO).}}
\title{Elation KM-arcs}
\date{}
\begin{document}
\maketitle

\begin{abstract}
In this paper, we study {\em KM-arcs} in $\PG(2,q)$, the Desarguesian projective plane of order $q$. A KM-arc $\mathcal{A}$ of type $t$ is a natural generalisation of a {\em hyperoval}: it is a set of $q+t$ points in $\PG(2,q)$ such that every line of $\PG(2,q)$ meets $\mathcal{A}$ in $0,2$ or $t$ points.

We study a particular class of KM-arcs, namely, {\em elation} KM-arcs. These KM-arcs are highly symmetrical and moreover,  many of the known examples are elation KM-arcs. We provide an algebraic framework and show that all elation KM-arcs of type $q/4$ in $\PG(2,q)$ are translation KM-arcs. Using a result of \cite{wij}, this concludes the classification problem for elation KM-arcs of type $q/4$. 

Furthermore, we construct for all $q=2^h$, $h> 3$, an infinite family of elation KM-arcs of type $q/8$, and for $q=2^h$, where $4,6,7\mid h$ an infinite family of KM-arcs of type $q/16$. Both families contain new examples of KM-arcs.
\end{abstract}

 \paragraph*{Keywords:} KM-arc, $(0,2,t)$-arc, set of even type, elation arc, translation arc
 \paragraph*{MSC 2010 codes:} 51E20, 51E21

\section{Introduction and definitions}

Point sets in $\PG(2,q)$, the Desarguesian projective plane of over the finite field $\F_q$ of order $q$, that have few different intersections sizes with lines have been a research subject throughout the last decades. A point set $\S$ of \emph{type} $(i_{1},\dots,i_{m})$ in $\PG(2,q)$ is a point set such that for every line in $\PG(2,q)$ the intersection size $\ell\cap\S$ equals $i_{j}$ for some $j$ and such that each value $i_{j}$ occurs as intersection size for some line. In \cite{mig} point sets of type $(0,2,q/2)$ of size $\frac{3q}{2}$ were studied. This led to the following generalisation by Korchmáros and Mazzocca in \cite{km}. 

\begin{definition}
  A \emph{KM-arc of type $t$} in $\PG(2,q)$ is a point set of type $(0,2,t)$ with size $q+t$. A line containing $i$ of its points is called an \emph{$i$-secant}.
\end{definition}

Originally these KM-arcs were denoted as \emph{$(q+t)$-arcs of type $(0,2,t)$} \cite{km} or \emph{$(q+t,t)$-arcs of type $(0,2,t)$} \cite{gw} but in honour of Korchmáros and Mazzocca, the notation `KM-arcs' was introduced in \cite{vdd}. KM-arcs of type $t=2$ are {\em hyperovals}, which have their own theory; the classification of hyperovals seems far out of reach at this moment. KM-arcs of type $t=q$ in $\PG(2,q)$ on the other hand, are easily seen to be the symmetric difference of two lines. For KM-arcs of type $2<t<q$, further combinatorial information and conditions on $t$ and $q$ can be deduced. The following results were obtained in \cite[Theorem 2.5]{gw} and \cite[Proposition 2.1]{km}. 

\begin{theorem}
  If $\mathcal{A}$ is a KM-arc of type $t$ in $\PG(2,q)$, $2<t<q$, then
  \begin{itemize}
    \item $q$ is even;
    \item $t$ is a divisor of $q$;
    \item there are $q/t+1$ different $t$-secants to $\mathcal{A}$, and they are concurrent.
  \end{itemize}
\end{theorem}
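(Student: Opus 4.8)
The plan is to analyze the combinatorics by counting incidences, and to use the classical argument that a set meeting every line evenly (modulo the $t$-secants) forces $q$ even and $t \mid q$. Let $\mathcal{A}$ be a KM-arc of type $t$ with $2 < t < q$, so $|\mathcal{A}| = q+t$ and every line meets $\mathcal{A}$ in $0$, $2$, or $t$ points.

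First I would fix a point $P \in \mathcal{A}$ and count the $q+1$ lines through $P$. Each such line meets $\mathcal{A}$ in $2$ or $t$ points, hence in $1$ or $t-1$ further points of $\mathcal{A} \setminus \{P\}$. If $a$ of these lines are $2$-secants and $b$ are $t$-secants, then $a + b = q+1$ and $a \cdot 1 + b \cdot (t-1) = q + t - 1$, which gives $b(t-2) = t-2$, so $b = 1$ (using $t > 2$). Thus through every point of $\mathcal{A}$ there passes exactly one $t$-secant. Next I would take a point $Q \notin \mathcal{A}$ lying on at least one $t$-secant (such a $Q$ exists once we know a $t$-secant exists, e.g. because not every line can be a $2$-secant by a size count) and count the lines through $Q$: if $c$ are $t$-secants and $d$ are $2$-secants, then $ct + 2d = q+t$ and $c + d \le q+1$; combined with the fact that every point of $\mathcal{A}$ lies on a unique $t$-secant, one deduces that the $t$-secants partition $\mathcal{A}$ minus... — more precisely, since a point of $\mathcal{A}$ is on a unique $t$-secant, the $t$-secants through $Q$ cover $ct$ points of $\mathcal{A}$ with no repetition, and an averaging/parity argument over all external points forces $q \equiv 0 \pmod 2$: indeed a $2$-secant and the parity of intersection numbers on a line not through a fixed point of $\mathcal{A}$ yields that $q+t$ and $t$ have the same parity, and a sharper count (restricting to lines through a point of $\mathcal{A}$, where the multiset of intersection sizes on the remaining $q$ points must be consistent) pins down $q$ even.

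For the divisibility $t \mid q$ and the count of $t$-secants, I would argue as follows. Let $P \in \mathcal{A}$ lie on its unique $t$-secant $\ell$. The other $q$ lines through $P$ are $2$-secants, covering the $q+t-1$ remaining points of $\mathcal{A}$, of which $t-1$ lie on $\ell$; so these $q$ lines cover $q$ points of $\mathcal{A}\setminus \ell$, one per line — consistent. Now take a second point $P' \in \mathcal{A}$ on a $t$-secant $\ell' \neq \ell$ (if all $t$-secants coincided we'd be in a degenerate case handled separately). Consider the point $R = \ell \cap \ell'$. If $R \in \mathcal{A}$, then $R$ lies on two $t$-secants, contradicting uniqueness; hence $R \notin \mathcal{A}$. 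Counting on lines through $R$: the $t$-secants through $R$ each contribute $t$ points of $\mathcal{A}$, all distinct (uniqueness again), and the remaining lines through $R$ are $2$- or $0$-secants. If $n$ denotes the number of $t$-secants through $R$ and we show every $t$-secant passes through $R$, then these $n$ lines cover $nt$ distinct points of $\mathcal{A}$; the other $q+1-n$ lines through $R$ then cover the remaining $q+t-nt$ points in pairs, forcing $q + t - nt$ even and, with the earlier relations, $n = q/t + 1$ and hence $t \mid q$. The concurrency claim is exactly the statement that every $t$-secant passes through $R$: I would prove this by a counting argument showing that a $t$-secant missing $R$ would create a point of $\mathcal{A}$ on two $t$-secants or violate the line count through some external point.

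The main obstacle is establishing concurrency of the $t$-secants rigorously — the naive count through a single external point does not immediately force all $t$-secants to be concurrent. I expect to handle this by a global double count: let $\tau$ be the total number of $t$-secants; summing $\binom{t}{2}$ over all $t$-secants and $\binom{2}{2}$ over all $2$-secants must equal $\binom{q+t}{2}$ (counting pairs of points of $\mathcal{A}$, each on a unique line), which gives one equation relating $\tau$; a second equation comes from summing intersection sizes, $t\tau + 2s = (q+1)(q+t)/\text{something}$ appropriately normalized, where $s$ is the number of $2$-secants. Solving yields $\tau = q/t + 1$ and then $t \mid q$ follows since $\tau \in \mathbb{Z}$. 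Finally, knowing $\tau = q/t+1$ and that each of the $q+t$ points lies on exactly one $t$-secant, the $t$-secants partition $\mathcal{A}$ into $\tau$ blocks of size $t$ (so $\tau t = q+t$, consistent), and a short argument — e.g. two disjoint $t$-secants together with the lines joining their points would overcount — forces them to share a common point, giving concurrency and simultaneously re-deriving $q$ even from $\tau t = q+t$ with $t \mid q$.
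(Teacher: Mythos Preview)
The paper does not actually prove this theorem; it is quoted as a known result from \cite[Theorem 2.5]{gw} and \cite[Proposition 2.1]{km}, so there is no in-paper argument to compare against. What can be assessed is the internal soundness of your sketch.

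Your first step is clean and correct: counting lines through a point $P\in\mathcal{A}$ gives $b(t-2)=t-2$, so every point of $\mathcal{A}$ lies on a unique $t$-secant. From this alone the $t$-secants partition $\mathcal{A}$, hence $\tau t=q+t$, so $\tau=q/t+1$ and $t\mid q$. You reach this eventually, but the detour through a global double count of pairs is unnecessary; the partition is immediate from the uniqueness you already established.

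Two genuine gaps remain. First, your argument for $q$ even is not an argument: you invoke ``an averaging/parity argument'' and ``a sharper count'' without saying what they are. The standard way to finish is to exhibit a point $R\notin\mathcal{A}$ lying on no $t$-secant (which exists since the $q/t+1$ $t$-secants cover fewer than $q^{2}+q+1$ points when $t>2$); every line through such an $R$ is a $0$- or $2$-secant, so $q+t$ is even, and together with $t\mid q$ and a short parity check this forces $q$ (and $t$) even. Second, your concurrency argument is not there yet, and the phrase ``two disjoint $t$-secants'' is a slip: any two lines in $\PG(2,q)$ meet. What you need is to show that the common point $R$ of two $t$-secants in fact lies on every $t$-secant; this is typically done by counting $2$-secants through $R$ and using the parity constraint $nt+2m=q+t$ together with $t\mid q$ and $t$ even to rule out $n<q/t+1$. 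As written, your sketch identifies the right objects but does not close either of these two steps.
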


If $\A$ is a KM-arc of type $t>2$, then the point contained in all $t$-secants to $\A$ is called the \emph{$t$-nucleus} of $\A$.

The main questions in the study of the KM-arcs are for which values of $q$ and $t$, a KM-arc of type $t$ in $\PG(2,q)$ exists, and which nonequivalent KM-arcs of type $t$ in $\PG(2, q)$ exist for given admissible $q$ and $t$.  

Recall that every element of $\PGammaL(3,q)$ defines a {\em collineation} of the projective plane $\PG(2,q)$ and vice versa, where a collineation is an incidence preserving mapping. KM-arcs are studied up to $\PGammaL$-equivalence.  {\em Elations} of a projective plane are particular collineations that will play an important role in this paper. An {\em elation} with axis the line $\ell$ and centre the point $R$ on $\ell$ is a collineation which fixes the points of $\ell$ and stabilises the lines through the centre $R$. We see that the set of all elations with a fixed centre and a fixed axis form a subgroup of $\PGammaL$.

A KM-arc is a {\em translation} KM-arc with translation line $\ell_\infty$ if the group of all elations with axis $\ell_\infty$ that stabilise $\A$, acts transitively on the points of $\A\setminus \ell_\infty$ (see \cite{km}).

\begin{definition}
	Let $\A$ be a KM-arc of type $t>2$ in $\PG(2,q)$ with $t$-nucleus $N$. Then $\A$ is an {\em elation KM-arc} with elation line $\ell_\infty$ if and only if for every $t$-secant $\ell\neq \ell_\infty$ to $\mathcal{A}$, the group of elations with axis $\ell_\infty$ that stabilise $\A$ acts transitively on the points of $\ell$.
	\par A hyperoval (KM-arc of type 2) $\mathcal{H}$ in $\PG(2,q)$ is called an \emph{elation hyperoval} with elation line $\ell_\infty$ if a non-trivial elation with axis $\ell_\infty$ which stabilises $\mathcal{H}$ exists.
\end{definition}

It is immediate that all collineations which stabilise a KM-arc of type $t>2$, fix its $t$-nucleus. Hence, the $t$-nucleus of a KM-arc of type $t$ lies on the elation line since all fixed points of an elation lie on the axis. Moreover the $t$-nucleus will be the centre of all elations stabilising the $t$-secants. So, for an elation KM-arc $\mathcal{A}$ of type $t>2$ the group of elations with axis the elation line and centre the $t$-nucleus, stabilising $\mathcal{A}$, acts sharply transitively on the points of $\mathcal{A}\cap \ell$ with $\ell$ an arbitrary $t$-secant.
\par For hyperovals there is no concurrency point of all $2$-secants. Hence, for elation hyperovals, we have presented a slightly modified version of the definition of an elation KM-arc. We will see in Lemmas \ref{Sinvoeren} and \ref{Sinvoeren2} that we can work with both definitions in the same way. Considering that hyperovals are KM-arcs we will call elation hyperovals also elation KM-arcs
\par It follows from the definitions that every translation KM-arc is an elation KM-arc. The following theorem was shown for translation KM-arcs in \cite[Prop. 6.2]{km}. The proof presented there however cannot not be generalised to the case of elation KM-arcs.

\begin{theorem}\label{elationlinesecant}
	Let $\A$ be an elation KM-arc of type $t$ in $\PG(2,q)$, $2\leq t<q$, with elation line $m$, then $m$ is a $t$-secant to $\mathcal{A}$.
\end{theorem}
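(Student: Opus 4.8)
The plan is to argue by contradiction. Suppose the elation line $m$ is not a $t$-secant; since $q$ is even, $t\mid q$ and $t<q$, write $q=2^{h}$ and $t=2^{j}$ with $1\le j<h$. First I would force $m$ to be external to $\mathcal A$. If $t>2$, let $N$ be the $t$-nucleus: it lies on $m$, and the group $G$ of all elations with axis $m$ and centre $N$ that stabilise $\mathcal A$ has order $t$ by the remarks preceding the statement. A non-trivial elation with axis $m$ fixes no point off $m$, so $G$ acts semiregularly on the affine points of $\mathcal A\setminus m$; hence $t$ divides $(q+t)-|m\cap\mathcal A|$, and as $t\mid q$ also $t\mid|m\cap\mathcal A|$. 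Since $|m\cap\mathcal A|\in\{0,2,t\}$, is not $t$ by hypothesis and is not $2$ (as $j\ge 2$), it is $0$. If $t=2$, the claim is just that $m$ is a secant of the hyperoval $\mathcal H$, so I simply assume $m$ is external, fix a non-trivial elation $\sigma$ with axis $m$ stabilising $\mathcal H$ (one exists by definition; it has order $2=t$ in characteristic $2$), and set $N:=$ the centre of $\sigma$ and $G:=\langle\sigma\rangle$, so again $|G|=t$.

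Next, $\mathcal A$ lies in $\AG(2,q)=\PG(2,q)\setminus m$ and has $q+t$ points. Choose a point $M$ on $m$ with $M\neq N$. The $q$ lines through $M$ other than $m$ form a parallel class of $\AG(2,q)$ partitioning $\mathcal A$, and each meets $\mathcal A$ in $0$ or $2$ points: for $t=2$ by the hyperoval property, and for $t>2$ because a line through $M$ (other than $m$) meeting $\mathcal A$ cannot be a $t$-secant, all $t$-secants passing through $N\neq M$. Thus exactly $(q+t)/2$ of them, say the set $\mathcal S$, are $2$-secants. Every elation with axis $m$ fixes $M$, so $G$ permutes this parallel class, and since $G$ stabilises $\mathcal A$ it permutes $\mathcal S$. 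A non-trivial elation with axis $m$ and centre $N$ fixes only $m$ and the lines through $N$, whereas a line of the parallel class through $M$ is affine and meets $m$ only in $M\neq N$; hence no such line is fixed, and $G$ acts \emph{freely} on $\mathcal S$. Therefore $t=|G|$ divides $|\mathcal S|=(q+t)/2$.

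Finally, $(q+t)/2=2^{\,j-1}(2^{\,h-j}+1)$ with $2^{\,h-j}+1$ odd since $h>j$, so its $2$-adic valuation is $j-1<j$ and $t=2^{j}\nmid(q+t)/2$, a contradiction; hence $m$ is a $t$-secant. I expect the middle step to be the real obstacle: a naive divisibility bound on $|G|=t$ is vacuous because $t$ is automatically a power of $2$, and the point is instead to make $G$ act freely on the parallel class of $2$-secants cut out by a point $M\neq N$ of $m$, whose size $(q+t)/2$ has $2$-adic valuation exactly one short of that of $t$; identifying this set, and disposing of the $2$-secant alternative beforehand via the semiregular action, are where the work lies, the valuation computation itself being immediate. (For $t>2$ one may instead argue in coordinates with $m$ at infinity and $N$ the vertical direction: the $q/t+1$ $t$-secants become vertical lines, and fixing any horizontal direction would pair them up via $2$-secants, which is impossible since $q/t+1$ is odd.)
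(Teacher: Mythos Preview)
Your proof is correct and follows essentially the same route as the paper's: both pick a point of $m$ different from $N$, observe that the elation group $G$ of order $t$ acts freely on the $2$-secants through that point, and derive the contradiction $t\mid(q+t)/2$. Your version is slightly more careful in that you first dispose of the possibility that $m$ is a $2$-secant (via the semiregular action of $G$ on $\mathcal{A}\setminus m$), whereas the paper's proof glosses over this; but the core idea and the final valuation count are identical.
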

\begin{proof} Recall that any collineation that stabilises $\A$ has to fix the $t$-nucleus $N$ of $\A$ if $t>2$ and hence, that the elation line $m$ is a line through $N$. If $t=2$, we define $N$ as the centre of the given non-trivial elation that stabilises $\A$.
\par Suppose that the elation line $m$ is not a $t$-secant and let $G$ be the group of elations with centre $N$ and axis $m$ stabilising $\A$. Then $G$ acts transitively on the points of $\A\cap\ell$ for every $t$-secant $\ell$. Note that $G$ has size $t$.
\par Let $\ell_1$ and $\ell_2$ be two $t$-secants of $\A$ trough $N$ (recall that all $t$-secants contain $N$ if $t>2$), and let $P\in \ell_1$ and $Q\in \ell_2$ be two points of $\A$. Denote the intersection $\langle P,Q\rangle\cap m$ by $R$. The orbit of the line $\langle P,Q\rangle$ under $G$ is a set of $t$ lines through $R$. Moreover, every line through $R$ which meets $\A$ in $2$ points, is contained in an orbit of $G$ of length $t$. Hence, the number of $2$-secants through $R$ is a multiple of $t$. Given the fact that $\A$ has $q+t$ points and every line through $R$ meets $\A$ in $0$ or $2$ points, we know that there are $\frac{q+t}{2}$ distinct $2$-secants to $\A$ through $R$. We have that $\frac{q+t}{2}=t\left(\frac{q}{2t}+\frac{1}{2}\right)$, and since $q=2^h$ and $t=2^j$ for some $j<h$, we have that $2t \mid q$. Hence, the  number of $2$-secants through $R$, $(q+t)/2$, is not a multiple of $t$, a contradiction.
\end{proof}

We first introduce the `classical' examples constructed by Korchmáros-Mazzocca and Gács-Weiner and then give a survey of the known results in Table \ref{overviewKMarcs}.

\begin{construction} \cite{km}\label{kmc}
Let $q=2^h$ and $q'=2^{h-i}$, with $h-i \mid h$, and let $L$ be the relative trace function from $\F_{q}$ to $\F_{q'}$. Let $g$ be an $o$-polynomial in $\F_{q'}$, i.e. $(1,g(x),x)$ is the affine part of a hyperoval (KM-arc of type $2$) in $\PG(2,q')$ containing $(0,1,0)$ and $(0,0,1)$. Then, the point set $\{(1,g(L(x)),x)\mid x\in\F_{q}\}$ can uniquely be extended to a KM-arc $\mathcal{A}_{km}$ of type $2^{i}$ in $\PG(2,q)$. It has $2^{i}$-nucleus $(0,0,1)$. 
\end{construction}

We will show in Lemma \ref{alleselation} that all the KM-arcs arising from Construction \ref{kmc} are elation KM-arcs. It was already shown in \cite[Proposition 6.4]{km} that a KM-arc in $\PG(2,q)$ constructed in this way is a translation KM-arc if and only if $g$ is the $o$-polynomial $x\mapsto x^{2^n}$, for an integer $n$ admitting $\gcd(h',n)=1$. Note that $g$ is the $o$-polynomial corresponding to a translation hyperoval in $\PG(2,q')$. 

We now recall the three different constructions from Gács and Weiner \cite{gw}.

\begin{construction} \cite[Construction 3.4]{gw}\label{gwc} Let $I$ be a direct complement of $\F_q$ in the additive group of $\F_{q^h}$, $h >1$. Let $H$ be a hyperoval or a KM-arc of type $t$ with affine part $\{(1,x_k, y_k) : x_k, y_k \in \F_q\} \subseteq \PG(2,q)$. Construct the following point set in $\AG(2, q^h)$:
\[J =\{(1,x_k, y_k+i) \mid (1,x_k, y_k) \in H, i\in I\}\;.\] 
\begin{itemize}
\item[(A)] If $H$ is a hyperoval and $(0,0,1)\in H$, then $J$ can be uniquely extended to a KM-arc of type $q^{h-1}$ in $\PG(2,q^h)$. This KM-arc has $q^{h-1}$-nucleus $(0,0,1)$.
\item[(B)] if $H$ is a hyperoval and $(0,0,1)\notin H$, then $J$ can be uniquely extended to a KM-arc of type $2q^{h-1}$ in $\PG(2, q^h)$. This KM-arc has $2q^{h-1}$-nucleus $(0,0,1)$.
\item[(C)] If $H$ is a KM-arc of type $t$ and $(0,0,1)$ is the $t$-nucleus of $H$, then $J$ can be uniquely extended to a KM-arc of type $tq^{h-1}$ in $\PG(2, q^h)$. This KM-arc has $tq^{h-1}$-nucleus $(0,0,1)$.
\end{itemize}
Note that in construction (A) the hyperoval $H$ contains one more point on $X=0$ in $\PG(2,q)$, next to $(0,0,1)$. Hence, to extend $J$ to a KM-arc of type $q^{h-1}$ we need $q^{h-1}$ points on $X=0$ in $\PG(2,q^{h})$. In constructions (B) and (C) the KM-arc $H$ of type $t$, which can be a hyperoval, can either be completely contained in the affine part of $\PG(2,q)$ or else have $t$ points on $X=0$. In the latter case, to extend $J$ to a KM-arc of type $tq^{h-1}$ we need $tq^{h-1}$ points on $X=0$ in $\PG(2,q^{h})$. In the former case no points need to be added.
\end{construction}

\begin{remark} It was already noted by G\'acs and Weiner \cite[p.138]{gw} that Construction \ref{gwc} (A) is equivalent to Construction \ref{kmc}, where the $o$-polynomial $g$ used in Construction \ref{kmc} is the $o$-polynomial of the hyperoval $H$ used in Construction \ref{gwc} (A).
\end{remark}

\begin{remark}\label{directcomplements}
	If $I$ and $I'$ are different direct complements of $\F_q$ in $\F_{q^h}$, then the affine point set $J=\{(1,x_k, y_k+i) : (1,x_k, y_k) \in H, i\in I\}$ and the affine point set $J'=\{(1,x_k, y_k+i) : (1,x_k, y_k) \in H, i\in I'\}$ in $\PG(2,q)$ are $\PGL$-equivalent. To see this, we will see below that there is an $\F_q$-linear map $\tau$ acting on $\F_{q^h}$ which fixes $\F_q$ but maps $I$ onto $I'$. The map $\phi_\tau:(1,x,y)\mapsto (1,x,\tau(y))$ then induces a collineation of $\PG(2,q^h)$ mapping the point set $J$ onto $J'$, since $\phi_{\tau} (1,x_k,y_k+I)=(1,x_k,\tau(y_k+I))=(1,x_k,y_k+\tau(I))=(1,x_k,y_k+I')$.
	\par To see that there exists an $\F_q$-linear map $\tau$ acting on $\F_{q^h}$ which fixes $\F_q$ but maps $I$ onto $I'$, consider $\F_{q^h}$ as $\F_q^h$, and further consider $\F_q^h$ as a projective space $\PG(h-1,q)$. Then we need to find a collineation fixing the point $V$ corresponding to $\F_q$ and mapping $\pi_{1}$ onto $\pi_{2}$ with $\pi_1$ and $\pi_2$ two different hyperplanes not through $V$. Clearly, there is an elation with axis $\langle V,\pi_{1}\cap\pi_{2} \rangle$ which fulfils the requirements.
\end{remark}

This paper is organised as follows. In Lemma \ref{alleselation}, we will see that the KM-arcs constructed from Construction \ref{gwc} (A) are always elation KM-arcs. We will prove in Lemma \ref{elationals} and Theorem \ref{elationals2} that a KM-arc obtained in Construction \ref{gwc} (B) and (C) is an elation KM-arc if and only if the KM-arc (or hyperoval) started with is an elation KM-arc with $t$-nucleus $(0,0,1)$ (or is an elation hyperoval stabilised by an elation with centre $(0,0,1)$).

\begin{table}[ht]
  \begin{tabular}{c|c|c|c|c}
    $q$ & $t$ & Condition & Comments & Reference \\\hline\hline
    $2^{h}$ & $2^{i}$ & $h-i\mid h$ & elation, see Lemma \ref{alleselation}& \cite{km} (see Constr. \ref{kmc} and \ref{gwc} (A))\\\hline
    $2^{h}$ & $2^{i+1}$ & $h-i\mid h$ & some elation, see Lemma \ref{elationals} & \cite{gw} (see Constr. \ref{gwc} (B))\\\hline
    $2^{h}$ & $2^{i+m}$ & $h-i\mid h$, a KM-arc & some elation, see Lemma \ref{elationals} & \cite{gw} (see Constr. \ref{gwc} (C))\\
     & & of type $2^{m}$ in & & \\
     & & $\PG(2,2^{h-i})$ exists & & \\\hline
    $2^{h}$ & $2^{h-1}$ &  & translation & \cite{wij,vdd}\\
     & &  & (all $\PGL$-equivalent) & \\\hline
    $2^{h}$ & $2^{h-2}$ &$h\geq3$  &  translation \cite{wij} & \cite{vdd}\\
     & & &  non-translation & \cite{wij} \\
     & & & elation iff translation &See Section \ref{secq/4} \\\hline
    $2^{h}$ & $2^{h-3}$ & $h\geq4$ & elation, {\bf new family} & See Section \ref{q/8} \\\hline
    32 & 4 & & one elation, see Remark \ref{qover8in32} & \cite{kmm}, \cite{vdd2} \\\hline
    $2^{h}$ & $2^{h-4}$ & $4\mid h,\ 6 \mid h, \mathrm{or }\ 7 \mid h$ & elation, & See Section \ref{q/16} \\
     & & & {\bf new family for $7\mid h$} & \\\hline
  \end{tabular}
  \caption{An overview of the known KM-arcs of type $t$ in $\PG(2,q)$}
  \label{overviewKMarcs}
\end{table}

We have seen that every translation KM-arc is an elation KM-arc, but the converse does not necessarily hold. For KM-arcs of type $q/4$ in $\PG(2,q)$ however, we will show in Section \ref{secq/4} that every elation KM-arc is necessarily a translation KM-arc, which brings us to the full classification of elation KM-arcs of type $q/4$ in Theorem \ref{mainq/4}.

The other sections of this paper are devoted to the construction of new examples of KM-arcs; we present a family of elation KM-arcs of type $q/8$ for all values of $q=2^h$ in Section \ref{q/8} and a family of elation KM-arcs of type $q/16$ for $q=2^h$, with $h$ a multiple of $4$, $6$ or $7$, in Section \ref{q/16}.

\section{Elation KM-arcs: an algebraic approach}

In the following lemma, we show that the {\em affine point set} of an elation KM-arc (i.e. the set of points not lying on the elation line) has a convenient algebraic description.
\begin{lemma}\label{Sinvoeren} If $\mathcal{A}$ is an elation KM-arc of type $t>2$ in $\PG(2,q)$, $q=2^h$, with elation line $\ell_\infty: X=0$ and $t$-nucleus $N(0,0,1)$, then there is an additive subgroup $S$ of size $t$ in $\F_{q}$, such that for any $\alpha\in\F_{q}$ the set $\{z\mid(1,\alpha,z)\in\mathcal{A}\}$ is either empty or a coset of $S$. Vice versa, if for a KM-arc $\mathcal{A}$ there is an additive subgroup $S$ of size $t$ in $\F_{q}$, such that for any $\alpha\in\F_{q}$ the set $\{z\mid(1,\alpha,z)\in\mathcal{A}\}$ is either empty or a coset of $S$, then $\mathcal{A}$ is an elation KM-arc with elation line $X=0$ and $t$-nucleus $(0,0,1)$.
\end{lemma}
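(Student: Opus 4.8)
The plan is to exploit the algebraic description of elations with axis $X=0$ and centre $(0,0,1)$. First I would observe that an elation $\phi$ with axis $\ell_\infty : X=0$ and centre $N(0,0,1)$ acts on the affine points $(1,\alpha,z)$ by fixing the first two coordinates and adding something to the last, i.e. $\phi : (1,\alpha,z) \mapsto (1, \alpha, z + c)$ for some constant $c\in\F_q$ depending only on $\phi$; indeed the lines through $N$ are exactly $\{X=0\}$ together with the verticals $\{(1,\alpha,z) : z \in \F_q\}\cup\{N\}$, so $\phi$ must stabilise each vertical line, and it fixes $X=0$ pointwise, which forces the map to be a translation $z\mapsto z+c$ on every vertical line, with the \emph{same} constant $c$ by linearity (the matrix of $\phi$ in $\PGL$ must be of the form $\mathrm{diag}$-plus-one-off-diagonal-entry; collineations in $\PGammaL$ that are elations with this centre and axis are already linear since the field automorphism part would not fix $X=0$ pointwise unless trivial). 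So the group $G$ of all elations with axis $\ell_\infty$ and centre $N$ stabilising $\mathcal{A}$ corresponds to a subset $S = \{c \in \F_q : (1,\alpha,z)\mapsto(1,\alpha,z+c) \text{ stabilises } \mathcal A\}$, and since $G$ is a group, $S$ is an additive subgroup of $\F_q$.

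Next I would pin down the size of $S$. By the remark following the definition of elation KM-arc (quoted in the excerpt), the group $G$ of elations with axis the elation line and centre the $t$-nucleus acts sharply transitively on $\mathcal A \cap \ell$ for every $t$-secant $\ell$; by Theorem \ref{elationlinesecant} the elation line itself is a $t$-secant, but more usefully, pick any $t$-secant $\ell \neq \ell_\infty$. Such an $\ell$ passes through $N$, hence is a vertical line $\{(1,\alpha_0,z):z\in\F_q\}\cup\{N\}$ for some $\alpha_0$, and it meets $\mathcal A$ in exactly $t$ affine points $(1,\alpha_0,z)$. Sharp transitivity of $G$ on these $t$ points gives $|G| = t$, hence $|S| = t$. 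Then for this particular $\alpha_0$, the set $\{z : (1,\alpha_0,z)\in\mathcal A\}$ is a single $G$-orbit, which (identifying $G$ with $S$ acting by translation) is exactly a coset of $S$. For an arbitrary $\alpha$, the vertical line through $(1,\alpha,0)$ and $N$ meets $\mathcal A$ in $0$, $2$, or $t$ points; it cannot meet in $2$ points, because $S$ (having order $t > 2$) stabilises $\mathcal A$ and acts on that vertical line as a translation group of order $t$, so any nonempty fibre $\{z : (1,\alpha,z)\in\mathcal A\}$ is a union of $S$-cosets and thus has size a multiple of $t$; combined with the $(0,2,t)$-condition this forces the fibre to be empty or a single $S$-coset. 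This proves the first direction.

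For the converse, suppose $S \leq (\F_q,+)$ has order $t$ and every fibre $\{z : (1,\alpha,z)\in\mathcal A\}$ is empty or an $S$-coset. For each $c\in S$, the map $\phi_c : (1,\alpha,z)\mapsto (1,\alpha,z+c)$, extended by fixing $\ell_\infty$ pointwise, is an elation with axis $X=0$ and centre $(0,0,1)$; since each nonempty fibre is a coset of $S$, it is invariant under adding $c\in S$, so $\phi_c$ stabilises the affine part of $\mathcal A$, and as $\mathcal A$ is the unique extension of its affine part to a KM-arc (or simply because $\phi_c$ fixes $\ell_\infty$ and hence permutes the points of $\mathcal A$ on $\ell_\infty$ consistently), $\phi_c$ stabilises $\mathcal A$. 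These $\phi_c$, $c\in S$, form a group of order $t$ of elations with axis $X=0$ stabilising $\mathcal A$. It remains to check transitivity on each $t$-secant $\ell\neq\ell_\infty$: such $\ell$ is a vertical line through $N$ meeting $\mathcal A$ in $t$ affine points, which by hypothesis form a single coset of $S$; the group $\{\phi_c : c\in S\}$ acts on this coset exactly as $S$ acts on itself by translation, i.e. (sharply) transitively. Hence $\mathcal A$ is an elation KM-arc with elation line $X=0$ and $t$-nucleus $(0,0,1)$.

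The main obstacle I anticipate is the careful verification in the first direction that the stabilising elations are genuinely of the pure translation form $z\mapsto z+c$ with a single field-element constant — ruling out a nontrivial field-automorphism component and checking the off-diagonal structure of the $\PGL$ matrix — together with the argument that $S$ has order exactly $t$ rather than merely being a subgroup whose order is a multiple of something; this is where one must invoke sharp transitivity on a $t$-secant distinct from $\ell_\infty$ and hence Theorem \ref{elationlinesecant} to know such a secant behaves as expected. The divisibility argument (every nonempty fibre has size a multiple of $t = |S|$, so the $(0,2,t)$-condition forces it to be a single coset) is the clean combinatorial heart and should go through without trouble.
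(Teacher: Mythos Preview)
Your proposal is correct and follows essentially the same approach as the paper: identify the group of elations with axis $X=0$ and centre $(0,0,1)$ with the additive group $(\F_q,+)$ via the matrices $\left(\begin{smallmatrix}1&0&0\\0&1&0\\\mu&0&1\end{smallmatrix}\right)$, so that the stabiliser of $\mathcal{A}$ corresponds to an additive subgroup $S$ of size $t$ whose orbits on vertical lines are cosets. The paper is slightly more direct in that it simply writes down this matrix form (elations in a Desarguesian plane are automatically in $\PGL$, so your worry about a field-automorphism component is moot), and it does not need your divisibility argument for non-$t$-secant verticals since every line through the $t$-nucleus meeting $\mathcal{A}$ is already a $t$-secant; but these extra steps of yours are harmless.
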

\begin{proof}Let $\mathcal{A}$ be an elation KM-arc of type $t$ in $\PG(2,q)$, $q=2^h$, with elation line $\ell_\infty: X=0$ and $t$-nucleus $N(0,0,1)$. The $t$-secants, different from $\ell_\infty$, are of the form $Y=\alpha X$ for some $\alpha \in \F_q$. The group $E$ of elations with centre $N$ and axis $\ell_\infty$ consists of the elations induced by all matrices of the form$\left(\begin{smallmatrix} 1&0&0\\0&1&0\\\mu&0&1\end{smallmatrix}\right)$, where $\mu\in\F_{q}$. Here, the points are represented as column vectors, and matrices are acting from the left. It is straightforward to check that a set $T$ is an additive subgroup of $\F_{q}$ if and only if the set $\psi(T)$ of elations corresponding to the matrices in $\left\{\left(\begin{smallmatrix} 1&0&0\\0&1&0\\\nu&0&1\end{smallmatrix}\right)\mid \nu\in T\right\}$ is a subgroup of $E$. The orbit of a point $(1,x,y)$ under the action of $\psi(T)$ is the point set $\{(1,x,\nu+y)\mid \nu\in T\}$.
\par If $\mathcal{A}$ is an elation KM-arc, the orbit of the point $(1,\alpha_1,\beta_1)\in\mathcal{A}$ under the subgroup $E_{\mathcal{A}}$ of $E$ stabilising $\mathcal{A}$ is exactly the set of $t$ points of $\mathcal{A}$ on the $t$-secant $\ell:Y=\alpha_1X$. Let $S$ be the additive subgroup of $\F_{q}$ such that $\psi(S)=E_{\mathcal{A}}$. This implies that the set of points on $\ell\cap\mathcal{A}$ equals $\{(1,\alpha_1,\tau+\beta_1)\mid \tau\in S\}$.
 
Vice versa, we assume that $S$ is an additive subgroup of size $t$ in $\F_{q}$, such that for any $\alpha\in\F_{q}$ the set $\{z\mid(1,\alpha,z)\in\mathcal{A}\}$ is either empty or a coset of $S$. Let $G$ be the group of elations induced by the matrices of the form $\left(\begin{smallmatrix} 1&0&0\\0&1&0\\\mu&0&1 \end{smallmatrix}\right)$, where $\mu\in S$. Then $G$ has size $t$ and acts transitively on the set of points $\mathcal{A}\cap\ell$ with $\ell:Y=\alpha X$ a $t$-secant of $\mathcal{A}$, for any $\alpha\in\F_{q}$. This means exactly that $\mathcal{A}$ is  an elation KM-arc with elation line $X=0$ and $t$-nucleus $(0,0,1)$.
\end{proof}

For the hyperoval case we have a similar result.

\begin{lemma}\label{Sinvoeren2}
	If $\mathcal{H}$ is a hyperoval in $\PG(2,q)$, $q=2^h$, that is stabilised by a non-trivial elation with elation line $\ell_\infty: X=0$ and centre $N(0,0,1)$, then there is an additive subgroup $S$ of size $2$ in $\F_{q}$, such that for any $\alpha\in\F_{q}$ the set $\{z\mid(1,\alpha,z)\in\mathcal{H}\}$ is either empty or a coset of $S$. Vice versa, if for a hyperoval $\mathcal{H}$ there is an additive subgroup $S$ of size $2$ in $\F_{q}$, such that for any $\alpha\in\F_{q}$ the set $\{z\mid(1,\alpha,z)\in\mathcal{H}\}$ is either empty or a coset of $S$, then $\mathcal{H}$ is stabilised by a nontrivial elation with elation line $X=0$ and centre $(0,0,1)$.
\end{lemma}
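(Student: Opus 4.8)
The plan is to mimic the proof of Lemma~\ref{Sinvoeren} with $t=2$, using the single assumed elation in place of the group $E_{\mathcal{A}}$. As recalled there, a non-trivial elation $\phi$ with axis $\ell_\infty : X=0$ and centre $N(0,0,1)$ is induced by a matrix $\left(\begin{smallmatrix} 1&0&0\\0&1&0\\\mu&0&1\end{smallmatrix}\right)$ with $\mu\in\F_q\setminus\{0\}$; since $\characteristic\F_q=2$ the square of this matrix is the identity, so $\phi$ has order $2$ and $\langle\phi\rangle$ is, via the map $\psi$ of Lemma~\ref{Sinvoeren}, the image of the additive subgroup $S=\{0,\mu\}$ of $\F_q$, which has size $2$. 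The orbit of an affine point $(1,\alpha,z)$ under $\langle\phi\rangle$ is $\{(1,\alpha,z),(1,\alpha,z+\mu)\}$, that is, the coset $z+S$ inside the line $Y=\alpha X$.

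For the first implication, the key preliminary observation is that $N\notin\mathcal{H}$. Indeed, if $N$ lay on $\mathcal{H}$, then any line $\ell:Y=\alpha X$ through $N$ would, $\mathcal{H}$ being a hyperoval, be a $2$-secant through the point $N$, hence meet $\mathcal{H}$ in $N$ and in exactly one further point, which is necessarily affine because $N$ is the only point of $\ell$ on the axis; as $\phi$ fixes $\ell$ and fixes $N$, it would fix that further point too, contradicting the fact that a non-trivial elation fixes no point off its axis. Consequently, for every $\alpha$ the set $\mathcal{H}\cap\{Y=\alpha X\}$ consists of affine points only, has size $0$ or $2$, and is $\langle\phi\rangle$-invariant (because $\phi$ stabilises both $\mathcal{H}$ and the line), hence a union of $\langle\phi\rangle$-orbits; each such orbit has size $2$, since a fixed affine point would again be an off-axis fixed point of $\phi$. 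A union of $2$-element cosets of $S$ having at most $2$ elements is empty or a single coset, which is precisely the assertion about $\{z\mid(1,\alpha,z)\in\mathcal{H}\}$.

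For the converse, given an additive subgroup $S=\{0,s\}$ of size $2$, I would let $\phi$ be the non-trivial elation induced by $\left(\begin{smallmatrix} 1&0&0\\0&1&0\\s&0&1\end{smallmatrix}\right)$, which has axis $X=0$ and centre $(0,0,1)$. Every point of $\mathcal{H}$ on $X=0$ lies on the axis of $\phi$ and is therefore fixed, while on each line $Y=\alpha X$ the map $\phi$ sends $(1,\alpha,z)$ to $(1,\alpha,z+s)$ and hence permutes the (possibly empty) coset $\{z\mid(1,\alpha,z)\in\mathcal{H}\}$ of $S$ onto itself. Thus $\phi(\mathcal{H})\subseteq\mathcal{H}$, and since $\phi$ is a bijection of the finite set $\mathcal{H}$ it stabilises $\mathcal{H}$; as $s\neq 0$, $\phi$ is non-trivial.

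The only step that is not a literal transcription of the $t>2$ argument is the verification that the centre $N$ does not lie on $\mathcal{H}$: for KM-arcs of type $t>2$ one works directly with the $t$-secants and this issue never arises, but for a hyperoval every line through $N$ is a secant, so $N\in\mathcal{H}$ must be excluded by hand. I expect this to be the main (though short) obstacle; everything else follows the earlier proof with $t=2$.
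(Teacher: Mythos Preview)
Your proof is correct and follows essentially the same route as the paper, which simply refers back to the proof of Lemma~\ref{Sinvoeren} with $E$ replaced by the order-$2$ group generated by the given elation. The one extra step you supply, namely the verification that the centre $N$ does not lie on $\mathcal{H}$, is a genuine detail that the paper leaves implicit; it is needed to guarantee that the two points of $\mathcal{H}$ on each secant $Y=\alpha X$ are affine, and your argument for it is sound.
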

\begin{proof}
	This proof is similar to the proof of Lemma \ref{Sinvoeren}, with the 2-secants through the centre taking the place of the $t$-secants, and $E$ the group consisting of the one non-trivial elation that stabilises $\mathcal{H}$ together with the trivial collineation.
\end{proof}

Now we check whether the known constructions give rise to elation KM-arcs. First we deal with the family of KM-arcs constructed by Korchmáros and Mazzocca

\begin{lemma}\label{alleselation}
All KM-arcs in the family of Korchmáros and Mazzocca (Construction \ref{kmc}, \cite{km}) are elation KM-arcs. 
\end{lemma}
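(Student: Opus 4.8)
The plan is to reduce the claim to the converse half of Lemma~\ref{Sinvoeren}: for a KM-arc $\mathcal{A}$ of type $t>2$ in $\PG(2,q)$ it suffices to produce an additive subgroup $S\leq\F_q$ of order $t$ such that, for every $\alpha\in\F_q$, the fibre $\{z\mid (1,\alpha,z)\in\mathcal{A}\}$ is either empty or a coset of $S$. Since this criterion only involves the affine part of the arc, the (uniquely determined) points of $\mathcal{A}_{km}$ on the line $X=0$ are irrelevant, and I may work directly with the explicit affine point set $\{(1,g(L(x)),x)\mid x\in\F_q\}$ from Construction~\ref{kmc}.

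The natural candidate is $S:=\ker L$, the kernel of the relative trace map $L\colon\F_q\to\F_{q'}$. Since $L$ is an $\F_{q'}$-linear surjection (and $\F_{q'}$ is a subfield of $\F_q$ because $h-i\mid h$), the set $S$ is an additive subgroup of order $|\F_q|/|\F_{q'}|=2^h/2^{h-i}=2^i=t$, exactly as required. To check the fibre condition, fix $\alpha\in\F_q$ and observe that $\{z\mid(1,\alpha,z)\in\mathcal{A}_{km}\}=\{x\in\F_q\mid g(L(x))=\alpha\}$. Because $g$ is an $o$-polynomial of $\F_{q'}$ it permutes $\F_{q'}$, and because $L$ maps $\F_q$ onto $\F_{q'}$, this set is empty when $\alpha\notin\F_{q'}$ and otherwise equals $\{x\in\F_q\mid L(x)=g^{-1}(\alpha)\}$, which is a coset of $\ker L=S$. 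Lemma~\ref{Sinvoeren} then yields that $\mathcal{A}_{km}$ is an elation KM-arc with elation line $X=0$ and $t$-nucleus $(0,0,1)$.

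I do not expect a genuine obstacle here; the only points needing (routine) care are that $\ker L$ has order $2^i$, which rests on surjectivity of the trace, and that an $o$-polynomial is a bijection of $\F_{q'}$. One minor case distinction: Construction~\ref{kmc} with $i=1$ produces a hyperoval rather than a KM-arc of type $>2$, so Lemma~\ref{Sinvoeren} does not literally apply; there the same subgroup $S=\ker L$, now of size $2$, induces a non-trivial elation stabilising the hyperoval, and one invokes Lemma~\ref{Sinvoeren2} instead. In either case the conclusion is that every arc of the Korchm\'aros--Mazzocca family is an elation KM-arc.
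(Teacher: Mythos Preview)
Your proof is correct and follows essentially the same route as the paper: set $S=\ker L$, check it has order $2^{i}$, and verify that each nonempty fibre $\{x\mid g(L(x))=\alpha\}$ is a coset of $S$, then invoke Lemma~\ref{Sinvoeren}. The only cosmetic difference is that you use the bijectivity of the $o$-polynomial $g$ to write the fibre as $L^{-1}(g^{-1}(\alpha))$ directly, whereas the paper picks a representative $\beta$ with $g(L(\beta))=\alpha$ and counts; your treatment of the borderline hyperoval case via Lemma~\ref{Sinvoeren2} is an extra nicety the paper omits.
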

\begin{proof} Recall that the set of affine points of the KM-arc $\mathcal{A}_{km}$ in $\PG(2,q)$ is $\{(1,g(L(x)),x)\mid x\in\F_{q}\}$, where $g$ is an $o$-polynomial in $\F_{q'}$ and $L$ is the relative trace function from $\F_{q}$ to $\F_{q'}$, with $q'=2^{h-i}$, $q=2^h$ and $h-i\mid h$. Define $S=\{x\in \F_q \mid L(x)=0\}$, then $S$ is an additive subgroup of $\F_q$ of size $2^{i}$. We claim that for every $\alpha\in\F_{q}$, the set $T_{\alpha}=\{x\in\F_{q}\mid g(L(x))=\alpha\}$ is either empty or a coset of $S$. First note that $\{g(L(x))\mid x \in \F_{q}\}=\F_{q'}$, so $T_{\alpha}$ is empty if $\alpha\notin\F_{q'}$. If $\alpha\in\F_{q'}$, we can find a $\beta\in\F_{q}$ such that $g(L(\beta))=\alpha$. Then $g(L(\beta+s))=g(L(\beta)+L(s))=g(L(\beta))=\alpha$ for all $s\in S$. Since there are exactly $|S|$ solutions $x$ to the equation $g(L(x))=\alpha$, we know $T_{\alpha}=\beta+S$. This proves the claim, and hence, by Lemma \ref{Sinvoeren}, the statement.
\end{proof}

Now we check the constructions by Gács and Weiner. For additive subgroups $G_1$ and $G_2$ of $\F_{q}$, the additive subgroup generated by subgroups $G_1$ and $G_2$ is denoted by $\langle G_1,G_2\rangle$. If $G_2=\langle x\rangle$, then, by abuse of notation, we also write $\langle G_1,x\rangle$ instead of $\langle G_1,\langle x\rangle\rangle$. Using this convention, we denote the additive subgroup generated by the elements $\alpha_1,\alpha_2,\ldots,\alpha_k\in \F_q$ (or equivalently, the $\F_2$-vector subspace spanned by these elements when considering $\F_{q}$ as a vector space over $\F_{2}$) by $\left\langle\alpha_{1},\alpha_{2},\ldots,\alpha_k\right\rangle$.

\begin{lemma}\label{elationals} Let $\mathcal{B}$ be an elation KM-arc in $\PG(2,q)$ with elation line $X=0$ and $t$-nucleus $(0,0,1)$. Let $\mathcal{A}$ be a KM-arc in $\PG(2,q^h)$ that arises from $\mathcal{B}$ as in Construction \ref{gwc} (C), then $\mathcal{A}$ is an elation KM-arc with elation line $X=0$ and $tq^{h-1}$-nucleus $(0,0,1)$.
Let $\mathcal{H}$ be a hyperoval in $\PG(2,q)$ that is stabilised by a non-trivial elation with elation line $X=0$ and centre $(0,1,0)$ and let $\mathcal{A'}$ be a KM-arc in $\PG(2,q^h)$ that arises from $\mathcal{H}$ as in Construction \ref{gwc} (B), then $\mathcal{A'}$ is an elation KM-arc with elation line $X=0$ and $2q^{h-1}$-nucleus $(0,0,1)$.
\end{lemma}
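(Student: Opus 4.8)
The plan is to reduce both assertions to the sufficient criterion of Lemma~\ref{Sinvoeren} (and, for the hyperoval case, its counterpart Lemma~\ref{Sinvoeren2}): to certify that a KM-arc is an elation KM-arc with elation line $X=0$ and nucleus $(0,0,1)$ it suffices to exhibit an additive subgroup of $\F_{q^h}$ whose order equals the type and whose cosets are exactly the non-empty ``vertical slices'' $\{z\mid (1,\alpha,z)\in\mathcal{A}\}$, $\alpha\in\F_{q^h}$. For a KM-arc obtained from Construction~\ref{gwc}, the subgroup I would use is $\langle S,I\rangle$, where $I$ is the direct complement of $\F_q$ in $\F_{q^h}$ used in the construction and $S\le\F_q$ is the small subgroup that encodes the elation structure of the object one starts from. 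The key observation is that the affine point set of the constructed arc is exactly $J$, so its vertical slice at $\alpha$ is empty unless $\alpha\in\F_q$, and otherwise equals $\{y\mid (1,\alpha,y)\in(\text{starting object})\}+I$; everything then comes down to the vertical slices of the starting object, which the hypotheses control.

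For Construction~\ref{gwc}(C) I would first apply Lemma~\ref{Sinvoeren} to $\mathcal{B}$ to obtain an additive subgroup $S\le\F_q$ of order $t$ such that every non-empty vertical slice of $\mathcal{B}$ is a coset of $S$. Since $S\subseteq\F_q$ and $\F_q\cap I=\{0\}$, the sum $S'=S+I$ is direct, so $|S'|=t\,q^{h-1}$, which is exactly the type of $\mathcal{A}$ produced by Construction~\ref{gwc}(C). By the slice observation, every non-empty vertical slice of $\mathcal{A}$ equals $(\beta+S)+I=\beta+S'$ for a suitable $\beta\in\F_{q^h}$, hence is a coset of $S'$. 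Lemma~\ref{Sinvoeren} then yields that $\mathcal{A}$ is an elation KM-arc with elation line $X=0$ and $tq^{h-1}$-nucleus $(0,0,1)$. This part is entirely routine bookkeeping.

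For Construction~\ref{gwc}(B) I would run the same argument with Lemma~\ref{Sinvoeren2} in place of Lemma~\ref{Sinvoeren}. First record the structural facts about $\mathcal{H}$ that are in force: $(0,0,1)\notin\mathcal{H}$ (this is exactly the hypothesis under which Construction~\ref{gwc}(B) applies), the line $X=0$ is a $2$-secant of $\mathcal{H}$ by Theorem~\ref{elationlinesecant} applied to $\mathcal{H}$ (being stabilised by a non-trivial elation with axis $X=0$, $\mathcal{H}$ is an elation hyperoval with elation line $X=0$), and the centre of the stabilising elation does not lie on $\mathcal{H}$ (otherwise matching up the fixed lines through the centre forces $\mathcal{H}$ into the axis). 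Lemma~\ref{Sinvoeren2} then provides an additive subgroup $S\le\F_q$ of order $2$ with every non-empty vertical slice of $\mathcal{H}$ a coset of $S$; with $S'=\langle S,I\rangle$ (direct, of order $2q^{h-1}$) the same slice computation shows every vertical slice of $\mathcal{A'}$ is empty or a coset of $S'$, and Lemma~\ref{Sinvoeren} concludes that $\mathcal{A'}$ is an elation KM-arc with elation line $X=0$ and $2q^{h-1}$-nucleus $(0,0,1)$.

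The only point I would be careful about — and the main obstacle — is feeding the elation hypothesis on $\mathcal{H}$ into Lemma~\ref{Sinvoeren2}: that lemma describes the vertical slices of $\mathcal{H}$ in terms of an elation whose centre is the nucleus $(0,0,1)$ of $\mathcal{A'}$, i.e.\ one acting on the $z$-coordinate, and this is precisely the elation whose orbits on $\mathcal{H}$ (together with the $2$-secant $X=0$) force each non-empty vertical slice of $\mathcal{H}$ to be a coset of a single $2$-element subgroup $S$. Thus the crux is to see that the elation stabilising $\mathcal{H}$ can be taken to have centre $(0,0,1)$: if one only has an elation of $\mathcal{H}$ with a different centre on $X=0$ off $\mathcal{H}$, the extra step is to upgrade it to one with centre $(0,0,1)$ (equivalently, in standard o-polynomial coordinates, that the o-polynomial of $\mathcal{H}$ admits an $\F_2$-translation symmetry). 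Everything else — the affine description of $J$ and the order of $\langle S,I\rangle$ — is formal.
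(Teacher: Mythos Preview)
Your approach is exactly the paper's: apply Lemma~\ref{Sinvoeren} (resp.\ Lemma~\ref{Sinvoeren2}) to the starting object to extract $S$, observe that the affine vertical slices of the constructed arc are $U_\alpha+I$ and hence cosets of $\langle S,I\rangle$, then invoke Lemma~\ref{Sinvoeren} again.

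The obstacle you flag in the hyperoval case---that Lemma~\ref{Sinvoeren2} needs the elation centre to be $(0,0,1)$ while the hypothesis names $(0,1,0)$---is a typo in the statement rather than a genuine gap: the paper's own proof simply applies Lemma~\ref{Sinvoeren2} to $\mathcal{H}$ with no intermediate step, and the converse result (Theorem~\ref{elationals2}) states the centre as $(0,0,1)$. With the corrected hypothesis no ``upgrading'' of the elation is needed, and your additional remarks (that $X=0$ must be a $2$-secant, that the centre cannot lie on $\mathcal{H}$) are correct but not required for the argument.
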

\begin{proof} 
Since $\mathcal{B}$ is an elation KM-arc, by Lemma \ref{Sinvoeren} we know that there exists an additive subgroup $S$ of size $t$ in $\F_{q}$ such that for any $\alpha\in\F_{q}$ the set $U_{\alpha}=\{z\mid(1,\alpha,z)\in\mathcal{B}\}$ is either empty or a coset of $S$. Now, from the description of $\mathcal{A}$ in Construction \ref{gwc} (C) it follows that $U'_{\alpha}=\{z\mid(1,\alpha,z)\in\mathcal{A}\}$ equals $\{u+i\mid u\in U_{\alpha}, i\in I\}$ if $\alpha\in\F_{q}$, with $I$ a direct complement of $\F_{q}$ in $\F_{q^{h}}$. It is immediate that $U'_{\alpha}$ is empty if $U_{\alpha}$ is empty or if $\alpha\notin\F_{q}$. If $U'_{\alpha}$ is non-empty, then it is a coset of the additive subgroup $\langle S,I\rangle$ of $\F_{q^{h}}$. It now follows from Lemma \ref{Sinvoeren} that $\mathcal{A}$ is an elation KM-arc with elation line $X=0$ and $tq^{h-1}$-nucleus $(0,0,1)$.
\par The proof of the second part is very similar. We now apply Lemma \ref{Sinvoeren2} on $\mathcal{H}$ and then argue as in the first part of the proof.
\end{proof}

We recall that it is not required in Constructions \ref{gwc} (B) and (C) to have non-affine points in the KM-arc to start with. From this point of view it is worthwhile to note that in both cases of the previous lemma non-affine points are required.

We will now prove the converse of Lemma \ref{elationals}.
\begin{theorem}\label{elationals2}
Let $\mathcal{A}$ be a KM-arc of type $tq^{h-1}$, $t>2$, in $\PG(2,q^h)$ with $tq^{h-1}$-nucleus $(0,0,1)$ that arises from some KM-arc $\mathcal{B}$ of type $t$ in $\PG(2,q)$ as in Construction \ref{gwc} (C). If $\mathcal{A}$ is an elation KM-arc with elation line $X=0$, then $\mathcal{B}$ is an elation KM-arc with elation line $X=0$ and $t$-nucleus $(0,0,1)$.

If $\mathcal{A}$ is an elation KM-arc of type $2q^{h-1}$ in $\PG(2,q^h)$ with $2q^{h-1}$-nucleus $(0,0,1)$ and with elation line $X=0$ that arises from a hyperoval $\mathcal{H}$ as in Construction \ref{gwc} (B), then $\mathcal{H}$ is an elation hyperoval stabilised by a non-trivial elation with axis $X=0$ and centre $(0,0,1)$.
\end{theorem}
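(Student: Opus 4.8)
The plan is to reverse the reduction used in the proof of Lemma \ref{elationals}, this time invoking the ``vice versa'' halves of Lemmas \ref{Sinvoeren} and \ref{Sinvoeren2}. For the first statement, I would start from the hypothesis on $\mathcal{A}$: by Lemma \ref{Sinvoeren} there is an additive subgroup $S'$ of $\F_{q^h}$ of size $tq^{h-1}$ such that, for every $\alpha\in\F_{q^h}$, the set $U'_\alpha:=\{z\mid(1,\alpha,z)\in\mathcal{A}\}$ is empty or a coset of $S'$. On the other hand, the description of $\mathcal{A}$ in Construction \ref{gwc} (C) shows that $U'_\alpha=\emptyset$ for $\alpha\notin\F_q$, while for $\alpha\in\F_q$ we have $U'_\alpha=U_\alpha+I$, where $U_\alpha:=\{z\mid(1,\alpha,z)\in\mathcal{B}\}\subseteq\F_q$ and $I$ is the direct complement of $\F_q$ in $\F_{q^h}$ used in the construction.

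The engine of the argument is that intersecting with $\F_q$ ``undoes'' the thickening by $I$. Since $I\cap\F_q=\{0\}$, one has $(X+I)\cap\F_q=X$ for every $X\subseteq\F_q$, and $(v+S')\cap\F_q=v+(S'\cap\F_q)$ for every $v\in\F_q$. Set $S:=S'\cap\F_q$, an additive subgroup of $\F_q$. Then, for $\alpha\in\F_q$, $U_\alpha=U'_\alpha\cap\F_q$; if this set is nonempty we may write $U'_\alpha=v_\alpha+S'$ with $v_\alpha\in U_\alpha\subseteq\F_q$, and hence $U_\alpha=(v_\alpha+S')\cap\F_q=v_\alpha+S$. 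So every $U_\alpha$ is empty or a coset of the fixed subgroup $S$.

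It remains to verify that $|S|$ equals the type $t$ of $\mathcal{B}$. The $q/t+1\geq2$ concurrent $t$-secants of $\mathcal{B}$ all pass through $(0,0,1)$ and, as $(0,0,1)\notin\mathcal{B}$, have pairwise disjoint intersections with $\mathcal{B}$, so they partition its $q+t$ points into subsets of size $t$; at most one of these secants is the line $X=0$, so there is a $t$-secant $Y=\alpha X$, and it meets $\mathcal{B}$ in $t$ affine points because $(0,0,1)$ is its only point on $X=0$. For such an $\alpha$, $U_\alpha$ is a coset of $S$ of size $t$, whence $|S|=t$. The converse direction of Lemma \ref{Sinvoeren}, applied to the KM-arc $\mathcal{B}$ of type $t$ with the subgroup $S$, now yields that $\mathcal{B}$ is an elation KM-arc with elation line $X=0$ and $t$-nucleus $(0,0,1)$.

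The second statement is obtained by the same computation, with Lemma \ref{Sinvoeren2} in place of Lemma \ref{Sinvoeren} and $\mathcal{H}$ in place of $\mathcal{B}$. Construction \ref{gwc} (B) gives $\{z\mid(1,\alpha,z)\in\mathcal{A}\}=\{z\mid(1,\alpha,z)\in\mathcal{H}\}+I$ for $\alpha\in\F_q$; since $(0,0,1)\notin\mathcal{H}$, every line $Y=\alpha X$ meets $\mathcal{H}$ in $0$ or $2$ affine points, and as $\mathcal{H}$ has affine points there is an $\alpha$ with exactly $2$. Intersecting the $S'$-cosets with $\F_q$ then produces an additive subgroup $S=S'\cap\F_q$ of $\F_q$ of size $2$ enjoying the coset property, and Lemma \ref{Sinvoeren2} concludes that $\mathcal{H}$ is stabilised by a non-trivial elation with axis $X=0$ and centre $(0,0,1)$. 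I expect the only point requiring genuine care to be the bookkeeping around the direct complement $I$: the two intersection identities, and the observation that some ``column'' $U_\alpha$ attains the full size $t$ (respectively $2$), which is what forces $|S|$ to equal the type of $\mathcal{B}$ rather than merely to divide it.
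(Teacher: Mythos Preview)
Your proof is correct and follows essentially the same approach as the paper's. The only organisational difference is that the paper first observes $I\subseteq S'$ and then writes $S'=\langle S,I\rangle$ for some additive subgroup $S\subseteq\F_q$ (so $|S|=t$ from the outset), whereas you set $S:=S'\cap\F_q$ directly and use the intersection identities $(X+I)\cap\F_q=X$ and $(v+S')\cap\F_q=v+S$ to obtain $U_\alpha=v_\alpha+S$, recovering $|S|=t$ at the end from a nonempty $U_\alpha$; the underlying mechanism---undoing the $I$-thickening via the direct complement property---is identical.
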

\begin{proof} 
We consider the KM-arc $\mathcal{A}$ arising from $\mathcal{B}$ and assume that $\mathcal{A}$ is an elation KM-arc. By Lemma \ref{Sinvoeren} we know that there is an additive subgroup $S'$ of size $tq^{h-1}$ in $\F_{q^{h}}$, such that for any $\alpha\in\F_{q^{h}}$ the set $U'_{\alpha}=\{z\mid(1,\alpha,z)\in\mathcal{A}\}$ is either empty or a coset of $S'$. For $\mathcal{B}$ we define $U_{\alpha}=\{z\mid(1,\alpha,z)\in\mathcal{B}\}$ for any $\alpha\in\F_{q}$. Through Construction \ref{gwc} (C) we know that $U'_{\alpha}$ equals $\bigcup_{u\in U_{\alpha}}u+I$ for any $\alpha\in\F_{q}$, and that it is empty if $\alpha\notin\F_{q}$. We want to prove that there exists an additive subgroup of $\F_{q}$ such that any non-empty $U_{\alpha}$ is a coset of it.
\par It is easy to see that $I\subseteq S'$. Since $I$ is a direct complement of $\F_{q}$ in $\F_{q^{h}}$ we can find an additive subgroup $S$ of $\F_{q}$ such that $S'=\langle S,I\rangle$; necessarily $|S|=t$. Now fix a value $\alpha\in\F_{q}$ such that $U_{\alpha}$ (or equivalently $U'_{\alpha}$) is non-empty. Let $u$ be an element of $U_{\alpha}$. We know that $u$ is also an element of $U'_{\alpha}$, hence we can write $U'_{\alpha}=u+S'$. For an arbitrary $v\in U_{\alpha}$ there exists an $s'\in S'$ such that $v=u+s'$ since $v$ is also an element of $U'_{\alpha}$. Since $S'=\langle S,I\rangle$, there are unique $s\in S$ and $i\in I$ such that $s'=s+i$. So, $v=u+s+i$ and as $v,u,s\in\F_{q}$ and $I$ is a direct complement of $\F_{q}$ we know that $i=0$ and that $v=u+s$. Since $v$ was arbitrarily chosen, we see that $U_{\alpha}\subseteq u+S$. As $|U_{\alpha}|=t=|S|$ we conclude that $U_{\alpha}=u+S$. So, for any $\alpha$ we find that $U_{\alpha}$ is a coset of $S$. The theorem now follows from Lemma \ref{Sinvoeren}.
\par The proof of the second part follows by a very similar reasoning.
\end{proof}

Later in this paper, we will need the notion of $\F_q$-linear sets in a projective space. Let $V$ be an $r$-dimensional vector space over $\F_{q^n}$, let $\Omega$ be the projective space $\PG(V)=\PG(r-1,q^n)$, $q=p^h$, $p$ prime, and let $T$ be a set of points of $\Omega$. The set $T$ is said to be an {\em $\F_q$-linear} set of $\Omega$ of rank $t$ if it is defined by the non-zero vectors of an $\F_q$-vector subspace $U$ of $V$ of dimension $t$, i.e. $T=\mathcal{B}(U)=\{\langle u\rangle_{\F_{q^n}}: u \in U\setminus\{0\}\}$. For more information on $\F_q$-linear sets, we refer to \cite{FQ11} and \cite{olga}.

If $\S$ is an $\F_q$-linear point set contained in a line of $\PG(2,q^{n})$, then the (usual) dual of this point set defines a subset of the set of the lines through a fixed point. We will call such a set {\em an $\F_q$-linear pencil} as the terminology `dual of a linear set' is already in use, see e.g. \cite{olga}.

\begin{remark} Consider an elation KM-arc as in Lemma \ref{Sinvoeren}. The set of points on the $t$-secant $Y=\alpha X$ is of the form $\{(1,\alpha,\beta+s)\mid s\in S\}$ for an additive subgroup $S$ of $\F_{q}$. Now it is clear that this set of points, together with the $t$-nucleus $(0,0,1)$, forms an $\F_2$-linear set on the line $Y=\alpha X$. It has been conjectured by Vandendriessche in \cite{vdd} that {\em all} KM-arcs have this property, i.e., that the points of a KM-arc of type $t$ that lie on a given $t$-secant $\ell$, together with the $t$-nucleus, form an $\F_2$-linear set on $\ell$.
Note that it has been shown in \cite{gw} that the set of points on a $t$-secant $\ell$ to a KM-arc of type $t$ define a {\em Vandermonde set}. A set $\{y_1,\ldots,y_t\}\subseteq \F_q$, with $1<t<q$, is a Vandermonde set if $\sum_i y_i^k=0$ for all $1\leq k \leq t-2$.

Every $\F_2$-linear set is a Vandermonde set, but not all Vandermonde sets are $\F_2$-linear sets. In $\F_{2^h}$, $h\leq 5$, Vandermonde and $\F_2$-linear are equivalent properties, however, in $\F_{2^6}$, where $z^6=z^4+z^3+z+1$ (the default polynomial used in the computer algebra package GAP), the set $\{0,\, z^{12}, z^{15}, z^{17}, z^{19}, z^{43}, z^{56},  z^{59}\}$ is Vandermonde, but not $\F_2$-linear. Considering that all elation KM-arcs have the conjectured property, that the non-elation KM-arcs constructed in \cite[Section 4]{wij} (see Section \ref{secq/4}) have the conjectured property and that in \cite{vdd2} the conjecture was checked for all KM-arcs in $\PG(2,2^h)$, $h\leq 5$, it would be interesting to know whether the Vandermonde property really can be strengthened to the $\F_2$-linear property.
\end{remark}

\section{Elation KM-arcs of type \texorpdfstring{$q/4$}{q/4}}\label{secq/4}

Recall the construction from \cite{wij} where we have permuted the first and third coordinate:
\begin{theorem}\label{ourconstruction}
  Let $\Tr$ be the absolute trace function from $\F_{q}$ to $\F_{2}$, $q=2^{h}\geq8$. Choose $\alpha,\beta\in\F_{q}\setminus\{0,1\}$ such that $\alpha\beta\neq1$ and define
  \[
    \gamma=\frac{\beta+1}{\alpha\beta+1}\;,\quad\xi=\alpha\beta\gamma\;.
  \]
  Now choose $a,b\in\F_{2}\subset\F_{q}$, and define the following sets
  \begin{align*}
    \S_{0}&=\left\{(0,1,z)\mid z\in \F_q,\Tr(z)=0,\Tr\left(\frac{z}{\alpha}\right)=a\right\}\;,\\
    \S_{1}&=\left\{(1,0,z)\mid z\in \F_q,\Tr(z)=0,\Tr\left(\frac{z}{\alpha\gamma}\right)=0\right\}\;,\\
    \S_{2}&=\left\{(1,1,z)\mid z\in \F_q,\Tr(z)=1,\Tr\left(\frac{z}{\alpha\beta}\right)=b\right\}\;,\\
    \S_{3}&=\left\{(1,\gamma,z)\mid z\in \F_q,\Tr\left(\frac{z}{\alpha\gamma}\right)=a+1,\Tr\left(\frac{z}{\xi}\right)=b+1\right\}\;,\\
    \S_{4}&=\left\{(1,\beta+1,z)\mid z\in \F_q,\Tr\left(\frac{z}{\alpha\beta}\right)=a+b+1,\Tr\left(\frac{z}{\xi}\right)=b\right\}\;.
  \end{align*}
  Then, $\A_{\alpha,\beta,a,b}=\cup^{4}_{i=0}\S_{i}$ is a KM-arc of type $q/4$ in $\PG(2,q)$.
\end{theorem}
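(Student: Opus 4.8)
The plan is to verify directly that $\A_{\alpha,\beta,a,b}$ is a point set of type $(0,2,q/4)$ of size $q+q/4$. First I would count points: for each $i$ the two trace conditions defining $\S_i$ are $\F_2$-linearly independent (for $\S_0$ because $\alpha\neq1$, for $\S_1$ because $\alpha\gamma\neq1$, for $\S_2$ because $\alpha\beta\neq1$, for $\S_3$ because $\beta\neq1$, for $\S_4$ because $\gamma\neq1$, all of which reduce to $\alpha,\beta\notin\{0,1\}$ and $\alpha\beta\neq1$), so $|\S_i|=q/4$ and $|\A_{\alpha,\beta,a,b}|=5\cdot q/4=q+q/4$. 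Next, $\S_0,\dots,\S_4$ lie respectively on the five lines $X=0$, $Y=0$, $Y=X$, $Y=\gamma X$ and $Y=(\beta+1)X$ through $N=(0,0,1)$, and these five lines are pairwise distinct because the slopes $\infty,0,1,\gamma,\beta+1$ are (again using $\alpha,\beta\notin\{0,1\}$ and $\alpha\beta\neq1$). Two distinct lines meet only in $N\notin\A_{\alpha,\beta,a,b}$, so the $\S_i$ are pairwise disjoint, $\A_{\alpha,\beta,a,b}$ is exactly their union, each of the five lines is a $(q/4)$-secant, and every other line through $N$ is external. Hence it remains only to show that a line not through $N$ meets $\A_{\alpha,\beta,a,b}$ in $0$ or $2$ points.

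Write such a line as $L_{u,v}\colon Z=uX+vY$. It meets each of the five $(q/4)$-secants in exactly one point, namely $(0,1,v)$, $(1,0,u)$, $(1,1,u+v)$, $(1,\gamma,u+v\gamma)$ and $(1,\beta+1,u+v(\beta+1))$, and since $\A_{\alpha,\beta,a,b}$ lies in the union of those five lines these are the only points of $\A_{\alpha,\beta,a,b}$ possibly on $L_{u,v}$. Using $\xi=\alpha\beta\gamma$ and $\beta+1=\gamma(\alpha\beta+1)$ one has $\gamma/(\alpha\gamma)=1/\alpha$, $\gamma/\xi=1/(\alpha\beta)$, $(\beta+1)/(\alpha\beta)=1/\alpha+1/(\alpha\beta)$ and $(\beta+1)/\xi=1+1/(\alpha\beta)$, and substituting these turns the membership of the five candidate points into a system of ten affine $\F_2$-conditions in the seven quantities $\Tr(v),\ \Tr(v/\alpha),\ \Tr(v/(\alpha\beta)),\ \Tr(u),\ \Tr(u/(\alpha\gamma)),\ \Tr(u/(\alpha\beta)),\ \Tr(u/\xi)$. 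Writing $n(u,v)$ for the number of the five candidate points lying in $\A_{\alpha,\beta,a,b}$, the goal is $n(u,v)\in\{0,2\}$ for all $u,v$.

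I would prove this in two steps. Step one: $n(u,v)$ is even. Reducing the sum of the five conjunctions modulo $2$, and using $x^2=x$ on $\F_2$, the cross terms cancel thanks to the identity $1/(\alpha\gamma)+1/(\alpha\beta)+1/\xi=1$ --- equivalently $\gamma+\xi=\beta+1$, which holds since $\gamma+\xi=\gamma(1+\alpha\beta)=\beta+1$ --- so that $\Tr(u)+\Tr(u/(\alpha\gamma))+\Tr(u/(\alpha\beta))+\Tr(u/\xi)=0$ for every $u$, after which the parity of $n(u,v)$ collapses to a linear expression that vanishes. Step two: $n(u,v)\neq4$, i.e.\ no four of the five candidate points lie in $\A_{\alpha,\beta,a,b}$ at once. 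Four of the five cases collapse at once from $\F_2$-linear relations among the ten conditions --- for instance, membership of the candidate points on $X=0$, $Y=0$ and $Y=X$ would force $\Tr(v)=0$, $\Tr(u)=0$ and $\Tr(u)+\Tr(v)=1$ respectively, which is impossible --- and the one remaining case, the points on $Y=0$, $Y=X$, $Y=\gamma X$, $Y=(\beta+1)X$ all lying in $\A_{\alpha,\beta,a,b}$, is excluded exactly by $\Tr(u)+\Tr(u/(\alpha\gamma))+\Tr(u/(\alpha\beta))+\Tr(u/\xi)=0$. Since $n(u,v)\le5$, being even excludes $1,3,5$ and step two excludes $4$, so $n(u,v)\in\{0,2\}$. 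With the first paragraph this shows $\A_{\alpha,\beta,a,b}$ has type $(0,2,q/4)$ and size $q+q/4$, hence is a KM-arc of type $q/4$.

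The main obstacle is the bookkeeping in the last step: carrying the ten affine conditions and the constants $a,b$ through the modulo-$2$ expansion and pinning down exactly which relations among $\alpha,\beta,\gamma,\xi$ make the cross terms vanish. In effect $\gamma=(\beta+1)/(\alpha\beta+1)$ and $\xi=\alpha\beta\gamma$ are reverse-engineered so that the arguments $u+v\gamma$ (for $\S_3$) and $u+v(\beta+1)$ (for $\S_4$) are resolved by the linear functionals already used for $\S_0,\S_1,\S_2$, with no new trace term appearing and with $\gamma+\xi=\beta+1$ automatic; checking that this truly forces $n(u,v)\in\{0,2\}$, with no residual term in $a$ or $b$ surviving, is the real content of the proof. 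Alternatively, one may simply note that $\A_{\alpha,\beta,a,b}$ is the image under the coordinate swap $(X,Y,Z)\mapsto(Z,Y,X)$ of the KM-arc constructed in \cite{wij}, so the statement is exactly the verification carried out there.
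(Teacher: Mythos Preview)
The paper does not prove this statement at all: it is introduced with ``Recall the construction from \cite{wij} where we have permuted the first and third coordinate,'' and is simply quoted as a known result. Your closing sentence --- that $\A_{\alpha,\beta,a,b}$ is the image under $(X,Y,Z)\mapsto(Z,Y,X)$ of the KM-arc of \cite{wij} --- is therefore exactly the paper's own justification, and is the cleanest way to settle the theorem.

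Your direct verification is a different route, and its architecture is sound: the independence checks for each $\S_i$, the distinctness of the five slopes, and the key identity $\gamma+\xi=\beta+1$ (equivalently $1/(\alpha\gamma)+1/(\alpha\beta)+1/\xi=1$) are all correct and are precisely what underlies the construction in \cite{wij}. What your proposal does not actually do is carry out the two computational steps you isolate. For the parity step, you assert that the mod-$2$ sum of the five membership indicators collapses to zero, but you do not expand the products or track the constants $a,b$; the single identity you quote is necessary but not by itself sufficient to make the claim evident. For the step $n(u,v)\neq4$, you give one of the five cases in full and assert that three more ``collapse at once''; note that your sample argument (the first three candidate points cannot all lie in $\A$) only directly handles the two cases where the excluded point is on $Y=\gamma X$ or $Y=(\beta+1)X$, so the remaining cases still need their own linear relations written down. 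You say yourself that this bookkeeping ``is the real content of the proof,'' and that is accurate: as written, your proposal is a correct plan with the right ingredients identified, rather than a complete proof. Either finish the expansion explicitly, or --- as both you and the paper do --- invoke \cite{wij}.
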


It is easy to prove (see also \cite[Theorem 4.8]{wij}), that $\A_{\alpha,\beta,a,b}$ is $\PGL$-equivalent to $\A_{\alpha,\beta,0,0}$.
\begin{theorem}\label{transliff}\cite[Theorem 4.9]{wij}
  Let $\alpha,\beta\in\F_{q}\setminus\{0,1\}$, with $\alpha\beta\neq1$. The KM-arc $\A_{\alpha,\beta,0,0}$ is a translation KM-arc if and only if $\alpha\in\left\{\frac{1}{\beta^{2}},1+\frac{1}{\beta},\beta,\frac{1}{\sqrt{\beta}},\frac{1}{\beta+1}\right\}$.
\end{theorem}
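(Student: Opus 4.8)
The plan is to combine Theorem~\ref{elationlinesecant} with a direct analysis of the elation group for each admissible choice of translation line. Since a translation KM-arc is an elation KM-arc whose elation line is the translation line, Theorem~\ref{elationlinesecant} tells us that if $\A_{\alpha,\beta,0,0}$ is a translation KM-arc then its translation line is one of its five $t$-secants, namely $\ell_{0}\colon X=0$, $\ell_{1}\colon Y=0$, $\ell_{2}\colon Y=X$, $\ell_{3}\colon Y=\gamma X$, $\ell_{4}\colon Y=(\beta+1)X$, all of which pass through the $t$-nucleus $N=(0,0,1)$ (and are pairwise distinct because $\alpha,\beta\notin\{0,1\}$ and $\alpha\beta\neq1$). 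So it suffices, for each $i$, to decide exactly when $\A_{\alpha,\beta,0,0}$ is a translation KM-arc with translation line $\ell_{i}$, and then to check that the five resulting conditions on $\alpha$ are precisely the five values in the statement.

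Next I would turn the property into a condition on the five $t$-secants. Fix $\ell_{i}$ as translation line and let $H$ be the group of elations with axis $\ell_{i}$ stabilising $\A=\A_{\alpha,\beta,0,0}$. As a subgroup of the group of all elations with axis $\ell_{i}$, which is abelian and regular on $\PG(2,q)\setminus\ell_{i}$, $H$ acts regularly on the $q$ points of $\A$ off $\ell_{i}$. Since $N\in\ell_{i}$ and each point of $\A$ lies on a unique $t$-secant, $H$ permutes the four $t$-secants different from $\ell_{i}$; being abelian and transitive on them it does so regularly, so the kernel $H_{0}$ of this action has order $q/4$ and is sharply transitive on the $q/4$ points of $\A$ on each of those four lines. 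Choosing affine coordinates $(1,y',z')$ in which $\ell_{i}\colon X'=0$ and $N=(0,0,1)$ (realised by a collineation fixing $N$, i.e.\ acting on the pencil of lines through $N$ as an element of $\PGL(2,q)$), the elations with axis $\ell_{i}$ become the maps $(y',z')\mapsto(y'+a,z'+b)$, the lines through $N$ are $Y'=cX'$ and $X'=0$, and projecting $H$ onto the $a$-coordinate forces the set of slopes $c$ of the four $t$-secants $\neq\ell_{i}$ to be a coset of an additive subgroup of $\F_{q}$ of order $4$; choosing the collineation so that one of those four lines maps to $Y'=0$, that set is the subgroup itself. Conversely, if this slope set is an additive subgroup of order $4$ and the $z'$-coordinate sets of $\A$ on the four affine $t$-secants are cosets of one common additive subgroup of order $q/4$ with additively compatible representatives, one reads off a subgroup $H$ of order $q$ stabilising $\A$ and transitive on its affine points. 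The point is that, once the slope condition holds, these auxiliary conditions on the $z'$-coordinates follow automatically, using the standard fact that two level sets $\{z:\Tr(\lambda_{1}z)=c_{1}\}$ and $\{z:\Tr(\lambda_{2}z)=c_{2}\}$ meet in a coset of a subgroup of order $q/4$ exactly when $\lambda_{1},\lambda_{2}$ are $\F_{2}$-independent. So $\A_{\alpha,\beta,0,0}$ is a translation KM-arc with translation line $\ell_{i}$ if and only if the slope condition for $\ell_{i}$ holds.

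Then I would carry out the five computations. In characteristic $2$ a four-set $\{0,a,b,c\}$ is a subgroup exactly when $a+b+c=0$. For $\ell_{0}$ the slopes are $\{0,1,\gamma,\beta+1\}$, so the condition reads $\gamma=\beta$, i.e.\ $\alpha=1/\beta^{2}$ (and then $\xi=1$ and $\alpha\gamma=\alpha\beta$, so the trace descriptions of $\S_{1},\dots,\S_{4}$ collapse to cosets of the single subgroup $\{z:\Tr(z)=\Tr(z/(\alpha\beta))=0\}$ with compatible representatives). Applying suitable projectivities fixing $N$ (for instance $c\mapsto 1/c$ for $\ell_{1}$, $c\mapsto 1/(c+1)$ for $\ell_{2}$, $c\mapsto 1/(c+\gamma)$ for $\ell_{3}$, $c\mapsto 1/(c+\beta+1)$ for $\ell_{4}$, each sending the slope of $\ell_{i}$ to $\infty$ and that of $\ell_{0}$ to $0$), the four relevant slopes for $\ell_{1},\ell_{2},\ell_{3},\ell_{4}$ become four-sets whose ``$a+b+c=0$'' conditions, after clearing denominators using $\gamma=(\beta+1)/(\alpha\beta+1)$ and $\xi=\alpha\beta\gamma$, simplify to $\alpha\beta=\beta+1$, $\alpha=\beta$, $\alpha^{2}\beta=1$ and $\alpha(\beta+1)=1$, that is to $\alpha\in\{1+\tfrac1\beta,\ \beta,\ 1/\sqrt\beta,\ 1/(\beta+1)\}$. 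Together with $\alpha=1/\beta^{2}$ this gives the five values claimed, and in each case one checks that the value obtained lies in $\F_{q}\setminus\{0,1\}$ and satisfies $\alpha\beta\neq1$, which is immediate from $\beta\notin\{0,1\}$.

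The main obstacle is the bookkeeping in the cases $\ell_{2},\ell_{3},\ell_{4}$: after the coordinate change both the slopes and the $z$-coordinates of the sets $\S_{j}$ get multiplied by field constants, so one has to verify carefully (but with only elementary manipulations of trace conditions) both that the slope set is a subgroup precisely for the claimed value of $\alpha$ and that the $z'$-sets then become cosets of a common subgroup of order $q/4$ with additively compatible representatives; the recurring tool is to recognise when two pairs of linear functionals $z\mapsto\Tr(\lambda z)$ span the same $\F_{2}$-plane. (The borderline value $q=8$, where $t=q/4=2$ and $\A_{\alpha,\beta,0,0}$ is a hyperoval, is treated in the same spirit, with $N$ taken to be the centre of the stabilising elation as in Theorem~\ref{elationlinesecant}.)
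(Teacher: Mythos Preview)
The paper does not prove Theorem~\ref{transliff}; it is quoted verbatim from \cite[Theorem 4.9]{wij} and used as input to Lemma~\ref{iffTranslation}. So there is no proof in the paper to compare against, and your proposal supplies an argument where the paper gives only a reference.

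Your approach is sound and the forward direction is essentially complete. Using Theorem~\ref{elationlinesecant} to reduce to the five $t$-secants as candidate translation lines is correct, and the observation that a transitive elation group with axis $\ell_i$ forces the four remaining slopes (after moving $\ell_i$ to infinity and one other secant to slope $0$) to form an additive subgroup of order $4$ is exactly right. Your five ``$a+b+c=0$'' computations are correct and recover precisely the five values of $\alpha$; this is the same calculation that appears in Lemma~\ref{vwclub} (phrased there as the club condition on $\beta$), so in effect you are running the logic of Lemma~\ref{iffTranslation} in reverse, which is a reasonable and self-contained route.

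The converse is where you leave real work undone. Your sentence ``once the slope condition holds, these auxiliary conditions on the $z'$-coordinates follow automatically'' overstates matters: there is no abstract principle guaranteeing that a KM-arc whose secant slopes form a subgroup is translation --- indeed, in the paper this equivalence (Lemma~\ref{iffTranslation}) is established only \emph{via} Theorem~\ref{transliff}, not independently of it. What actually has to happen in each of the five cases is a concrete check that, under the corresponding value of $\alpha$, the four $z'$-sets become cosets of one common subgroup of order $q/4$ (i.e.\ the two trace functionals defining each piece span the same $\F_2$-plane in $\F_q^*$) \emph{and} that the coset representatives are additively compatible so that the elations $(y',z')\mapsto(y'+a,z'+b)$ with $a$ ranging over the slope subgroup assemble into a group stabilising $\A$. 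These checks do go through (for instance, for $\ell_0$ with $\alpha=1/\beta^2$ one finds $\gamma=\beta$, $\xi=1$, $\alpha\gamma=\alpha\beta$, and all four affine pieces are cosets of $\{z:\Tr(z)=\Tr(z/(\alpha\beta))=0\}$ with visibly compatible shifts), but they are five genuine verifications, not a formality. You acknowledge this in your obstacle paragraph; just be aware that this is where the substance of the proof lies, and it should be written out rather than asserted.
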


Consider an $\F_2$-linear set $\mathcal{S}$ of size $5$ in $\PG(1,2^h)$. By definition, we know that there is an $\F_{2}$-subspace $U$ of $\F^{2}_{2^h}$ such that $\mathcal{S}=\mathcal{B}(U)$. Note that $U$ is not uniquely determined by $\mathcal{S}$. It is not too hard to check that either $\dim(\langle U\rangle_2)=2$ or $\dim(\langle U\rangle_2)=3$, where $\langle U \rangle_2$ denotes the projective space defined by the vector space $U$ over $\F_{2}$. If $\langle U \rangle_2$ is a solid, then every point of $\mathcal{B}(U)$ is defined by the projective points of one line of $\langle U\rangle_2$. If $\langle U \rangle_2$ is a plane, then there is exactly one point $H$ of $\mathcal{B}(U)$ such that $H$ is determined by the points of a projective line of $\langle U\rangle_2$, and each of the other four points of $\mathcal{B}(U)$ is determined by exactly one of the four remaining points of $\langle U\rangle_2$. In the latter case, $\mathcal{S}$ is called a {\em club} or more specifically, a {\em $2$-club of rank $3$} and the point $H$ is called the {\em head} of the club.
In the former case, $\mathcal{S}=\mathcal{B}(U)$ with $\dim(\langle U\rangle_2)=3$, we see that for every plane $\langle V\rangle_2$ of $\langle U\rangle_2$, also
$\mathcal{S}=\mathcal{B}(V)$, and $\mathcal{S}$ is a club, with the head determined by choice of the plane $\langle V\rangle_2$. We see from this argument that the head is not uniquely determined in this case, and that any point of $\mathcal{S}$ can play the role of the head. It will follow from the proof of the following theorem that in the latter case, the club forms an $\mathbb{F}_4$-subline.

\begin{lemma}\label{vwclub}
	The set $\mathcal{C}=\{(1,0), (0,1), (1,1),(\gamma,1), (\beta+1,1)\}\subseteq\PG(1,2^{h})$ is an $\F_2$-linear set if and only if $\beta\in \{\gamma, \frac{1}{\gamma+1},\sqrt{\gamma+1},1+\frac{1}{\gamma},1+\gamma^2\}$. If two of the values in this set coincide, they all coincide and $\mathcal{C}$ forms an $\mathbb{F}_4$-subline.
\end{lemma}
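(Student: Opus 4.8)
The plan is to reduce the question to an elementary ``sum-to-zero'' condition on the five points of $\mathcal{C}$, and then to run through the five possible choices of head. Identify $\PG(1,2^h)$ with $\F_q\cup\{\infty\}$, $q=2^h$, via $(x,1)\leftrightarrow x$ and $(1,0)\leftrightarrow\infty$, so that $\mathcal{C}=\{\infty,0,1,\gamma,\beta+1\}$; in the setting of Theorem \ref{ourconstruction} one has $\gamma\notin\{0,1\}$ and $\beta+1\notin\{0,1,\gamma\}$, so these five points are pairwise distinct. By the discussion preceding the lemma, a size-$5$ $\F_2$-linear set on a line is a $2$-club of rank $3$ (possibly an $\F_4$-subline), hence it possesses at least one head.

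Next I would record the normal form of a rank-$3$ $\F_2$-linear set with a prescribed head. If $U\leq\F_q^2$ is a $3$-dimensional $\F_2$-subspace whose linear set $\mathcal{B}(U)$ has head $\infty$, then $U\cap(\F_q\times\{0\})$ is $2$-dimensional, say equal to $W\times\{0\}$ with $W\leq\F_q$ of $\F_2$-dimension $2$, and one reads off $\mathcal{B}(U)=\{\infty\}\cup(c+W)$ for a suitable coset $c+W$; conversely every set of this shape arises as such a $\mathcal{B}(U)$. Combined with the elementary fact that four distinct elements $x_1,x_2,x_3,x_4$ of a field of characteristic $2$ form a coset of a $2$-dimensional $\F_2$-subspace precisely when $x_1+x_2+x_3+x_4=0$, this yields: a set $\{\infty,x_1,x_2,x_3,x_4\}$ with the $x_i\in\F_q$ distinct is an $\F_2$-linear set with head $\infty$ if and only if $x_1+x_2+x_3+x_4=0$. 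Any projectivity fixing $\infty$ acts on $\F_q$ as $x\mapsto ax+b$ and thus multiplies $x_1+x_2+x_3+x_4$ by $a$, so this condition does not depend on the coordinatisation. Transporting it by a projectivity moving a chosen point $H$ to $\infty$ gives the working criterion: $\mathcal{C}$ is an $\F_2$-linear set if and only if there is a point $H\in\mathcal{C}$ such that, for some (equivalently, any) $\rho\in\PGL(2,q)$ with $\rho(H)=\infty$, the images of the remaining four points of $\mathcal{C}$ sum to $0$ in $\F_q$.

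The computation then consists of the five cases $H\in\{\infty,0,1,\gamma,\beta+1\}$. For $H=\infty$ take $\rho=\mathrm{id}$ and obtain $0+1+\gamma+(\beta+1)=0$, i.e. $\beta=\gamma$. For a finite head $H=c$ take $\rho\colon x\mapsto(x+c)^{-1}$, write out the linear equation satisfied by the four images, and solve for $\beta$; this yields $\beta=\frac{1}{\gamma+1}$ for $H=0$, $\beta=1+\frac{1}{\gamma}$ for $H=1$, $\beta=1+\gamma^2$ for $H=\gamma$, and $\beta^2=\gamma+1$ (that is, $\beta=\sqrt{\gamma+1}$) for $H=\beta+1$. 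All the inversions occurring here are legitimate exactly because $\gamma\notin\{0,1\}$ and $\beta+1\notin\{0,1,\gamma\}$. This produces precisely the set $\left\{\gamma,\frac{1}{\gamma+1},\sqrt{\gamma+1},1+\frac{1}{\gamma},1+\gamma^2\right\}$ and establishes the equivalence.

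For the final assertion I would use a direct calculation: equating any two of the five expressions for $\beta$ and using $\gamma\notin\{0,1\}$ forces $\gamma^2+\gamma+1=0$ in every case, i.e. $\gamma\in\F_4\setminus\F_2$; then all five expressions equal $\gamma$, and $\mathcal{C}=\{\infty,0,1,\gamma,\gamma+1\}=\F_4\cup\{\infty\}=\PG(1,4)$ is an $\F_4$-subline. Conceptually this is forced anyway: two coinciding values mean $\mathcal{C}$ is a linear set with two distinct heads, and — as in the discussion preceding the lemma — a size-$5$ linear set with a non-unique head is exactly an $\F_4$-subline, all five of whose points are heads. The only genuinely delicate point I anticipate is setting up the normal form of a rank-$3$ linear set with a fixed head cleanly enough that the sum-to-zero reformulation and its coordinate-independence are transparent; once that is in place, the five cases and the coincidence analysis are routine field arithmetic in $\F_q$.
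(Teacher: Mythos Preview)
Your proposal is correct and follows essentially the same approach as the paper: both arguments run through the five possible heads, move the candidate head to a standard position by a projectivity, and read off a single $\F_2$-linear relation among the remaining four points, obtaining exactly the five displayed values of~$\beta$; the coincidence clause is handled in both by the ``two heads $\Rightarrow$ all heads $\Rightarrow$ $\F_4$-subline'' observation (you also supply the direct verification that any coincidence forces $\gamma^2+\gamma+1=0$). Your write-up is a bit more streamlined in that you isolate the sum-to-zero criterion once and reuse the single projectivity $x\mapsto(x+c)^{-1}$, whereas the paper picks ad hoc $2\times2$ matrices case by case, but the mathematical content is the same.
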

\begin{proof}
	First suppose that $(1,0)$ is the head of the $\F_2$-linear set. Then $(0,1), (1,1),(\gamma,1), (\beta+1,1)$ are linearly dependent over $\F_2$ if and only if $\beta=\gamma$. If the head is $(0,1)$, then $\{(1,0),(1,1),(\gamma,1),(\beta+1,1)\}=\left\{(1,0),(1,1),(1,\frac{1}{\gamma}),(1,\frac{1}{\beta+1})\right\}$ is an $\F_2$-linear set if an only if $1+\frac{1}{\gamma}+\frac{1}{\beta+1}=0$, equivalently $\beta=\frac{1}{\gamma+1}$.
	If the head is $(1,1)$, we use the collineation induced by the matrix $\left(\begin{smallmatrix}1&0\\1&1\end{smallmatrix}\right)$ to map the head $(1,1)$ onto $(1,0)$, and the other points to the points with coordinates $(0,1),(1,1),(\gamma,\gamma+1)$ and $(\beta+1,\beta)$. It follows that $\mathcal{C}$ is an $\F_2$-linear set with head $(1,0)$ if and only if $1+\frac{\gamma}{\gamma+1}+\frac{\beta+1}{\beta}=0$, equivalently $\beta=1+\frac{1}{\gamma}$. If the head is $(\gamma,1)$, then similarly, we use the matrix $\left(\begin{smallmatrix}1&\gamma\\0&1\end{smallmatrix}\right)$ to map the head to $(0,1)$ and the other points to $(1,0)$, $(\gamma,1)$, $(1+\gamma,1)$ and $ (\beta+\gamma+1,1)$, which forms an $\F_2$-linear set if and only if $\gamma^2=\beta+1$. Finally, if the head is $(\beta+1,1)$, we get that $\beta^2=\gamma+1$ in order for $\mathcal{C}$ to define an $\F_2$-linear set.
	
	If two values of $\{\gamma, \frac{1}{\gamma+1},\sqrt{\gamma+1},1+\frac{1}{\gamma},1+\gamma^2\}$ coincide, then we know from the previous reasoning that the linear set $\mathcal{C}$ has more than one head, and hence, that all points can play the role of the head, which implies that all values have to coincide (this can be deduced by direct calculations as well). We find that $\gamma^3=1$ and $\beta=\gamma$. This implies that $\mathcal{C}=\{(1,x)\mid x\in \mathbb{F}_4\}\cup (0,1)$, and hence, $\mathcal{C}$ defines an $\mathbb{F}_4$-subline.
\end{proof}

\begin{lemma}\label{iffTranslation} The $q/4$-secants to the KM-arc $\mathcal{A}_{\alpha,\beta, a, b}$ in $\PG(2,q)$, $q=2^h$, define an $\F_2$-linear pencil if and only if the KM-arc is a translation arc.
\end{lemma}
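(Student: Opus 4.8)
\emph{Plan.} The idea is to rewrite both sides of the claimed equivalence as explicit conditions on $\alpha$ and $\beta$ using the tools already set up, and then to match these conditions by a short computation. First I would pin down the five $q/4$-secants of $\A_{\alpha,\beta,a,b}$. From Theorem \ref{ourconstruction} the affine pieces $\S_0,\S_1,\S_2,\S_3,\S_4$ lie on the lines $X=0$, $Y=0$, $Y=X$, $Y=\gamma X$ and $Y=(\beta+1)X$ respectively, all of which pass through the $q/4$-nucleus $(0,0,1)$; since $\alpha,\beta\notin\{0,1\}$ and $\alpha\beta\neq 1$ one checks these five lines are pairwise distinct, and as a KM-arc of type $q/4$ has exactly five $q/4$-secants (which are concurrent), these are precisely them (in particular they do not depend on $a,b$). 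Dualising the pencil of these five lines through $(0,0,1)$ produces the five collinear points $(1,0),(0,1),(1,1),(\gamma,1),(\beta+1,1)$, i.e. exactly the set $\mathcal{C}$ of Lemma \ref{vwclub}. Hence the $q/4$-secants form an $\F_2$-linear pencil if and only if $\mathcal{C}$ is an $\F_2$-linear set, which by Lemma \ref{vwclub} happens exactly when $\beta\in\{\gamma,\tfrac{1}{\gamma+1},\sqrt{\gamma+1},1+\tfrac{1}{\gamma},1+\gamma^2\}$.

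Next I would invoke the translation criterion. Since $\A_{\alpha,\beta,a,b}$ is $\PGL$-equivalent to $\A_{\alpha,\beta,0,0}$ (remark after Theorem \ref{ourconstruction}) and being a translation KM-arc is $\PGL$-invariant, Theorem \ref{transliff} gives that $\A_{\alpha,\beta,a,b}$ is a translation arc if and only if $\alpha\in\{\tfrac{1}{\beta^{2}},1+\tfrac{1}{\beta},\beta,\tfrac{1}{\sqrt{\beta}},\tfrac{1}{\beta+1}\}$. So the lemma reduces to the purely computational statement that, under the relation $\gamma=\frac{\beta+1}{\alpha\beta+1}$ (equivalently $\alpha=\frac{\beta+1+\gamma}{\beta\gamma}$), membership of $\beta$ in the five-element set of Lemma \ref{vwclub} is equivalent to membership of $\alpha$ in the five-element set of Theorem \ref{transliff}.

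To finish I would note that for fixed $\beta\notin\{0,1\}$ the relation $\gamma=\frac{\beta+1}{\alpha\beta+1}$ is a bijection between admissible values of $\alpha$ and of $\gamma$, so it suffices to rewrite each of the five conditions on $\beta$ as a condition $\gamma=f(\beta)$ and to verify that the corresponding $\alpha$ is exactly one of the five listed values: a direct calculation gives, for example, $\beta=\gamma\Leftrightarrow\alpha=\tfrac1{\beta^{2}}$, $\beta=1+\tfrac1\gamma\Leftrightarrow\alpha=\beta$, $\beta=\sqrt{\gamma+1}\Leftrightarrow\alpha=\tfrac1{\beta+1}$, and the remaining two pair up likewise, using that squaring is additive in characteristic $2$ (e.g. $\sqrt{\beta}+1=\sqrt{\beta+1}$). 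The only point needing care is keeping track of which value of $\beta$ corresponds to which value of $\alpha$ and handling these square-root identities correctly; there is no genuine obstacle. Finally, the degenerate sub-case of Lemma \ref{vwclub} where all five values of $\beta$ coincide and $\mathcal{C}$ is an $\F_4$-subline is automatically included, since it corresponds to $\beta=\gamma$ with $\gamma^{3}=1$, which is among the listed possibilities.
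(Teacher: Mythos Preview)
Your proposal is correct and follows essentially the same route as the paper: identify the dual of the pencil of $q/4$-secants with the set $\mathcal{C}$ of Lemma~\ref{vwclub}, translate the $\F_2$-linearity condition into $\beta\in\{\gamma,\tfrac{1}{\gamma+1},\sqrt{\gamma+1},1+\tfrac{1}{\gamma},1+\gamma^2\}$, and then verify via the relation $\gamma=\frac{\beta+1}{\alpha\beta+1}$ that this is equivalent to the translation criterion of Theorem~\ref{transliff}. Your write-up is a bit more explicit than the paper's (you spell out the identification of the secants, the dualisation, and the $\PGL$-invariance reduction to $a=b=0$), but the argument is the same.
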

\begin{proof}  The $q/4$-secants to a KM-arc of the form $\mathcal{A}_{\alpha,\beta, a, b}$ define the set of points $\mathcal{C}=\{(1,0)$, $(0,1)$, $(1,1)$, $(\gamma,1)$, $(\beta+1,1)\}$ in $\PG(1,q)$. From Lemma \ref{vwclub}, we get that $\mathcal{C}$ is an $\F_2$-linear set if and only if $\beta\in \left\{\gamma, \frac{1}{\gamma+1},\sqrt{\gamma+1},1+\frac{1}{\gamma},1+\gamma^2\right\}$. Plugging in $\gamma=\frac{\beta+1}{\alpha\beta+1}$, yields that this condition is equivalent to $$\beta\in\left\{\frac{\beta+1}{\alpha\beta+1},\frac{\alpha\beta+1}{\alpha\beta+\beta}, \sqrt{\frac{\beta+\alpha\beta}{\alpha\beta+1}},\frac{\alpha\beta+\beta}{\beta+1},\frac{\beta^2+\alpha^{2}\beta^{2}}{\alpha^{2}\beta^{2}+1}\right\}.$$ This in turn is equivalent to
$\alpha\in\{\frac{1}{\beta^{2}},1+\frac{1}{\beta},\frac{1}{\beta+1},\beta,\frac{1}{\sqrt{\beta}}\}$, and hence, we conclude by Theorem \ref{transliff}, that the $q/4$-secants to the KM-arc $\mathcal{A}_{\alpha,\beta, a, b}$ define an $\F_2$-linear pencil if and only if the KM-arc is a translation arc.
\end{proof}

\begin{lemma} \label{vormq/4} Every elation KM-arc of type $q/4$ is $\PGL$-equivalent to a KM-arc whose elation line is given by $X=0$ and whose affine points are given by $\{(1,0,s)\mid s\in S\}\cup\{(1,1,1+s)\mid s\in S\}\cup\{(1,\alpha,\alpha'+s)\mid s\in S\}\cup\{(1,\beta,\beta'+s)\mid s\in S\}$ with $S$ an additive subgroup of size $q/4$ of $\F_q$, and with $\alpha,\alpha',\beta,\beta'\in\F_{q}$.
\end{lemma}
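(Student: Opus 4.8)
The idea is to normalise $\mathcal{A}$ in two stages: first fix the elation line and the $t$-nucleus and read off the algebraic description of Lemma~\ref{Sinvoeren}, and then use the residual collineation group of that configuration to normalise two of the $q/4$-secants and to place one of the cosets at the subgroup itself.

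First I would use Theorem~\ref{elationlinesecant} (the elation line $m$ of $\mathcal{A}$ is a $q/4$-secant) together with the fact that the $t$-nucleus $N$ lies on $m$; since $(N,m)$ is an incident point--line pair and $\PGL(3,q)$ is transitive on such pairs, a collineation takes $m$ to $\ell_\infty:X=0$ and $N$ to $(0,0,1)$. (If $q=8$, so $q/4=2$, one takes $N$ to be the centre of the given elation and uses Lemma~\ref{Sinvoeren2} below in place of Lemma~\ref{Sinvoeren}; the argument is identical.) By Lemma~\ref{Sinvoeren} there is an additive subgroup $S_0\le\F_q$ of order $q/4$ such that $\{z\mid(1,\mu,z)\in\mathcal{A}\}$ is empty or a coset of $S_0$ for every $\mu\in\F_q$. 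As $\mathcal{A}$ has $q+q/4$ points, $q/4$ of which lie on $\ell_\infty$, its affine part has $q$ points; hence exactly four (pairwise distinct) values $\mu_1,\dots,\mu_4\in\F_q$ occur, and the affine part of $\mathcal{A}$ equals $\bigcup_{i=1}^4\{(1,\mu_i,\nu_i+s)\mid s\in S_0\}$ for suitable $\nu_i\in\F_q$.

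Next I would identify the stabiliser in $\PGL(3,q)$ of the flag $(\ell_\infty,N)$: a direct check with the defining matrix shows it consists of the collineations induced by $\left(\begin{smallmatrix}1&0&0\\c&d&0\\e&f&g\end{smallmatrix}\right)$ with $c,e,f\in\F_q$, $d,g\in\F_q^\ast$, acting on affine points by $(1,y,z)\mapsto(1,\,c+dy,\,e+fy+gz)$; in particular no field automorphism is required, so everything stays inside $\PGL$. Applying such a map to the affine part of $\mathcal{A}$ produces $\bigcup_{i}\{(1,\,c+d\mu_i,\,(e+f\mu_i+g\nu_i)+s')\mid s'\in gS_0\}$, which is again of the coset shape of Lemma~\ref{Sinvoeren}, now with subgroup $gS_0$ (still of order $q/4$); it remains to choose the parameters.

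Finally I would carry out the normalisation in two independent sub-steps. Since $\mu_1\neq\mu_2$, solve $c+d\mu_1=0$, $c+d\mu_2=1$ for $c$ and $d\neq0$; taking $e=f=0$, $g=1$ in this step leaves the $z$-coordinate untouched, so the four secants become $Y=0$, $Y=X$, $Y=\alpha X$, $Y=\beta X$ (with $\alpha,\beta$ the images of $\mu_3,\mu_4$) while the cosets $\nu_i+S_0$ persist. Then, with $c=0$, $d=g=1$, the residual map restricts on $Y=0$ to $z\mapsto e+z$ and on $Y=X$ to $z\mapsto e+f+z$; the choice $e=\nu_1$, $f=\nu_1+\nu_2+1$ replaces the coset on $Y=0$ by $S_0$ and the coset on $Y=X$ by $1+S_0$, while the last two secants pick up cosets $\alpha'+S_0$, $\beta'+S_0$ with $\alpha',\beta'\in\F_q$. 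Setting $S:=S_0$, this is precisely the claimed form. I expect the only subtle point to be the explicit identification of the stabiliser group and the check that its $(c,d)$- and $(e,f,g)$-parameters can be prescribed independently so that the two sub-steps do not interfere; this is routine, and there is no real obstacle.
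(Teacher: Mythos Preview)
Your proof is correct and is essentially the same argument as the paper's, only unpacked into explicit matrix computations. The paper compresses your two normalisation stages into a single appeal to the transitivity of $\PGL(3,q)$ on frames: one picks the frame consisting of the $q/4$-nucleus, a further point on the elation line, and two affine points of $\mathcal{A}$ on distinct $q/4$-secants, sends it to $(0,0,1)$, $(0,1,0)$, $(1,0,0)$, $(1,1,1)$, and then invokes Lemma~\ref{Sinvoeren}; the presence of $(1,0,0)$ and $(1,1,1)$ in $\mathcal{A}$ immediately forces the two corresponding cosets of $S$ to be $S$ and $1+S$, which is exactly what your second sub-step achieves by hand.
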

\begin{proof} As $\PGL(3,q)$ acts transitively on the frames (4 points in standard position), we may take the $q/4$-nucleus to be $(0,0,1)$, the elation line to be $X=0$, and the points $(1,0,0)$ and $(1,1,1)$ to be contained in the KM-arc. The statement now follows from Lemma \ref{Sinvoeren}.
\end{proof}

Recall that for additive subgroups $G_1$ and $G_2$ of $\F_{q}$ the additive subgroup generated by subgroups $G_1$ and $G_2$ is denoted by $\langle G_1,G_2\rangle$. Note that an additive subgroup of $\F_{q}$ corresponds to a vector subspace of the $h$-dimensional vector space $\F_2^h$. It is well-known (see e.g \cite[2.24]{lidl}) that the hyperplanes of this $h$-dimensional vector space are in one-to-one correspondence with the sets $\{x\in \F_{q}\mid\Tr(\alpha x)=0\}$ where $\alpha\in \F_{q}^*$. Vector subspaces of codimension two can be written as the intersection of two different hyperplanes, which gives us the following lemma.

\begin{lemma}\label{viatrace} 
If $S$ is an additive subgroup of order $q/4$ of $\F_{q}$, $q=2^h$, then $S=\{ x\in \F_{q}\mid \Tr(\mu_1 x)=\Tr(\mu_2 x)=0\}$ for some $\mu_1\neq \mu_2\in \F_{q}^*$.
\end{lemma}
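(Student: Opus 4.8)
The plan is to reduce the statement to a standard fact about finite-dimensional vector spaces over $\F_2$. Identify $\F_q = \F_{2^h}$ with the $h$-dimensional $\F_2$-vector space $V = \F_2^h$; then an additive subgroup $S$ of $\F_q$ is exactly an $\F_2$-subspace of $V$, and the condition $|S| = q/4 = 2^{h-2}$ says precisely that $S$ has codimension $2$ in $V$. So the content of the lemma is: every codimension-two subspace of $V$ is the intersection of two distinct hyperplanes, combined with the identification of hyperplanes of $V$ with kernels of trace forms $x \mapsto \Tr(\mu x)$.

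The key steps, in order, would be the following. First, recall the duality between hyperplanes of $V = \F_2^h$ and nonzero linear functionals on $V$: every hyperplane is the kernel of a nonzero functional, and two functionals have the same kernel iff they are scalar multiples of one another, which over $\F_2$ means they are equal. Second, invoke the fact already cited in the excerpt (\cite[2.24]{lidl}) that the nonzero $\F_2$-linear functionals on $\F_q$ are exactly the maps $x \mapsto \Tr(\mu x)$ for $\mu \in \F_q^*$, and that distinct $\mu$ give distinct functionals; hence the hyperplanes of $V$ are precisely the sets $\{x \in \F_q \mid \Tr(\mu x) = 0\}$, $\mu \in \F_q^*$, and this parametrisation is a bijection. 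Third, take $S$ of codimension $2$: pick any hyperplane $H_1 \supseteq S$ (extend a basis of $S$ to a basis of $V$ and drop one vector), write $H_1 = \{x \mid \Tr(\mu_1 x) = 0\}$; since $S$ has codimension $1$ inside $H_1$, pick a second hyperplane $H_2 \supseteq S$ with $H_2 \neq H_1$, write $H_2 = \{x \mid \Tr(\mu_2 x) = 0\}$. Then $S \subseteq H_1 \cap H_2$, and since $H_1 \neq H_2$ the intersection $H_1 \cap H_2$ has codimension $2$, so by a dimension count $S = H_1 \cap H_2 = \{x \in \F_q \mid \Tr(\mu_1 x) = \Tr(\mu_2 x) = 0\}$; distinctness of $H_1$ and $H_2$ forces $\mu_1 \neq \mu_2$.

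Honestly, there is no real obstacle here: the only thing to be slightly careful about is the claim that a codimension-two subspace is contained in at least two distinct hyperplanes (equivalently, that the annihilator of $S$ in the dual space is $2$-dimensional and hence contains more than one line — which over $\F_2$ means more than one nonzero vector, in fact exactly three). One should also make sure the bijection $\mu \leftrightarrow$ functional is genuinely a bijection and not merely a surjection, so that "$\mu_1 \neq \mu_2$" really is equivalent to "$H_1 \neq H_2$"; this is again part of the content of \cite[2.24]{lidl} (the nondegeneracy of the trace bilinear form $(\mu,x)\mapsto\Tr(\mu x)$ on $\F_q$). With these remarks the proof is a couple of lines, exactly as sketched in the paragraph immediately preceding the lemma statement in the excerpt.
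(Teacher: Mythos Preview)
Your proposal is correct and follows exactly the approach indicated in the paper: the paragraph immediately preceding the lemma already says that hyperplanes of $\F_2^h$ are in bijection with the sets $\{x\in\F_q\mid\Tr(\alpha x)=0\}$, $\alpha\in\F_q^*$, and that a codimension-two subspace is the intersection of two distinct hyperplanes, which is precisely what you spell out. The paper offers no further proof beyond that sentence, so your write-up is simply a more detailed version of the same argument.
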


\begin{lemma}\label{S=alfaS} Let $S$ be an additive subgroup of order $q/4$ of $\F_{q}$, $q=2^h$. If $S=\alpha S$, for some $\alpha \in \F^{*}_{q}\setminus\{1\}$, then $\alpha \in \F_{4}$ and hence $h$ is even and $S=\{x\in\F_q\mid \Tr(\mu x)=\Tr(\alpha\mu x)=0\}$ for some $\mu\in\F_{q}$. Moreover, in this case for every $\beta \in \F_{q}\setminus\F_{4}$, we have $\langle S,\beta S\rangle=\F_{q}$ and for every $\beta \in\F^{*}_{4}$, we have $S=\beta S$.
\end{lemma}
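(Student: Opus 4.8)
The plan is to exploit that the hypothesis $S=\alpha S$ makes $S$ a module over the subring $\F_2[\alpha]$ of $\F_q$. Being a finite integral domain, $\F_2[\alpha]$ is a subfield of $\F_q$, say $\F_{2^d}$ with $d\mid h$, and $S$ is an $\F_{2^d}$-vector space (it is closed under addition and under multiplication by $\alpha$, hence by all of $\F_2[\alpha]$), so $2^d$ divides $|S|=2^{h-2}$, i.e.\ $d\mid h-2$. Since $h-(h-2)=2$ this forces $d\mid 2$. If $d=1$ then $\alpha\in\F_2=\{0,1\}$, which is excluded; hence $d=2$, so $\alpha\in\F_4\setminus\F_2\subseteq\F_q$ and, as $2=d\mid h$, $h$ is even. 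In particular $S$ is an $\F_4$-subspace of $\F_q$.

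For the trace description I would pass to the trace-dual $S^\perp=\{y\in\F_q\mid\Tr(xy)=0\text{ for all }x\in S\}$, a $2$-dimensional $\F_2$-subspace with $(S^\perp)^\perp=S$ by nondegeneracy of the trace form. Since $\Tr((\alpha y)x)=\Tr(y(\alpha x))$ and $\alpha S=S$, the subspace $S^\perp$ is again closed under multiplication by $\alpha$, hence is a $1$-dimensional $\F_4$-subspace of $\F_q$. Picking any nonzero $\mu\in S^\perp$ gives $S^\perp=\F_4\mu=\langle\mu,\alpha\mu\rangle$, and dualising back yields $S=\{x\in\F_q\mid\Tr(\mu x)=\Tr(\alpha\mu x)=0\}$, as required. (Equivalently, one may take the pair $\mu_1\ne\mu_2$ furnished by Lemma \ref{viatrace}, note $\alpha\mu_1\in S^\perp=\langle\mu_1,\mu_2\rangle$, and rescale to reach a spanning pair of the form $\{\mu,\alpha\mu\}$.)

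For the \emph{moreover} part: if $\beta\in\F_4^*$ then $\beta S=S$ simply because $S$ is an $\F_4$-subspace. If $\beta\notin\F_4$, I would argue by duality once more: $(\beta S)^\perp=\beta^{-1}S^\perp=\F_4\,\beta^{-1}\mu$, so $\langle S,\beta S\rangle^\perp=S^\perp\cap(\beta S)^\perp=\F_4\mu\cap\F_4\beta^{-1}\mu$. These are two $1$-dimensional $\F_4$-subspaces of $\F_q$ (recall $h$ is even, so $\F_4\subseteq\F_q$); they coincide only if $\beta^{-1}\mu$ is an $\F_4$-multiple of $\mu$, i.e.\ $\beta^{-1}\in\F_4$, contradicting $\beta\notin\F_4$. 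Hence their intersection is $\{0\}$ and $\langle S,\beta S\rangle=\F_q$.

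Everything here is elementary linear algebra over $\F_2$ and $\F_4$; the one step that requires genuine thought is the opening reduction — recognising $\F_2[\alpha]$ as a subfield and playing the divisibility constraints $d\mid h$ and $d\mid h-2$ against each other to pin down $d=2$ — together with the observation that $S^\perp$ inherits multiplication by $\alpha$ from $S$. Beyond carefully tracking $\F_2$- versus $\F_4$-dimensions, I do not anticipate a serious obstacle.
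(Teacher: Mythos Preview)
Your proof is correct and takes a genuinely different, more structural route than the paper's. The paper writes $S=\{x\mid\Tr(\mu_1 x)=\Tr(\mu_2 x)=0\}$ from the outset (via Lemma~\ref{viatrace}) and then argues by an explicit case analysis: the hypothesis $S=\alpha S$ forces each of $\mu_1,\mu_2$ to lie in $\{\mu_1/\alpha,\mu_2/\alpha,(\mu_1+\mu_2)/\alpha\}$, and running through the possibilities yields the equation $\alpha^2=\alpha+1$. The \emph{moreover} clause is handled similarly, by examining linear dependence among $\{\mu_1,\mu_2,\mu_1/\beta,\mu_2/\beta\}$ over $\F_2$. Your argument instead identifies $\F_2[\alpha]$ as a subfield $\F_{2^d}$, turns $S$ into an $\F_{2^d}$-vector space, and extracts $d=2$ from the pair of divisibility constraints $d\mid h$ and $d\mid h-2$; the trace description and the \emph{moreover} clause then fall out cleanly from the observation that $S^\perp$ is a one-dimensional $\F_4$-subspace. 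Your approach is more conceptual and would generalise immediately to subgroups of other small index, whereas the paper's computation is self-contained and avoids invoking the module viewpoint or nondegeneracy of the trace form.
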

\begin{proof} By Lemma \ref{viatrace}, we have that $S=\{ x\in \F_q\mid \Tr(\mu_1 x)=\Tr(\mu_2 x)=0\}$ for some $\mu_{1},\mu_{2}\in\F^{*}_{q}$, $\mu_{1}\neq\mu_{2}$. Clearly $\alpha S=\{x\in \F_q\mid\Tr((\mu_1/\alpha) x)=\Tr((\mu_2/\alpha) x)=0\}$. Suppose that $S=\alpha S$, for some $\alpha\neq 1$, then both $\mu_1$ and $\mu_2$ have to be contained in the set $\{\mu_1/\alpha,\mu_2/\alpha,(\mu_1+\mu_2)/\alpha\}$. If $\mu_1=\mu_1/\alpha$, then $\alpha=1$, a contradiction. 
\par Note that $\mu_{2}$ and $\mu_1+\mu_2$ can be interchanged, hence without loss of generality we may assume $\mu_1=\mu_2/\alpha$. Then, either $\mu_2=\mu_1/\alpha$ or $\mu_2=(\mu_1+\mu_2)/\alpha$ since $\mu_2=\mu_2/\alpha$ implies that $\alpha=1$. In the former case, we have that $\alpha^2=1$ and hence, $\alpha=1$, a contradiction. In the latter case, we have that $\alpha^2=\alpha+1$ and hence, $\alpha\in \F_{4}$ and $h$ is even. Also $S$ is given by $\{ x\in \F_q\mid \Tr(\mu_2 x)=\Tr(\alpha\mu_2 x)=0\}$.
\par Consider $\beta\in\F^{*}_{q}$. The subgroup $\beta S$ equals $\{x\in \F_q \mid \Tr((\mu_1/\beta) x)=\Tr((\mu_2/\beta) x)=0\}$. Now suppose that $\langle S,\beta S\rangle$ is a subgroup of order $q/2$ (or equivalently, defines a hyperplane of $\F_q$), then we have that the elements in the set $V=\{\mu_1/\beta,\mu_2/\beta, \mu_1,\mu_2\}$ are linearly dependent over $\F_2$. Since $\mu_1=\mu_2/\alpha$, it follows that the elements in $V=\{\mu_1/\beta,\alpha \mu_1/\beta,\mu_1,\alpha \mu_1\}$ are linearly dependent, and hence, since $\mu_1\neq 0$ and $\alpha^{-1}=\alpha^{2}=\alpha+1$, we know that $1/\beta$ is an $\F_2$-linear combination of $\alpha$ and $1$. It follows that $\beta \in \F_4^*$. If $\beta \in \F_{q}\setminus\F_{4}$, we conclude that $\langle S,\beta S\rangle$ cannot be a subgroup of order $q/2$, but by the first part of the proof it cannot have order $q/4$ either. Hence, $\langle S,\beta S\rangle$ equals $\F_{q}$. If $\beta=\alpha+1$, then $S=\alpha S$ implies that also $S=(\alpha+1) S=\beta S$. Hence, for every $\beta \in\F^{*}_{4}=\{1,\alpha,\alpha+1\}$, we have $S=\beta S$.
\end{proof}

\begin{lemma}\label{OKofF8}
	Let $S$ be an additive subgroup of order $q/4$  in $\F_{q}$, $q=2^h$, and let $\alpha,\beta\in\F_{q}$. If $\left\langle S,\alpha S\right\rangle$, $\left\langle S,\beta S\right\rangle$, $\left\langle \alpha S,\beta S\right\rangle$ and $\left\langle (\alpha+1)S,(\beta+1) S\right\rangle$ are subgroups of order $q/2$, then either $\beta=\alpha+1$ and $S=\{ x\in \F_{q} \mid \Tr(\gamma x)=\Tr(\alpha \gamma x)=0\}$ for some $\gamma\in\F_{q}$, or $3\mid h$, $\alpha,\beta \in \F_8\subseteq\F_{q}$ and there is an additive subgroup $S'$ of order $q/8$ in $\F_{q}$ such that $S'=S\cap\alpha S\cap\beta S\cap (\alpha +1)S\cap (\beta +1)S$. Moreover, if in this case $\left\langle S,\alpha S\right\rangle=\{ x\in \F_{q} \mid \Tr(\gamma x)=0\}$ for some $\gamma\in\F_{q}$, then $S'=\{x\in\F_{q}\mid\forall y\in\F_{8}:\Tr(xy\gamma)=0\}$.
\end{lemma}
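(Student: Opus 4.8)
The plan is to pass to the dual description afforded by the trace form. For an $\F_2$-subspace $W$ of $\F_q$ write $W^{\perp}=\{x\in\F_q:\Tr(wx)=0\text{ for all }w\in W\}$; since $\Tr$ is non-degenerate, $\perp$ is a polarity of $\F_q$ regarded as an $h$-dimensional $\F_2$-space, so $\dim W+\dim W^{\perp}=h$, $W^{\perp\perp}=W$, $(W_1+W_2)^{\perp}=W_1^{\perp}\cap W_2^{\perp}$, $(W_1\cap W_2)^{\perp}=W_1^{\perp}+W_2^{\perp}$ and $(\lambda W)^{\perp}=\lambda^{-1}W^{\perp}$ for $\lambda\ne0$. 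By Lemma \ref{viatrace} we may write $S=D^{\perp}$ with $D$ a $2$-dimensional $\F_2$-subspace of $\F_q$. Then $\alpha S=(\alpha^{-1}D)^{\perp}$, so $\langle S,\alpha S\rangle=(D\cap\alpha^{-1}D)^{\perp}$, $\langle \alpha S,\beta S\rangle=(\alpha^{-1}D\cap\beta^{-1}D)^{\perp}$, and so on; multiplying intersections by suitable non-zero scalars, the four hypotheses say exactly that $\dim(D\cap\lambda D)=1$ for $\lambda$ equal in turn to $\alpha^{-1}$, $\beta^{-1}$, $\alpha/\beta$ and $(\alpha+1)/(\beta+1)$. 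In particular $\alpha,\beta\notin\{0,1\}$ (else one of the four subgroups would have order $q/4$), hence $\alpha+1,\beta+1\ne0$, and $\alpha\ne\beta$.

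Next I would record two elementary facts about a $2$-dimensional $D$. As $D$ has only three non-zero vectors and any two distinct non-zero vectors of a $2$-dimensional $\F_2$-space form a basis, the multiplicative stabiliser $\{\lambda\in\F_q^{*}:\lambda D=D\}$ acts semiregularly on those three vectors and so has order $1$ or $3$; it has order $3$ exactly when $D$ is closed under multiplication by $\F_4$ (so $\F_4\subseteq\F_q$ and $D=\F_4\delta$ for some $\delta\ne0$), i.e.\ when $D$ is an $\F_4$-line. If $D$ were an $\F_4$-line, then $D$ and $\alpha^{-1}D$ would be $\F_4$-lines meeting in $\F_2$-dimension $0$ or $2$, never $1$, contradicting the first hypothesis. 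Hence $D$ is not an $\F_4$-line, its multiplicative stabiliser is trivial, and for every $\lambda\in\F_q^{*}$ one has $\dim(D\cap\lambda D)=1$ if and only if $\lambda\in R(D)\setminus\{1\}$, where $R(D):=\{v/u:u,v\in D\setminus\{0\}\}\subseteq\F_q^{*}$ is closed under inversion.

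The core of the argument is then short. Let $\gamma$ span the line $D\cap\alpha^{-1}D$ given by the first hypothesis. Then $\gamma$ and $\alpha\gamma$ lie in $D$ and are distinct and non-zero (since $\alpha\notin\{0,1\}$), hence form a basis, so $D=\langle\gamma,\alpha\gamma\rangle=\gamma\langle1,\alpha\rangle$; therefore $S=\{x:\Tr(\gamma x)=\Tr(\alpha\gamma x)=0\}$, which is the displayed form in the first alternative, and $\langle S,\alpha S\rangle=\langle\gamma\rangle^{\perp}=\{x:\Tr(\gamma x)=0\}$, so this $\gamma$ is exactly the one occurring in the ``moreover'' clause. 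Moreover $R(D)=R(\langle1,\alpha\rangle)=\{1\}\cup O(\alpha)$, where $O(\alpha)=\{\alpha,1+\alpha,\alpha^{-1},(1+\alpha)^{-1},\alpha/(1+\alpha),(1+\alpha)/\alpha\}$ is the cross-ratio orbit of $\alpha$; it has six distinct elements because $\alpha\notin\F_4$ (otherwise $D=\gamma\F_4$ would be an $\F_4$-line). The remaining three hypotheses now read $\beta\in O(\alpha)$, $\alpha/\beta\in O(\alpha)$, $(\alpha+1)/(\beta+1)\in O(\alpha)$, with $\beta\ne\alpha$. I would go through the five values $\beta\in O(\alpha)\setminus\{\alpha\}$. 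For $\beta=1+\alpha$ all three hold automatically, and we are in the first alternative. For $\beta\in\{\alpha^{-1},(1+\alpha)^{-1},(1+\alpha)/\alpha\}$ the element $\alpha/\beta$ is $\alpha^{2}$, $\alpha(1+\alpha)$ or $\alpha^{2}/(1+\alpha)$; imposing $\alpha/\beta\in O(\alpha)$ and clearing denominators gives, case by case, either $\alpha^{2}+\alpha+1=0$ (excluded since $\alpha\notin\F_4$) or one of the two irreducible cubics $\alpha^{3}+\alpha+1=0$, $\alpha^{3}+\alpha^{2}+1=0$ over $\F_2$. For $\beta=\alpha/(1+\alpha)$ one has $\alpha/\beta=1+\alpha\in O(\alpha)$ automatically, but $\beta+1=(1+\alpha)^{-1}$, so the last hypothesis forces $(1+\alpha)^{2}=(\alpha+1)/(\beta+1)\in O(\alpha)$, which by the same reduction again gives $\alpha^{2}+\alpha+1=0$ (excluded) or one of those two cubics. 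Thus, unless $\beta=1+\alpha$, $\alpha$ is a root of an $\F_2$-irreducible cubic, so $\alpha\in\F_8\setminus\F_2$, whence $3\mid h$ and $\beta\in O(\alpha)\subseteq\F_8\setminus\F_2$.

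It remains to finish the second alternative. With $\alpha,\beta\in\F_8\setminus\F_2$ and $D=\gamma\langle1,\alpha\rangle\subseteq\gamma\F_8$, each of $D,\alpha^{-1}D,\beta^{-1}D,(\alpha+1)^{-1}D,(\beta+1)^{-1}D$ lies in the $3$-dimensional space $\gamma\F_8$, and their sum properly contains $D$ (as $\alpha^{-1}D\ne D$, the stabiliser being trivial and $\alpha\ne1$), so the sum equals $\gamma\F_8$. Dualising, $S'=S\cap\alpha S\cap\beta S\cap(\alpha+1)S\cap(\beta+1)S=(\gamma\F_8)^{\perp}$, which has dimension $h-3$, i.e.\ order $q/8$; and since $\gamma\F_8=\{\gamma y:y\in\F_8\}$, this equals $\{x:\Tr(xy\gamma)=0\text{ for all }y\in\F_8\}$, matching the ``moreover'' clause. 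The step I expect to be the main obstacle is the bookkeeping in the third paragraph: one must enumerate the cross-ratio orbit correctly, track which algebraic coincidences are forbidden by ``$D$ is not an $\F_4$-line'', and check that every genuinely new coincidence is a root of one of the two $\F_2$-irreducible cubics --- so that $\F_8$, and nothing larger, is forced. By contrast, the reductions of the hypotheses to the dimension statements in the first paragraph (with the correct occurrences of $\alpha^{-1},\beta^{-1}$ and $(\alpha+1)/(\beta+1)$) are routine once the dual set-up is fixed.
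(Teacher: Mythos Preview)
Your proof is correct and follows essentially the same strategy as the paper: dualise via the trace form, so that the four hypotheses become intersection conditions on the $2$-dimensional space $D=S^{\perp}$ and its scalar translates, and then case-analyse to force either $\beta=\alpha+1$ or $\alpha$ to satisfy an irreducible cubic over $\F_2$. The only organisational difference is that the paper phrases the dual picture projectively (the lines $S^{D},(\alpha S)^{D},(\beta S)^{D}$ are either concurrent or span a plane, and then computes in coordinates), whereas you parametrise the cases by the cross-ratio orbit $O(\alpha)$ and run through the five possible values of $\beta$; both routes land on the same pair of cubics $\alpha^{3}+\alpha+1=0$ and $\alpha^{3}+\alpha^{2}+1=0$, and your identification of $S'=(\gamma\F_{8})^{\perp}$ matches the paper's ``moreover'' clause verbatim.
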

\begin{proof} First note that the conditions of the lemma imply that $\alpha\neq\beta$ and that $\alpha,\beta\notin\{0,1\}$. For convenience, we consider the subgroups as subspaces of the projective space $\mathrm{PG}(\F_2^h)=\PG(h-1,2)$, and dualise. The dualisation is induced by the bijection that maps the hyperplane $\{x\mid \Tr(kx)=0\}$ onto the vector line $k$ and vice versa. By $T^D$ we denote the dual of a subspace $T$. The subspaces $S^D$, $(\alpha S)^D$ and $(\beta S)^D$ are vector planes of $\F_2^h$ hence, we consider them as lines of $\PG(h-1,2)$. %

The condition that the subgroup generated by any two elements of $\{S,\alpha S, \beta S\}$ has order $q/2$ translates into the condition that the lines $S^D$, $(\alpha S)^D$ and $(\beta S)^D$ mutually intersect in a point. Note that $\langle S,\alpha S\rangle$ always contains $(\alpha +1)S$, which implies that the line $((\alpha+1)S)^D$ goes through $S^D\cap (\alpha S)^D$, and similarly, the line $((\beta+1)S)^D$ goes through $S^D\cap (\beta S)^D$. We denote the hyperplane $\langle S,\alpha S\rangle$ by $\{ x\in \F_{q} \mid \Tr(\gamma x)=0\}$. Then, $S=\{ x\in \F_{q} \mid \Tr(\gamma x)=\Tr(\alpha \gamma x)=0\}$ and $\alpha S=\{ x\in \F_{q} \mid \Tr(\gamma x)=\Tr((\gamma/\alpha) x)=0\}$. Note that $S\neq\alpha S$ also implies that $\alpha^{2}\neq\alpha+1$ (see Lemma \ref{S=alfaS}).

Recall that the lines $S^D$, $(\alpha S)^D$ and $(\beta S)^D$ mutually intersect in a point. The first possibility is that the lines $S^D=\left\langle\gamma,\alpha\gamma\right\rangle$, $(\alpha S)^D=\left\langle\gamma,\frac{\gamma}{\alpha}\right\rangle$ and $(\beta S)^D=\left\langle\frac{\gamma}{\beta},\frac{\alpha\gamma}{\beta}\right\rangle$ go through a common point. Then, we have that $1/\beta=1$, $\alpha/\beta=1$ or $(\alpha+1)/\beta=1$. The first and the second lead immediately to a contradiction, so $\beta=\alpha+1$.

The second possibility is that the lines $S^D$, $(\alpha S)^D$ and $(\beta S)^D$ are contained in a common plane but are not concurrent. The plane spanned by $S^{D}$ and $(\alpha S)^{D}$ is the plane $\left\langle\gamma,\alpha\gamma,\frac{\gamma}{\alpha}\right\rangle$. Since $(\beta S)^D$ lies in this plane, $\frac{\gamma}{\beta}$ and $\frac{\alpha\gamma}{\beta}$ are both a linear combination of $\gamma$, $\alpha\gamma$ and $\frac{\gamma}{\alpha}$. Hence, $\frac{1}{\beta}$ and $\frac{\alpha}{\beta}$ are both a linear combination of $\alpha$, $1$ and $\frac{1}{\alpha}$. So, $\frac{\alpha}{\beta}$ is both a linear combination of $\alpha$, $1$ and $\frac{1}{\alpha}$ and of $\alpha^{2}$, $\alpha$ and $1$. Then, either $\frac{\alpha}{\beta}=\alpha+1$ or else there exist $\lambda_{1},\lambda_{2}\in\F_{2}$ such that $\alpha^{2}+\lambda_{2}\alpha+\lambda_{1}+\frac{1}{\alpha}=0$, equivalently such that $\alpha^{3}+\lambda_{2}\alpha^{2}+\lambda_{1}\alpha+1=0$. In the latter case we must have that $\alpha^3+\alpha+1=0$ or $\alpha^3+\alpha^2+1=0$, since $(\alpha+1)^3\neq0$. Hence $\alpha \in \F_8$ and consequently also $\beta\in\F_{8}$ and $3\mid h$.
\par In the former case we have that $\beta=\frac{\alpha}{\alpha+1}$. Then, $((\beta+1)S)^{D}=\left\langle(\alpha+1)\gamma,\alpha(\alpha+1)\gamma\right\rangle$. We also have that $(\alpha+1)S=\left\langle\gamma, \frac{\gamma}{\alpha+1}\right\rangle$.  Since $((\alpha+1)S)^{D}$ and $((\beta+1)S)^{D}$ have a point in common the sets $\{\alpha+1,\alpha^{2}+\alpha,\alpha^{2}+1\}$ and $\left\{1,\frac{1}{\alpha+1},\frac{\alpha}{\alpha+1}\right\}$ have an element in common. If follows that either $\alpha^{2}=\alpha+1$, in which case $\beta=\alpha+1$, or else $\alpha^{3}+\alpha^{j}+1=0$ with $j\in\{1,2\}$, in which case $\alpha \in \F_8$ and hence also $\beta\in\F_{8}$ and $3\mid h$.
\par Note that in the cases with $\alpha,\beta\in\F_{8}$ we have that the lines $S^{D}$, $(\alpha S)^{D}$, $(\beta S)^{D}$, $((\alpha+1) S)^{D}$, $((\beta+1) S)^{D}$ are contained in the plane $\left\langle\gamma,\alpha\gamma,\frac{\gamma}{\alpha}\right\rangle=\left\langle\gamma,\alpha\gamma,\alpha^{2}\gamma\right\rangle$. Hence, the subgroup $\left\langle\gamma,\alpha\gamma,\alpha^{2}\gamma\right\rangle^{D}=S'=\{x\in\F_{q}\mid\forall y\in\F_{8}:\Tr(xy\gamma)=0\}$ equals the intersection $S\cap\alpha S\cap\beta S\cap (\alpha +1)S\cap (\beta +1)S$.
\end{proof}

\begin{lemma}\label{nosolutions} Let $\F_{q}$ be a field that admits $\F_{8}$ as a subfield. Let $\alpha,\beta\in \F_8\setminus\{0,1\}$ with $\beta \notin\{\alpha,\alpha+1\}$ and let $S$ be an additive subgroup of order $q/4$ of $\F_{q}$. Assume that $\langle S,\alpha S\rangle=\{x\in \F_q\mid \Tr(k_1 x)=0\}$, $\langle S,\beta S\rangle=\{x\in \F_q\mid \Tr(k_2 x)=0\}$, $\langle \alpha S,\beta S\rangle=\{x\in \F_q\mid \Tr(k_3 x)=0\}$ and $\langle (\alpha+1) S,(\beta+1) S\rangle=\{x\in \F_q\mid \Tr(k_4 x)=0\}$ for certain pairwise different $k_{1},k_{2},k_{3},k_{4}\in\F^{*}_{q}$. Then the system of equations
\begin{numcases}{}
\Tr(k_1(X+\alpha))=1\label{eq1}\\
\Tr(k_2(Y+\beta))=1\label{eq2}\\
\Tr(k_3(\alpha Y+\beta X))=1\label{eq3}\\
\Tr(k_4((\alpha+\beta)+(\beta+1)X+(\alpha+1)Y))=1\label{eq4}
\end{numcases}
has no solutions $(X,Y)\in \F_q^2$.
\end{lemma}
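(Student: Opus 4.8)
The plan is to descend the entire system to $\F_8$, where it becomes a finite problem.

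First, the hypotheses on $S$, $\alpha$ and $\beta$ are exactly those of Lemma~\ref{OKofF8}, and since $\beta\notin\{\alpha,\alpha+1\}$ we are in its second alternative: $3\mid h$, $\alpha,\beta\in\F_8$, and $S'=S\cap\alpha S\cap\beta S\cap(\alpha+1)S\cap(\beta+1)S$ is an additive subgroup of $\F_q$ of order $q/8$ with $S'=\{x\in\F_q\mid\Tr_{\F_q/\F_8}(k_1x)=0\}$ (taking $\gamma=k_1$ there, as $\langle S,\alpha S\rangle=\{x\mid\Tr(k_1x)=0\}$), so that $(S')^\perp=k_1\F_8$ for the trace form $(x,y)\mapsto\Tr(xy)$ on $\F_q$. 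Each of the four hyperplanes in the statement contains $S'$ (it contains one of $S,\alpha S,(\alpha+1)S$, each of which contains $S'$), so each $k_i$ lies in $(S')^\perp=k_1\F_8$; write $k_i=c_ik_1$ with $c_i\in\F_8^*$, $c_1=1$.

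Next I would rewrite each equation using $\Tr=\Tr_{\F_q/\F_2}=\Tr_{\F_8/\F_2}\circ\Tr_{\F_q/\F_8}$ together with $c_i,\alpha,\beta\in\F_8$. Setting $w=\Tr_{\F_q/\F_8}(k_1X)$, $u=\Tr_{\F_q/\F_8}(k_1Y)$ and $\kappa=\Tr_{\F_q/\F_8}(k_1)$, each equation becomes a trace equation in $(w,u)\in\F_8^2$, and the substitution $w\mapsto w+\alpha\kappa$, $u\mapsto u+\beta\kappa$ makes all $\kappa$-terms cancel; one is left with
\begin{align*}
\Tr_{\F_8/\F_2}(w)&=1, &\Tr_{\F_8/\F_2}(c_2u)&=1,\\
\Tr_{\F_8/\F_2}(c_3(\beta w+\alpha u))&=1, &\Tr_{\F_8/\F_2}(c_4((\beta+1)w+(\alpha+1)u))&=1.
\end{align*}
Since $X\mapsto\Tr_{\F_q/\F_8}(k_1X)$ is onto $\F_8$ and $X,Y$ vary independently, $(w,u)$ runs over all of $\F_8^2$, so the original system has a solution over $\F_q$ if and only if this one has a solution over $\F_8$.

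Regard these four equations as $\F_2$-affine conditions on $\F_8^2\cong\F_2^6$; since each right-hand side is $1$ and there are no constant terms, the system has no solution as soon as some $T\subseteq\{1,2,3,4\}$ of odd size satisfies $\sum_{i\in T}v_i=0$ in $\F_8^2$, where $v_1=(1,0)$, $v_2=(0,c_2)$, $v_3=c_3(\beta,\alpha)$, $v_4=c_4(\beta+1,\alpha+1)$ are the coefficient vectors (summing the corresponding equations then gives $0=1$); and conversely consistency forces that no such $T$ exists. As none of the $v_i$ is zero and no two of them are equal, an odd relation must involve exactly three of the $v_i$, so it suffices to show that for every admissible $(\alpha,\beta)$ one of $v_1+v_2+v_3,\ v_1+v_2+v_4,\ v_1+v_3+v_4,\ v_2+v_3+v_4$ vanishes (equivalently, $\dim_{\F_2}\langle v_1,v_2,v_3,v_4\rangle=3$ and $v_1+v_2+v_3+v_4\neq0$).

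For this last step one needs $c_2,c_3,c_4$. In the quotient $\F_q/S'\cong\F_8$ the groups $S,\alpha S,\beta S,(\alpha+1)S,(\beta+1)S$ become the rank-one subgroups spanned by $\lambda_\alpha,\alpha\lambda_\alpha,\beta\lambda_\alpha,(\alpha+1)\lambda_\alpha,(\beta+1)\lambda_\alpha$, where $\lambda_\alpha$ is the nonzero element of $\F_8$ orthogonal to both $1$ and $\alpha$; hence the four hyperplanes become the lines $\langle1,\alpha\rangle,\langle1,\beta\rangle,\langle\alpha,\beta\rangle,\langle\alpha+1,\beta+1\rangle$ of the Fano plane $\PG(2,2)$ (meeting pairwise in six distinct points, by the conditions on $\alpha,\beta$), and comparing with $\langle S,\alpha S\rangle=\{x\mid\Tr(k_1x)=0\}$ gives $c_2=\lambda_\beta/\lambda_\alpha$, $c_3=\lambda_{\langle\alpha,\beta\rangle}/\lambda_\alpha$, $c_4=\lambda_{\langle\alpha+1,\beta+1\rangle}/\lambda_\alpha$, with $\lambda_U$ the generator of $U^\perp$. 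Up to exchanging $\alpha$ and $\beta$ and applying the Frobenius $x\mapsto x^2$, both of which preserve the whole setup, there are only four admissible pairs $(\alpha,\beta)$ to consider, and for each a direct computation in $\F_8$ produces the required vanishing triple among $v_1,v_2,v_3,v_4$. This final finite verification is the only real work; it might conceivably be avoided by the observation that all four $v_i$ lie in the $4$-dimensional subspace $\ker\Tr_{\F_8/\F_2}\times\ker\Tr_{\F_8/\F_2}$ of $\F_8^2$ and are further degenerate, but checking the four cases seems the most economical route.
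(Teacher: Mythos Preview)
Your argument is correct and follows essentially the same route as the paper's proof: both determine the ratios $k_i/k_1\in\F_8$ from the Fano-plane structure of the lines $S^D,(\alpha S)^D,\ldots$ in the plane $(S')^D$, and then exhibit an $\F_2$-linear dependence among the four trace equations (with an odd number of terms) that forces $0=1$. The paper writes the generic solution of \eqref{eq1}--\eqref{eq2} as $(t/k_1+\alpha,\,t'/k_2+\beta)$, substitutes into \eqref{eq3}--\eqref{eq4}, and in each of the eight cases reads off a relation of the form $t+t''=t'''$; your descent via $\Tr=\Tr_{\F_8/\F_2}\circ\Tr_{\F_q/\F_8}$ is a tidy repackaging of the same computation, with the vanishing of a triple $v_i+v_j+v_k$ being exactly that relation. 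Your reduction to four cases via the $\alpha\leftrightarrow\beta$ swap and Frobenius is legitimate once one is working purely inside~$\F_8$. One small caution: the speculative remark at the end that all $v_i$ lie in $\ker\Tr_{\F_8/\F_2}\times\ker\Tr_{\F_8/\F_2}$ is false (already $v_1=(1,0)$ fails, since $\Tr_{\F_8/\F_2}(1)=1$), so that shortcut does not work and the case check is indeed needed.
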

\begin{proof}
	Using the same dualisation and notation as in the proof of Lemma \ref{OKofF8}, we consider the lines $S^D,(\alpha S)^D$, $(\beta S)^D$, $((\alpha+1)S)^D$ and $((\beta+1)S)^D$. It follows from the conditions of the lemma that these lines are all different. From Lemma \ref{OKofF8} we know that these lines are contained in a plane $\pi$, the dual of the subgroup $\{x\mid\forall y\in \F_{8}:\Tr(k_{1}xy)=0\}$. As the line $((\alpha+1)S)^D$ goes through $S^D\cap (\alpha S)^D$, it is the unique third line in $\pi$ through $S^D\cap (\alpha S)^D$, different from $S^D$ and $(\alpha S)^D$. Since $\langle S,\alpha S\rangle=\{x\in \F_q\mid \Tr(k_1 x)=0\}$, the point $S^D\cap (\alpha S)^D$ is the point $k_1$. Similarly, the line $((\beta+1)S)^D$ is the unique line which goes through $S^D\cap (\beta S)^D=k_2$, and is different from $S^D$ and $(\beta S)^D$. The lines $(\alpha S)^D$ and $(\beta S)^D$ meet in the point $k_3$. The plane $\pi$ is generated by $k_{1}$, $k_{1}$ and $k_{3}$. It follows that the point $k_4$, which is the intersection of $((\alpha+1)S)^D$ and $((\beta+1)S)^D$ (and also is contained in $((\alpha+\beta)S)^D$) is equal to $k_1+k_2+k_3$, as it is the unique point of the plane, not on the lines $S^{D}$, $(\alpha S)^{D}$ and $(\beta S)^{D}$.
	\par Since $\langle S,\alpha S\rangle=\{x\in \F_q\mid \Tr(k_1 x)=0\}$ we know that $S=\{ x\in \F_{q} \mid \Tr(k_{1} x)=\Tr(\alpha k_{1} x)=0\}$ and $\alpha S=\{ x\in \F_{q} \mid \Tr(k_{1}x)=\Tr((k_{1}/\alpha) x)=0\}$. Analogously it follows from $\langle S,\beta S\rangle=\{x\in \F_q\mid \Tr(k_2 x)=0\}$ that $S=\{ x\in \F_{q} \mid \Tr(k_{2} x)=\Tr(\beta k_{2} x)=0\}$ and $\beta S=\{ x\in \F_{q} \mid \Tr(k_{2}x)=\Tr((k_{2}/\beta) x)=0\}$. Comparing the expressions for $S$, This implies that
	\begin{align}\label{eq5}
	k_2/k_1 \in \{\alpha,\alpha+1\}
	\end{align}
and 
\begin{align}\label{eq6}
k_2/k_1\in \{\alpha/\beta,1/\beta,(\alpha+1)/\beta\}\;.
\end{align}
Similarly, from $\langle\alpha S,\beta S\rangle=\{x\in \F_q\mid \Tr(k_3 x)=0\}$ it follows that $\alpha S=\{ x\in \F_{q} \mid \Tr(k_{3} x)=\Tr((\beta/\alpha) k_{3} x)=0\}$ and $\beta S=\{ x\in \F_{q} \mid \Tr(k_{3}x)=\Tr((\alpha/\beta)k_{3} x)=0\}$, and we find that
\begin{align}\label{eq7}
k_3/k_1 \in\{1/\alpha,(\alpha+1)/\alpha\}
\end{align}
and
\begin{align}\label{eq8}
k_3/k_1\in \{\alpha/\beta,1/\beta,(\alpha+1)/\beta\}.
\end{align}
Looking at the system of equations, we see that a solution $(X,Y)$ to \eqref{eq1} and \eqref{eq2} is of the form $\left(\frac{t}{k_1}+\alpha,\frac{t'}{k_2}+\beta\right)$ for some $t,t'\in \F_q$ with $\Tr(t)=\Tr(t')=1$. Equation \eqref{eq3} gives us that 
\begin{align}\label{eq9}
\frac{k_3}{k_2}\alpha t'+\frac{k_3}{k_1}\beta t=t''
\end{align}
for some $t''$ with $\Tr(t'')=1$.
Finally, equation \eqref{eq4} with $k_4=k_1+k_2+k_3$ yields
\begin{align}\label{eq10}
(k_1+k_2+k_3)\left(\frac{\beta+1}{k_1}t+\frac{\alpha+1}{k_2}t'\right)=t'''
\end{align}
for some $t'''\in \F_q$ with $\Tr(t''')=1$.

We know that $\alpha^3=\alpha+1$ or $\alpha^{3}=\alpha^2+1$ and $\beta\in \{\alpha^2,\alpha^2+1,\alpha^2+\alpha,\alpha^2+\alpha+1\}$. Suppose first that $\alpha^3=\alpha+1$ and $\beta=\alpha^2$. It follows from equations \eqref{eq5} and \eqref{eq6} that $k_2/k_1=\alpha=(\alpha+1)/\beta$. From equations \eqref{eq7} and \eqref{eq8}, we get that $k_3/k_1=1/\alpha=\alpha/\beta$. Hence, $k_4=k_1(1+\alpha+1/\alpha)$.
Equation \eqref{eq9} becomes $\alpha t+(\alpha^2+1)t'=t''$ while equation \eqref{eq10} becomes $(\alpha+1)t+(\alpha^2+1)t'=t'''$. This implies that $t+t''=t'''$, a contradiction since $\Tr(t+t'')=0$ and $\Tr(t''')=1$.

A tedious calculation shows the following results.
\begin{itemize}
\item If $\alpha^3=\alpha+1$ and $\beta=\alpha^2+1$, then $k_2/k_1=\alpha$, $k_3/k_1=(\alpha+1)/\alpha$.
\item If $\alpha^3=\alpha+1$ and $\beta=\alpha^2+\alpha$, then $k_2/k_1=\alpha+1$, $k_3/k_1=1/\alpha$.
\item If $\alpha^3=\alpha+1$ and $\beta=\alpha^2+\alpha+1$, then $k_2/k_1=\alpha+1$, $k_3/k_1=(\alpha+1)/\alpha$.
\item If $\alpha^3=\alpha^2+1$ and $\beta=\alpha^2$, then $k_2/k_1=\alpha+1$, $k_3/k_1=1/\alpha$.
\item If $\alpha^3=\alpha^2+1$ and $\beta=\alpha^2+1$, then $k_2/k_1=\alpha+1$, $k_3/k_1=(\alpha+1)/\alpha$.
\item If $\alpha^3=\alpha^2+1$ and $\beta=\alpha^2+\alpha$, then $k_2/k_1=\alpha$, $k_3/k_1=1/\alpha$.
\item If $\alpha^3=\alpha^2+1$ and $\beta=\alpha^2+\alpha+1$, then $k_2/k_1=\alpha$, $k_3/k_1=(\alpha+1)/\alpha$.
\end{itemize}
In all of the above cases, the reader can check that plugging these values in equations \eqref{eq9} and \eqref{eq10} gives a contradiction in the same way as deduced above.
\end{proof}

\begin{theorem}\label{elationLinPencil} If a KM-arc $\mathcal{A}$ of type $q/4$ in $\PG(2,q)$ is an elation KM-arc, then its $q/4$-secants define an $\F_2$-linear pencil with head corresponding to the elation line. Moreover, if the elation line is given by $X=0$ and the linear pencil of $q/4$-secants is given by the set of lines with equation $Y=kX$ with $k\in\left\langle\alpha,1\right\rangle$ up to $\PGL$-equivalence, then the subgroup determined by the points of $\mathcal{A}$ on its $q/4$-secants is given by $S=\{x\mid\Tr(\mu x)=\Tr(\alpha\mu x)=0\}$ for some $\mu\in\F_{q}$.
\end{theorem}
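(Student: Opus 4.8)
The plan is to bring $\mathcal{A}$ into the normal form of Lemma~\ref{vormq/4} and reduce the whole statement to a single congruence, which we then read off the $2$-secants of $\mathcal{A}$. By $\PGL$-invariance I may assume the elation line is $X=0$, the $q/4$-nucleus is $(0,0,1)$, and the affine points of $\mathcal{A}$ are $\bigcup_{k\in D}\{(1,k,f_k+s)\mid s\in S\}$ with $D=\{0,1,\alpha,\beta\}$, $f_0=0$, $f_1=1$, $f_\alpha=\alpha'$, $f_\beta=\beta'$, $|S|=q/4$; by Theorem~\ref{elationlinesecant} there are also $q/4$ points of $\mathcal{A}$ on $X=0$. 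Identifying the pencil of lines through $(0,0,1)$ with $\PG(1,q)$ by $Y=kX\leftrightarrow k$, $X=0\leftrightarrow\infty$, the $q/4$-secants form the set $D\cup\{\infty\}$. A direct computation with $\F_2$-linear sets shows that $D\cup\{\infty\}$ is an $\F_2$-linear set with head $\infty$ as soon as $D$ is an $\F_2$-affine subspace; since $0,1,\alpha\in D$ and $|D|=4$ this means $D=\langle 1,\alpha\rangle_{\F_2}$, i.e.\ $\beta=\alpha+1$ (and when $\alpha\in\F_4$ this is the $\F_4$-subline, for which every point, in particular $\infty$, is a head). So the first assertion reduces to proving $\beta=\alpha+1$, and the second to determining $S$.

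The key constraints use only that $\mathcal{A}$ is a KM-arc. For distinct $k_1,k_2,k_3\in D$ and points $(1,k_1,f_{k_1}+s_1),(1,k_2,f_{k_2}+s_2)\in\mathcal{A}$, the line through these two points misses the nucleus (its points lie on different $q/4$-secants), so it is a $2$-secant and its intersection point with $Y=k_3X$ is off $\mathcal{A}$. Computing that point and letting $s_1,s_2$ range over $S$, its $Z$-coordinate fills a full coset of $\tfrac1{k_1+k_2}\langle(k_1+k_2)S,(k_2+k_3)S\rangle$, a subgroup containing $S$; disjointness from $f_{k_3}+S$ therefore amounts to
\[
(k_2+k_3)f_{k_1}+(k_1+k_3)f_{k_2}+(k_1+k_2)f_{k_3}\ \notin\ \langle(k_1+k_2)S,(k_2+k_3)S\rangle .
\]
Specialising to the four triples of $D$ gives $\alpha+\alpha'\notin\langle S,\alpha S\rangle$, $\beta+\beta'\notin\langle S,\beta S\rangle$, $\beta\alpha'+\alpha\beta'\notin\langle\alpha S,\beta S\rangle$ and $(\alpha+\beta)+(\beta+1)\alpha'+(\alpha+1)\beta'\notin\langle(\alpha+1)S,(\beta+1)S\rangle$; in particular each of $\langle S,\alpha S\rangle$, $\langle S,\beta S\rangle$, $\langle\alpha S,\beta S\rangle$, $\langle(\alpha+1)S,(\beta+1)S\rangle$ is a proper subgroup of $\F_q$, hence of order $q/4$ or $q/2$.

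If one of these four subgroups has order $q/4$, two of $S,\alpha S,\beta S,(\alpha+1)S,(\beta+1)S$ coincide, giving $S=\lambda S$ for a suitable $\lambda\in\F_q^{*}\setminus\{1\}$ built from $\alpha,\beta$; by Lemma~\ref{S=alfaS}, $\lambda\in\F_4$ and $\langle S,\gamma S\rangle=\F_q$ for every $\gamma\notin\F_4$, so the first two constraints (which keep $\langle S,\alpha S\rangle$ and $\langle S,\beta S\rangle$ proper) force $\alpha,\beta\in\F_4$; being distinct and unequal to $0,1$ they are the two primitive elements of $\F_4$, so $\beta=\alpha+1$. If instead all four subgroups have order $q/2$, Lemma~\ref{OKofF8} leaves only $\beta=\alpha+1$ or the case $3\mid h$, $\alpha,\beta\in\F_8\setminus\{0,1\}$, $\beta\notin\{\alpha,\alpha+1\}$; in that case, writing the four hyperplanes as $\{\Tr(k_ix)=0\}$ for $i=1,\dots,4$ — with $k_1,\dots,k_4$ pairwise distinct, since by the proof of Lemma~\ref{OKofF8} the five dual lines are distinct, $k_1,k_2,k_3$ form a triangle, and $k_4=k_1+k_2+k_3$ — the four trace reformulations of the non-membership conditions say exactly that $(X,Y)=(\alpha',\beta')$ solves \eqref{eq1}--\eqref{eq4}, contradicting Lemma~\ref{nosolutions}. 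Hence $\beta=\alpha+1$ in every case, which proves the first assertion.

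For the ``moreover'' part, $\beta=\alpha+1$ is part of the hypothesis and Lemma~\ref{Sinvoeren} again puts $\mathcal{A}$ in the above form (now without normalising $f_0,f_1$); the first constraint gives $\langle S,\alpha S\rangle\neq\F_q$. If $\langle S,\alpha S\rangle$ has order $q/4$ then $S=\alpha S$ and Lemma~\ref{S=alfaS} yields $S=\{x\mid\Tr(\mu x)=\Tr(\alpha\mu x)=0\}$; if it has order $q/2$, say $\langle S,\alpha S\rangle=\{x\mid\Tr(\mu x)=0\}$, then (as $S$ and $\alpha S$ lie in this hyperplane) $\Tr(\mu s)=\Tr(\alpha\mu s)=0$ for all $s\in S$, so $S\subseteq\{\Tr(\mu x)=0\}\cap\{\Tr(\alpha\mu x)=0\}$, a subgroup of order $q/4=|S|$, whence equality. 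The main obstacle throughout is the $\F_8$-subcase: one has to recognise that the innocuous non-membership conditions produced by $2$-secants are, after dualisation, precisely the system of Lemma~\ref{nosolutions} evaluated at the ``translation parameters'' $(\alpha',\beta')$, and one has to verify that the four trace coefficients $k_1,\dots,k_4$ are genuinely pairwise distinct so that Lemma~\ref{nosolutions} can be invoked.
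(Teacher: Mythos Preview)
Your proof is correct and follows essentially the same route as the paper's: normalise via Lemma~\ref{vormq/4}, extract the four non-membership constraints from the non-collinearity of triples on distinct affine $q/4$-secants, use Lemmas~\ref{S=alfaS} and~\ref{OKofF8} to reduce to $\beta=\alpha+1$ or the $\F_8$-subcase, and eliminate the latter via Lemma~\ref{nosolutions}. Your $2$-secant framing of the constraints is equivalent to the paper's determinant computation, and your separate treatment of the case where some $\langle\cdot,\cdot\rangle$ has order $q/4$ (forcing $\alpha,\beta\in\F_4$ via Lemma~\ref{S=alfaS}) together with your explicit check that $k_1,\dots,k_4$ are pairwise distinct make explicit a step the paper leaves to the combination of Lemmas~\ref{S=alfaS} and~\ref{OKofF8}.
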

\begin{proof} By Lemma \ref{vormq/4}, we know that up to $\PGL$-equivalence we can take the elation line to be $X=0$ and the affine points of $\mathcal{A}$ to be $\{(1,0,s)\mid s\in S\}\cup\{(1,1,1+s)\mid s\in S\}\cup\{(1,\alpha,\alpha'+s)\mid s\in S\}\cup\{(1,\beta,\beta'+s)\mid s\in S\}$ for some $\alpha,\alpha',\beta,\beta'\in\F_{q}$, where $S$ has order $q/4$. For any $a\in\F_{q}$ we denote the line $Y=aX$ by $\ell_{a}$. Then we see that the affine points are contained in the lines $\ell_0,\ell_1,\ell_\alpha$ and $\ell_\beta$.

Three points of $\mathcal{A}$ on the lines $\ell_0,\ell_1,\ell_\alpha$ respectively, cannot be collinear. Hence, we find that $\left|\begin{smallmatrix} 1&0&s_1\\1&1&1+s_2\\1&\alpha&\alpha'+s_3\end{smallmatrix}\right|\neq 0$ for all $s_1,s_2,s_3\in S$. This implies that for all $s_1,s_2,s_3\in S$, $\alpha'+s_3+\alpha+\alpha s_2+\alpha s_1+s_1\neq 0$, and hence, that $\alpha+\alpha'\notin \langle S,\alpha S\rangle$. In particular, $\langle S,\alpha S\rangle$ cannot be the entire field $\F_q$. In a similar way, by looking at three points on $\ell_0,\ell_1,\ell_\beta$, we find that $\beta+\beta'\notin\langle S,\beta S\rangle$, hence, $\langle S,\beta S\rangle\neq \F_q$, and by looking at points on $\ell_0,\ell_\alpha,\ell_\beta$, that $\alpha \beta'+\beta\alpha'\notin\langle \alpha S,\beta S\rangle$, and hence $\langle \alpha S,\beta S\rangle$ is not $\F_q$.
For three points on $\ell_1,\ell_\alpha,\ell_\beta$, we find that for all $s_1,s_2,s_3$ in $\F_q$,
\begin{align*}
&\alpha \beta'+\beta \alpha'+\alpha'+\beta'+\alpha+\beta+\alpha(s_1+s_3)+\beta(s_1+s_2)+s_2+s_3\neq0\\
\Leftrightarrow\quad&\alpha \beta'+\beta \alpha'+\alpha'+\beta'+\alpha+\beta+(\alpha+1)(s_1+s_3)+(\beta+1)(s_1+s_2)\neq 0\;.
\end{align*}
Hence, we have that $\alpha \beta'+\beta \alpha'+\alpha'+\beta'+\alpha+\beta\notin\langle (\alpha+1)S,(\beta+1)S\rangle$ and so $\langle (\alpha+1)S,(\beta+1)S\rangle$ cannot be $\F_q$.

Since $S,\alpha S, \beta S, (\alpha+1)S$ and $(\beta+1)S$ are additive subgroups of $\F_{q}$ of size $q/4$, by Lemmas \ref{S=alfaS} and \ref{OKofF8}, we find that either $\beta=\alpha+1$, and then $\ell_0,\ell_1,\ell_\alpha,\ell_\beta,\ell_\infty$ define an $\F_2$-linear pencil with $\ell_\infty$ as head, or $3| h$ and $\alpha,\beta \in \F_8$. 
Suppose we are in the latter case. Since $\alpha+\alpha' \notin \langle S,\alpha S\rangle$, we have that $\Tr(k_1(\alpha+\alpha'))=1$ with $k_{1}$ such that  $\langle S,\alpha S\rangle$ is the subgroup of all elements $\{x\mid\Tr(k_1x)=0\}$. Similarly, $\Tr(k_2(\beta+\beta'))=1$, with $k_{2}$ such that $\langle S,\beta S\rangle=\{x\mid\Tr(k_2x)=0\}$, and $\Tr(k_3(\alpha\beta'+\beta \alpha'))=1$, with $k_{3}$ such that $\langle \alpha S,\beta S\rangle=\{x\mid\Tr(k_3x)=0\}$, and $\Tr(k_4((\alpha+1)\beta'+(\beta+1)\alpha'+\alpha+\beta))=1$, with $k_{4}$ such that $\langle (\alpha+1)S,(\beta+1)S\rangle=\{x\mid\Tr(k_4x)=0\}$. But by Lemma \ref{nosolutions}, there is no solution $(\alpha',\beta')$ for this system of equations.
\par So, $\beta=\alpha+1$ and the $q/4$-secants determine an $\F_{2}$-linear pencil with $\ell_{\infty}$ as head. Either $S=\alpha S$ or $\left\langle S,\alpha S\right\rangle$ has order $q/2$. In the former case, the second part of the statement follows form Lemma \ref{S=alfaS} and in the latter case it follows from Lemma \ref{OKofF8}.
\end{proof}

\begin{remark}\label{remlinearpencil}
	We believe that the statement of Theorem \ref{elationLinPencil} holds for general elation KM-arcs of type $t$, i.e. that the $t$-secants to an elation KM-arc of type $t$ define an $\F_2$-linear pencil (with the elation line as head). It is worth mentioning that this property also seems to hold for elation hyperovals, where the pencil that should be $\F_2$-linear is the set of $2$-secants through the centre of the non-trivial elation.
\end{remark}

\begin{theorem}\label{mainq/4}
	Let $\A$ be an elation KM-arc of type $q/4$, then $\A$ is $\PGL$-equivalent to the KM-arc $\mathcal{A}_{1/\beta^2,\beta,0,0,0}$. Hence, $\A$ is a translation KM-arc.
\end{theorem}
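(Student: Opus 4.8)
The plan is to reduce the statement to the classification of KM-arcs of type $q/4$ from \cite{wij} and then read off the conclusion from the equivalences established in this section. That classification says that every KM-arc of type $q/4$ in $\PG(2,q)$ is $\PGL$-equivalent to some $\mathcal{A}_{\alpha,\beta,a,b}$, and hence, by the observation following Theorem \ref{ourconstruction}, to some $\mathcal{A}_{\alpha,\beta,0,0}$ with $\alpha,\beta\in\F_q\setminus\{0,1\}$ and $\alpha\beta\neq1$. So I would start by fixing such a representative and assuming, up to $\PGL$-equivalence, that $\A=\mathcal{A}_{\alpha,\beta,0,0}$.

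The heart of the proof is then a single chain of implications. Being an elation KM-arc is preserved under $\PGL$-equivalence, since a collineation conjugates elations to elations; hence $\mathcal{A}_{\alpha,\beta,0,0}$ is an elation KM-arc, and Theorem \ref{elationLinPencil} applies, giving that its $q/4$-secants define an $\F_2$-linear pencil. By Lemma \ref{iffTranslation}, for an arc of the form $\mathcal{A}_{\alpha,\beta,0,0}$ this is equivalent to the arc being a translation KM-arc, and by Theorem \ref{transliff} it is equivalent to $\alpha\in\left\{\frac{1}{\beta^{2}},1+\frac{1}{\beta},\beta,\frac{1}{\sqrt{\beta}},\frac{1}{\beta+1}\right\}$. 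In particular $\A$ is now already known to be a translation KM-arc, which is the last assertion of the statement.

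It remains to put $\A$ into the declared normal form $\mathcal{A}_{1/\delta^{2},\delta,0,0}$. I would argue that the five values of $\alpha$ above are precisely the five admissible heads of the $\F_2$-linear pencil $\mathcal{C}$ of $q/4$-secants from Lemma \ref{vwclub}; relabelling the standard frame so that the elation line, together with two of the remaining $q/4$-secants, takes the roles of $X=0$, $Y=0$ and $Y=X$ transforms $\mathcal{A}_{\alpha,\beta,0,0}$ into an arc $\mathcal{A}_{\alpha',\delta,0,0}$ with $\alpha'=1/\delta^{2}$ for a suitable $\delta\in\F_q\setminus\{0,1\}$. (One may instead simply quote the corresponding normal-form result for translation KM-arcs of type $q/4$ from \cite{wij}.) Since $1/\delta^{2}$ lies in the set of Theorem \ref{transliff}, $\mathcal{A}_{1/\delta^{2},\delta,0,0}$ is a translation KM-arc, consistent with the final sentence.

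I do not expect a deep obstacle: this theorem is essentially an assembly of the classification of \cite{wij} with Theorem \ref{elationLinPencil}, Lemma \ref{iffTranslation} and Theorem \ref{transliff}. The only step needing care is the last normalisation --- exhibiting the explicit $\PGL$-maps that realise each of the five translation cases as $\mathcal{A}_{1/\delta^{2},\delta,0,0}$, or checking that this is already in \cite{wij} --- which is routine but error-prone substitution over $\F_q$, tracking how $\gamma=\frac{\beta+1}{\alpha\beta+1}$ and $\xi=\alpha\beta\gamma$ behave.
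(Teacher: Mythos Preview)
Your opening step assumes a full classification of KM-arcs of type $q/4$ from \cite{wij}, namely that every such arc is $\PGL$-equivalent to some $\mathcal{A}_{\alpha,\beta,a,b}$. That is not in \cite{wij}: Theorem~\ref{ourconstruction} is quoted here only as a \emph{construction}, and Theorem~\ref{transliff} only characterises the translation members of that family. Nowhere is it claimed that the family exhausts all KM-arcs of type $q/4$; indeed the abstract of the present paper advertises that the result ``concludes the classification problem for \emph{elation} KM-arcs of type $q/4$'', which would be vacuous if the full classification already existed. Since Lemma~\ref{iffTranslation} is stated only for arcs of the form $\mathcal{A}_{\alpha,\beta,a,b}$, the rest of your chain of implications never gets off the ground.

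The paper's route sidesteps this entirely by using the elation hypothesis structurally rather than by quoting an external classification. Lemma~\ref{vormq/4} puts an arbitrary elation KM-arc of type $q/4$ into the coordinate form with affine points $\{(1,0,s)\}\cup\{(1,1,1+s)\}\cup\{(1,\alpha,\alpha'+s)\}\cup\{(1,\beta,\beta'+s)\}$, $s\in S$. Theorem~\ref{elationLinPencil} then does two jobs at once: its first assertion forces the five $q/4$-secants into an $\F_2$-linear pencil with the elation line as head, so after relabelling one has $\beta=\alpha+1$; its ``Moreover'' clause pins down the subgroup as $S=\{x\mid\Tr(\mu x)=\Tr(\beta\mu x)=0\}$. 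With the secant parameters $\{0,1,\beta,\beta+1\}$ and this explicit $S$, one checks directly against Theorem~\ref{ourconstruction} (taking $\alpha=1/\beta^{2}$ there gives $\gamma=\beta$) that the arc is $\PGL$-equivalent to $\mathcal{A}_{1/\beta^{2},\beta,0,0}$, and Theorem~\ref{transliff} finishes. No separate normalisation step among the five cases of Theorem~\ref{transliff}, and no appeal to Lemma~\ref{iffTranslation}, is required.
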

\begin{proof}
	By Theorem \ref{elationLinPencil}, we know that the $q/4$-secants to an elation KM-arc of type $q/4$ define an $\F_2$-linear pencil with head the elation line. Hence, by Lemmas \ref{vormq/4} and \ref{elationLinPencil} $\A$ is equivalent to a KM-arc with elation line $X=0$ and affine point set $\{(1,0,s)\mid s\in S\}\cup\{(1,1,1+s)\mid s\in S\}\cup\{ (1,\beta,\alpha_{1}+s)\mid s\in S\}\cup \{(1,\beta+1,\alpha_{2}+s)\mid s\in S\}$, where $\alpha_{1},\alpha_{2},\beta$ are elements of $\F_{q}$ and $S$ is an additive subgroup of order $q/4$ of $\F_{q}$ given by $S=\{x\mid\Tr(\mu x)=\Tr(\beta\mu x)=0\}$ for some $\mu\in\F_{q}$. We see that the KM-arc $\mathcal{A}$ is $\PGL$-equivalent with the KM-arc $\mathcal{A}_{1/\beta^2,\beta,0,0}$.
	\par The statement now follows from Theorem \ref{transliff} or Lemma \ref{iffTranslation}.
\end{proof}

\section{A new family of elation KM-arcs of type \texorpdfstring{$q/8$}{q/8}}\label{q/8}

We start by recalling the definition of the Kronecker delta.

\begin{definition}
	For two integers $i$ and $j$, the Kronecker delta $\delta_{i,j}$ equals $1$ if $i=j$ and $0$ else.
\end{definition}

We can consider the Kronecker delta as a $\Z\times\Z\to\Z$ function that maps $(i,j)$ onto $\delta_{i,j}$. We now define a similar function for vectors over $\F_{2}$.

\begin{definition}
	The function $M^{k}_{n}:(\F^{k}_{2})^{n}\to\F_{2}$ is the function taking $n$ vectors of length $k$ as argument and mapping them to 0 if two of these vectors are equal and to 1 otherwise.
\end{definition}
The proof of the following lemma is left to the reader.
\begin{lemma}\label{Ms}
	Let $x,y,z$ be vectors in $\F^{k}_{2}$.
	\begin{enumerate}[label=(\roman*)]
		\item $M^{k}_{2}(x,y)=1+\prod^{k}_{i=1}(x_{i}+y_{i}+1)$
		\item $M^{k}_{3}(x,y,z)=M^{k}_{2}(x,y)+M^{k}_{2}(y,z)+M^{k}_{2}(z,x)$
	\end{enumerate}
\end{lemma}

We now construct a new family of KM-arcs.

\begin{theorem}\label{consq8}
	Let $q=2^h$, $h\geq4$. Let $\alpha_{1},\alpha_{2},\alpha_{3}\in\F^{*}_{q}$ be $\F_{2}$-independent and define $S=\{x\in\F_{q}\mid \forall i:\Tr(\alpha_{i}x)=0\}$. Let  $\beta_{1},\beta_{2},\beta_{3}\in\F^{*}_{q}$ be such that $\Tr(\alpha_{i}\beta_{j})=\delta_{i,j}$. Let $f_{1},f_{2},f_{3}$ be the three functions $\F^{3}_{2}\to\F_{2}$
	, given by $f_{1}:(x,y,z)\mapsto x+y+z+yz$, $f_{2}:(x,y,z)\mapsto y+z+xz$ and $f_{3}:(x,y,z)\mapsto z+xy$.
	\par For any $(\lambda_{1},\lambda_{2},\lambda_{3})\in\F^{3}_{2}$ we define
	\[
	\mathcal{S}_{(\lambda_{1},\lambda_{2},\lambda_{3})}=\left\{\left(1,\sum^{3}_{i=1}\lambda_{i}\alpha_{i},\sum^{3}_{i=1}f_{i}(\lambda_{1},\lambda_{2},\lambda_{3})\beta_{i}+s\right)\Bigg|\ s\in S\right\}\;.
	\]
	We also define $\mathcal{S}_{0}=\{(0,1,x)\mid\forall i: \Tr(\alpha^{2}_{i}x)=0\}$ and
	$\mathcal{A}=\mathcal{S}_{0}\cup\bigcup_{v\in\F^{3}_{2}}\mathcal{S}_{v}$. If $q>16$, then $\mathcal{A}$ is an elation KM-arc of type $q/8$ in $\PG(2,q)$ with elation line $X=0$ and $q/8$-nucleus $(0,0,1)$. If $q=16$, then $\mathcal{A}$ is an elation hyperoval in $\PG(2,q)$ with elation line $X=0$.
\end{theorem}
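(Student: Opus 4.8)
The plan is to verify directly that $\mathcal{A}$ has $q+q/8$ points and meets every line in $0$, $2$ or $q/8$ points, and then to read off the elation property from Lemma~\ref{Sinvoeren} (or Lemma~\ref{Sinvoeren2} when $q=16$). First I would clear away the bookkeeping. Since $x\mapsto x^2$ is an automorphism of $\F_q$, both $\alpha_1,\alpha_2,\alpha_3$ and $\alpha_1^2,\alpha_2^2,\alpha_3^2$ are $\F_2$-independent, so $S$ and the subgroup $S'=\{x\mid\forall i:\Tr(\alpha_i^2x)=0\}=\{s^2\mid s\in S\}$ both have size $q/8$. Writing $\alpha_\lambda=\sum_i\lambda_i\alpha_i$, the eight elements $\alpha_\lambda$ are distinct, so the sets $\mathcal{S}_\lambda$ ($\lambda\in\F_2^3$) lie on the eight distinct lines $Y=\alpha_\lambda X$ through $(0,0,1)$, each has $|S|=q/8$ points, $\mathcal{S}_0$ has $q/8$ points and lies on $X=0$, these nine sets are pairwise disjoint, and $(0,0,1)\notin\mathcal{A}$; hence $|\mathcal{A}|=q+q/8$. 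For the lines through $(0,0,1)$: the line $X=0$ meets $\mathcal{A}$ exactly in $\mathcal{S}_0$, the line $Y=kX$ meets $\mathcal{A}$ exactly in $\mathcal{S}_\lambda$ when $k=\alpha_\lambda$ and in nothing otherwise. So the $9=q/(q/8)+1$ lines $X=0$ and $Y=\alpha_\lambda X$ are concurrent $q/8$-secants, every other line through $(0,0,1)$ is a $0$-secant, and $(0,0,1)$ is forced to be the $q/8$-nucleus.

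The substance is in the lines not through $(0,0,1)$, i.e.\ the lines $L_{u,v}:Z=uX+vY$ with $u,v\in\F_q$, where I must show $|L_{u,v}\cap\mathcal{A}|\in\{0,2\}$. The only point of $L_{u,v}$ on $X=0$ is $(0,1,v)$, and it lies in $\mathcal{S}_0$ precisely when $v\in S'$; put $\epsilon(v)=1$ in that case and $\epsilon(v)=0$ otherwise. A point $(1,\alpha_\lambda,\sum_i f_i(\lambda)\beta_i+s)$ of $\mathcal{S}_\lambda$ lies on $L_{u,v}$ iff $s=u+v\alpha_\lambda+\sum_i f_i(\lambda)\beta_i$, which determines $s$ uniquely; this $s$ lies in $S$ iff, for $j=1,2,3$,
\[
f_j(\lambda)=\Tr(\alpha_j u)+\sum_{i=1}^{3}\Tr(v\alpha_i\alpha_j)\,\lambda_i .
\]
Writing $f=(f_1,f_2,f_3):\F_2^3\to\F_2^3$, $c_u=(\Tr(\alpha_j u))_j\in\F_2^3$ and letting $A_v=(\Tr(v\alpha_i\alpha_j))_{i,j}$ be the resulting symmetric $3\times3$ matrix over $\F_2$, this system reads $f(\lambda)=A_v\lambda+c_u$; since the $\mathcal{S}_\lambda$ are pairwise disjoint we get
\[
|L_{u,v}\cap\mathcal{A}|=\epsilon(v)+\#\{\lambda\in\F_2^3\mid f(\lambda)=A_v\lambda+c_u\}.
\]
The key point of the reduction is that $\epsilon(v)=1$ exactly when the diagonal $(\Tr(v\alpha_i^2))_i$ of $A_v$ vanishes.

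Everything now rests on the following finite statement about $f$, which I would establish by direct verification after evaluating $f$ on the eight points of $\F_2^3$ (this is where the very specific quadratic functions $f_1,f_2,f_3$ enter; one finds in particular that $f$ is itself a bijection): for the map $g_A:\lambda\mapsto f(\lambda)+A\lambda$ and a symmetric matrix $A$ over $\F_2$, $g_A$ is a bijection of $\F_2^3$ when $A$ has zero diagonal, while every fibre of $g_A$ has size $0$ or $2$ when $A$ has nonzero diagonal — and this one runs through by splitting the $64$ symmetric matrices according to their diagonal. This is the main obstacle: no individual computation is hard, but one has to see why these $f_i$ are forced. The bijectivity for zero-diagonal $A$ is exactly what prevents a line $L_{u,v}$ with $v\in S'$ from becoming a $3$-secant (or a $1$-secant), and the ``fibres of size $\le2$'' property for nonzero-diagonal $A$ is exactly what keeps a line with $v\notin S'$ at $0$ or $2$ points. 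Granting the statement: if $\epsilon(v)=1$ then $A_v$ has zero diagonal, so $g_{A_v}$ is bijective, the count is $1$, and $|L_{u,v}\cap\mathcal{A}|=2$; if $\epsilon(v)=0$ then $A_v$ has nonzero diagonal, the count is $0$ or $2$, and $|L_{u,v}\cap\mathcal{A}|\in\{0,2\}$.

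Putting the cases together, every line of $\PG(2,q)$ meets $\mathcal{A}$ in $0$, $2$ or $q/8$ points while $|\mathcal{A}|=q+q/8$, so $\mathcal{A}$ is a KM-arc of type $q/8$ with $q/8$-nucleus $(0,0,1)$. By construction, for every $\alpha\in\F_q$ the set $\{z\mid(1,\alpha,z)\in\mathcal{A}\}$ is empty when $\alpha\notin\langle\alpha_1,\alpha_2,\alpha_3\rangle$ and equals the coset $\sum_i f_i(\lambda)\beta_i+S$ of the additive subgroup $S$ of size $q/8$ when $\alpha=\alpha_\lambda$; hence, when $q>16$ (so $q/8>2$), Lemma~\ref{Sinvoeren} shows $\mathcal{A}$ is an elation KM-arc with elation line $X=0$ and $q/8$-nucleus $(0,0,1)$. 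When $q=16$ we have $q/8=2$, so $\mathcal{A}$ is a hyperoval with $18=q+2$ points, and the same coset description together with Lemma~\ref{Sinvoeren2} shows that $\mathcal{A}$ is an elation hyperoval with elation line $X=0$.
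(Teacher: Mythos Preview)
Your argument is correct and is packaged differently from the paper's. You parametrise the lines not through $(0,0,1)$ and reduce the intersection count to a purely combinatorial statement about $g_A:\lambda\mapsto f(\lambda)+A\lambda$ on $\F_2^3$ for symmetric $A$, to be settled by a finite check. The paper instead shows directly that three points on distinct $q/8$-secants are never collinear, by computing the relevant $3\times3$ determinant $\Delta$ and evaluating $\Tr(\Delta)$ (respectively $\Tr\bigl((\sum_i(\lambda_i+\lambda'_i)\alpha_i)\,\Delta'\bigr)$ when one of the secants is $\ell_\infty$); these traces collapse to the Boolean functions $M^3_3(\overline\lambda,\overline\lambda',\overline\lambda'')$ and $M^3_2(\overline\lambda,\overline\lambda')$, which equal $1$ whenever the vectors are distinct. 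A short counting argument then excludes $1$-secants.

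The two approaches are closely linked: the paper's identity $\sum_i(\lambda_i+\lambda'_i)(f_i(\overline\lambda)+f_i(\overline\lambda'))=M^3_2(\overline\lambda,\overline\lambda')$ is exactly what proves your ``$g_A$ is bijective for zero-diagonal $A$'' without a case split, since $g_A(\lambda)=g_A(\lambda')$ forces $(\lambda+\lambda')^T A(\lambda+\lambda')=\sum_iA_{ii}(\lambda_i+\lambda'_i)=1$; and the cyclic identity yielding $M^3_3$ gives your ``fibres have size $\le2$'' for arbitrary symmetric $A$. What your reformulation buys is a clean separation into an algebraic reduction plus a single finite fact about $\F_2^3$, independent of $q$; what the paper's computation buys is a short symbolic proof of that finite fact, which also generalises cleanly to the $q/16$ construction in the next section. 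Either way you might note that, strictly, only ``fibres $\le2$'' together with the counting argument is needed to rule out $1$-secants; your stronger ``fibres $0$ or $2$'' claim is true but takes a little more than one line to see without the brute-force check.
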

\begin{proof}
	We know that $S$ is a subgroup of $\F_{q},+$ containing $q/8$ elements. Now note that the element $\beta_{j}$, $j=1,2,3$ is a coset leaders of the coset $\{x\in\F_{q}\mid \forall i:\Tr(\alpha_{i}x)=\delta_{i,j}\}$ which also implies that the existence of elements $\beta_{1},\beta_{2},\beta_{3}$ is guaranteed.
	\par It is immediate that the points of $\mathcal{S}_{(\lambda_{1},\lambda_{2},\lambda_{3})}$, with $(\lambda_{1},\lambda_{2},\lambda_{3})\in\F^{3}_{2}$, are on the line $\ell_{(\lambda_{1},\lambda_{2},\lambda_{3})}$ with equation $(\sum^{3}_{i=1}\lambda_{i}\alpha_{i})X+Y=0$ and that the points of $\mathcal{S}_{0}$ are on the line $\ell_{\infty}$ with equation $X=0$. Hence, all lines through $N(0,0,1)$ either contain $q/8$ or $0$ points of $\mathcal{A}$.
	\par Now we check that three points on different $q/8$-secants are not collinear. First we assume that $\ell_{\infty}$ is not among these three $q/8$-secants. Then the three points can be described as
	\[
	\left(1,\sum^{3}_{i=1}\lambda_{i}\alpha_{i},\sum^{3}_{i=1}f_{i}(\overline{\lambda})\beta_{i}+s\right), \left(1,\sum^{3}_{i=1}\lambda'_{i}\alpha_{i},\sum^{3}_{i=1}f_{i}(\overline{\lambda}')\beta_{i}+s'\right)\text{ and } \left(1,\sum^{3}_{i=1}\lambda''_{i}\alpha_{i},\sum^{3}_{i=1}f_{i}(\overline{\lambda}'')\beta_{i}+s''\right)
	\] 
	with $\overline{\lambda}=(\lambda_{1},\lambda_{2},\lambda_{3})$, $\overline{\lambda}'=(\lambda'_{1},\lambda'_{2},\lambda'_{3})$ and $\overline{\lambda}''=(\lambda''_{1},\lambda''_{2},\lambda''_{3})$ three pairwise different vectors in $\F^{3}_{2}$. We find that
	\begin{align*}
	\Delta&=\begin{vmatrix}
	1 & \sum^{3}_{i=1}\lambda_{i}\alpha_{i} & \sum^{3}_{i=1}f_{i}(\overline{\lambda})\beta_{i}+s\\
	1 & \sum^{3}_{i=1}\lambda'_{i}\alpha_{i} & \sum^{3}_{i=1}f_{i}(\overline{\lambda}')\beta_{i}+s'\\
	1 & \sum^{3}_{i=1}\lambda''_{i}\alpha_{i} & \sum^{3}_{i=1}f_{i}(\overline{\lambda}'')\beta_{i}+s''
	\end{vmatrix}\\
	&=\sum_{cyc}\left(\sum_{i,j=1}^{3}(\lambda_{i}f_{j}(\overline{\lambda}')+\lambda'_{i}f_{j}(\overline{\lambda}))\alpha_{i}\beta_{j}+s\sum^{3}_{i=1}\lambda'_{i}\alpha_{i}+s'\sum^{3}_{i=1}\lambda_{i}\alpha_{i}\right)
	\end{align*}
	where the cyclic sum is taken over $(\overline{\lambda},\overline{\lambda}',\overline{\lambda}'')$ and the corresponding $(s,s',s'')$. We calculate the trace of both sides of this equation. Considering that $\Tr(\alpha_{i}t)=0$ for all $t\in S$, $i=1,2,3$, that $\Tr(\alpha_{i}\beta_{j})=\delta_{i,j}$ and that the trace function is $\F_{2}$-linear, we find that
	\begin{align}\label{m3}
	\Tr(\Delta)&=\sum_{cyc}\left(\sum_{i,j=1}^{3}(\lambda_{i}f_{i}(\overline{\lambda}')+\lambda'_{i}f_{i}(\overline{\lambda}))\right)\nonumber\\
	&=\sum_{cyc}\left(\lambda_{1}(\lambda'_{1}+\lambda'_{2}+\lambda'_{3}+\lambda'_{2}\lambda'_{3})+\lambda'_{1}(\lambda_{1}+\lambda_{2}+\lambda_{3}+\lambda_{2}\lambda_{3})+\lambda_{2}(\lambda'_{2}+\lambda'_{3}+\lambda'_{1}\lambda'_{3})\right.\nonumber\\
	&\qquad\qquad\left.+\lambda'_{2}(\lambda_{2}+\lambda_{3}+\lambda_{1}\lambda_{3})+\lambda_{3}(\lambda'_{3}+\lambda'_{1}\lambda'_{2})+\lambda'_{3}(\lambda_{3}+\lambda_{1}\lambda_{2})\right)\nonumber\\
	&=\sum_{cyc}\left(M^{3}_{2}(\overline{\lambda},\overline{\lambda}')+\sum_{i=1}^{3}(\lambda_{i}+1)+\sum_{i=1}^{3}(\lambda'_{i}+1)+1\right)\nonumber\\
	&=M^{3}_{2}(\overline{\lambda},\overline{\lambda}')+M^{3}_{2}(\overline{\lambda}',\overline{\lambda}'')+M^{3}_{2}(\overline{\lambda}'',\overline{\lambda})\nonumber\\
	&=M^{3}_{3}(\overline{\lambda},\overline{\lambda}',\overline{\lambda}'')\;.
	\end{align}
	In the last step, we used Lemma \ref{Ms}(ii).
	It follows that $\Delta\neq0$ because the vectors $\overline{\lambda}$, $\overline{\lambda}'$ and $\overline{\lambda}''$ are pairwise different, hence the three points that we considered are not collinear.
	\par Now we assume that $\ell_{\infty}$ is among the three $q/8$-secants. Then, the three points can be described as
	\[
	\left(1,\sum^{3}_{i=1}\lambda_{i}\alpha_{i},\sum^{3}_{i=1}f_{i}(\overline{\lambda})\beta_{i}+s\right), \left(1,\sum^{3}_{i=1}\lambda'_{i}\alpha_{i},\sum^{3}_{i=1}f_{i}(\overline{\lambda}')\beta_{i}+s'\right)\text{ and } \left(0,1,t\right)
	\] 
	with $\overline{\lambda}=(\lambda_{1},\lambda_{2},\lambda_{3})$, $\overline{\lambda}'=(\lambda'_{1},\lambda'_{2},\lambda'_{3})$ two different vectors in $\F^{3}_{2}$ and $\Tr(\alpha^{2}_{i}t)=0$ for $i=1,2,3$. We find that
	\begin{align*}
	\Delta'&=\begin{vmatrix}
	0 & 1 & t\\
	1 & \sum^{3}_{i=1}\lambda_{i}\alpha_{i} & \sum^{3}_{i=1}f_{i}(\overline{\lambda})\beta_{i}+s\\
	1 & \sum^{3}_{i=1}\lambda'_{i}\alpha_{i} & \sum^{3}_{i=1}f_{i}(\overline{\lambda}')\beta_{i}+s'
	\end{vmatrix}=t\sum^{3}_{i=1}(\lambda_{i}+\lambda'_{i})\alpha_{i}+\sum_{i=1}^{3}(f_{i}(\overline{\lambda})+f_{i}(\overline{\lambda}'))\beta_{i}+s+s'\;.
	\end{align*}
	It follows that
	\begin{align}
	\Tr\left(\left(\sum_{i=1}^{3}(\lambda_{i}+\lambda'_{i})\alpha_{i}\right)\Delta'\right)&=\Tr\left(t\sum^{3}_{i=1}(\lambda_{i}+\lambda'_{i})\alpha^{2}_{i}+\sum_{i,j=1}^{3}(f_{j}(\overline{\lambda})+f_{j}(\overline{\lambda}'))(\lambda_{i}+\lambda'_{i})\alpha_{i}\beta_{j}\right.\nonumber\\
	&\qquad\qquad\left.+(s+s')\left(\sum_{i=1}^{3}(\lambda_{i}+\lambda'_{i})\alpha_{i}\right)\right)\nonumber\\
	&=\sum_{i=1}^{3}(f_{i}(\overline{\lambda})+f_{i}(\overline{\lambda}'))(\lambda_{i}+\lambda'_{i})\nonumber\\
	&=(\lambda_{1}+\lambda'_{1})(\lambda_{1}+\lambda_{2}+\lambda_{3}+\lambda_{2}\lambda_{3}+\lambda'_{1}+\lambda'_{2}+\lambda'_{3}+\lambda'_{2}\lambda'_{3})\nonumber\\
	&\quad+(\lambda_{2}+\lambda'_{2})(\lambda_{2}+\lambda_{3}+\lambda_{1}\lambda_{3}+\lambda'_{2}+\lambda'_{3}+\lambda'_{1}\lambda'_{3})\nonumber\\
	&\quad+(\lambda_{3}+\lambda'_{3})(\lambda_{3}+\lambda_{1}\lambda_{2}+\lambda'_{3}+\lambda'_{1}\lambda'_{2})\nonumber\\
	&=M^{3}_{2}(\overline{\lambda},\overline{\lambda}')\;.\label{veellambdas}
	\end{align}
	In the final step we used that all elements of $\F_{2}$ equal their square. Again we find that $\Delta'\neq0$, hence the three points are not collinear.
	\par We conclude that all lines not through $N$ contain at most two points of $\mathcal{A}$. For any point $P\in\mathcal{A}$ there are $q$ points of $\mathcal{A}$ not on the $q/8$-secant $\ell_{P}=\left\langle P,N\right\rangle$ so all $q$ lines through $P$ different from $\ell_{P}$ contain precisely two points of $\mathcal{A}$. Consequently, all lines of $\PG(2,q)$ contain 0, 2 or $q/8$ points of $\mathcal{A}$. So, $\mathcal{A}$ is a KM-arc of type $q/8$. From its definition and Lemmas \ref{Sinvoeren} and \ref{Sinvoeren2} it follows immediately that $\mathcal{A}$ is an elation KM-arc with elation line $X=0$ if $q>16$ and that $\mathcal{A}$ is an elation hyperoval with elation line $X=0$ if $q=16$.
\end{proof}

\begin{corollary}\label{q8exist}
	A KM-arc of type $q/8$ in $\PG(2,q)$, $q$ even, exists for all $q\geq16$.
\end{corollary}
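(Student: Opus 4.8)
The plan is to read off the corollary directly from Theorem \ref{consq8}. Since $q$ is even, write $q=2^h$; the hypothesis $q\geq 16$ is exactly $h\geq 4$, which is precisely the range covered by Theorem \ref{consq8}. For $h\geq 5$, i.e. $q>16$, Theorem \ref{consq8} produces an (elation) KM-arc $\mathcal{A}$ of type $q/8$ in $\PG(2,q)$, so there is nothing more to do in that case.

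The only point requiring a word of care is the boundary case $q=16$, where $q/8=2$: here Theorem \ref{consq8} yields not a KM-arc "of type $t>2$" but an elation hyperoval in $\PG(2,16)$. However, as recalled in the introduction, a hyperoval is a KM-arc of type $2$, and $2=16/8=q/8$, so this object is indeed a KM-arc of type $q/8$ in $\PG(2,16)$. Hence the existence statement holds for $q=16$ as well.

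Combining the two cases: for every even $q\geq 16$ we have exhibited a KM-arc of type $q/8$ in $\PG(2,q)$, which is the claim. There is no genuine obstacle here — the corollary is an immediate packaging of Theorem \ref{consq8} together with the convention that hyperovals count as KM-arcs of type $2$; the only thing to remember is to state explicitly that $q=16$ is covered via the hyperoval clause of that theorem.
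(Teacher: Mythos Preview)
Your proposal is correct and follows essentially the same approach as the paper, which simply states that the result follows immediately from Theorem~\ref{consq8}. Your explicit handling of the boundary case $q=16$ (where the construction yields a hyperoval, i.e.\ a KM-arc of type $2=q/8$) is a welcome clarification but does not depart from the paper's argument.
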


This result follows immediately from the preceding theorem. The existence of KM-arcs of type $q/8$ was previously not generally known. We will discuss this in detail in Remark \ref{q8detail} 

\begin{remark}
	Instead of the three $\F^{3}_{2}\to\F_{2}$ functions $f_{1}:(x,y,z)\mapsto x+y+z+yz$, $f_{2}:(x,y,z)\mapsto y+z+xz$ and $f_{3}:(x,y,z)\mapsto z+xy$ that we used in Theorem \ref{consq8} we could have used other $\F^{3}_{2}\to\F_{2}$ functions. E.g., $f_{1}:(x,y,z)\mapsto y+z+yz$, $f_{2}:(x,y,z)\mapsto z+xz$ and $f_{3}:(x,y,z)\mapsto xy$ work as well. We chose the current representation because it also has a neat description of the points on the elation line.
\end{remark}

We mention an interesting property on this class of KM-arcs.

\begin{theorem}\label{linpenq/8}
	The $q/8$-secants of the elation KM-arc of type $q/8$ constructed in Theorem \ref{consq8} define an $\F_2$-linear pencil, $q>16$.
\end{theorem}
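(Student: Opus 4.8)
The plan is to dualise the pencil of $q/8$-secants at the $q/8$-nucleus $N=(0,0,1)$ and to exhibit the defining $\F_2$-subspace explicitly. First I would recall, from the proof of Theorem~\ref{consq8}, that the $q/8$-secants of $\mathcal{A}$ are exactly the lines $\ell_{\overline\lambda}\colon\bigl(\sum_{i=1}^{3}\lambda_i\alpha_i\bigr)X+Y=0$ with $\overline\lambda=(\lambda_1,\lambda_2,\lambda_3)\in\F_2^{3}$, together with the elation line $\ell_\infty\colon X=0$; all of these pass through $N$. Since $\alpha_1,\alpha_2,\alpha_3$ are $\F_2$-independent, the slopes $\sum_{i}\lambda_i\alpha_i$ range precisely over the $\F_2$-subspace $V:=\langle\alpha_1,\alpha_2,\alpha_3\rangle$ of $\F_q$, so there are $|V|+1=9=\tfrac{q}{q/8}+1$ secants, as expected.

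Next I would pass to the dual. Fixing the transversal line $m\colon Z=0$, which misses $N$, the line $kX+Y=0$ meets $m$ in the point $(1:k)$ (in the coordinates $(X:Y)$ on $m$), while $\ell_\infty$ meets $m$ in $(0:1)$. Hence the trace of the pencil on $m$ is the point set
\[
\mathcal{C}=\{(1:k)\mid k\in V\}\cup\{(0:1)\}\subseteq\PG(1,q).
\]
I would then take $U=\F_2\times V\subseteq\F_q^{2}$, an $\F_2$-subspace of $\F_2$-dimension $4$, and check that $\mathcal{C}=\mathcal{B}(U)$: a nonzero vector $(0,v)$ with $v\in V$ spans the point $(0:1)$, and a vector $(1,v)$ with $v\in V$ spans the point $(1:v)$, and together these exhaust $U\setminus\{0\}$. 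Thus $\mathcal{C}$ is an $\F_2$-linear set of rank $4$ on $m$, so by definition the $q/8$-secants form an $\F_2$-linear pencil. As a bonus, the point $(0:1)$ has weight $3$ in $\mathcal{C}$ (it is spanned by the $\F_2$-plane $\{0\}\times V$ of $U$) whereas every other point has weight $1$; since $(0:1)$ corresponds under the dualisation to $\ell_\infty$, the head of the pencil is the elation line, in line with Remark~\ref{remlinearpencil}.

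I expect no real obstacle here: the construction in Theorem~\ref{consq8} was designed so that the $q/8$-secants already carry an obvious $\F_2$-linear structure in their slopes, and the only point requiring care is to match the definition of an $\F_2$-linear pencil (the projective dual at $N$ of an $\F_2$-linear set on a line) with the explicit subspace $U=\F_2\times V$; if one would rather avoid choosing a transversal, the identical computation can be carried out using the map sending a line through $N$ to its slope.
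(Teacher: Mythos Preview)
Your proposal is correct. The paper states this theorem without proof, treating it as immediate from the construction (the analogous $q/16$ result is explicitly prefaced by ``follows immediate from the definition''), and your argument simply makes that immediacy explicit: the $q/8$-secants are $\ell_\infty$ together with the lines $Y=kX$ for $k$ ranging over the $\F_2$-subspace $V=\langle\alpha_1,\alpha_2,\alpha_3\rangle$, and intersecting this pencil with the transversal $Z=0$ exhibits the corresponding point set on $\PG(1,q)$ as $\mathcal{B}(\F_2\times V)$.
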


It would be interesting to know whether Theorem \ref{elationLinPencil} is valid for all KM-arcs of type $q/8$ (see also Remark \ref{remlinearpencil}).

In Theorem \ref{nottranslation} we will give a negative answer to the question whether there are translation KM-arcs contained in the family of KM-arcs constructed in Theorem \ref{consq8}, but in order to prove this, we need some lemmas.
\begin{lemma}\label{nottrans}
	The KM-arc constructed in Theorem \ref{consq8} is not a translation KM-arc with the elation line as translation line.
\end{lemma}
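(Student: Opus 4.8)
The plan is to bound the full group $G$ of elations with axis $\ell_{\infty}\colon X=0$ that stabilise the KM-arc $\mathcal{A}$ of Theorem \ref{consq8}, and to show that $|G|<q$. Since $\mathcal{A}\cap\ell_{\infty}=\mathcal{S}_{0}$ has $q/8$ points, $\mathcal{A}\setminus\ell_{\infty}=\bigcup_{v\in\F_{2}^{3}}\mathcal{S}_{v}$ has $q$ points, so $|G|<q$ forces $G$ not to act transitively on $\mathcal{A}\setminus\ell_{\infty}$, and hence $\mathcal{A}$ is not a translation KM-arc with translation line $\ell_{\infty}$.

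First I would use that every elation with axis $X=0$ is a translation $\tau_{a,b}\colon(1,y,z)\mapsto(1,y+a,z+b)$ fixing every point $(0,y,z)$ (it is induced by $\left(\begin{smallmatrix}1&0&0\\a&1&0\\b&0&1\end{smallmatrix}\right)$), so $G$ is a subgroup of $\{\tau_{a,b}\mid a,b\in\F_{q}\}$. The set $\mathcal{S}_{v}$ lies on the line $Y=a_{v}X$ where $a_{v}=\sum_{i}\lambda_{i}\alpha_{i}$, and as $v=(\lambda_{1},\lambda_{2},\lambda_{3})$ runs over $\F_{2}^{3}$ the element $a_{v}$ runs over the subgroup $A=\langle\alpha_{1},\alpha_{2},\alpha_{3}\rangle$, with $v\mapsto a_{v}$ a group isomorphism $\F_{2}^{3}\to A$. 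If $\tau_{a,b}$ stabilises $\mathcal{A}$, it permutes these eight lines, so $A+a=A$, i.e. $a\in A$; writing $a=a_{v_{0}}$ one gets $\tau_{a,b}(\mathcal{S}_{v})=\mathcal{S}_{v+v_{0}}$ for all $v$. Comparing the third coordinates and using $\Tr(\alpha_{i}\beta_{j})=\delta_{i,j}$ --- so that $\sum_{i}c_{i}\beta_{i}$ is determined modulo $S$ exactly by the vector $(c_{1},c_{2},c_{3})\in\F_{2}^{3}$ --- this condition becomes $b\equiv\sum_{i}\bigl(f_{i}(v+v_{0})+f_{i}(v)\bigr)\beta_{i}\pmod{S}$ for all $v$. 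As the left-hand side is independent of $v$, such a $b$ exists if and only if the vector $\bigl(f_{i}(v+v_{0})+f_{i}(v)\bigr)_{i=1,2,3}$ is constant in $v$.

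Next I would package this. The set $A'=\{a\in A\mid\tau_{a,b}\in G\text{ for some }b\}$ is a subgroup of $A$, and the kernel of the projection $G\to A'$, $\tau_{a,b}\mapsto a$, is $\{\tau_{0,b}\mid b\in S\}$ (because $\mathcal{S}_{v}+(0,b)=\mathcal{S}_{v}$ precisely when $b\in S$), which has order $q/8$; hence $|G|=|A'|\cdot(q/8)$. It now suffices to produce one element of $A$ not in $A'$. Taking $v_{0}=(1,0,0)$, a one-line computation gives $f_{2}(v+v_{0})+f_{2}(v)=\lambda_{3}$ (and $f_{3}(v+v_{0})+f_{3}(v)=\lambda_{2}$), which is not constant on $\F_{2}^{3}$; hence no translation $\tau_{\alpha_{1},b}$ stabilises $\mathcal{A}$, i.e. $\alpha_{1}\notin A'$. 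Therefore $|A'|\leq 4$, so $|G|\leq q/2<q$, and $G$ does not act transitively on $\mathcal{A}\setminus\ell_{\infty}$; for $q=16$ the same count gives $|G|\leq 8<16$, covering the hyperoval case as well.

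The only genuinely delicate part is the bookkeeping in the second paragraph: verifying that a stabilising translation moves the eight affine $q/8$-secants by a translation inside $A$, and then extracting the congruence condition on $b$ cleanly via the pairing $\Tr(\alpha_{i}\beta_{j})=\delta_{i,j}$. Once that is in place, exhibiting a slope of $A$ that is not realised --- here $\alpha_{1}$, because $v\mapsto f_{2}(v+v_{0})+f_{2}(v)$ is not constant --- together with the order count finishes the proof.
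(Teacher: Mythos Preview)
Your proof is correct. Both you and the paper show that the translation $\tau_{\alpha_{1},\beta_{1}}$ (with axis $\ell_{\infty}$) does not stabilise $\mathcal{A}$, but the two arguments are packaged quite differently. The paper gives a four-line direct computation: it picks the three concrete points $P_{1}=(1,0,0)$, $P_{2}=(1,\alpha_{1},\beta_{1})$, $P=(1,\alpha_{2},\beta_{1}+\beta_{2})$ of $\mathcal{A}$, notes that the unique elation with axis $\ell_{\infty}$ sending $P_{1}$ to $P_{2}$ is $\tau_{\alpha_{1},\beta_{1}}$, and checks that $P^{\tau}=(1,\alpha_{1}+\alpha_{2},\beta_{2})\notin\mathcal{A}$ because $\Tr(\alpha_{3}\beta_{2})=0$ while points of $\mathcal{S}_{(1,1,0)}$ satisfy $\Tr(\alpha_{3}\,\cdot\,)=1$. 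You instead determine the whole stabilising elation group $G$ structurally: you identify the slope subgroup $A'\leq A$, show the kernel of $G\to A'$ is $\{\tau_{0,s}\mid s\in S\}$, and exclude $\alpha_{1}$ from $A'$ via the non-constancy of $v\mapsto f_{2}(v+v_{0})+f_{2}(v)$. Your route is longer but yields the sharper bound $|G|\le q/2$ (and in fact the machinery to compute $|G|$ exactly), whereas the paper's bare-hands check is the minimal witness needed for the lemma.
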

\begin{proof}
	We use the notation introduced in the statement of Theorem \ref{consq8}. The elation line $\ell$ is given by $X=0$. We can see that $P_{1}(1,0,0)$, $P_{2}(1,\alpha_{1},\beta_{1})$ and $P(1,\alpha_{2},\beta_{1}+\beta_{2})$ are points of $\mathcal{A}$. The unique translation $\tau$ with translation line $\ell$ mapping $P_{1}$ onto $P_{2}$ is given by $\left(\begin{smallmatrix}
	1&0&0\\ \alpha_{1}&1&0\\ \beta_{1}&0&1
	\end{smallmatrix}\right)$. Then $P^{\tau}$ is $(1,\alpha_{1}+\alpha_{2},\beta_{2})$. From the construction it follows that all points of $\mathcal{A}$ on the line $Y+\left(\alpha_{1}+\alpha_{2}\right)X=0$ can be written as $(1,\alpha_{1}+\alpha_{2},t)$ with $\Tr(\alpha_{1}t)=0$ and  $\Tr(\alpha_{2}t)=\Tr(\alpha_{3}t)=1$. Since $\Tr(\alpha_{3}\beta_{2})=0$, the point $P^{\tau}$ is not in $\mathcal{A}$. Hence, $\mathcal{A}$ is not a translation KM-arc.
\end{proof}

Instead of this direct proof we could have applied \cite[Theorem 2.2]{wij}, but that would not have made the calculations easier.
\comments{
\begin{corollary}\label{notalltranslation}
	For any $q$ the family of KM-arcs in $\PG(2,q)$ constructed in Theorem \ref{consq8} contains elation KM-arcs which are not translation KM-arcs.
\end{corollary}
\begin{proof}
	It follows from Lemma \ref{nottrans} that the KM-arcs of this family are not translation KM-arcs with the elation line as translation line. If a KM-arc of type $t>2$ of this family is a translation KM-arc with a $t$-secant different from the elation line as translation line, then the subgroups $\{x\in\F_{q}\mid \forall i:\Tr(\alpha_{i}x)=0\}$ and $\{x\in\F_{q}\mid \forall i:\Tr(\alpha^{2}_{i}x)=0\}$ should coincide, hence the subgroups $\langle\alpha_{1},\alpha_{2},\alpha_{3}\rangle$ and $\langle\alpha^{2}_{1},\alpha^{2}_{2},\alpha^{2}_{3}\rangle$ should coincide which is in general not the case.
\end{proof}
}

It is clear that for any set $\{\alpha_{1},\alpha_{2},\alpha_{3}\}\subset\F_{q}$ that is an $\F_{2}$-independent triple, we can construct a KM-arc in $\PG(2,q)$ through Theorem \ref{consq8}. However some of the obtained KM-arcs will be $\PGammaL$-equivalent. 

We first prove that the construction in Theorem \ref{consq8} only depends on the subgroup $\left\langle\alpha_{1},\alpha_{2},\alpha_{3}\right\rangle$ and not on the choice of $\alpha_{1},\alpha_{2},\alpha_{3}$.

\begin{lemma}\label{projequiv1}
	Let $\{\alpha_{1},\alpha_{2},\alpha_{3}\}\subset\F^{*}_{q}$ and $\{\alpha'_{1},\alpha'_{2},\alpha'_{3}\}\subset\F^{*}_{q}$ be both $\F_{2}$-independent sets such that $\left\langle\alpha_{1},\alpha_{2},\alpha_{3}\right\rangle=\left\langle\alpha'_{1},\alpha'_{2},\alpha'_{3}\right\rangle$. Let $\mathcal{A}$ be the KM-arc constructed through Theorem \ref{consq8} using the triple $(\alpha_{1},\alpha_{2},\alpha_{3})$ and let $\mathcal{A}'$ be the KM-arc constructed through Theorem \ref{consq8} using the triple $(\alpha'_{1},\alpha'_{2},\alpha'_{3})$. Then $\mathcal{A}$ and $\mathcal{A}'$ are $\PGL$-equivalent.
\end{lemma}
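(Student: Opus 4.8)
The plan is to exploit that the triple $S$, $\mathcal S_0$, and the family of $q/8$-secants of the arc produced by Theorem \ref{consq8} depends only on the $\F_2$-subspace $W:=\langle\alpha_1,\alpha_2,\alpha_3\rangle$, so that $\mathcal A$ and $\mathcal A'$ literally have the same elation line, the same $q/8$-nucleus $N=(0,0,1)$, the same $q/8$-secants, the same subgroup $S$, and differ only in which coset of $S$ is attached to which affine $q/8$-secant; one then writes down a single elation in $\PGL(3,q)$ correcting this. Indeed $S=\{x:\Tr(\alpha_ix)=0\ \forall i\}=W^{\perp}$ (perp for the trace form), $\mathcal S_0$ is governed by $(W^{(2)})^{\perp}$ with $W^{(2)}=\{w^{2}:w\in W\}$ the Frobenius image of $W$, and the $q/8$-secants are $\ell_\infty:X=0$ together with the lines $\ell_w:Y=wX$, $w\in W$ — all determined by $W$, hence shared by $\mathcal A$ and $\mathcal A'$.

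To compare the coset assignments I would coordinatise. Let $\phi:W\to\F_2^{3}$ send $\sum_i\lambda_i\alpha_i$ to $(\lambda_i)_i$, and $\psi:\F_2^{3}\to\F_q/S$ send $(\mu_i)_i$ to $\sum_i\mu_i\beta_i+S$ (an isomorphism, since the $\beta_i$ span a complement of $S$). Writing $f=(f_1,f_2,f_3)$, the arc $\mathcal A$ carries on $\ell_w$ the coset $\psi\big(f(\phi(w))\big)$ and on $\ell_\infty$ the set $\mathcal S_0$. If $\alpha'_k=\sum_j t_{jk}\alpha_j$ with $T=(t_{jk})\in\GL(3,\F_2)$, then $\phi'=T^{-1}\circ\phi$ and $\beta'_l\equiv\sum_j(T^{-1})_{lj}\beta_j\pmod S$, so $\mathcal A'$ carries on $\ell_w$ the coset $\psi\big(\tilde f(\phi(w))\big)$ with $\tilde f:=S^{\top}\!\circ f\circ S$, where $S:=T^{-1}$ (and $\det S=1$ over $\F_2$). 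Thus it suffices, for each $S\in\GL(3,\F_2)$, to produce a $\PGL(3,q)$-collineation taking the arc with datum $f$ to the arc with datum $\tilde f$.

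For this take $\varphi:(X,Y,Z)\mapsto(X,\,w_0X+Y,\,eX+Z)$ with $w_0\in W$, $e\in\F_q$ to be chosen; it lies in $\PGL(3,q)$, fixes $N$ and $\ell_\infty$ pointwise (so $\varphi(\mathcal S_0)=\mathcal S_0$), maps each $S$-coset in the third coordinate to an $S$-coset, and sends $\ell_w$ to $\ell_{w+w_0}$, moving the coset $\psi(f(\phi(w)))$ to $\psi\big(f(\phi(w))+\epsilon\big)$ where $\epsilon:=\psi^{-1}(e+S)$. Hence $\varphi(\mathcal A)=\mathcal A'$ exactly when $\tilde f(v+\mu)=f(v)+\epsilon$ for all $v\in\F_2^{3}$, with $\mu:=\phi(w_0)$. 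Now split $f=Q+L$ into its pure-quadratic part $Q(v)=(v_2v_3,\,v_1v_3,\,v_1v_2)$ and linear part $L$ (note $f(0)=0$). The polarisation of $Q$ is $B(u,v):=Q(u+v)+Q(u)+Q(v)=(u_2v_3+u_3v_2,\ u_1v_3+u_3v_1,\ u_1v_2+u_2v_1)=u\times v$, the cross product; by the adjugate identity $S^{\top}\big((Su)\times(Sv)\big)=(\det S)(u\times v)=u\times v$ over $\F_2$, the map $v\mapsto S^{\top}Q(Sv)+Q(v)$ is $\F_2$-linear, so $\tilde f-f=S^{\top}f(S\,\cdot\,)+f(\cdot)$ is $\F_2$-linear with some matrix $N_S$. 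Using $f(v+\mu)=f(v)+B(v,\mu)+\big(Q(\mu)+L\mu\big)$ together with $B(\mu,\mu)=0$, one checks that $\varphi$ does the job precisely when $N_S$ equals the symmetric zero-diagonal matrix $M^{\mu}$ characterised by $M^{\mu}v=B(v,\mu)$, in which case one is forced to set $\epsilon=Q(\mu)+L\mu$ and to pick any $e$ with $e+S=\psi(\epsilon)$. Since $\mu\mapsto M^{\mu}$ is a bijection onto the $\F_2$-space of symmetric zero-diagonal $3\times3$ matrices, everything reduces to the claim that $N_S$ is symmetric with zero diagonal for every $S\in\GL(3,\F_2)$.

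To prove this claim, note that $\langle v,Mv\rangle$ vanishes for all $v$ iff $M$ is symmetric with zero diagonal, so it suffices to show $\langle v,N_Sv\rangle=0$, i.e. $g(Sv)=g(v)$, where $g(v):=\langle v,f(v)\rangle=\sum_i v_if_i(v)$; a direct expansion gives $g(v)=1+\prod_i(1+v_i)$, which equals $0$ iff $v=0$ and hence is constant on the two orbits of $\GL(3,\F_2)$ on $\F_2^{3}$, so $g(Sv)=g(v)$ automatically (that $g$ is exactly $M^{3}_{2}(v,0)$ of Lemma \ref{Ms} reflects that this is the quantity governing the non-collinearity computation of Theorem \ref{consq8}). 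Choosing $\mu$ with $M^{\mu}=N_S$, $w_0=\phi^{-1}(\mu)$, $\epsilon=Q(\mu)+L\mu$ and $e$ with $e+S=\psi(\epsilon)$ then produces the required $\varphi\in\PGL(3,q)$ with $\varphi(\mathcal A)=\mathcal A'$. The only real difficulty I anticipate is conceptual: recognising that the pure-quadratic part of $f$ is literally the cross product — $\GL(3,\F_2)$-equivariant up to a linear correction via the adjugate identity — and that the residual linear defect $N_S$ is forced to be symmetric with zero diagonal by the orbit-invariance of $g(v)=\langle v,f(v)\rangle$; once these are seen, a single elation with axis the elation line settles everything, with no reduction to generators of $\GL(3,\F_2)$ and no genericity hypothesis on the $\alpha_i$, the rest being the routine identities above.
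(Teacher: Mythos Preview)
Your argument is correct and takes a genuinely different route from the paper's own proof. The paper proceeds by noting that $\GL_3(\F_2)$ is generated by two explicit matrices $M_1,M_2$, and then for each generator separately writes down an explicit collineation of the form $\left(\begin{smallmatrix}1&0&0\\a&1&0\\b&0&1\end{smallmatrix}\right)$ that carries $\mathcal A$ to $\mathcal A'$, leaving the verification to the reader. You instead give a uniform, generator-free argument: after observing that $S$, $\mathcal S_0$ and the set of $q/8$-secants depend only on $W=\langle\alpha_1,\alpha_2,\alpha_3\rangle$, you reduce the problem to showing that for every $S\in\GL_3(\F_2)$ the linear defect $N_S$ (the matrix of $v\mapsto S^{\top}f(Sv)+f(v)$) is symmetric with zero diagonal. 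The two key ideas here are nice: first, that the purely quadratic part $Q$ of $f$ has polarisation equal to the cross product, so the adjugate identity $S^{\top}\big((Su)\times(Sv)\big)=(\det S)\,u\times v$ forces $\tilde f-f$ to be $\F_2$-linear; second, that symmetry-with-zero-diagonal of $N_S$ is equivalent to the $\GL_3(\F_2)$-invariance of $g(v)=\langle v,f(v)\rangle$, which you identify with $M^3_2(v,0)=1+\prod_i(1+v_i)$, the very quantity that drives the non-collinearity computation in Theorem~\ref{consq8}. This explains \emph{why} the construction depends only on $W$, rather than merely verifying it, and it produces the required elation $(X,Y,Z)\mapsto(X,w_0X+Y,eX+Z)$ in one stroke for all basis changes at once. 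The price is more machinery (polarisation, the adjugate identity, the characterisation of symmetric zero-diagonal matrices via $\langle v,Mv\rangle$); the paper's version is shorter to state but pushes the verification onto two ad hoc checks.
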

\begin{proof}
	We can find a matrix $M\in\GL_{3}(\F_{2})$ such that $(\alpha_{1},\alpha_{2},\alpha_{3})M=(\alpha'_{1},\alpha'_{2},\alpha'_{3})$. The multiplicative group $\GL_{3}(\F_{2})$ can be generated by the matrices $M_{1}=\left(\begin{smallmatrix}
	1&0&0\\1&1&0\\0&0&1
	\end{smallmatrix}\right)$ and $M_{2}=\left(\begin{smallmatrix}
	0&1&0\\0&0&1\\1&0&0
	\end{smallmatrix}\right)$ and hence it is sufficient to prove the statement for $M=M_{1}$ and for $M=M_{2}$. Let $\beta_{1},\beta_{2},\beta_{3}$ be as in the construction presented in Theorem \ref{consq8}, so $\Tr(\alpha_{i}\beta_{j})=1$.
	\par We first look at $M=M_{1}$. In this case $(\alpha'_{1},\alpha'_{2},\alpha'_{3})=(\alpha_{1}+\alpha_{2},\alpha_{2},\alpha_{3})$. Then $(\beta'_{1},\beta'_{2},\beta'_{3})=(\beta_{1},\beta_{1}+\beta_{2},\beta_{3})$ fulfils $\Tr(\alpha'_{i}\beta'_{j})=\delta_{i,j}$. We know that the construction in Theorem \ref{consq8} does not depend on the choice of the coset leaders. So, when constructing the KM-arc using the triple $(\alpha'_{1},\alpha'_{2},\alpha'_{3})$ we may use $\beta'_{1},\beta'_{2},\beta'_{3}$ as coset leaders. The point set of $\mathcal{A}$ is given by $\mathcal{S}_{0}\cup\bigcup_{v\in\F^{3}_{2}}\mathcal{S}_{v}$ with
	\begin{align*}
	\mathcal{S}_{0}&=\{(0,1,x)\mid\forall i: \Tr(\alpha^{2}_{i}x)=0\}\text{ and }\\
	\mathcal{S}_{(\lambda_{1},\lambda_{2},\lambda_{3})}&=\left\{\left(1,\sum^{3}_{i=1}\lambda_{i}\alpha_{i},\sum^{3}_{i=1}f_{i}(\lambda_{1},\lambda_{2},\lambda_{3})\beta_{i}+s\right)\Bigg|\ s\in S\right\}\;,
	\end{align*}
	and the point set of $\mathcal{A}'$ is given by $\mathcal{S}'_{0}\cup\bigcup_{v\in\F^{3}_{2}}\mathcal{S}'_{v}$ with 	
	\begin{align*}
	\mathcal{S}'_{0}&=\{(0,1,x)\mid\forall i: \Tr(\alpha'^{2}_{i}x)=0\}\text{ and }\\
	\mathcal{S}'_{(\lambda_{1},\lambda_{2},\lambda_{3})}&=\left\{\left(1,\sum^{3}_{i=1}\lambda_{i}\alpha'_{i},\sum^{3}_{i=1}f_{i}(\lambda_{1},\lambda_{2},\lambda_{3})\beta'_{i}+s\right)\Bigg|\ s\in S\right\}\;.
	\end{align*}
	We know that $\left\langle\alpha^{2}_{1},\alpha^{2}_{2},\alpha^{2}_{3}\right\rangle=\left\langle\alpha'^{2}_{1},\alpha'^{2}_{2},\alpha'^{2}_{3}\right\rangle$. Keeping this in mind and using the above expressions for the $\alpha'_{i}$'s and $\beta'_{i}$'s, it can readily be checked that the collineation induced by the matrix $\left(\begin{smallmatrix}
	1&0&0\\\alpha_{2}+\alpha_{3}&1&0\\\beta_{1}+\beta_{3}&0&1
	\end{smallmatrix}\right)$ maps $\mathcal{A}$ onto $\mathcal{A'}$.
	\par Now we look at the case $M=M_{2}$, hence at $(\alpha'_{1},\alpha'_{2},\alpha'_{3})=(\alpha_{3},\alpha_{1},\alpha_{2})$. Then $(\beta'_{1},\beta'_{2},\beta'_{3})=(\beta_{3},\beta_{1},\beta_{2})$ fulfils $\Tr(\alpha'_{i}\beta'_{j})=\delta_{i,j}$. Again we can construct $\mathcal{A'}$ (note that $\mathcal{A}$ is as above). Here we can check that the collineation induced by the matrix $\left(\begin{smallmatrix}
	1&0&0\\\alpha_{1}+\alpha_{2}&1&0\\\beta_{2}+\beta_{3}&0&1
	\end{smallmatrix}\right)$ maps $\mathcal{A}$ onto $\mathcal{A'}$. 
\end{proof}

\begin{lemma}\label{projequiv2}
	Let $\mathcal{A}$ be a KM-arc of type $q/8$ in $\PG(2,q)$ obtained by the construction in Theorem \ref{consq8} using the admissible tuple $(\alpha_{1},\alpha_{2},\alpha_{3})$ and let $\mathcal{A'}$ be a KM-arc of type $q/8$ in $\PG(2,q)$ obtained by the construction in Theorem \ref{consq8} using $(k\alpha^{\varphi}_{1},k\alpha^{\varphi}_{2},k\alpha^{\varphi}_{3})$, with $k\in\F^{*}_{q}$ and $\varphi$ a field automorphism of $\F_{q}$. Then $\mathcal{A}$ and $\mathcal{A'}$ are $\PGammaL$-equivalent. 
\end{lemma}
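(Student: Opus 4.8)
The plan is to factor the passage from $(\alpha_1,\alpha_2,\alpha_3)$ to $(k\alpha_1^\varphi,k\alpha_2^\varphi,k\alpha_3^\varphi)$ as the composition of two elementary operations on admissible triples: first apply the field automorphism $\varphi$, obtaining $(\alpha_1^\varphi,\alpha_2^\varphi,\alpha_3^\varphi)$, and then multiply through by the scalar $k\in\F_q^*$. Since a composition of $\PGammaL$-equivalences is again a $\PGammaL$-equivalence, it suffices to produce, for each of these two operations, an explicit collineation of $\PG(2,q)$ carrying the KM-arc built from the first triple onto the one built from the second. Note also that both operations preserve $\F_2$-independence and nonvanishing, so every intermediate triple is admissible. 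Throughout I will use, exactly as in the proof of Lemma~\ref{projequiv1}, that the construction of Theorem~\ref{consq8} does not depend on the choice of coset leaders $\beta_1,\beta_2,\beta_3$, so it is enough to match the two KM-arcs for one convenient choice.

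For the automorphism step I would take the semilinear map $\phi_\varphi\colon(x_0,x_1,x_2)\mapsto(x_0^\varphi,x_1^\varphi,x_2^\varphi)\in\PGammaL(3,q)$. The key observation is that the absolute trace of $\F_q=\F_{2^h}$ is fixed by every field automorphism, so $\Tr(\alpha_i^\varphi x^\varphi)=\Tr((\alpha_i x)^\varphi)=\Tr(\alpha_i x)$ for all $x$, and likewise with $\alpha_i$ replaced by $\alpha_i^2$. Hence $\phi_\varphi$ carries the subgroup $S=\{x\mid\forall i:\Tr(\alpha_i x)=0\}$ onto the subgroup associated to $(\alpha_1^\varphi,\alpha_2^\varphi,\alpha_3^\varphi)$, carries the set defining $\mathcal{S}_0$ (governed by the $\alpha_i^2$) onto the one governed by the $(\alpha_i^\varphi)^2$, and sends any triple $(\beta_1,\beta_2,\beta_3)$ with $\Tr(\alpha_i\beta_j)=\delta_{i,j}$ to a triple $(\beta_1^\varphi,\beta_2^\varphi,\beta_3^\varphi)$ with the same property for the automorphed $\alpha_i$'s. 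Since $\lambda_i$ and $f_i(\overline\lambda)$ lie in $\F_2$ and are unaffected by $\varphi$, a short computation shows that $\phi_\varphi$ sends each $\mathcal{S}_{\overline\lambda}$ onto the corresponding set of the construction for the triple $(\alpha_1^\varphi,\alpha_2^\varphi,\alpha_3^\varphi)$ with coset leaders $\beta_i^\varphi$; coset-leader independence closes this step.

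For the scaling step I would use the diagonal matrix $\mathrm{diag}(1,k,k^{-1})$, inducing the collineation $\phi_k\colon(x_0,x_1,x_2)\mapsto(x_0,kx_1,k^{-1}x_2)\in\PGL(3,q)$. Replacing each $\alpha_i$ by $k\alpha_i$ replaces $S$ by $k^{-1}S$, and one may take $k^{-1}\beta_i$ as coset leaders since $\Tr\big((k\alpha_i)(k^{-1}\beta_j)\big)=\delta_{i,j}$; the set defining $\mathcal{S}_0$ now involves $(k\alpha_i)^2=k^2\alpha_i^2$ and is exactly $k^{-2}$ times the old one. A direct check shows that $\phi_k$ sends the generic point $\big(1,\sum_i\lambda_i\alpha_i,\sum_i f_i(\overline\lambda)\beta_i+s\big)$ of $\mathcal{S}_{\overline\lambda}$ to $\big(1,k\sum_i\lambda_i\alpha_i,k^{-1}(\sum_i f_i(\overline\lambda)\beta_i+s)\big)$, which is the generic point of the corresponding set for $(k\alpha_1,k\alpha_2,k\alpha_3)$ with coset leaders $k^{-1}\beta_i$ and subgroup $k^{-1}S$, and that $\phi_k$ sends $\mathcal{S}_0$ onto the analogous set for $(k\alpha_i)$. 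Applying this with $(\alpha_1^\varphi,\alpha_2^\varphi,\alpha_3^\varphi)$ in the role of $(\alpha_1,\alpha_2,\alpha_3)$ and composing with $\phi_\varphi$ yields the desired $\PGammaL$-equivalence $\mathcal{A}\to\mathcal{A}'$.

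The only mildly delicate point I anticipate is the bookkeeping around $\mathcal{S}_0$: there the relevant data are the squares $\alpha_i^2$ rather than the $\alpha_i$ themselves, so under scaling the factor $k$ enters as $k^2$, and one must check that the last diagonal entry $k^{-1}$ (which acts on the third coordinate, hence as $k^{-2}$ after renormalising the middle coordinate to $1$) exactly compensates for this; likewise, under $\varphi$ one must keep distinct the subgroup $S^\varphi$ defined by the $\alpha_i^\varphi$ and the $\mathcal{S}_0$-subgroup defined by the $(\alpha_i^\varphi)^2$. Everything else reduces to $\F_2$-linearity of the trace together with its invariance under Frobenius, which is routine.
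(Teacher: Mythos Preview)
Your proposal is correct and follows essentially the same approach as the paper. The paper uses a single collineation $\gamma$ given by the diagonal matrix $\mathrm{diag}(1,k,k^{-1})$ together with the field automorphism $\varphi$, whereas you factor this as the composition $\phi_k\circ\phi_\varphi$; these are the same map, and both arguments hinge on the invariance of $\Tr$ under $\varphi$, the relation $\Tr\big((k\alpha_i)(k^{-1}\beta_j)\big)=\delta_{i,j}$, and the $k^{-2}$ rescaling on $\mathcal{S}_0$, exactly as you identify.
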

\begin{proof}	
	Denote the set $\{s\mid \forall i:\Tr(\alpha_{i}s)=0\}$ by $S$ and the set $\{s\mid \forall i:\Tr(k\alpha^{\varphi}_{i}s)=0\}$ by $S'$. These sets are used in the construction of $\mathcal{A}$ and $\mathcal{A'}$, respectively. Let $\gamma$ be the collineation induced by the matrix $\left(\begin{smallmatrix} 	1&0&0\\0&k&0\\0&0&k^{-1} \end{smallmatrix}\right)$ and the field automorphism $\varphi$. Then $\mathcal{A}^{\gamma}=\mathcal{A'}$ since
	\begin{align*}
	\{k^{-1}s^{\varphi}\mid s\in S\}&=\{k^{-1}s^{\varphi}\mid\forall i:\Tr(\alpha_{i}s)=0\}=\{k^{-1}s^{\varphi}\mid\forall i:\Tr(\alpha^{\varphi}_{i}s^{\varphi})=0\}\\
	&=\{k^{-1}t\mid\forall i:\Tr(\alpha^{\varphi}_{i}t)=0\}=\{s\mid \forall i:\Tr(\alpha^{\varphi}_{i}ks)=0\}=S'\;,\\
	\{(0,k,k^{-1}x^{\varphi})\mid\forall i:\Tr(\alpha^{2}_{i}x)=0\}&=\{(0,1,k^{-2}x^{\varphi})\mid\forall i:\Tr(\alpha_{i}^{2}x)=0\}\\
	&=\{(0,1,k^{-2}x^{\varphi})\mid\forall i:\Tr((\alpha^{\varphi}_{i})^{2}x^{\varphi})=0\}\\
	&=\{(0,1,x)\mid\forall i:\Tr((k\alpha^{\varphi}_{i})^{2}x)=0\}
	\end{align*}
	and $\Tr((k\alpha^{\varphi}_{i})(k^{-1}\beta^{\varphi}_{j}))=\delta_{i,j}$. In both calculations we used that $\Tr(x^{\varphi})=\Tr(x)$ for the arbitrary field automorphism $\varphi$ and for any $x\in\F_{q}$.
\end{proof}


Combining the previous lemma with Lemma \ref{nottrans} yields that the constructed KM-arcs are not translation KM-arcs.

\begin{theorem}\label{nottranslation}
	If $\mathcal{A}$ is a KM-arc in $\PG(2,q)$ arising from the construction in Theorem \ref{consq8}, then $\mathcal{A}$ is not a translation KM-arc.
\end{theorem}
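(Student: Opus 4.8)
The plan is to show that no line of $\PG(2,q)$ can serve as a translation line of $\mathcal{A}$. First, since every translation KM-arc is an elation KM-arc, Theorem~\ref{elationlinesecant} forces a translation line $m$ of $\mathcal{A}$ to be a $t$-secant, hence to pass through the $t$-nucleus $N=(0,0,1)$; by the description in Theorem~\ref{consq8} the only candidates are $\ell_\infty\colon X=0$ and the lines $\ell_v\colon(\sum_i\lambda_i\alpha_i)X+Y=0$, $v=(\lambda_1,\lambda_2,\lambda_3)\in\F_2^3$. Lemma~\ref{nottrans} already rules out $\ell_\infty$, so what remains is to exclude $m=\ell_w$ for $w\in\F_2^3$.

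For this I would first invoke Lemma~\ref{projequiv1}: because $\mathcal{A}$ depends up to $\PGL$ only on the $\F_2$-space $\langle\alpha_1,\alpha_2,\alpha_3\rangle$, when $w\neq 0$ we may choose the generating triple so that $\sum_iw_i\alpha_i=\alpha_1$, reducing to $m=\ell_{(1,0,0)}\colon Y=\alpha_1X$; the case $w=0$ ($m=\ell_0\colon Y=0$) is handled in the same way. Applying the collineation $(x,y,z)\mapsto(x,y+\alpha_1x,z)$ (the identity when $w=0$), which carries $m$ to $Y=0$ while fixing the line $X=0$ and the set $\mathcal{S}_0=\mathcal{A}\cap\{X=0\}=\{(0,1,x)\mid\forall i\colon\Tr(\alpha_i^2x)=0\}$, we are reduced to proving that $\mathcal{A}$ is not a translation KM-arc with translation line $Y=0$. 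If it were, then for any two points of $\mathcal{A}$ on a common $t$-secant through $N$ the unique elation with axis $Y=0$ carrying one to the other would stabilise $\mathcal{A}$; since $\{X=0\}\cap\{Y=0\}=\{N\}$, the stabiliser in such a translation group of the $t$-secant $X=0$ consists only of centre-$N$ elations $(x,y,z)\mapsto(x,y,z+\mu y)$, and such a map stabilises $\mathcal{S}_0$ only if $\mu$ lies in the subgroup $\{x\mid\forall i\colon\Tr(\alpha_i^2x)=0\}$ of order $q/8$; a translation KM-arc needs a stabiliser of order $q$, a contradiction. Equivalently — and this is probably the cleanest route — one mimics the proof of Lemma~\ref{nottrans} directly: taking $P_1\in\mathcal{S}_0$ and $P_2$ on a suitable $\ell_v$, one checks that the unique elation with axis $m$ sending $P_1$ to $P_2$ sends some third point of $\mathcal{A}$ off $\mathcal{A}$, the failure being forced because $f_1,f_2,f_3$ are genuinely quadratic rather than $\F_2$-affine.

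The hard part will be making that last step uniform over all the secants $\ell_w$. A translation group need not consist solely of centre-$N$ elations (the proof of Lemma~\ref{nottrans} itself uses one with centre $(0,\alpha_1,\beta_1)$), and the purely group-theoretic count above leaves open the case when $\langle\alpha_1,\alpha_2,\alpha_3\rangle$ equals the subfield $\F_8$ — for then $S$ is an $\F_8$-subspace and several of the naive inequalities become equalities. The way around this is to exploit the rigidity of the construction recorded in Lemmas~\ref{projequiv1} and \ref{projequiv2} (invariance only under changes of the generating triple inside $\langle\alpha_1,\alpha_2,\alpha_3\rangle$, scalar multiplication, and Frobenius) to cut down to a bounded list of explicit secants, and then to run, for each, the three-point computation above; tracking the coset leaders $\sum_if_i(v)\beta_i$ through the recoordinatisation is the bulk of the remaining work.
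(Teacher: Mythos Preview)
Your plan diverges from the paper's proof, which is considerably shorter and avoids the case analysis you are setting up. The paper argues as follows: once Lemma~\ref{nottrans} rules out the elation line, assume the translation line is one of the other $q/8$-secants. A necessary condition is that the subgroup $S=\{x\mid\forall i:\Tr(\alpha_ix)=0\}$ on the affine secants coincide with the subgroup $\{x\mid\forall i:\Tr(\alpha_i^2x)=0\}$ on $\ell_\infty$, i.e.\ $\langle\alpha_1,\alpha_2,\alpha_3\rangle=\langle\alpha_1^2,\alpha_2^2,\alpha_3^2\rangle$. Now comes the key trick you are missing: by Lemma~\ref{projequiv2} the tuple $(k\alpha_1,k\alpha_2,k\alpha_3)$ yields a $\PGammaL$-equivalent arc for every $k\in\F_q^*$, so the same conclusion applies to it, giving $k\langle\alpha_i\rangle=k^2\langle\alpha_i^2\rangle=k^2\langle\alpha_i\rangle$, hence $\langle\alpha_i\rangle=k\langle\alpha_i\rangle$ for \emph{all} $k$, impossible for a proper subgroup. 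No recoordinatisation, no case split over the secants $\ell_w$, no tracking of coset leaders.

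Your argument (4a) has a genuine error as written. After moving $m$ to $Y=0$, you bound the stabiliser of the secant $X=0$ inside the translation group by $q/8$ and then claim ``a translation KM-arc needs a stabiliser of order $q$, a contradiction''. But the translation group has order $q$ only globally; the stabiliser of a single $t$-secant inside it has order exactly $t=q/8$, so your bound is met with equality and there is no contradiction. What the argument actually yields is that the centre-$N$ part of the translation group must simultaneously satisfy $T=\{x\mid\Tr(\alpha_i^2x)=0\}$ (from $\ell_\infty$) and $(b_v-a)T=S$ for every other secant $\ell_v$, which is precisely the condition the paper exploits --- but you do not extract or use it.

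Your route (4b), mimicking Lemma~\ref{nottrans} secant by secant, could in principle be made to work, and your diagnosis of the difficulties (non-centre-$N$ elations, the $\F_8$ degeneration) is accurate. But this is a lot of bookkeeping that the paper's scalar-variation trick sidesteps entirely; I would recommend abandoning the case analysis and adopting that argument instead.
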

\begin{proof}
	For $q=16$ this result will follow from Theorem \ref{q=16}. Let $\mathcal{A}$ be a KM-arc in $\PG(2,q)$, $q\geq32$, constructed through Theorem \ref{consq8} using the admissible tuple $(\alpha_{1},\alpha_{2},\alpha_{3})$. We assume that $\mathcal{A}$ is a translation KM-arc. By \cite[Prop. 6.2]{km} (see also Theorem \ref{elationlinesecant}) the translation line must be a $q/8$-secant of $\mathcal{A}$.
	It follows from Theorem \ref{nottrans} that the translation line cannot be the elation line. So we assume that the translation line is a $q/8$-secant different from the elation line. Then, the subgroups $\{x\in\F_{q}\mid \forall i:\Tr(\alpha_{i}x)=0\}$ and $\{x\in\F_{q}\mid \forall i:\Tr(\alpha^{2}_{i}x)=0\}$ have to coincide, hence the subgroups $\langle\alpha_{1},\alpha_{2},\alpha_{3}\rangle$ and $\langle\alpha^{2}_{1},\alpha^{2}_{2},\alpha^{2}_{3}\rangle$ coincide. By Lemma \ref{projequiv1}, for every $k\in\F^{*}_{q}$, the admissible tuple $(k\alpha_{1},k\alpha_{2},k\alpha_{3})$, gives rise to a KM-arc $\mathcal{A'}$ $\PGammaL$-equivalent to $\mathcal{A}$, which is hence also a translation KM-arcs. As before, we find that the subgroups $\langle k\alpha_{1},k\alpha_{2},k\alpha_{3}\rangle$ and $\langle k^{2}\alpha^{2}_{1},k^{2}\alpha^{2}_{2},k^{2}\alpha^{2}_{3}\rangle$ coincide. It follows that for all $k\in\F^{*}_{q}$, the subgroups $\langle\alpha_{1},\alpha_{2},\alpha_{3}\rangle$ and $k\langle\alpha_{1},\alpha_{2},\alpha_{3}\rangle$ coincide, a contradiction, so the assumption is false.
\end{proof}

We now discuss the construction of Theorem \ref{consq8} for $q=16$ and $32$. For $q=16$ the construction of Theorem \ref{consq8} yields a hyperoval in $\PG(2,16)$. It is long known that up to isomorphism there are only two hyperovals in $\PG(2,16)$: the regular hyperoval and the Lunelli-Sce hyperoval (\cite{hall,okp}). The regular hyperoval has a stabiliser isomorphic to $\PGammaL(2,q)$ and hence has order $16320$, while the Lunelli-Sce has a stabiliser of order $144$ (see \cite{okp,pc}).

\begin{theorem}\label{q=16}
	For all admissible triples, the construction of Theorem \ref{consq8} for $q=16$ gives rise to the same hyperoval in $\PG(2,16)$ up to $\PGammaL$-equivalence; this hyperoval is the Lunelli-Sce hyperoval.
\end{theorem}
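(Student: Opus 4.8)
The plan is to exploit the fact that there are only two hyperovals in $\PG(2,16)$ up to $\PGammaL$-equivalence, so it suffices to (a) check that the construction of Theorem~\ref{consq8} with $q=16$ can never produce the regular hyperoval, and (b) verify that for one concrete admissible triple it produces the Lunelli--Sce hyperoval; the fact that it is independent of the triple then follows from Lemmas~\ref{projequiv1} and~\ref{projequiv2}. The key structural observation is that the hyperoval $\mathcal{A}$ obtained here is, by construction and by Lemma~\ref{Sinvoeren2}, stabilised by a non-trivial elation with axis $X=0$ and centre $(0,0,1)$: indeed for each $\alpha$ the set $\{z\mid (1,\alpha,z)\in\mathcal{A}\}$ is either empty or a coset of the size-$2$ subgroup $S=\{x\mid \forall i:\Tr(\alpha_i x)=0\}$. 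So the whole argument reduces to distinguishing the two hyperovals by their interaction with elations.

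First I would recall (or quickly derive) that the stabiliser of the regular hyperoval in $\PG(2,16)$ is $\PGammaL(2,16)$, of order $16320=2^5\cdot 3\cdot 5\cdot 17$, and that this group contains no non-trivial elations of $\PG(2,16)$: a regular hyperoval is a conic plus nucleus, and the elations of $\PG(2,q)$ stabilising a conic are trivial (one can see this directly, since an elation fixing a conic pointwise-on-its-axis and fixing the nucleus would have to fix too many points of the conic, forcing it to be the identity). By contrast, the Lunelli--Sce hyperoval has stabiliser of order $144=2^4\cdot 3^2$, and it is known (see \cite{okp,pc}) that this group does contain non-trivial elations. Since our $\mathcal{A}$ admits a non-trivial elation, it cannot be projectively equivalent to the regular hyperoval, hence it must be the Lunelli--Sce hyperoval. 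This already settles the theorem, modulo the claim about elations on the regular hyperoval — which is the main point to pin down carefully.

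As an alternative, more self-contained route in case one does not want to invoke the stabiliser structure of Lunelli--Sce directly: fix the admissible triple $(\alpha_1,\alpha_2,\alpha_3)$ to be a basis of $\F_{16}$ over $\F_2$ contained in a subfield-free position (for instance $\alpha_i=\omega^{i}$ for a primitive element $\omega$), compute explicitly the $18$ points of $\mathcal{A}=\mathcal{S}_0\cup\bigcup_{v\in\F_2^3}\mathcal{S}_v$ using the given functions $f_1,f_2,f_3$, and exhibit a collineation sending this point set to a standard list of coordinates for the Lunelli--Sce hyperoval (such lists are in \cite{hall}). By Lemma~\ref{projequiv1} the construction depends only on $\langle\alpha_1,\alpha_2,\alpha_3\rangle=\F_{16}$, and by Lemma~\ref{projequiv2} it is $\PGammaL$-invariant under $(\alpha_i)\mapsto(k\alpha_i^\varphi)$; since every admissible triple spans $\F_{16}$, all admissible triples give $\PGammaL$-equivalent hyperovals, so it genuinely suffices to check one triple. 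The main obstacle is the verification step: either carrying out the elation-count argument cleanly for the regular hyperoval, or performing the (finite but fiddly) coordinate computation matching $\mathcal{A}$ to a known model of Lunelli--Sce. I would present the elation argument as the primary proof, relegating the explicit coordinate check to a remark or a computer verification.
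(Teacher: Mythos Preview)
Your primary argument contains a genuine error: the regular hyperoval in $\PG(2,q)$ \emph{is} stabilised by non-trivial elations. Concretely, the conic $XZ=Y^{2}$ (with nucleus $(0,1,0)$) is fixed by every map $(x,y,z)\mapsto(x,\,y+\mu x,\,z+\mu^{2}x)$, since $(1,t,t^{2})\mapsto(1,\,t+\mu,\,(t+\mu)^{2})$; this is a non-trivial elation with axis $X=0$ and centre $(0,1,\mu)$. More structurally, the regular hyperoval is a translation hyperoval (the case $k=1$ of the $o$-polynomial $x\mapsto x^{2^{k}}$), so its stabiliser contains the full group of $q$ translations with a fixed axis. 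Your sketch that ``an elation fixing a conic \dots\ would have to fix too many points of the conic'' overlooks that the elation only fixes the (at most two) conic points on its axis and may freely permute the rest. Since both hyperovals in $\PG(2,16)$ are elation hyperovals, the property ``admits a non-trivial elation'' cannot distinguish them, and your main route collapses. A salvage would be to show the constructed hyperoval is not a \emph{translation} hyperoval for any axis; Lemma~\ref{nottrans} only excludes the elation line $X=0$, and the general statement (Theorem~\ref{nottranslation}) explicitly defers the $q=16$ case to the present theorem, so invoking it would be circular.

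There is also a gap in your uniqueness argument. An admissible triple $(\alpha_{1},\alpha_{2},\alpha_{3})$ spans an additive subgroup of order $8$, not all of $\F_{16}$; so Lemma~\ref{projequiv1} alone only reduces to the $15$ index-$2$ subgroups of $(\F_{16},+)$, and you still need to connect these to one another. The paper does this with a short counting step: for any order-$8$ subgroup $S$ and any $k\in\F_{16}^{*}\setminus\{1\}$ one has $kS\neq S$, hence the $15$ subgroups are exactly $\{kT:k\in\F_{16}^{*}\}$ for one fixed $T$, and Lemma~\ref{projequiv2} (with trivial $\varphi$) finishes. For the identification with Lunelli--Sce the paper then computes, via \textsc{FinInG}, that the stabiliser of one instance has order $144$. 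Your alternative route---explicitly matching one instance to a standard coordinate list for Lunelli--Sce---is a legitimate substitute for that computation, provided you first insert the missing ``$15=|{\F_{16}^{*}}|$'' step above.
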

\begin{proof}
	By Lemma \ref{projequiv1} we know that the projective equivalence class of the KM-arc does not depend on the choice of the parameters $\alpha_{1},\alpha_{2},\alpha_{3}$ (using the notation of Theorem \ref{consq8}) but only on the additive subgroup they generate. We know that $\F_{16}$ has precisely 15 additive subgroups of order 8. For any subgroup $S$ of order $8$ and any $k\in\F^{*}_{16}\setminus\{1\}$ clearly $kS\neq S$, so the 15 additive subgroups of order 8 can be written as $kT$ with $k\in\F^{*}_{16}$ and $T$ a fixed subgroup of order $8$. By Theorem \ref{projequiv2} we then know that all admissible triples give rise to the same hyperoval up to projective equivalence.  Using the GAP-package FinInG (\cite{fining}) we computed the stabiliser of one KM-arc in $\PG(2,16)$ constructed through Theorem \ref{consq8}. We found it to have size 144, hence the conclusion.
\end{proof}

\begin{remark}\label{qover8in32}
	The KM-arcs of type $4$ in $\PG(2,32)$ have been classified up to projective equivalence in \cite[Result 2.14]{vdd}. There are $8$ equivalence classes. One of these classes was already described in \cite{kmm}. It is straightforward to check that only one of the 8 given KM-arcs is an elation KM-arc, the one whose affine points are given by $\{(1,f(z),z)\mid z\in\F_{32}\}$ with $f(z)=z^{24}+z^{20}+\alpha^{18}z^{18}+\alpha^5z^{16}+\alpha^2z^{12}+\alpha^{18}z^{10}+\alpha^{18}z^8+\alpha^{23}z^6+\alpha^{5}z^4+\alpha^{22}z^2+\alpha^{26}z$. The following result is immediate.  
\end{remark}

\begin{theorem}
    For all admissible triples the construction of Theorem \ref{consq8} for $q=32$ gives rise to the same elation KM-arc of type $4$ in $\PG(2,32)$ up to $\PGammaL$-equivalence.
\end{theorem}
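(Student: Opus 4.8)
The plan is to read the statement off the classification of KM-arcs of type $4$ in $\PG(2,32)$ together with Theorem \ref{consq8}. First I would observe that for $q=32$ we have $h=5\geq 4$ and $q>16$, so Theorem \ref{consq8} applies and tells us that for \emph{every} admissible triple $(\alpha_{1},\alpha_{2},\alpha_{3})$ the point set $\mathcal{A}=\mathcal{S}_{0}\cup\bigcup_{v\in\F_{2}^{3}}\mathcal{S}_{v}$ is an elation KM-arc of type $q/8=4$ in $\PG(2,32)$, with elation line $X=0$ and $4$-nucleus $(0,0,1)$. Thus every KM-arc produced by the construction for $q=32$ is, in particular, an elation KM-arc of type $4$ in $\PG(2,32)$.

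Next I would invoke Remark \ref{qover8in32}: by \cite[Result 2.14]{vdd} the KM-arcs of type $4$ in $\PG(2,32)$ fall into exactly eight $\PGammaL$-equivalence classes, and a direct check (as recorded in that remark) shows that precisely one of these classes consists of elation KM-arcs. Since being an elation KM-arc is a $\PGammaL$-invariant property, any two KM-arcs arising from the construction of Theorem \ref{consq8} with $q=32$ -- both being elation KM-arcs of type $4$ in $\PG(2,32)$ -- must lie in this single class, and hence are $\PGammaL$-equivalent; moreover one recognises the constructed arc as the arc with affine points $\{(1,f(z),z)\mid z\in\F_{32}\}$ exhibited in Remark \ref{qover8in32}. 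This settles the theorem, and there is essentially no obstacle: the substantive work is already carried out, namely the proof that the construction always yields an elation KM-arc (Theorem \ref{consq8}) and the classification in \cite{vdd}.

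For completeness I would note that a self-contained argument is also available, mirroring the proof of Theorem \ref{q=16}. By Lemma \ref{projequiv1} the $\PGL$-class of the constructed arc depends only on the additive subgroup $\left\langle\alpha_{1},\alpha_{2},\alpha_{3}\right\rangle$ of order $8$ in $\F_{32}$, and by Lemma \ref{projequiv2} its $\PGammaL$-class is invariant under the substitution $\left\langle\alpha_{1},\alpha_{2},\alpha_{3}\right\rangle\mapsto k\left\langle\alpha_{1},\alpha_{2},\alpha_{3}\right\rangle^{\varphi}$ for $k\in\F_{32}^{*}$ and $\varphi\in\Aut(\F_{32})$. One is then reduced to showing that the group of order $31\cdot5=155$ generated by these substitutions acts transitively on the set of $\gs{5}{3}{2}=155$ additive subgroups of order $8$ in $\F_{32}$. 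Here $\F_{32}^{*}$ already acts semiregularly on that set -- a nontrivial $k$ with $kV=V$ would make $V$ a module over the subring $\F_{2}[k]$, which is either $\F_{2}$ or all of $\F_{32}$ since $5$ is prime, forcing $V\in\{\{0\},\F_{32}\}$, impossible for a $3$-dimensional $\F_{2}$-subspace -- so $\F_{32}^{*}$ has five orbits of size $31$, and the remaining point is to check that the Frobenius automorphism fuses these five orbits into one. Establishing this last transitivity is the main (and only) obstacle along that route; it is entirely bypassed by the classification-based argument above.
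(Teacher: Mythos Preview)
Your classification-based argument is correct and is exactly what the paper means when it writes, just before the theorem, that ``the following result is immediate'' from Remark~\ref{qover8in32}. The paper then supplies a \emph{computer-free} proof along the lines of your second sketch, and it is worth recording how it disposes of what you flag as the obstacle. Rather than first showing that $\F_{32}^{*}$ acts semiregularly and then arguing separately that Frobenius fuses the five resulting $\F_{32}^{*}$-orbits, the paper shows in one stroke that the full group of order $31\cdot5=155$ acts \emph{freely} on the $155$ subgroups, hence transitively. Suppose $\langle\alpha_{1},\alpha_{2},\alpha_{3}\rangle^{\varphi}=k\langle\alpha_{1},\alpha_{2},\alpha_{3}\rangle$; then $(\alpha_{1}^{\varphi},\alpha_{2}^{\varphi},\alpha_{3}^{\varphi})=k(\alpha_{1},\alpha_{2},\alpha_{3})A$ for some $A\in\GL_{3}(\F_{2})$. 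Iterating five times and using $k^{1+\varphi+\varphi^{2}+\varphi^{3}+\varphi^{4}}=k^{31}=1$ gives $(\alpha_{1},\alpha_{2},\alpha_{3})=(\alpha_{1},\alpha_{2},\alpha_{3})A^{5}$, so $A^{5}=I$; since $|\GL_{3}(\F_{2})|=168$ is coprime to $5$, this forces $A=I$. Then $\alpha_{i}^{\varphi}=k\alpha_{i}$ for all $i$, so $\alpha_{1}/\alpha_{2}$ is fixed by $\varphi$; as $\F_{2}$ is the only proper subfield of $\F_{32}$ and $\alpha_{1}\neq\alpha_{2}$, $\varphi$ must be trivial and $k=1$. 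This use of $\gcd(168,5)=1$ is the ingredient missing from your alternative route.
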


Using the above results we can now give a computer free proof of this result.

\begin{proof}
	By Lemma \ref{projequiv1} we know that the projective equivalence class of the KM-arc only depends on the additive subgroup $\left\langle\alpha_{1},\alpha_{2},\alpha_{3}\right\rangle$ with $\alpha_{1},\alpha_{2},\alpha_{3}$ as in the statement of Theorem \ref{consq8}. It is immediate that $\F_{32}$ has $155$ additive subgroups of order 8. By Lemma \ref{projequiv2} we also know that for any subgroup $T$ the KM-arcs arising from $T$ and $kT^{\varphi}$, respectively, are $\PGammaL$-equivalent for any $k\in\F^{*}_{32}$ and any field automorphism $\varphi$ of $\F_{32}$. \par Assume that $\left\langle\alpha_{1},\alpha_{2},\alpha_{3}\right\rangle^{\varphi}_{2}=k\left\langle\alpha_{1},\alpha_{2},\alpha_{3}\right\rangle$, with $k\in\F^{*}_{32}$ and $\varphi\in\Aut(\F_{32})$. Then we can find a matrix $A\in\GL_{3}(\F_{2})$ such that $(\alpha^{\varphi}_{1},\alpha^{\varphi}_{2},\alpha^{\varphi}_{3})=k(\alpha_{1},\alpha_{2},\alpha_{3})A$. Applying this repetitively it follows that
	\[
	(\alpha_{1},\alpha_{2},\alpha_{3})=(\alpha^{\varphi^{5}}_{1},\alpha^{\varphi^{5}}_{2},\alpha^{\varphi^{5}}_{3})=k^{1+\varphi+\varphi^{2}+\varphi^{3}+\varphi^{4}}(\alpha_{1},\alpha_{2},\alpha_{3})A^{5}=(\alpha_{1},\alpha_{2},\alpha_{3})A^{5}\;
	\]
	since $\Aut(\F_{32})$ is a cyclic group of order $5$ which implies that $k^{1+\varphi+\varphi^{2}+\varphi^{3}+\varphi^{4}}=k^{31}=1$. As $|\GL_{3}(\F_{2})|=168$, the matrix $A$ cannot have order $5$, so $A$ is the identity matrix; here we also use that $\alpha_{1},\alpha_{2},\alpha_{3}$ are $\F_{2}$-independent. We find that $\alpha^{\varphi}_{i}=k\alpha_{i}$ for $i=1,2,3$. Consequently, $\frac{\alpha_{1}}{\alpha_{2}}$ is fixed by $\varphi$. As $\F_{2}$ is the only subfield of $\F_{32}$ and $\alpha_{1}\neq\alpha_{2}$, the field automorphism $\varphi$ must be trivial, and so also $k=1$.
	\par So, for a fixed additive subgroup $T$ of order $8$ in $\F_{32}$ all subgroups $kT^{\varphi}$, with $k\in\F^{*}_{32}$ and $\varphi\in\Aut(\F_{32})$, are different. For a fixed $T$ there are thus 31.5=155 subgroups of the form $kT^{\varphi}$. We conclude that all subgroups of order $8$ in $\F_{32}$ give rise to the same KM-arc of type $4$ up to projective equivalence.
\end{proof}

From \cite{vdd2} we also know that the stabiliser $G_{32}$ of this unique elation KM-arc $\mathcal{A}_{32}$ of type 4 is a group of order $16$. So, next to the four elations (including the identity) that $G_{32}$ contains by definition, there are other collineations stabilising $\mathcal{A}_{32}$; all of them fix only one point, the $4$-nucleus. It is clear that $\mathcal{A}_{32}$ is not translation.

Now we cover the larger values for $q$. First we recall a result from \cite{gw}. It learns us that the iterative process admitted by Construction \ref{gwc} (C) does not always construct `new' examples.

\begin{theorem}[{\cite[Remark 1]{gw}}]\label{gwremark}
	If $\mathcal{A}$ is the KM-arc in $\PG(2,q^{h})$ obtained from a hyperoval $\mathcal{H}$ in $\PG(2,q)$, $q$ even, through Construction \ref{gwc} (A) or (B), and $\mathcal{A'}$ is the KM-arc in $\PG(2,q^{hr})$ obtained from $\mathcal{A}$ through Construction 2 (C), then $\mathcal{A'}$ also arises from $\mathcal{H}$ through a direct application of Construction \ref{gwc} (A) or (B).
\end{theorem}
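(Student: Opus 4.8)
The plan is to track affine point sets through the two successive constructions and then appeal to the uniqueness of the extension in Construction~\ref{gwc}. Fix coordinates so that the relevant line is $\ell_\infty\colon X=0$ and the $t$-nucleus (or, in the hyperoval case, the point playing that role) is $(0,0,1)$; write the affine part of $\mathcal{H}$ with respect to $\ell_\infty$ as $\{(1,x_k,y_k)\mid k\}$ with $x_k,y_k\in\F_q$, where $(0,0,1)\in\mathcal{H}$ in case~(A) and $(0,0,1)\notin\mathcal{H}$ in case~(B). Let $I_1$ be the direct complement of $\F_q$ in $\F_{q^h}$ used to build $\mathcal{A}$, and let $I_2$ be the direct complement of $\F_{q^h}$ in $\F_{q^{hr}}$ used to build $\mathcal{A}'$. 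The first observation is that the extension step in Construction~\ref{gwc} adds only points lying on $X=0$ (in some sub-cases none at all), so the affine part of $\mathcal{A}$ is exactly $J=\{(1,x_k,y_k+i_1)\mid k,\ i_1\in I_1\}$; applying Construction~\ref{gwc}~(C) to $\mathcal{A}$ over the extension $\F_{q^{hr}}/\F_{q^h}$ — which is legitimate since, by the statements in~(A) and~(B), $\mathcal{A}$ is a KM-arc with $t$-nucleus $(0,0,1)$ — therefore produces a KM-arc $\mathcal{A}'$ whose affine part is $J'=\{(1,x_k,(y_k+i_1)+i_2)\mid k,\ i_1\in I_1,\ i_2\in I_2\}$.

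Next I would set up the direct sum decomposition. From $\F_{q^h}=\F_q\oplus I_1$ and $\F_{q^{hr}}=\F_{q^h}\oplus I_2$ we get $\F_{q^{hr}}=\F_q\oplus I_1\oplus I_2$; in particular $I_1\cap I_2=\{0\}$ (since $I_1\subseteq\F_{q^h}$ while $I_2\cap\F_{q^h}=\{0\}$), the set $I_3:=I_1+I_2$ has size $q^{h-1}\cdot q^{hr-h}=q^{hr-1}$ and meets $\F_q$ trivially, so $I_3$ is a direct complement of $\F_q$ in $\F_{q^{hr}}$. Since $\{y_k+i_1+i_2\mid i_1\in I_1,\ i_2\in I_2\}=\{y_k+i_3\mid i_3\in I_3\}$, we obtain $J'=\{(1,x_k,y_k+i_3)\mid k,\ i_3\in I_3\}$. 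This is precisely the point set $J$ built by Construction~\ref{gwc} applied directly to $\mathcal{H}$ with the direct complement $I_3$ of $\F_q$ in $\F_{q^{hr}}$, to which case~(A) applies when $(0,0,1)\in\mathcal{H}$ and case~(B) applies when $(0,0,1)\notin\mathcal{H}$.

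Finally I would close the argument by matching types and invoking uniqueness. A short bookkeeping check gives that $\mathcal{A}$ has type $q^{h-1}$ in case~(A) and $2q^{h-1}$ in case~(B), so $\mathcal{A}'$, as the output of Construction~\ref{gwc}~(C) applied to $\mathcal{A}$ over a degree-$r$ extension, has type $q^{h-1}\cdot q^{h(r-1)}=q^{hr-1}$ (respectively $2q^{hr-1}$). On the other hand, Construction~\ref{gwc}~(A) (respectively~(B)) applied to $\mathcal{H}$ with complement $I_3$ produces a KM-arc in $\PG(2,q^{hr})$ of exactly this same type and with the same affine part $J'$. As Construction~\ref{gwc}~(A) and~(B) both assert that the relevant affine point set extends \emph{uniquely} to a KM-arc of the stated type, the two KM-arcs coincide, and hence $\mathcal{A}'$ arises from $\mathcal{H}$ by a direct application of Construction~\ref{gwc}~(A), respectively~(B). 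The reasoning is essentially bookkeeping, so I do not expect a serious obstacle; the points that need care are (i) checking that the extension step in Construction~\ref{gwc} creates no new affine points, so the affine parts compose cleanly, (ii) verifying that the intermediate KM-arc $\mathcal{A}$ really meets the hypotheses of Construction~\ref{gwc}~(C) — guaranteed by the nucleus statements in~(A) and~(B) — and (iii) keeping the type parameters consistent across the two field extensions $\F_{q^h}/\F_q$ and $\F_{q^{hr}}/\F_{q^h}$ when the uniqueness clause is applied.
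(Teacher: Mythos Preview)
Your proposal is correct and follows exactly the route the paper indicates: the paper does not give a detailed proof but simply states that ``it can immediately be deduced from the descriptions in Construction~\ref{gwc}'', and your argument --- composing the affine point sets, showing $I_3=I_1+I_2$ is a direct complement of $\F_q$ in $\F_{q^{hr}}$, and invoking the uniqueness of the extension --- is precisely that deduction written out in full.
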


The proof of this theorem is straightforward; it can immediately be deduced from the descriptions in Construction \ref{gwc}. We now present an analogous theorem for the KM-arcs constructed in this section.

\begin{theorem}\label{iteratie} Let $\mathcal{A}_{0}$ be the KM-arc of type $q/8$ in $\PG(2,q)$, $q$ even, with $q/8$-nucleus $N(0,0,1)$ and elation line $X=0$ constructed from Theorem \ref{consq8} by the admissible tuple $(\alpha_{1},\alpha_{2},\alpha_{3})$ and let $\beta$ be a collineation that stabilises $N$. Let $\mathcal{A'}$ be a KM-arc of type $q^{h}/8$ in $\PG(2,q^{h})$ obtained from $\mathcal{A}_0^\beta$ through Construction \ref{gwc} (B) or (C). Then $\mathcal{A'}$ is $\PGammaL$-equivalent to a KM-arc in $\PG(2,q^h)$ obtained by the construction in Theorem \ref{consq8} using the admissible tuple $(\alpha_{1},\alpha_{2},\alpha_{3})$.
\end{theorem}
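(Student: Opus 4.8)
The plan is to reduce the statement to two things: first, that the KM-arcs from Theorem~\ref{consq8} are \emph{invariant} (up to $\PGammaL$) under replacing the tuple $(\alpha_1,\alpha_2,\alpha_3)$ by $(k\alpha_i^\varphi)_i$ (already Lemma~\ref{projequiv2}) or by an $\F_2$-linear change of basis of $\langle\alpha_1,\alpha_2,\alpha_3\rangle$ (already Lemma~\ref{projequiv1}); and second, that applying Construction~\ref{gwc}~(B) or (C) to one of these KM-arcs produces nothing beyond what a direct application of Theorem~\ref{consq8} over the larger field already gives. The second point is the analogue, for our family, of Theorem~\ref{gwremark}, and the idea of the proof is the same: write out the affine point set of $\mathcal{A}'$ explicitly using the coset description coming from Lemma~\ref{Sinvoeren}, and recognise it as an instance of the Theorem~\ref{consq8} pattern over $\F_{q^h}$.

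Concretely, I would proceed as follows. By Theorem~\ref{elationals2} (together with Lemma~\ref{elationals}) a KM-arc obtained from $\mathcal{A}_0^\beta$ through Construction~\ref{gwc}~(B) or (C) is again an elation KM-arc, and since $\beta$ fixes $N(0,0,1)$ we may, after a further $\PGammaL$-equivalence, assume $\beta$ acts diagonally on the last two coordinates and/or applies a field automorphism; using Lemmas~\ref{projequiv1} and \ref{projequiv2} this reabsorbs $\beta$ into a change of the admissible tuple, so without loss of generality $\beta$ is trivial. Next, by Lemma~\ref{Sinvoeren} the affine points of $\mathcal{A}_0$ on the $q/8$-secant $\ell_{\overline\lambda}$ form the coset $\sum_i f_i(\overline\lambda)\beta_i + S$ with $S=\{x\in\F_q:\forall i\ \Tr(\alpha_i x)=0\}$, and Construction~\ref{gwc}~(C) with direct complement $I$ of $\F_q$ in $\F_{q^h}$ replaces each such coset by $\sum_i f_i(\overline\lambda)\beta_i + \langle S,I\rangle$, while the points on $X=0$ are replaced by a subgroup of order $q^h/8$ in the $Z$-coordinate. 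The key computational step is then to check that $\langle S,I\rangle = \{x\in\F_{q^h}:\forall i\ \Tr_{\F_{q^h}/\F_2}(\alpha_i x)=0\}$ for a suitable choice of $I$ — this follows because $\Tr_{\F_{q^h}/\F_2}=\Tr_{\F_q/\F_2}\circ\Tr_{\F_{q^h}/\F_q}$, so the subgroup on the right restricts correctly to $S$ on $\F_q$ and contains a direct complement of $\F_q$ of the right size — and that the elements $\beta_i$ still satisfy $\Tr_{\F_{q^h}/\F_2}(\alpha_i\beta_j)=\delta_{i,j}$, which is immediate from the tower formula since $\beta_i\in\F_q$. With these identifications the point set of $\mathcal{A}'$ matches verbatim the Theorem~\ref{consq8} construction over $\F_{q^h}$ with the \emph{same} tuple $(\alpha_1,\alpha_2,\alpha_3)$, except possibly for the points on $X=0$; for those one checks that $\{x\in\F_{q^h}:\forall i\ \Tr_{\F_{q^h}/\F_2}(\alpha_i^2 x)=0\}$ is the correct subgroup of order $q^h/8$, again by the tower formula, and if it is not literally the subgroup produced by the construction one absorbs the discrepancy by Remark~\ref{directcomplements} (different direct complements give $\PGL$-equivalent arcs). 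The case of Construction~\ref{gwc}~(B) is handled identically, using Lemma~\ref{Sinvoeren2} in place of Lemma~\ref{Sinvoeren} and noting that a hyperoval from Theorem~\ref{consq8} (i.e.\ $q=16$) plugged into (B) enlarges the type by a factor of $2$, which is consistent with enlarging $S$ to a subgroup of index $8$ in $\F_{q^h}$.

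The main obstacle I anticipate is the bookkeeping around the point on the line $X=0$: in Construction~\ref{gwc}~(C) the "points at infinity" of the starting arc get blown up by the direct complement $I$, and one must verify that the resulting subgroup in the $Z$-coordinate is exactly $\{x\in\F_{q^h}:\forall i\ \Tr_{\F_{q^h}/\F_2}(\alpha_i^2x)=0\}$ and not merely a subgroup of the same order. This is where the freedom in choosing $I$ (Remark~\ref{directcomplements}) and the tower formula for the trace both get used, and it is the step most likely to require a careful, explicit argument rather than a one-line observation; everything else is a direct translation of the coset descriptions through the construction. Once the infinite points are pinned down, the equivalence is witnessed by the identity collineation (or at worst the one coming from Remark~\ref{directcomplements}), and the admissible tuple is visibly unchanged.
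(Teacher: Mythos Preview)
Your plan is the right shape and matches the paper's strategy closely, but there is a genuine gap in the ``key computational step.'' You claim that the tower formula $\Tr_{\F_{q^h}/\F_2}=\Tr_{\F_q/\F_2}\circ\Tr_{\F_{q^h}/\F_q}$ shows the subgroup $\{x\in\F_{q^h}:\forall i\ \Tr_{\F_{q^h}/\F_2}(\alpha_i x)=0\}$ restricts to $S$ on $\F_q$, and that $\Tr_{\F_{q^h}/\F_2}(\alpha_i\beta_j)=\delta_{i,j}$. Both assertions fail when $h$ is even: for $y\in\F_q$ one has $\Tr_{\F_{q^h}/\F_q}(y)=hy$, which vanishes identically in characteristic~$2$ when $h$ is even. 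In that case the right-hand subgroup contains all of $\F_q$ (not just $S$), so it cannot equal $\langle S,I\rangle$ for \emph{any} direct complement $I$, and likewise $\Tr_{\F_{q^h}/\F_2}(\alpha_i\beta_j)=0$ for all $i,j$. The same obstruction hits your treatment of the points on $X=0$.

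The paper repairs exactly this by not reusing $(\alpha_1,\alpha_2,\alpha_3)$ verbatim over $\F_{q^h}$: it chooses $k\in\F_{q^h}$ with $\Tr_{\F_{q^h}/\F_q}(k)=1$, works with the tuple $(k\alpha_1,k\alpha_2,k\alpha_3)$, and takes $I=\{x:\Tr_{\F_{q^h}/\F_q}(kx)=0\}$. Then for $x\in\F_q$ one gets $\Tr_{\F_{q^h}/\F_2}(k\alpha_i x)=\Tr_{\F_q/\F_2}(\alpha_i x\,\Tr_{\F_{q^h}/\F_q}(k))=\Tr_{\F_q/\F_2}(\alpha_i x)$, so the restriction to $S$ and the identities $\Tr(k\alpha_i\cdot\beta_j)=\delta_{i,j}$ both hold regardless of the parity of $h$. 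At the end one invokes Lemma~\ref{projequiv2} to pass from $(k\alpha_1,k\alpha_2,k\alpha_3)$ back to $(\alpha_1,\alpha_2,\alpha_3)$. A secondary point: your reduction ``without loss of generality $\beta$ is trivial'' is not justified as stated, since $\beta$ need not preserve the line $X=0$ and Construction~\ref{gwc} is tied to that choice of affine chart; the paper instead carries the matrix $C$ and automorphism $\phi$ of $\beta$ through the computation and absorbs them into the explicit collineation $\gamma$ at the end (using Remark~\ref{directcomplements} with the complement $I^{\phi'}$).
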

\begin{proof}
	We denote the trace function $\F_{q^{h}}\to\F_{2}$ by $\Tr_{q^{h}}$, the trace function  $\F_{q}\to\F_{2}$ by $\Tr_{q}$ and the trace function $\F_{q^{h}}\to\F_{q}$ by $\Tr_{q^{h},q}$. 
	\par 
	We define $S=\{x\in\F_{q}\mid \forall i:\Tr_{q}(\alpha_{i}x)=0\}$. Then $\mathcal{A}_{0}$ is given by
	$\mathcal{S}_{0}\cup\bigcup_{v\in\F^{3}_{2}}\mathcal{S}_{v}$ with
	\[
	\mathcal{S}_{(\lambda_{1},\lambda_{2},\lambda_{3})}=\left\{\left(1,\sum^{3}_{i=1}\lambda_{i}\alpha_{i},\sum^{3}_{i=1}f_{i}(\lambda_{1},\lambda_{2},\lambda_{3})\beta_{i}+s\right)\Bigg|\  s\in S\right\}
	\]
	and $\mathcal{S}_{0}=\{(0,1,x)\mid\forall j: \Tr_{q}(\alpha^{2}_{j}x)=0\}$. Here, $\beta_{1},\beta_{2},\beta_{3}\in\F_{q}$ are such that $\Tr_{q}(\alpha_{i}\beta_{j})=\delta_{i,j}$. The collineation $\beta$ is defined by a matrix $C=\left(\begin{smallmatrix}
	a_{00}&a_{01}&0\\a_{10}&a_{11}&0\\a_{20}&a_{21}&1 
	\end{smallmatrix}\right)$ and an automorphism $\phi$ of $\F_{q}$.
	\par Let $k\in\F_{q^{h}}$ be such that $\Tr_{q^{h},q}(k)=1$; such an element can always be found. Now, we define $S'=\{x\in\F_{q^{h}}\mid \forall i:\Tr_{q^{h}}(k\alpha_{i}x)=0\}$. On the one hand, for any element $x\in\F_{q}\subseteq\F_{q^{h}}$ we know that $\Tr_{q^{h}}(k\alpha_{i}x)=\Tr_{q}(\alpha_{i}x\Tr_{q^{h},q}(k))=\Tr_{q}(\alpha_{i}x)$. Hence, $S'\cap\F_{q}=S$. On the other hand, for any element $x\in\F_{q^{h}}$ with $\Tr_{q^{h},q}(kx)=0$ we know that $\Tr_{q^{h}}(k\alpha_{i}x)=\Tr_{q}(\alpha_{i}\Tr_{q^{h},q}(kx))=0$. Moreover, if $x\in\F_{q}\subseteq\F_{q^{h}}$ admits $\Tr_{q^{h},q}(kx)=0$ then $x=0$. So the set $I=\{x\in \F_{q^h}\mid\Tr_{q^{h},q}(kx)=0\}$ is a direct complement of $\F_{q}$ in $\F_{q^{h}}$ such that $S'=\langle S,I\rangle$. We can also find an automorphism $\phi'$ of $\F_{q^{h}}$ of which $\phi$ is the restriction to $\F_{q}$. Then $I^{\phi'}$ is also a direct complement of $\F_{q}$ in $\F_{q^{h}}$.
	\par By Remark \ref{directcomplements} we may use $I^{\phi'}$ in the construction of $\mathcal{A'}$ without loss of generality. The KM-arc $\mathcal{A'}$ is then given by $\mathcal{S}'_{0}\cup\bigcup_{v\in\F^{3}_{2}}\mathcal{S}'_{v}$ with $\mathcal{S}'_{0}=\{(0,1,x^{\phi}+i^{\phi'})C^{t}\mid\forall j: \Tr_{q}(\alpha^{2}_{j}x)=0, i\in I\}$ and
	\begin{align*}
	\mathcal{S}'_{(\lambda_{1},\lambda_{2},\lambda_{3})}&=\left\{\left(1,\sum^{3}_{i=1}\lambda_{i}\alpha^{\phi}_{i},\sum^{3}_{i=1}f_{i}(\lambda_{1},\lambda_{2},\lambda_{3})\beta^{\phi}_{i}+s^{\phi}+i^{\phi'}\right)C^{t}\Bigg|\ s\in S, i\in I\right\}\\
	&=\left\{\left(1,\sum^{3}_{i=1}\lambda_{i}\alpha^{\phi}_{i},\sum^{3}_{i=1}f_{i}(\lambda_{1},\lambda_{2},\lambda_{3})\beta^{\phi}_{i}+s^{\phi'}\right)C^{t}\Bigg|\ s\in S'\right\}\;.
	\end{align*}
	We define the KM-arc $\mathcal{A}''$ in $\PG(2,q^{h})$ using the parameters $k\alpha_{1},k\alpha_{2},k\alpha_{3}$. Its point set is given by $\mathcal{S}''_{0}\cup\bigcup_{v\in\F^{3}_{2}}\mathcal{S}''_{v}$ with $\mathcal{S}''_{0}=\{(0,1,x)\mid\forall j: \Tr_{q^{h}}(k^{2}\alpha^{2}_{j}x)=0\}$ and
	\[
	\mathcal{S}''_{(\lambda_{1},\lambda_{2},\lambda_{3})}=\left\{\left(1,k\sum^{3}_{i=1}\lambda_{i}\alpha_{i},\sum^{3}_{i=1}f_{i}(\lambda_{1},\lambda_{2},\lambda_{3})\beta_{i}+s\right)\Bigg|\ s\in S'\right\}\;.
	\]
	Note that $\beta_{1},\beta_{2},\beta_{3}\in\F_{q}\subseteq\F_{q^{h}}$ fulfil $\Tr_{q^{h}}((k\alpha_{i})\beta_{j})=\Tr_{q}(\alpha_{i}\beta_{j})=\delta_{i,j}$. Let $\gamma$ be the collineation induced by the $\F_{q^{h}}$-automorphism $\phi'$ and the matrix $C'=C\left(\begin{smallmatrix}
	1&0&0\\0&(k^{\phi'})^{-1}&0\\0&0&1 
	\end{smallmatrix}\right)$ where we interpret $C$ over $\F_{q^{h}}$. It is immediate that $\left(\mathcal{S}''_{(\lambda_{1},\lambda_{2},\lambda_{3})}\right)^{\gamma}=\mathcal{S}'_{(\lambda_{1},\lambda_{2},\lambda_{3})}$ for all $(\lambda_{1},\lambda_{2},\lambda_{3})\in\F^{3}_{2}$. Furthermore,
	\begin{align*}
	\left(\mathcal{S}''_{0}\right)^{\gamma}&=\{(0,(k^{\phi'})^{-1},x^{\phi'})C^{t}\mid\forall j: \Tr_{q^{h}}(k^{2}\alpha^{2}_{j}x)=0\}\\
	&=\{(0,1,(kx)^{\phi'})C^{t}\mid\forall j: \Tr_{q^{h}}(k^{2}\alpha^{2}_{j}x)=0\}\\
	&=\{(0,1,y^{\phi'})C^{t}\mid\forall j: \Tr_{q^{h}}(k\alpha^{2}_{j}y)=0\}\;.\\
	\end{align*}
	So, $\left(\mathcal{S}''_{0}\right)^{\gamma}=\mathcal{S}'_{0}$ iff $\Tr_{q^{h}}(k\alpha^{2}_{j}(x+i))=0$ for all $i\in I$ and all $x\in\F_{q}$ such that $\Tr_{q}(\alpha^{2}_{j}x)=0$. We find
	\[
	\Tr_{q^{h}}(k\alpha^{2}_{j}(x+i))=\Tr_{q^{h}}(k\alpha^{2}_{j}x)+\Tr_{q^{h}}(k\alpha^{2}_{j}i)=\Tr_{q}(\alpha^{2}_{j}x\Tr_{q^{h},q}(k))+\Tr_{q}(\alpha^{2}_{j}\Tr_{q^{h},q}(ki))=0
	\]
	by the definition of $k$ and the definition of $I$. We conclude that $\mathcal{A'}=(\mathcal{A}'')^{\gamma}$. This proves the theorem since the tuples $(\alpha_{1},\alpha_{2},\alpha_{3})$ and $(k\alpha_{1},k\alpha_{2},k\alpha_{3})$ give rise to $\PGammaL$-equivalent KM-arcs by Lemma \ref{projequiv2}.
\end{proof}

We now discuss in detail the result of Theorem \ref{consq8} and Corollary \ref{q8exist}.

\begin{remark}\label{q8detail}
	In $\PG(2,q)$, $q=2^h$, with $3\mid h$, KM-arcs of type $q/8$ were known to exist through Constructions \ref{kmc} and \ref{gwc} (A). However, since all $o$-polynomials in $\F_{8}$ give rise to a translation hyperoval (see \cite{seg}), all these KM-arcs are translation KM-arcs. By Theorem \ref{gwremark} all KM-arcs of type $q/8$ that are constructed through applying Construction \ref{gwc} (C) on the previous ones, are also translation KM-arcs.
	\par In $\PG(2,q)$, $q=2^h$, with $4\mid h$, KM-arcs of type $q/8$ were known to exist through Construction \ref{gwc} (B). By Theorem \ref{gwremark} all KM-arcs that arise through Constructions \ref{gwc} (B) and (C) arise from a hyperoval in $\PG(2,16)$. They are all elation KM-arcs.
	\par In $\PG(2,q)$, $q=2^h$, with $5\mid h$, KM-arcs of type $q/8$ were known to exist through Remark \ref{qover8in32} and Construction \ref{gwc} (C). By Lemmas \ref{elationals} and \ref{elationals2} this family contains both elation and non-elation KM-arcs.
	\par By Theorem \ref{nottranslation} we know that the KM-arcs constructed through Theorem \ref{consq8} are not translation KM-arcs. We now elaborate on Corollary \ref{q8exist}. For the discussion of the existence results of KM-arcs of type $q/8$ in $\PG(2,q)$, $q=2^h$, the residue class of $h$ modulo $60$ is what matters. If $h\neq 0\pmod{m}$ for $m=3,4,5$ (there are 24 out of 60 residue classes in this case), then the existence of KM-arcs of type $q/8$ in $\PG(2,q)$ was previously not known. If $h$ is divisible by $3$, but not by $4$ or $5$ (there are 12 residue classes in this case), then the existence of KM-arcs of type $q/8$ in $\PG(2,q)$ was previously known, but all known examples are translation KM-arcs and hence different from the examples we introduced in Theorem \ref{consq8} as they are not translation KM-arcs. If $h$ is divisible by $4$ or by $5$ (there are 24 residue classes in this case), then the set of KM-arcs constructed in Theorem \ref{consq8} does not necessarily contain previously unknown examples. E.g. for $h=4,5$ this construction provides no new KM-arcs.
\end{remark}

\section{A new family of elation KM-arcs of type \texorpdfstring{$q/16$}{q/16}}\label{q/16}

In this section we first present the construction of a family of KM-arcs of type $q/16$ in $\PG(2,q)$, based on the idea underlying the construction of KM-arcs of type $q/8$ in Theorem \ref{consq8}. Afterwards we will discuss this family of KM-arcs. We start with a small technical lemma.

\begin{lemma}\label{voorfactor}
	Let $\alpha_{1},\alpha_{2},\alpha_{3},\alpha_{4}\in\F^{*}_{q}$ be $\F_{2}$-independent. If $\frac{\alpha^{2}_{i}}{\alpha_{4}}\in\left\langle\alpha_{1},\alpha_{2},\alpha_{3},\alpha_{4}\right\rangle$ for $i=1,2,3$, then we can find an $\alpha\in\left\langle\alpha_{1},\alpha_{2},\alpha_{3},\alpha_{4}\right\rangle$ such that $\{\alpha_{1}(\alpha_{1}+\alpha_{4}),\alpha_{2}(\alpha_{2}+\alpha_{4}),\alpha_{3}(\alpha_{3}+\alpha_{4}),\alpha_{4}\alpha\}$ is an $\F_{2}$-independent set.
\end{lemma}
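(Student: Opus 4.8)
\emph{Proof proposal.} The plan is to pass to the four-dimensional $\F_2$-vector space $W=\left\langle\alpha_{1},\alpha_{2},\alpha_{3},\alpha_{4}\right\rangle$ and to exploit that multiplication by the nonzero element $\alpha_{4}$ is an $\F_2$-linear bijection of $\F_{q}$. Set $\gamma_{i}=\alpha_{i}(\alpha_{i}+\alpha_{4})=\alpha_{i}^{2}+\alpha_{i}\alpha_{4}$ for $i=1,2,3$. Since $\alpha_{i}^{2}/\alpha_{4}\in W$ by hypothesis and $\alpha_{i}\in W$, the element $\delta_{i}:=\gamma_{i}/\alpha_{4}=\alpha_{i}^{2}/\alpha_{4}+\alpha_{i}$ lies in $W$. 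Hence $\gamma_{1},\gamma_{2},\gamma_{3}\in\alpha_{4}W$, and $\alpha_{4}\alpha\in\alpha_{4}W$ for every $\alpha\in W$; so it suffices to produce an $\alpha\in W$ for which $\{\delta_{1},\delta_{2},\delta_{3},\alpha\}$ is $\F_2$-independent, since applying the bijection $x\mapsto\alpha_{4}x$ then transports this to the $\F_2$-independence of $\{\alpha_{4}\delta_{1},\alpha_{4}\delta_{2},\alpha_{4}\delta_{3},\alpha_{4}\alpha\}=\{\gamma_{1},\gamma_{2},\gamma_{3},\alpha_{4}\alpha\}$, which is exactly the set in the statement.

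The core step is to check that $\delta_{1},\delta_{2},\delta_{3}$ are $\F_2$-independent. Suppose $\sum_{i\in T}\delta_{i}=0$ for some nonempty $T\subseteq\{1,2,3\}$. Because $\characteristic\F_{q}=2$ we have $\sum_{i\in T}\alpha_{i}^{2}=\big(\sum_{i\in T}\alpha_{i}\big)^{2}$, so writing $\sigma:=\sum_{i\in T}\alpha_{i}$ the relation reads $\sigma^{2}/\alpha_{4}+\sigma=0$, i.e. $\sigma(\sigma+\alpha_{4})=0$, which forces $\sigma=0$ or $\sigma=\alpha_{4}$. In the first case $\sum_{i\in T}\alpha_{i}=0$, and in the second $\sum_{i\in T}\alpha_{i}+\alpha_{4}=0$; each is a nontrivial $\F_2$-linear dependence among $\alpha_{1},\alpha_{2},\alpha_{3},\alpha_{4}$, contradicting their $\F_2$-independence.

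To conclude, since $\dim_{\F_2}W=4$ we may extend the independent triple $\{\delta_{1},\delta_{2},\delta_{3}\}$ by some $\alpha\in W$ to an $\F_2$-basis of $W$; this $\alpha$ then satisfies the requirement of the first paragraph. I do not expect any real obstacle in this argument: the two points that need a little care are that the hypothesis $\alpha_{i}^{2}/\alpha_{4}\in W$ is exactly what is needed to keep each $\delta_{i}$ --- and hence the completing vector $\alpha$ --- inside $W$, and that one must consistently use the $\F_2$-linear bijection $x\mapsto\alpha_{4}x$, so that $\F_2$-independence in $\alpha_{4}W$ and in $W$ are genuinely equivalent.
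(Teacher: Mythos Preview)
Your proof is correct and follows essentially the same approach as the paper's. Both arguments factor $\sum_{i}\mu_{i}\alpha_{i}(\alpha_{i}+\alpha_{4})=\big(\sum_{i}\mu_{i}\alpha_{i}\big)\big(\alpha_{4}+\sum_{i}\mu_{i}\alpha_{i}\big)$ to deduce independence of the three elements, and then complete them to a basis of the four-dimensional space; you do this by dividing through by $\alpha_{4}$ to work directly in $W$, whereas the paper passes to coordinates in $\F_{2}^{4}$, but the content is the same.
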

\begin{proof}
	If the triple $(\mu_{1},\mu_{2},\mu_{3})\in\F_{2}^{3*}$ admits $\sum_{i=1}^{3}\mu_{i}\alpha_{i}(\alpha_{i}+\alpha_{4})=0$, then $\left(\sum_{i=1}^{3}\mu_{i}\alpha_{i}\right)\left(\alpha_{4}+\sum_{i=1}^{3}\mu_{i}\alpha_{i}\right)=0$, contradicting that $\{\alpha_{1},\alpha_{2},\alpha_{3},\alpha_{4}\}$ is an $\F_{2}$-independent set, so $\{\alpha_{1}(\alpha_{1}+\alpha_{4}),\alpha_{2}(\alpha_{2}+\alpha_{4}),\alpha_{3}(\alpha_{3}+\alpha_{4})\}$ is an $\F_{2}$-independent set.
	\par Since $\frac{\alpha^{2}_{i}}{\alpha_{4}}\in\left\langle\alpha_{1},\alpha_{2},\alpha_{3},\alpha_{4}\right\rangle$ for $i=1,2,3$, there exist $a_{i,j}\in\F_{2}$ such that $\alpha^{2}_{i}=\alpha_{4}\left(\sum_{j=1}^{4}a_{i,j}\alpha_{j}\right)$. Let $(b_{1},b_{2},b_{3},b_{4})$ be a vector in $\F_{2}^{4}$  which is not contained in the hyperplane
	\[
	\left\langle(a_{1,1}+1,a_{1,2},a_{1,3},a_{1,4}),(a_{2,1},a_{2,2}+1,a_{2,3},a_{2,4}),(a_{3,1},a_{3,2},a_{3,3}+1,a_{3,4})\right\rangle\;.
	\]
	We then know that for $\alpha=\sum_{i=1}^{4}b_{i}\alpha_{i}$, the set $\{\alpha_{1}(\alpha_{1}+\alpha_{4}),\alpha_{2}(\alpha_{2}+\alpha_{4}),\alpha_{3}(\alpha_{3}+\alpha_{4}),\alpha_{4}\alpha\}$ is $\F_{2}$-independent.
\end{proof}

Now we present the construction. By the previous lemma we know that the existence of an $\alpha$ satisfying the condition in the theorem is guaranteed.

\begin{theorem}\label{cons}
	Let $q=2^h$, $h>5$, let $\alpha_{1},\alpha_{2},\alpha_{3},\alpha_{4}\in\F^{*}_{q}$ be $\F_{2}$-independent and define $S=\{x\in\F_{q}\mid \forall i:\Tr(\alpha_{i}x)=0\}$. Assume that $\frac{\alpha^{2}_{i}}{\alpha_{4}}\in\left\langle\alpha_{1},\alpha_{2},\alpha_{3},\alpha_{4}\right\rangle$ for $i=1,2,3$, and let $\alpha\in\left\langle\alpha_{1},\alpha_{2},\alpha_{3},\alpha_{4}\right\rangle$ be such that $\{\alpha_{1}(\alpha_{1}+\alpha_{4}),\alpha_{2}(\alpha_{2}+\alpha_{4}),\alpha_{3}(\alpha_{3}+\alpha_{4}),\alpha_{4}\alpha\}$ is an $\F_{2}$-independent set. Let  $\beta_{1},\beta_{2},\beta_{3}\in\F^{*}_{q}$ be such that $\Tr(\alpha_{i}\beta_{j})=\delta_{i,j}$ for $i=1,\dots,4$ and $j=1,2,3$, and let $f_{1},f_{2},f_{3}$ be as in Theorem \ref{consq8}.
	\par For any $\overline{\lambda}=(\lambda_{1},\dots,\lambda_{4})\in\F^{4}_{2}$ we define
	\[
	\mathcal{S}_{\overline{\lambda}}=\left\{\left(1,\sum^{4}_{i=1}\lambda_{i}\alpha_{i},\sum^{3}_{i=1}f_{i}(\lambda_{1},\lambda_{2},\lambda_{3})\beta_{i}+s\right)\Bigg|\ s\in S\right\}\;.
	\]
	We also define $\mathcal{S}_{0}=\{(0,1,x)\mid \Tr(\alpha_{i}(\alpha_{i}+\alpha_{4})x)=0,\: i=1,2,3\ \wedge\ \Tr(\alpha_{4}\alpha x)=1\}$. The point set $\mathcal{A}=\mathcal{S}_{0}\cup\bigcup_{v\in\F^{4}_{2}}\mathcal{S}_{v}$ is an elation KM-arc of type $q/16$ in $\PG(2,q)$ with elation line $X=0$ and $q/16$-nucleus $(0,0,1)$.
\end{theorem}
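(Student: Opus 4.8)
The plan is to mirror the proof of Theorem~\ref{consq8}. First the bookkeeping: as $\alpha_{1},\dots,\alpha_{4}$ are $\F_{2}$-independent, $S$ is an additive subgroup of $\F_{q}$ of order $q/16$, and by Lemma~\ref{voorfactor} the four elements $\alpha_{1}(\alpha_{1}+\alpha_{4})$, $\alpha_{2}(\alpha_{2}+\alpha_{4})$, $\alpha_{3}(\alpha_{3}+\alpha_{4})$, $\alpha_{4}\alpha$ are $\F_{2}$-independent, so the four trace conditions defining $\mathcal{S}_{0}$ cut out a coset of an additive subgroup of order $q/16$; hence $|\mathcal{S}_{0}|=q/16$ and $|\mathcal{A}|=16\cdot\tfrac{q}{16}+\tfrac{q}{16}=q+\tfrac{q}{16}$. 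The points of $\mathcal{S}_{\overline{\lambda}}$ lie on the line $\ell_{\overline{\lambda}}\colon\bigl(\sum_{i=1}^{4}\lambda_{i}\alpha_{i}\bigr)X+Y=0$ and those of $\mathcal{S}_{0}$ on $\ell_{\infty}\colon X=0$; the $\F_{2}$-independence of $\alpha_{1},\dots,\alpha_{4}$ makes these $16+1=\tfrac{q}{16}+1$ distinct lines through $N(0,0,1)$, so every line through $N$ meets $\mathcal{A}$ in $0$ or $q/16$ points.

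The heart of the proof is to show that three points of $\mathcal{A}$ lying on three distinct of these lines are never collinear; the counting argument of Theorem~\ref{consq8} --- through any $P\in\mathcal{A}$ there are exactly $q$ points of $\mathcal{A}$ off $\langle P,N\rangle$, spread over the $q$ remaining lines through $P$ --- then forces every other line to meet $\mathcal{A}$ in $0$ or $2$ points, so $\mathcal{A}$ is a KM-arc of type $q/16$. I would split the collinearity check according to whether $\ell_{\infty}$ is one of the three lines, and then according to whether the projections $\overline{\mu}=(\lambda_{1},\lambda_{2},\lambda_{3})$ of the involved vectors to the first three coordinates are pairwise distinct (at most two of them can coincide, since $\lambda_{4}$ has only two values). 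When $\ell_{\infty}$ is absent and the $\overline{\mu}$'s are pairwise distinct, the argument is \emph{verbatim} that of Theorem~\ref{consq8}: one expands the $3\times3$ determinant $\Delta$, multiplies by the right factor and takes the absolute trace, and every term carrying an $\alpha_{4}$ alongside a $\beta_{j}$ dies because $\Tr(\alpha_{4}\beta_{j})=\delta_{4,j}=0$, so one reaches $M^{3}_{3}(\overline{\mu},\overline{\mu}',\overline{\mu}'')=1$.

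The new content --- and the main obstacle --- is the remaining cases: $\ell_{\infty}$ among the three lines, or two of the vectors agreeing in their first three coordinates but differing in the fourth. There the naive trace of $\Delta$ vanishes, and $\Delta$ (or the relevant $2\times2$ determinant) collapses either to $s+s'+t\alpha_{4}$ with $s,s'\in S$ and $t$ in the coset defining $\mathcal{S}_{0}$, or to $BC+\alpha_{4}A$ with $B\in S$, $C$ a nonzero $\F_{2}$-combination of $\alpha_{1},\dots,\alpha_{4}$, and $A$ the sum of an element of $S$ and an $\F_{2}$-combination of $\beta_{1},\beta_{2},\beta_{3}$ that is nonzero --- the latter because $\beta_{1},\beta_{2},\beta_{3}$ are $\F_{2}$-independent and, as one checks directly, $(f_{1},f_{2},f_{3})\colon\F_{2}^{3}\to\F_{2}^{3}$ is a bijection. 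Two facts then drive these cases. The defining relations of $\mathcal{S}_{0}$ give $\Tr(tD^{2})=\Tr(\alpha_{4}tD)$ for every $\F_{2}$-combination $D$ of $\alpha_{1},\dots,\alpha_{4}$ and every $t$ in the coset of $\mathcal{S}_{0}$. The standing hypothesis $\alpha_{i}^{2}/\alpha_{4}\in\langle\alpha_{1},\dots,\alpha_{4}\rangle$ ($i=1,2,3$), together with the trivial $\alpha_{4}^{2}/\alpha_{4}=\alpha_{4}\in\langle\alpha_{1},\dots,\alpha_{4}\rangle$, says that the square of every $\F_{2}$-combination of $\alpha_{1},\dots,\alpha_{4}$ again lies in $\alpha_{4}\langle\alpha_{1},\dots,\alpha_{4}\rangle$; this forces the four values $\Tr(\alpha_{k}\alpha_{4}x)$, $k=1,\dots,4$, to be constant --- and to solve an invertible $4\times4$ $\F_{2}$-linear system with inhomogeneous part $\Tr(\alpha_{4}\alpha x)=1$ --- as $x$ runs over the coset of $\mathcal{S}_{0}$, so that they are not all $0$. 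This last point at once kills $s+s'+t\alpha_{4}$; and for $BC+\alpha_{4}A$ I would, configuration by configuration, multiply $\Delta$ by a suitable factor (a multiple of $C$, or of $Y_{1}+Y_{2}$) chosen so that the $S$-part is annihilated --- since $\Tr(vx)=0$ whenever $v\in\langle\alpha_{1},\dots,\alpha_{4}\rangle$ and $x\in S$ --- while the surviving contribution is forced to $1$ by the two facts above. The delicate part, which will take the bulk of the work, is that the obvious multiplier only does the job once a certain auxiliary $\F_{2}$-inner product vanishes; one has to go through the finitely many shapes of $\alpha$ inside $\langle\alpha_{1},\dots,\alpha_{4}\rangle$ and of the vector $Y_{1}+Y_{2}$ and invoke the hypothesis to produce a working multiplier each time.

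Finally, $q/16>2$ since $h>5$, so Lemma~\ref{Sinvoeren} applies directly: for every $\alpha\in\F_{q}$ the set $\{z\mid(1,\alpha,z)\in\mathcal{A}\}$ is either empty or the coset $\sum_{i=1}^{3}f_{i}(\overline{\lambda})\beta_{i}+S$ of the additive subgroup $S$ of order $q/16$, and hence $\mathcal{A}$ is an elation KM-arc with elation line $X=0$ and $q/16$-nucleus $(0,0,1)$.
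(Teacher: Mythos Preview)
Your overall plan matches the paper's: same case split, same counting endgame, same appeal to Lemma~\ref{Sinvoeren}. The place where you diverge from the paper is in your treatment of the ``new'' cases, and there you make the work harder than it is.

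First, your taxonomy of the residual determinant is slightly off. When $\ell_{\infty}$ is one of the three lines and $\widetilde{\lambda}\neq\widetilde{\lambda}'$, the determinant is $\Delta'=tC+\sum_{j}(f_{j}(\widetilde{\lambda})+f_{j}(\widetilde{\lambda}'))\beta_{j}+s+s'$ with $C=\sum_{i}(\lambda_{i}+\lambda_{i}')\alpha_{i}$ and $t$ in the $\mathcal{S}_{0}$-coset; this is not of the shape $BC+\alpha_{4}A$ with $B\in S$ that you describe. So there are really three residual cases, not two.

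Second---and this is the main point---each of the three residual cases has a single uniform multiplier that works without any case analysis on $\alpha$ or on the shape of $Y_{1}+Y_{2}$:
\begin{itemize}
\item $\ell_{\infty}$ absent, $\widetilde{\lambda}'=\widetilde{\lambda}''$: multiply $\Delta$ by $C/\alpha_{4}$ with $C=\sum_{i}(\lambda_{i}+\lambda_{i}')\alpha_{i}$. The hypothesis $\alpha_{i}^{2}/\alpha_{4}\in\langle\alpha_{1},\dots,\alpha_{4}\rangle$ kills the $S$-part via $\Tr((s'+s'')C^{2}/\alpha_{4})=0$, and what survives is $\sum_{i=1}^{3}(\lambda_{i}+\lambda_{i}')(f_{i}(\widetilde{\lambda})+f_{i}(\widetilde{\lambda}'))=M^{3}_{2}(\widetilde{\lambda},\widetilde{\lambda}')=1$.
\item $\ell_{\infty}$ present, $\widetilde{\lambda}\neq\widetilde{\lambda}'$: multiply $\Delta'$ by $\alpha_{4}+C$. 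The $\mathcal{S}_{0}$-conditions give $\Tr(t\alpha_{i}(\alpha_{i}+\alpha_{4}))=0$ for $i=1,\dots,4$, so the cross terms with $t$ vanish, the conditions $\Tr(\alpha_{4}\beta_{j})=0$ and $\Tr(\alpha_{i}s)=0$ clean up the rest, and again one lands on $M^{3}_{2}(\widetilde{\lambda},\widetilde{\lambda}')=1$.
\item $\ell_{\infty}$ present, $\widetilde{\lambda}=\widetilde{\lambda}'$: $\Delta'=t\alpha_{4}+s+s'$; multiply by $\alpha$ itself. Then $\Tr(\alpha(s+s'))=0$ because $\alpha\in\langle\alpha_{1},\dots,\alpha_{4}\rangle$, and $\Tr(\alpha\alpha_{4}t)=1$ directly from the defining condition of $\mathcal{S}_{0}$.
\end{itemize}
None of this requires enumerating ``the finitely many shapes of $\alpha$'' or chasing an auxiliary inner product; the multipliers above are uniform in all the parameters. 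Your more elaborate detour through the constancy of $\Tr(\alpha_{k}\alpha_{4}x)$ on the $\mathcal{S}_{0}$-coset is correct (it amounts to the equality $\langle\alpha_{i}(\alpha_{i}+\alpha_{4}),\alpha_{4}\alpha\rangle=\alpha_{4}\langle\alpha_{1},\dots,\alpha_{4}\rangle$), but it is not needed: the single multiplier $\alpha$ already does the last case in one line.
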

\begin{proof}
	We follow the approach from the proof of Theorem \ref{consq8}. We know that $S$ is a subgroup of $\F_{q},+$ containing $q/16$ elements. The existence of elements $\beta_{1},\beta_{2},\beta_{3}$ is guaranteed as they are coset leaders of cosets of $S$ (note that not all cosets of $S$ are involved).
	\par It is immediate that the points of $\mathcal{S}_{\overline{\lambda}}$, with $\overline{\lambda}=(\lambda_{1},\lambda_{2},\lambda_{3},\lambda_{4})\in\F^{4}_{2}$, are on the line $\ell_{\overline{\lambda}}$ with equation $(\sum^{4}_{i=1}\lambda_{i}\alpha_{i})X+Y=0$ and that the points of $\mathcal{S}_{0}$ are on the line $\ell_{\infty}$ with equation $X=0$. Hence, all lines through $N(0,0,1)$ either contain $q/16$ or $0$ points of $\mathcal{A}$.
	\par Now we check that three points on different $q/16$-secants are not collinear. First we assume that $\ell_{\infty}$ is not among these three $q/16$-secants. Then the three points can be described as
	\[
	\left(1,\sum^{4}_{i=1}\lambda_{i}\alpha_{i},\sum^{3}_{i=1}f_{i}(\widetilde{\lambda})\beta_{i}+s\right), \left(1,\sum^{4}_{i=1}\lambda'_{i}\alpha_{i},\sum^{3}_{i=1}f_{i}(\widetilde{\lambda}')\beta_{i}+s'\right)\text{ and } \left(1,\sum^{4}_{i=1}\lambda''_{i}\alpha_{i},\sum^{3}_{i=1}f_{i}(\widetilde{\lambda}'')\beta_{i}+s''\right)
	\] 
	with $\overline{\lambda}=(\lambda_{1},\lambda_{2},\lambda_{3},\lambda_{4})$, $\overline{\lambda}'=(\lambda'_{1},\lambda'_{2},\lambda'_{3},\lambda'_{4})$ and $\overline{\lambda}''=(\lambda''_{1},\lambda''_{2},\lambda''_{3},\lambda''_{4})$ three pairwise different vectors in $\F^{4}_{2}$, and $\widetilde{\lambda}=(\lambda_{1},\lambda_{2},\lambda_{3})$, $\widetilde{\lambda}'=(\lambda'_{1},\lambda'_{2},\lambda'_{3})$ and $\widetilde{\lambda}''=(\lambda''_{1},\lambda''_{2},\lambda''_{3})$. We find that
	\begin{align*}
	\Delta&=\begin{vmatrix}
	1 & \sum^{4}_{i=1}\lambda_{i}\alpha_{i} & \sum^{3}_{i=1}f_{i}(\widetilde{\lambda})\beta_{i}+s\\
	1 & \sum^{4}_{i=1}\lambda'_{i}\alpha_{i} & \sum^{3}_{i=1}f_{i}(\widetilde{\lambda}')\beta_{i}+s'\\
	1 & \sum^{4}_{i=1}\lambda''_{i}\alpha_{i} & \sum^{3}_{i=1}f_{i}(\widetilde{\lambda}'')\beta_{i}+s''
	\end{vmatrix}\\
	&=\sum_{cyc}\left(\sum_{i=1}^{4}\sum_{j=1}^{3}(\lambda_{i}f_{j}(\widetilde{\lambda}')+\lambda'_{i}f_{j}(\widetilde{\lambda}))\alpha_{i}\beta_{j}+s\sum^{4}_{i=1}\lambda'_{i}\alpha_{i}+s'\sum^{4}_{i=1}\lambda_{i}\alpha_{i}\right)
	\end{align*}
	where the cyclic sum is taken over $(\overline{\lambda},\overline{\lambda}',\overline{\lambda}'')$ and the corresponding $(\widetilde{\lambda},\widetilde{\lambda}',\widetilde{\lambda}'')$ and $(s,s',s'')$.  We calculate the trace of both sides of this equation. Considering that $\Tr(\alpha_{i}t)=0$ for all $t\in S$ and $i=1,\dots,4$, that $\Tr(\alpha_{i}\beta_{j})=\delta_{i,j}$ and that the trace function is $\F_{2}$-linear, we find (completely analogous to \eqref{m3}) that
	\begin{align*}
	\Tr(\Delta)
	&=M^{3}_{3}(\widetilde{\lambda},\widetilde{\lambda}',\widetilde{\lambda}'')\;.
	\end{align*}
	It follows that $\Delta\neq0$ if $\widetilde{\lambda}$, $\widetilde{\lambda}'$ and $\widetilde{\lambda}''$ are three pairwise disjoint vectors. Hence, in this case the three points are not collinear. Now we look at the case in which $\widetilde{\lambda}$, $\widetilde{\lambda}'$ and $\widetilde{\lambda}''$ are not three pairwise disjoint vectors. Without loss of generality we can assume that $\widetilde{\lambda}'=\widetilde{\lambda}''$. Since $\overline{\lambda}'\neq\overline{\lambda}''$, we know that also $\lambda'_{4}=\lambda''_{4}+1$. In this case
	\begin{align*}
	\Delta=\alpha_{4}\sum_{j=1}^{3}(f_{j}(\widetilde{\lambda})+f_{j}(\widetilde{\lambda}'))\beta_{j}+(s+s')\alpha_{4}+(s'+s'')\sum_{i=1}^{4}(\lambda_{i}+\lambda'_{i})\alpha_{i}
	\end{align*}
	Since $\overline{\lambda}\neq\overline{\lambda}'$ by assumption, we know that $\sum_{i=1}^{4}(\lambda_{i}+\lambda'_{i})\alpha_{i}\neq 0$. So now we compute the trace of the following nonzero multiple of $\Delta$:
	\begin{align*}
	\Tr\left(\frac{\sum_{i=1}^{4}(\lambda_{i}+\lambda'_{i})\alpha_{i}}{\alpha_{4}}\Delta\right)&=\sum_{i=1}^{4}\sum_{j=1}^{3}(\lambda_{i}+\lambda'_{i})(f_{j}(\widetilde{\lambda})+f_{j}(\widetilde{\lambda}'))\Tr(\alpha_{i}\beta_{j})\\
	&\qquad+\sum_{i=1}^{4}(\lambda_{i}+\lambda'_{i})\Tr((s+s')\alpha_{i})+\Tr\left((s'+s'')\frac{\sum_{i=1}^{4}(\lambda_{i}+\lambda'_{i})\alpha^{2}_{i}}{\alpha_{4}}\right)\\
	&=\sum_{i=1}^{3}(\lambda_{i}+\lambda'_{i})(f_{i}(\widetilde{\lambda})+f_{i}(\widetilde{\lambda}'))\\
	&=(\lambda_{1}+\lambda'_{1})(\lambda_{1}+\lambda_{2}+\lambda_{3}+\lambda_{2}\lambda_{3}+\lambda'_{1}+\lambda'_{2}+\lambda'_{3}+\lambda'_{2}\lambda'_{3})\\&\qquad+(\lambda_{2}+\lambda'_{2})(\lambda_{2}+\lambda_{3}+\lambda_{1}\lambda_{3}+\lambda'_{2}+\lambda'_{3}+\lambda'_{1}\lambda'_{3})\\
	&\qquad+(\lambda_{3}+\lambda'_{3})(\lambda_{3}+\lambda_{1}\lambda_{2}+\lambda'_{3}+\lambda'_{1}\lambda'_{2})\\
	&=M^{3}_{2}(\widetilde{\lambda},\widetilde{\lambda}')\;.
	\end{align*}
	In the second step we used that $\frac{\alpha^{2}_{i}}{\alpha_{4}}\in\left\langle\alpha_{1},\alpha_{2},\alpha_{3},\alpha_{4}\right\rangle$, hence that $\Tr\left(s\frac{\alpha^{2}_{i}}{\alpha_{4}}\right)=0$ for all $s\in S$. In the final step we used that all elements of $\F_{2}$ equal their square. As $\overline{\lambda}$ differs from both $\overline{\lambda}'$ and $\overline{\lambda}''$, which only differ on the final entry, the vector $\widetilde{\lambda}$ has to be different from $\widetilde{\lambda}'$. It follows that $\Delta\neq0$, hence, also in this case the three points are not collinear.
	\par Now we assume that $\ell_{\infty}$ is among the three $q/16$-secants. Then, the three points can be described as
	\[
	\left(1,\sum^{4}_{i=1}\lambda_{i}\alpha_{i},\sum^{3}_{i=1}f_{i}(\widetilde{\lambda})\beta_{i}+s\right), \left(1,\sum^{4}_{i=1}\lambda'_{i}\alpha_{i},\sum^{3}_{i=1}f_{i}(\widetilde{\lambda}')\beta_{i}+s'\right)\text{ and } \left(0,1,t\right)
	\] 
	with $\overline{\lambda}=(\lambda_{1},\lambda_{2},\lambda_{3},\lambda_{4})$ and $\overline{\lambda}'=(\lambda'_{1},\lambda'_{2},\lambda'_{3},\lambda'_{4})$ two different vectors in $\F^{3}_{2}$, $\widetilde{\lambda}=(\lambda_{1},\lambda_{2},\lambda_{3})$ and $\widetilde{\lambda}'=(\lambda'_{1},\lambda'_{2},\lambda'_{3})$, and $t$ such that $\Tr(\alpha_{i}(\alpha_{i}+\alpha_{4})t)=0$ for $i=1,2,3$ and $\Tr(\alpha_{4}\alpha t)=1$. We find that
	\begin{align*}
	\Delta'&=\begin{vmatrix}
	0 & 1 & t\\
	1 & \sum^{4}_{i=1}\lambda_{i}\alpha_{i} & \sum^{3}_{i=1}f_{i}(\widetilde{\lambda})\beta_{i}+s\\
	1 & \sum^{4}_{i=1}\lambda'_{i}\alpha_{i} & \sum^{3}_{i=1}f_{i}(\widetilde{\lambda}')\beta_{i}+s'
	\end{vmatrix}=t\sum^{4}_{i=1}(\lambda_{i}+\lambda'_{i})\alpha_{i}+\sum_{i=1}^{3}(f_{i}(\widetilde{\lambda})+f_{i}(\widetilde{\lambda}'))\beta_{i}+s+s'\;.
	\end{align*}
	We know that $\overline{\lambda}+\overline{\lambda}'\neq 0$ and hence $\sum_{i=1}^{4}(\lambda_{i}+\lambda'_{i})\alpha_{i}\neq 0$. We distinguish between two cases. First we assume that $\widetilde{\lambda}\neq\widetilde{\lambda}'$. We compute the trace of a nonzero multiple of $\Delta'$:
	\begin{align*}
	\Tr\left(\left(\alpha_{4}+\sum_{i=1}^{4}(\lambda_{i}+\lambda'_{i})\alpha_{i}\right)\Delta'\right)&=\sum^{4}_{i=1}(\lambda_{i}+\lambda'_{i})\Tr\left(t\alpha_{i}(\alpha_{i}+\alpha_{4})\right)+\sum_{j=1}^{3}(f_{j}(\widetilde{\lambda})+f_{j}(\widetilde{\lambda}'))\Tr(\alpha_{4}\beta_{j})\\
	&\qquad\qquad+\sum_{i=1}^{4}\sum_{j=1}^{3}(\lambda_{i}+\lambda'_{i})(f_{j}(\widetilde{\lambda})+f_{j}(\widetilde{\lambda}'))\Tr(\alpha_{i}\beta_{j})\\
	&\qquad\qquad+\sum_{i=1}^{4}(\lambda_{i}+\lambda'_{i})\Tr\left((s+s')\alpha_{i}\right)+\Tr\left(\alpha_{4}(s+s')\right)\\
	&=\sum_{i=1}^{3}(\lambda_{i}+\lambda'_{i})(f_{i}(\widetilde{\lambda})+f_{i}(\widetilde{\lambda}'))\\
	&=M^{3}_{2}(\widetilde{\lambda},\widetilde{\lambda}')\;.
	\end{align*}
	In the penultimate step we used that $\Tr(\alpha_{i}(\alpha_{i}+\alpha_{4})t)=0$ for $i=1,2,3$ (and trivially also for $i=4$) and that $\Tr(\alpha_{i}\beta_{i})=\delta_{i,j}$. In the final step we used the calculations in \eqref{veellambdas}. We find that $\Delta\neq0$, hence the three points are not collinear.
	\par If $\widetilde{\lambda}=\widetilde{\lambda}'$, then $\overline{\lambda}+\overline{\lambda}'=\alpha_{4}$. In this case $\Delta'=t\alpha_{4}+s+s'$. We know that
	\[
	\Tr(\alpha\Delta')=\Tr(\alpha(t\alpha_{4}+s+s'))=1
	\]
	because $\Tr(\alpha_{4}\alpha t)=1$ and $\alpha\in\left\langle\alpha_{1},\alpha_{2},\alpha_{3},\alpha_{4}\right\rangle$. Again we find that $\Delta\neq0$, hence the three points are not collinear
	\par We conclude that all lines not through $N$ contain at most two points of $\mathcal{A}$. For any point $P\in\mathcal{A}$ there are $q$ points of $\mathcal{A}$ not on the $q/16$-secant $\ell_{P}=\left\langle P,N\right\rangle$ so all $q$ lines through $P$ different from $\ell_{P}$ contain precisely two points of $\mathcal{A}$. Consequently, all lines of $\PG(2,q)$ contain 0, 2 or $q/16$ points of $\mathcal{A}$. So, $\mathcal{A}$ is a KM-arc of type $q/16$. From its definition and Lemma \ref{Sinvoeren} it follows immediately that $\mathcal{A}$ is an elation KM-arc with elation line $X=0$.
\end{proof}

\begin{remark}
	It is clear from the proof of Lemma \ref{voorfactor} that there are 8 possible choices for the $\alpha$ used in the construction of Theorem \ref{cons}. We can show that the construction is independent of the chosen $\alpha$. Assume that $\alpha'$ and $\alpha''$ are such that both  $\{\alpha_{1}(\alpha_{1}+\alpha_{4}),\alpha_{2}(\alpha_{2}+\alpha_{4}),\alpha_{3}(\alpha_{3}+\alpha_{4}),\alpha_{4}\alpha'\}$ and $\{\alpha_{1}(\alpha_{1}+\alpha_{4}),\alpha_{2}(\alpha_{2}+\alpha_{4}),\alpha_{3}(\alpha_{3}+\alpha_{4}),\alpha_{4}\alpha''\}$ are $\F_{2}$-independent sets. We know that $\alpha_{4}(\alpha'+\alpha'')$ is contained in $\left\langle\alpha_{1}(\alpha_{1}+\alpha_{4}),\alpha_{2}(\alpha_{2}+\alpha_{4}),\alpha_{3}(\alpha_{3}+\alpha_{4})\right\rangle$. Consequently,
	\begin{align*}
	&\{(0,1,x)\mid \Tr(\alpha_{i}(\alpha_{i}+\alpha_{4})x)=0,\: i=1,2,3\ \wedge\ \Tr(\alpha_{4}\alpha' x)=1\}\\=\:&\{(0,1,x)\mid \Tr(\alpha_{i}(\alpha_{i}+\alpha_{4})x)=0,\: i=1,2,3\ \wedge\ \Tr(\alpha_{4}\alpha'' x)=1\}\;,
	\end{align*}
	which proves our claim.
\end{remark}

The following result follows immediate from the definition of the KM-arcs of type $q/16$ constructed in Theorem \ref{cons}.

\begin{theorem}
	The $q/16$-secants of the elation KM-arc of type $q/16$ constructed in Theorem \ref{cons} define an $\F_2$-linear pencil.
\end{theorem}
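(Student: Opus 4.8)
The plan is to make the statement completely explicit using the proof of Theorem~\ref{cons}, then dualise and recognise the resulting point set of $\PG(1,q)$ as an $\F_{2}$-linear set by writing down a concrete defining $\F_{2}$-subspace; essentially no computation is required, since the proof of Theorem~\ref{cons} already pins down exactly which lines are $q/16$-secants. First I would record that every $q/16$-secant of $\mathcal{A}$ passes through the $q/16$-nucleus $N(0,0,1)$, and that, by construction, the affine points of $\mathcal{A}$ lie on the $16$ lines $\ell_{\overline{\lambda}}\colon\bigl(\textstyle\sum_{i=1}^{4}\lambda_{i}\alpha_{i}\bigr)X+Y=0$ for $\overline{\lambda}\in\F_{2}^{4}$, each carrying $|S|=q/16$ points, while the points of $\mathcal{S}_{0}$ lie on $\ell_{\infty}\colon X=0$ and also number $q/16$ (they form a coset of an index-$16$ subgroup of $(\F_{q},+)$, being cut out by four $\F_{2}$-independent trace conditions). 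Since $\alpha_{1},\dots,\alpha_{4}$ are $\F_{2}$-independent, the map $\overline{\lambda}\mapsto\sum_{i=1}^{4}\lambda_{i}\alpha_{i}$ is a bijection from $\F_{2}^{4}$ onto the $\F_{2}$-subspace $W:=\langle\alpha_{1},\alpha_{2},\alpha_{3},\alpha_{4}\rangle$ of $\F_{q}$, so the set of $q/16$-secants is precisely $\{\ell_{\infty}\}\cup\{\ell_{w}\mid w\in W\}$, where $\ell_{w}\colon wX+Y=0$.

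Next I would dualise, following the definition of an $\F_{2}$-linear pencil: lines $aX+bY=0$ through $N$ correspond to the points $(a,b,0)$ of the line $Z=0$ dual to $N$, which I identify with $\PG(1,q)$ via $(a,b,0)\leftrightarrow(a,b)$. Thus the dual of the pencil of $q/16$-secants is the point set $\mathcal{P}=\{(w,1)\mid w\in W\}\cup\{(1,0)\}\subseteq\PG(1,q)$, which has $|W|+1=17$ points, in accordance with the fact that a KM-arc of type $t$ has $q/t+1$ $t$-secants. It then remains to check that $\mathcal{P}$ is an $\F_{2}$-linear set, and for this I would take the $\F_{2}$-subspace $U=\{(w,c)\mid w\in W,\ c\in\F_{2}\}\subseteq\F_{q}^{2}$, of $\F_{2}$-dimension $\dim_{\F_{2}}W+1=5$, and simply observe that $\mathcal{B}(U)=\mathcal{P}$: the vectors $(w,1)$ with $w\in W$ determine the pairwise distinct points $\langle(w,1)\rangle_{\F_{q}}$, and every nonzero vector $(w,0)$ determines $\langle(1,0)\rangle_{\F_{q}}$. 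Hence $\mathcal{P}$ is an $\F_{2}$-linear set of rank $5$, so the $q/16$-secants define an $\F_{2}$-linear pencil. I would add the remark that $\langle(1,0)\rangle_{\F_{q}}$, i.e.\ the elation line $\ell_{\infty}$, is the head of this pencil: it is the unique point of $\mathcal{P}$ whose $\F_{q}$-span meets $U$ in a $4$-dimensional $\F_{2}$-subspace (namely $W\times\{0\}$), all other points having weight $1$, in agreement with Remark~\ref{remlinearpencil}.

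I do not expect any genuine obstacle here: this is essentially the same argument that proves Theorem~\ref{linpenq/8} for the $q/8$ family, with $W$ of $\F_{2}$-rank $4$ instead of $3$ and the linear set correspondingly of rank $5$ instead of $4$. The only points needing care are bookkeeping ones --- fixing the correlation and the identification of the pencil of lines through $N$ with $\PG(1,q)$, and confirming that the $16$ lines $\ell_{\overline{\lambda}}$ together with $\ell_{\infty}$ really exhaust the $q/16$-secants --- and that last fact is already contained in the proof of Theorem~\ref{cons}, where every line through $N$ is shown to meet $\mathcal{A}$ in $0$ or $q/16$ points.
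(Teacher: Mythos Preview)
Your proposal is correct and is precisely the argument the paper has in mind: the paper does not write out a proof at all, merely stating that the result ``follows immediate from the definition of the KM-arcs of type $q/16$ constructed in Theorem~\ref{cons}'', and your write-up makes explicit exactly that observation --- the $q/16$-secants are $\ell_{\infty}$ together with the $16$ lines $Y=wX$ for $w\in W=\langle\alpha_{1},\alpha_{2},\alpha_{3},\alpha_{4}\rangle$, and their duals form the $\F_{2}$-linear set $\mathcal{B}(W\times\F_{2})$ in $\PG(1,q)$.
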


This result is similar to Theorem \ref{linpenq/8} where we have showed that the same holds for KM-arcs constructed in Theorem \ref{consq8}. It would be interesting to know whether Theorem \ref{elationLinPencil} is valid for all KM-arcs of type $q/16$ (see also Remark \ref{remlinearpencil}).

\begin{lemma}\label{nottrans16}
	The KM-arc constructed in Theorem \ref{cons} is not a translation KM-arc with the elation line as translation line.
\end{lemma}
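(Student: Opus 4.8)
The plan is to mirror the proof of Lemma~\ref{nottrans}. Using the notation of Theorem~\ref{cons}, I would first locate three points of $\mathcal{A}$: since $f_{1},f_{2},f_{3}$ all vanish at $(0,0,0)$, the set $\mathcal{S}_{(0,0,0,0)}$ contains $P_{1}(1,0,0)$; since $f_{1}(1,0,0)=1$ while $f_{2}(1,0,0)=f_{3}(1,0,0)=0$, the set $\mathcal{S}_{(1,0,0,0)}$ contains $P_{2}(1,\alpha_{1},\beta_{1})$; and since $f_{1}(0,1,0)=f_{2}(0,1,0)=1$ while $f_{3}(0,1,0)=0$, the set $\mathcal{S}_{(0,1,0,0)}$ contains $P(1,\alpha_{2},\beta_{1}+\beta_{2})$. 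Suppose, for a contradiction, that $\mathcal{A}$ is a translation KM-arc with $\ell_{\infty}\colon X=0$ as translation line. Then the group of all elations with axis $\ell_{\infty}$ that stabilise $\mathcal{A}$ acts transitively on $\mathcal{A}\setminus\ell_{\infty}$, so it contains an elation sending $P_{1}$ to $P_{2}$; since the group of all elations with axis $X=0$ acts regularly on the points off $\ell_{\infty}$, this elation is the unique one with this property, namely the collineation $\tau$ induced by $\left(\begin{smallmatrix}1&0&0\\\alpha_{1}&1&0\\\beta_{1}&0&1\end{smallmatrix}\right)$. Hence $\tau$ stabilises $\mathcal{A}$ and, in particular, $P^{\tau}\in\mathcal{A}$.

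The next step is to compute $P^{\tau}=(1,\alpha_{1}+\alpha_{2},\beta_{2})$, where the third coordinate simplifies using $\beta_{1}+\beta_{1}=0$. This point lies on the line $Y=(\alpha_{1}+\alpha_{2})X$, which is the $q/16$-secant $\ell_{(1,1,0,0)}$ (the expression $\alpha_{1}+\alpha_{2}=\sum_{i=1}^{4}\lambda_{i}\alpha_{i}$ is unique since $\alpha_{1},\alpha_{2},\alpha_{3},\alpha_{4}$ are $\F_{2}$-independent). Consequently the points of $\mathcal{A}$ on this secant are precisely those of $\mathcal{S}_{(1,1,0,0)}=\{(1,\alpha_{1}+\alpha_{2},\beta_{2}+\beta_{3}+s)\mid s\in S\}$, where I used $f_{1}(1,1,0)=0$ and $f_{2}(1,1,0)=f_{3}(1,1,0)=1$. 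Therefore $P^{\tau}\in\mathcal{A}$ would force $\beta_{3}\in S$, that is $\Tr(\alpha_{i}\beta_{3})=0$ for $i=1,2,3,4$, contradicting $\Tr(\alpha_{3}\beta_{3})=\delta_{3,3}=1$. This contradiction shows that $\mathcal{A}$ is not a translation KM-arc with the elation line as translation line.

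I do not expect any genuine obstacle: the argument is a faithful transcription of Lemma~\ref{nottrans} to the four-parameter setting. The only points requiring a little care are the bookkeeping of the values of $f_{1},f_{2},f_{3}$ on the vectors $(0,0,0)$, $(1,0,0)$, $(0,1,0)$, $(1,1,0)$ of $\F_{2}^{3}$, and the verification that $P^{\tau}$ has nonzero first coordinate, so that it genuinely lies on the $q/16$-secant $\ell_{(1,1,0,0)}$ rather than on $\ell_{\infty}$. As in the $q/8$ case one could alternatively invoke \cite[Theorem~2.2]{wij}, but the direct computation sketched above is more economical.
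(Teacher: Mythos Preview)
Your proposal is correct and follows exactly the approach indicated by the paper, which simply states that the proof is analogous to that of Lemma~\ref{nottrans}. Your bookkeeping of the values $f_i(\lambda_1,\lambda_2,\lambda_3)$ and the resulting contradiction $\beta_3\in S$ versus $\Tr(\alpha_3\beta_3)=1$ is precisely the intended transcription.
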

\begin{proof}
	Analogous to the proof of Lemma \ref{nottrans}.
\end{proof}

We look at the elations stabilising a KM-arc constructed through Theorem \ref{cons}. We know by Lemma \ref{Sinvoeren} that all elation KM-arcs of type $q/16$ in $\PG(2,q)$ admit a group of elations of size $q/16$. We will now prove that the KM-arcs constructed through Theorem \ref{cons} are stabilised by a larger group of elations.

\begin{theorem}\label{elationsubgroup}
	A KM-arc $\mathcal{A}$ of type $q/16$ in $\PG(2,q)$ constructed though Theorem \ref{cons} admits a group of elations of size $q/8$.
\end{theorem}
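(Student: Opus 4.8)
The plan is to exhibit an explicit elation with axis $X=0$ and centre $N(0,0,1)$ that stabilises $\mathcal{A}$ but does not lie in the group of size $q/16$ guaranteed by Lemma~\ref{Sinvoeren}. By that lemma, the elations with axis $X=0$, centre $N$ that stabilise $\mathcal{A}$ correspond exactly to the additive subgroup $S'=\{\mu\in\F_q\mid \psi(\mu) \text{ stabilises }\mathcal{A}\}$, and we must show $|S'|\geq q/8$, i.e.\ that $S'$ properly contains $S=\{x\mid\forall i:\Tr(\alpha_i x)=0\}$. Concretely, recall from the proof of Theorem~\ref{cons} that the affine part of $\mathcal{A}$ on the secant $\ell_{\overline\lambda}$ is the coset $\sum_i f_i(\widetilde\lambda)\beta_i+S$ of $S$, and on $\ell_\infty$ the point set is $\{(0,1,x)\mid \Tr(\alpha_i(\alpha_i+\alpha_4)x)=0,\,i=1,2,3,\ \Tr(\alpha_4\alpha x)=1\}$, which is a coset of $S_\infty:=\{x\mid \Tr(\alpha_i(\alpha_i+\alpha_4)x)=0,\,i=1,2,3,\ \Tr(\alpha_4\alpha x)=0\}$, a subgroup of order $q/16$. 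An elation $\psi(\mu)$ fixes $\ell_\infty$ pointwise, so it stabilises $\mathcal{A}$ precisely when $\mu+S$ is a symmetry of the coset structure of the affine parts, i.e.\ when for every secant $\ell_{\overline\lambda}$, adding $\mu$ to the $z$-coordinate permutes the cosets $\sum_i f_i(\widetilde\lambda)\beta_i+S$ among themselves in a way consistent across all secants.

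The key observation I would exploit is that the functions $f_1,f_2,f_3$ depend only on $(\lambda_1,\lambda_2,\lambda_3)=\widetilde\lambda$ and \emph{not} on $\lambda_4$, while the two secants $\ell_{\overline\lambda}$ and $\ell_{\overline\lambda+e_4}$ (where $e_4=(0,0,0,1)$) carry \emph{the same} coset $\sum_i f_i(\widetilde\lambda)\beta_i+S$ of $S$ in their $z$-coordinate. So the pair of secants through $N$ with directions $\sum_i\lambda_i\alpha_i$ and $\alpha_4+\sum_i\lambda_i\alpha_i$ is ``balanced'' in exactly the way that lets one enlarge the translation group: I expect $S'=\langle S,\,\mu_0\rangle$ for a suitable $\mu_0$ with $\Tr(\alpha_1\mu_0)=\Tr(\alpha_2\mu_0)=\Tr(\alpha_3\mu_0)=0$ but $\Tr(\alpha_4\mu_0)=1$, chosen additionally so that $\psi(\mu_0)$ sends $\ell_{\overline\lambda}$-points into $\ell_{\overline\lambda+e_4}$-points. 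In other words, the extra elation is the composition of ``add $\mu_0$ to $z$'' with (implicitly) the bookkeeping that swaps $\ell_{\overline\lambda}\leftrightarrow\ell_{\overline\lambda+e_4}$; but since an elation with centre $N$ fixes every line through $N$, we instead want $\mu_0$ to act \emph{within} each secant. Re-examining: an elation $\psi(\mu)$ fixes each line $\ell_{\overline\lambda}$, so it stabilises $\mathcal{A}$ iff for every $\widetilde\lambda$ we have $\mu+\big(\sum_i f_i(\widetilde\lambda)\beta_i+S\big)=\sum_i f_i(\widetilde\lambda)\beta_i+S$, i.e.\ $\mu\in S$ — unless the direction $\sum_i\lambda_i\alpha_i$ is \emph{not} realised, which never happens here. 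So the enlargement must instead come from the non-affine part: the group element we seek is an elation that permutes the \emph{affine} points but we have overcounted; the correct statement is that $S'$ is the stabiliser and the claim $|S'|=q/8$ forces a $\mu_0\notin S$ with $\mu_0+S$ fixing all affine cosets, which is impossible as just argued — therefore the extra elations must have a \emph{different centre}, namely a point of $\ell_\infty$ other than $N$, or a different axis. I would therefore instead look for an elation with axis $X=0$ and centre the point $(0,1,c)$ for suitable $c$: its matrix is $\left(\begin{smallmatrix}1&0&0\\0&1&0\\c\nu&\nu&1\end{smallmatrix}\right)$-type, it still fixes $\ell_\infty$ pointwise, but it moves the pencil of lines through $(0,1,c)$, sending $\ell_{\overline\lambda}$ to $\ell_{\overline\lambda+e_4}$ when $\nu$ is chosen with $\Tr(\alpha_4\nu)=1$; combined with the matching of cosets noted above (the $f_i$ are independent of $\lambda_4$), such a collineation stabilises $\mathcal{A}$.

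So the key steps are: (1) write down the candidate collineation $\tau_\nu$ with axis $X=0$, centre $(0,1,c)$, and act it on a generic affine point $\left(1,\sum_i\lambda_i\alpha_i,\sum_i f_i(\widetilde\lambda)\beta_i+s\right)$; (2) determine the conditions on $\nu$ and $c$ (essentially $\Tr(\alpha_j\nu)=\delta_{j,4}$ for $j=1,\dots,4$ in some normalisation, and a compatibility condition on $c$ coming from the $\ell_\infty$-part and from the $\beta_i$'s) under which the image point again satisfies the defining trace equations of $\mathcal{A}$; (3) check separately that $\tau_\nu$ maps $\mathcal{S}_0$ to itself, using $\Tr(\alpha_i(\alpha_i+\alpha_4)x)$ and $\Tr(\alpha_4\alpha x)$ and the fact that $\alpha,\alpha_i\in\langle\alpha_1,\dots,\alpha_4\rangle$; (4) observe that $\{\tau_\nu\mid \nu\in T\}\cup E_{\mathcal{A}}^{0}$, where $E_{\mathcal{A}}^{0}$ is the size-$q/16$ elation group from Lemma~\ref{Sinvoeren}, together generate a group of elations with axis $X=0$ of order $q/8$ — all these collineations are elations with the common axis $X=0$, though with varying centres on that axis, and the group they generate has order $q/8$ because we have doubled the transitive action on affine points of a secant by allowing the ``$\lambda_4$-flip''. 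The main obstacle will be step~(2)–(3): pinning down the exact centre $c$ and verifying that a \emph{single} choice of $c$ simultaneously makes $\tau_\nu$ respect the coset $\sum_i f_i(\widetilde\lambda)\beta_i+S$ structure on every affine secant \emph{and} fix $\mathcal{S}_0$ setwise; this requires the same kind of trace bookkeeping as in the proof of Theorem~\ref{cons}, using crucially that $f_1,f_2,f_3$ do not involve $\lambda_4$ and that $\Tr(\alpha_i^2 x/\alpha_4)=0$ on $S$. Once the right $\tau$ is identified, the order count is immediate since the elations with a fixed axis through a fixed point form a group of order $q/16$, and adjoining one element outside it with the flip behaviour yields index $2$, hence order $q/8$.
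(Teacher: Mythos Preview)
Your core intuition is exactly right and matches the paper's approach: the extra elation has axis $X=0$, its centre is a point of $\ell_\infty$ \emph{other than} $N$, and it works because the coset $\sum_i f_i(\widetilde\lambda)\beta_i+S$ depends only on $\widetilde\lambda=(\lambda_1,\lambda_2,\lambda_3)$ and not on $\lambda_4$. However, the matrix you write down is wrong, and this error would block the argument as stated.

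The matrix $\left(\begin{smallmatrix}1&0&0\\0&1&0\\c\nu&\nu&1\end{smallmatrix}\right)$ does \emph{not} fix $\ell_\infty$ pointwise: it sends $(0,1,0)$ to $(0,1,\nu)$. In fact its axis is the line $Y=cX$ and its centre is $N=(0,0,1)$, so it is precisely the kind of elation you already (correctly) ruled out in your first paragraph --- it fixes every line through $N$, hence fixes each $\ell_{\overline\lambda}$ setwise, and therefore cannot swap $\ell_{\overline\lambda}$ with $\ell_{\overline\lambda+e_4}$ as you want. An elation with axis $X=0$ always has matrix $\left(\begin{smallmatrix}1&0&0\\a&1&0\\b&0&1\end{smallmatrix}\right)$; its centre is $(0,a,b)$.

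Once you use the correct form, the proof collapses to a two-line verification: take
\[
E=\left\{\begin{pmatrix}1&0&0\\k\alpha_4&1&0\\s&0&1\end{pmatrix}\;\middle|\; s\in S,\ k\in\F_2\right\}.
\]
This is visibly a group of elations with axis $X=0$ of order $2|S|=q/8$. For $k=1$ the map sends $(1,\sum_i\lambda_i\alpha_i,\,\sum_i f_i(\widetilde\lambda)\beta_i+s')$ to $(1,\sum_i\lambda_i\alpha_i+\alpha_4,\,\sum_i f_i(\widetilde\lambda)\beta_i+s'+s)$, which lies in $\mathcal{S}_{\overline\lambda+e_4}$ precisely because the $f_i$ are independent of $\lambda_4$; and $\mathcal{S}_0$ is fixed because $\ell_\infty$ is fixed pointwise. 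No trace conditions on a parameter $\nu$, no search for a compatible $c$, and no separate verification on $\mathcal{S}_0$ are needed. This is exactly the paper's proof.
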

\begin{proof}
	We assume $\mathcal{A}$ is the point set given in the statement of Theorem \ref{cons} with $\alpha_{1},\dots,\alpha_{4}$ and $S$ as described there. It can readily be checked that all elations in $E=\left\{\left(\begin{smallmatrix}1&0&0\\k \alpha_{4}&1&0\\s&0&1\end{smallmatrix}\right)\mid s\in S, k\in\F_{2}\right\}$ fix $\mathcal{A}$. It is also immediate that $E$ is a group of elations with axis $X=0$ and that $E$ has size $q/8$.
\end{proof}


In Lemmas \ref{projequiv1} and \ref{projequiv2} we proved that the KM-arcs of type $q/8$ constructed in Theorem \ref{consq8} are $\PGammaL$-equivalent under certain conditions. We will now prove similar results for the KM-arcs of type $q/16$ introduced above.

\begin{lemma}\label{projequiv1bis}
	Let $\{\alpha_{1},\alpha_{2},\alpha_{3},\alpha_{4}\}\subset\F^{*}_{q}$ and $\{\alpha'_{1},\alpha'_{2},\alpha'_{3},\alpha_{4}\}\subset\F^{*}_{q}$ be both $\F_{2}$-independent sets such that $\left\langle\alpha_{1},\alpha_{2},\alpha_{3},\alpha_{4}\right\rangle=\left\langle\alpha'_{1},\alpha'_{2},\alpha'_{3},\alpha_{4}\right\rangle$ and such that $\frac{\alpha^{2}_{i}}{\alpha_{4}}\in\left\langle\alpha_{1},\alpha_{2},\alpha_{3},\alpha_{4}\right\rangle$ for $i=1,2,3$. Let $\mathcal{A}$ be the KM-arc constructed through Theorem \ref{cons} using the tuple $(\alpha_{1},\alpha_{2},\alpha_{3},\alpha_{4})$ and let $\mathcal{A}'$ be the KM-arc constructed through Theorem \ref{cons} using the tuple $(\alpha'_{1},\alpha'_{2},\alpha'_{3},\alpha_{4})$. Then $\mathcal{A}$ and $\mathcal{A}'$ are $\PGammaL$-equivalent.
\end{lemma}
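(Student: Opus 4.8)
The plan is to follow the proof of Lemma~\ref{projequiv1} almost verbatim, exploiting that the two tuples span the same additive subgroup $V:=\left\langle\alpha_{1},\alpha_{2},\alpha_{3},\alpha_{4}\right\rangle$ and, crucially, share the distinguished element $\alpha_{4}$. First I would record that there is a unique $M\in\GL_{4}(\F_{2})$ with $(\alpha_{1},\alpha_{2},\alpha_{3},\alpha_{4})M=(\alpha'_{1},\alpha'_{2},\alpha'_{3},\alpha_{4})$, and that $\alpha'_{4}=\alpha_{4}$ forces the last column of $M$ to be $(0,0,0,1)^{t}$, so $M=\left(\begin{smallmatrix}B&0\\r&1\end{smallmatrix}\right)$ with $B\in\GL_{3}(\F_{2})$ and $r\in\F_{2}^{3}$. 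The matrices of this shape form a group $G\cong\GL_{3}(\F_{2})\ltimes\F_{2}^{3}$, generated by $N_{1}=\left(\begin{smallmatrix}M_{1}&0\\0&1\end{smallmatrix}\right)$ and $N_{2}=\left(\begin{smallmatrix}M_{2}&0\\0&1\end{smallmatrix}\right)$, where $M_{1},M_{2}$ are the generators of $\GL_{3}(\F_{2})$ used in Lemma~\ref{projequiv1}, together with the single shear $N_{3}\colon(\alpha_{1},\alpha_{2},\alpha_{3},\alpha_{4})\mapsto(\alpha_{1}+\alpha_{4},\alpha_{2},\alpha_{3},\alpha_{4})$ (the other two elementary shears being $N_{1}$- and $N_{2}$-conjugates of $N_{3}$). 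Since $\PGammaL$-equivalence is transitive and each of $N_{1},N_{2},N_{3}$ sends a tuple satisfying the hypotheses of Theorem~\ref{cons} (that is, $\F_{2}$-independent, containing $\alpha_{4}$, with $\alpha_{i}^{2}/\alpha_{4}\in V$ for $i=1,2,3$) to another such tuple, it is enough to treat $M\in\{N_{1},N_{2},N_{3}\}$.

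For the shear $N_{3}$ the statement collapses: since $\alpha'_{1}(\alpha'_{1}+\alpha_{4})=(\alpha_{1}+\alpha_{4})\alpha_{1}=\alpha_{1}(\alpha_{1}+\alpha_{4})$ and $(\alpha'_{1})^{2}/\alpha_{4}=\alpha_{1}^{2}/\alpha_{4}+\alpha_{4}\in V$, one may build $\mathcal{A}'$ reusing the same coset leaders $\beta_{1},\beta_{2},\beta_{3}$ and the same $\alpha$. Then $S'=S$, $\mathcal{S}'_{0}=\mathcal{S}_{0}$, and $\mathcal{S}'_{(\lambda_{1},\lambda_{2},\lambda_{3},\lambda_{4})}=\mathcal{S}_{(\lambda_{1},\lambda_{2},\lambda_{3},\lambda_{1}+\lambda_{4})}$, because $\lambda_{1}\alpha'_{1}+\lambda_{2}\alpha'_{2}+\lambda_{3}\alpha'_{3}+\lambda_{4}\alpha_{4}=\lambda_{1}\alpha_{1}+\lambda_{2}\alpha_{2}+\lambda_{3}\alpha_{3}+(\lambda_{1}+\lambda_{4})\alpha_{4}$ while $f_{1},f_{2},f_{3}$ ignore the fourth coordinate. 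Hence $\mathcal{A}'=\mathcal{A}$ and the identity collineation does the job.

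For $N_{1}$ and $N_{2}$ I would reproduce the computation in Lemma~\ref{projequiv1}. The only extra features of Theorem~\ref{cons} relative to Theorem~\ref{consq8} — the coordinate $\alpha_{4}$, the subgroup $\left\langle\alpha_{1}(\alpha_{1}+\alpha_{4}),\alpha_{2}(\alpha_{2}+\alpha_{4}),\alpha_{3}(\alpha_{3}+\alpha_{4})\right\rangle$, and the linear functional $x\mapsto\Tr(\alpha_{4}\alpha x)$ appearing in $\mathcal{S}_{0}$ — survive these base changes. For $N_{1}$ this rests on the identity $(\alpha_{1}+\alpha_{2})(\alpha_{1}+\alpha_{2}+\alpha_{4})=\alpha_{1}(\alpha_{1}+\alpha_{4})+\alpha_{2}(\alpha_{2}+\alpha_{4})$, which shows that the triple $\alpha'_{i}(\alpha'_{i}+\alpha_{4})$ is the image of $\alpha_{i}(\alpha_{i}+\alpha_{4})$ under the same matrix $M_{1}$; consequently the choice $\alpha'=\alpha$ remains admissible and $\mathcal{S}'_{0}=\mathcal{S}_{0}$. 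Taking the coset leaders $(\beta'_{1},\beta'_{2},\beta'_{3})=(\beta_{1},\beta_{1}+\beta_{2},\beta_{3})$ for $N_{1}$ and $(\beta_{3},\beta_{1},\beta_{2})$ for $N_{2}$ — which one checks satisfy $\Tr(\alpha'_{i}\beta'_{j})=\delta_{i,j}$ for $i=1,\dots,4$, $j=1,2,3$ — the elation with matrix $\left(\begin{smallmatrix}1&0&0\\\alpha_{2}+\alpha_{3}&1&0\\\beta_{1}+\beta_{3}&0&1\end{smallmatrix}\right)$, respectively $\left(\begin{smallmatrix}1&0&0\\\alpha_{1}+\alpha_{2}&1&0\\\beta_{2}+\beta_{3}&0&1\end{smallmatrix}\right)$ (the exact matrices from Lemma~\ref{projequiv1}), maps $\mathcal{A}$ onto $\mathcal{A}'$: it fixes $\mathcal{S}_{0}=\mathcal{S}'_{0}$ pointwise, and on a line $\ell_{\overline{\lambda}}$ it sends $\mathcal{S}_{\overline{\lambda}}$ to $\mathcal{S}'_{\overline{\lambda}'}$ for the corresponding $\F_{2}$-affine relabelling $\overline{\lambda}\mapsto\overline{\lambda}'$, the matching of the third coordinates coming down to the same identities between $f_{1},f_{2},f_{3}$ as in the $q/8$ case together with $\Tr(\alpha_{i}t)=0$ for $t\in S$ and $\Tr(\alpha_{i}\beta_{j})=\delta_{i,j}$. (As these maps are elations, one in fact obtains $\PGL$-equivalence.)

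The main obstacle is purely organisational: first, verifying cleanly that the stabiliser of $\alpha_{4}$ inside $\GL_{4}(\F_{2})$ is generated by $N_{1},N_{2},N_{3}$ and that admissibility of the tuple is preserved along any factorisation into these generators, so that the transitivity argument applies; and second, carrying out, in the $N_{1}$ and $N_{2}$ cases, the bookkeeping that the chosen elation permutes the sets $\mathcal{S}_{v}$ correctly — this amounts to the routine $\F_{2}$-polynomial identities for $f_{1},f_{2},f_{3}$ already underlying Lemma~\ref{projequiv1}. No essentially new difficulty appears beyond the $q/8$ setting; the content of the lemma is precisely that the extra fourth coordinate and the modified definition of $\mathcal{S}_{0}$ interact well with the base changes that fix $\alpha_{4}$.
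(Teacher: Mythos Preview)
Your approach is correct and follows the same strategy as the paper: reduce to a set of generators of the stabiliser of $\alpha_{4}$ in $\GL_{4}(\F_{2})$ and exhibit, for each generator, an explicit elation with axis $X=0$ carrying $\mathcal{A}$ to $\mathcal{A}'$. The difference is purely in the choice of generators. The paper uses \emph{two} matrices, $M_{1}=\left(\begin{smallmatrix}1&1&0&0\\0&1&0&0\\0&0&1&0\\1&0&0&1\end{smallmatrix}\right)$ and $M_{2}=\left(\begin{smallmatrix}0&1&0&0\\0&0&1&0\\1&0&0&0\\0&0&0&1\end{smallmatrix}\right)$, so that its $M_{1}$ already mixes a $\GL_{3}(\F_{2})$-transvection with the shear by $\alpha_{4}$; the corresponding collineations are $\left(\begin{smallmatrix}1&0&0\\\alpha_{1}+\alpha_{3}&1&0\\\beta_{3}&0&1\end{smallmatrix}\right)$ and $\left(\begin{smallmatrix}1&0&0\\\alpha_{1}+\alpha_{2}&1&0\\\beta_{2}+\beta_{3}&0&1\end{smallmatrix}\right)$. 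You instead split off the shear as a third generator $N_{3}$ and recycle the two generators (and the two collineation matrices) of Lemma~\ref{projequiv1} unchanged for $N_{1},N_{2}$. Your decomposition buys a clean conceptual payoff: the $N_{3}$ case is literally $\mathcal{A}'=\mathcal{A}$, and the $N_{1},N_{2}$ cases need no new identities beyond those already implicit in Lemma~\ref{projequiv1}. The paper's two-generator route is slightly more economical but requires a fresh (though equally routine) verification for its $M_{1}$. Either way the substance---checking that $\langle\alpha_{i}(\alpha_{i}+\alpha_{4})\rangle$ and hence $\mathcal{S}_{0}$ are preserved, and that the $f_{i}$-identities match the cosets on each $t$-secant---is the same.
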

\begin{proof}
	We note that it follows from $\frac{\alpha^{2}_{i}}{\alpha_{4}}\in\left\langle\alpha_{1},\alpha_{2},\alpha_{3},\alpha_{4}\right\rangle$ for $i=1,2,3$ and $\left\langle\alpha_{1},\alpha_{2},\alpha_{3},\alpha_{4}\right\rangle=\left\langle\alpha'_{1},\alpha'_{2},\alpha'_{3},\alpha_{4}\right\rangle$ that also $\frac{\alpha'^{2}_{i}}{\alpha_{4}}\in\left\langle\alpha'_{1},\alpha'_{2},\alpha'_{3},\alpha_{4}\right\rangle$ for $i=1,2,3$.
	We proceed as in the proof of Lemma \ref{projequiv1}. We can find a matrix $M\in\GL_{4}(\F_{2})$ such that $(\alpha_{1},\alpha_{2},\alpha_{3},\alpha_{4})M=(\alpha'_{1},\alpha'_{2},\alpha'_{3},\alpha_{4})$. This matrix $M$ is contained in the subgroup $H=\{C\in\GL_{4}(\F_{2})\mid C_{i,4}=0,\: i=1,2,3\}$. This multiplicative group $H$ is generated by the matrices $M_{1}=\left(\begin{smallmatrix}
	1&1&0&0\\0&1&0&0\\0&0&1&0\\1&0&0&1
	\end{smallmatrix}\right)$ and $M_{2}=\left(\begin{smallmatrix}
	0&1&0&0\\0&0&1&0\\1&0&0&0\\0&0&0&1
	\end{smallmatrix}\right)$ and hence it is sufficient to prove the statement for $M=M_{1}$ and for $M=M_{2}$. Let $\alpha,\beta_{1},\beta_{2},\beta_{3}$ be as in the construction presented in Theorem \ref{cons}, so $\Tr(\alpha_{i}\beta_{j})=1$, and $\alpha\in\left\langle\alpha_{1},\alpha_{2},\alpha_{3},\alpha_{4}\right\rangle$ such that $\{\alpha_{1}(\alpha_{1}+\alpha_{4}),\alpha_{2}(\alpha_{2}+\alpha_{4}),\alpha_{3}(\alpha_{3}+\alpha_{4}),\alpha_{4}\alpha\}$ is an $\F_{2}$-independent set.
	\par We first look at $M=M_{1}$. In this case $(\alpha'_{1},\alpha'_{2},\alpha'_{3})=(\alpha_{1}+\alpha_{4},\alpha_{1}+\alpha_{2},\alpha_{3})$. Then $(\beta'_{1},\beta'_{2},\beta'_{3})=(\beta_{1}+\beta_{2},\beta_{2},\beta_{3})$ fulfils $\Tr(\alpha'_{i}\beta'_{j})=\delta_{i,j}$. We know that the construction in Theorem \ref{cons} does not depend on the choice of the coset leaders, so when constructing the KM-arc using the tuple $(\alpha'_{1},\alpha'_{2},\alpha'_{3},\alpha_{4})$ we may use $\beta'_{1},\beta'_{2},\beta'_{3}$ as coset leaders. A straightforward calculation shows that
	\[
	\left\langle\alpha_{1}(\alpha_{1}+\alpha_{4}),\alpha_{2}(\alpha_{2}+\alpha_{4}),\alpha_{3}(\alpha_{3}+\alpha_{4})\right\rangle=\left\langle\alpha'_{1}(\alpha'_{1}+\alpha_{4}),\alpha'_{2}(\alpha'_{2}+\alpha_{4}),\alpha'_{3}(\alpha'_{3}+\alpha_{4})\right\rangle\;,
	\]
	hence $\{\alpha'_{1}(\alpha'_{1}+\alpha_{4}),\alpha'_{2}(\alpha'_{2}+\alpha_{4}),\alpha'_{3}(\alpha'_{3}+\alpha_{4}),\alpha_{4}\alpha\}$ is also an $\F_{2}$-independent set. The point set of $\mathcal{A}$ is given by $\mathcal{S}_{0}\cup\bigcup_{v\in\F^{4}_{2}}\mathcal{S}_{v}$ with
	\begin{align*}
	\mathcal{S}_{0}&=\{(0,1,x)\mid \Tr(\alpha_{i}(\alpha_{i}+\alpha_{4})x)=0,\: i=1,2,3\ \wedge\ \Tr(\alpha_{4}\alpha x)=1\}\text{ and }\\
	\mathcal{S}_{(\lambda_{1},\lambda_{2},\lambda_{3},\lambda_{4})}&=\left\{\left(1,\sum^{4}_{i=1}\lambda_{i}\alpha_{i},\sum^{3}_{i=1}f_{i}(\lambda_{1},\lambda_{2},\lambda_{3})\beta_{i}+s\right)\Bigg|\ s\in S\right\}\;,
	\end{align*}
	and the point set of $\mathcal{A}'$ is given by $\mathcal{S}'_{0}\cup\bigcup_{v\in\F^{3}_{2}}\mathcal{S}'_{v}$ with 	
	\begin{align*}
	\mathcal{S}'_{0}&=\{(0,1,x)\mid \Tr(\alpha'_{i}(\alpha'_{i}+\alpha_{4})x)=0,\: i=1,2,3\ \wedge\ \Tr(\alpha_{4}\alpha x)=1\}\text{ and }\\
	\mathcal{S}'_{(\lambda_{1},\lambda_{2},\lambda_{3},\lambda_{4})}&=\left\{\left(1,\sum^{3}_{i=1}\lambda_{i}\alpha'_{i}+\lambda_{4}\alpha_{4},\sum^{3}_{i=1}f_{i}(\lambda_{1},\lambda_{2},\lambda_{3})\beta'_{i}+s\right)\Bigg|\ s\in S\right\}\;.
	\end{align*}
	Note that $S=\{x\mid \Tr(\alpha_i x)=0\}=\{x\mid\Tr(\alpha'_i x)=0\}$.
	\par Since $\left\langle\alpha_{1}(\alpha_{1}+\alpha_{4}),\alpha_{2}(\alpha_{2}+\alpha_{4}),\alpha_{3}(\alpha_{3}+\alpha_{4})\right\rangle=\left\langle\alpha'_{1}(\alpha'_{1}+\alpha_{4}),\alpha'_{2}(\alpha'_{2}+\alpha_{4}),\alpha'_{3}(\alpha'_{3}+\alpha_{4})\right\rangle$, we know that $\mathcal{S}'_{0}=\mathcal{S}_{0}$. Keeping this in mind and using the above expressions for the $\alpha'_{i}$'s and $\beta'_{i}$'s, it can readily be checked that the collineation induced by the matrix $\left(\begin{smallmatrix}
	1&0&0\\\alpha_{1}+\alpha_{3}&1&0\\\beta_{3}&0&1
	\end{smallmatrix}\right)$ maps $\mathcal{A}$ onto $\mathcal{A'}$.
	\par Now we look at the case $M=M_{2}$, hence at $(\alpha'_{1},\alpha'_{2},\alpha'_{3})=(\alpha_{3},\alpha_{1},\alpha_{2})$. Then $(\beta'_{1},\beta'_{2},\beta'_{3})=(\beta_{3},\beta_{1},\beta_{2})$ fulfils $\Tr(\alpha'_{i}\beta'_{j})=\delta_{i,j}$. Again we can construct $\mathcal{A'}$ (note that $\mathcal{A}$ is as above). Here we can check that the collineation induced by the matrix $\left(\begin{smallmatrix}
	1&0&0\\\alpha_{1}+\alpha_{2}&1&0\\\beta_{2}+\beta_{3}&0&1
	\end{smallmatrix}\right)$ maps $\mathcal{A}$ onto $\mathcal{A'}$.
\end{proof}

\begin{lemma}\label{projequiv2bis}
	Let $\mathcal{A}$ be a KM-arc of type $q/16$ in $\PG(2,q)$ obtained by the construction in Theorem \ref{cons} using the admissible tuple $(\alpha_{1},\alpha_{2},\alpha_{3},\alpha_{4})$ and let $\mathcal{A'}$ be a KM-arc of type $q/16$ in $\PG(2,q)$ obtained by the construction in Theorem \ref{cons} using $(k\alpha^{\varphi}_{1},k\alpha^{\varphi}_{2},k\alpha^{\varphi}_{3},k\alpha^{\varphi}_{4})$, with $k\in\F^{*}_{q}$ and $\varphi$ a field automorphism of $\F_{q}$. Then $\mathcal{A}$ and $\mathcal{A'}$ are $\PGammaL$-equivalent. 
\end{lemma}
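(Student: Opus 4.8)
The plan is to mimic the proof of Lemma \ref{projequiv2} almost verbatim, since the extra parameter $\alpha_4$ and the modified definition of $\mathcal{S}_0$ do not change the underlying mechanism: the equivalence is induced by a diagonal matrix together with the field automorphism $\varphi$. First I would set $S=\{x\in\F_q\mid\forall i:\Tr(\alpha_i x)=0\}$ and $S'=\{x\in\F_q\mid\forall i:\Tr(k\alpha_i^\varphi x)=0\}$, noting $|S|=|S'|=q/16$ since $\{\alpha_1,\dots,\alpha_4\}$ and $\{k\alpha_1^\varphi,\dots,k\alpha_4^\varphi\}$ are both $\F_2$-independent (multiplying by $k\neq 0$ and applying an automorphism both preserve $\F_2$-independence). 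I would also record that $\{\beta_1,\beta_2,\beta_3\}$ with $\Tr(\alpha_i\beta_j)=\delta_{i,j}$ transforms correctly: $k^{-1}\beta_j^\varphi$ satisfies $\Tr((k\alpha_i^\varphi)(k^{-1}\beta_j^\varphi))=\Tr(\alpha_i^\varphi\beta_j^\varphi)=\Tr((\alpha_i\beta_j)^\varphi)=\Tr(\alpha_i\beta_j)=\delta_{i,j}$, using $\Tr(x^\varphi)=\Tr(x)$ for any automorphism $\varphi$ of $\F_q$. The $f_i$ are untouched because they are $\F_2^3\to\F_2$ functions, independent of the field.

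Next I would exhibit the candidate collineation $\gamma$ induced by the matrix $\left(\begin{smallmatrix}1&0&0\\0&k&0\\0&0&k^{-1}\end{smallmatrix}\right)$ together with the field automorphism $\varphi$, exactly as in Lemma \ref{projequiv2}, and check it maps the affine slices $\mathcal{S}_{\overline\lambda}$ of $\mathcal{A}$ to those of $\mathcal{A}'$. The point $(1,\sum\lambda_i\alpha_i,\sum f_i(\widetilde\lambda)\beta_i+s)$ is sent by $\varphi$ to $(1,\sum\lambda_i\alpha_i^\varphi,\sum f_i(\widetilde\lambda)\beta_i^\varphi+s^\varphi)$ and then by the matrix to $(1,k\sum\lambda_i\alpha_i^\varphi,k^{-1}(\sum f_i(\widetilde\lambda)\beta_i^\varphi+s^\varphi))=(1,\sum\lambda_i(k\alpha_i^\varphi),\sum f_i(\widetilde\lambda)(k^{-1}\beta_i^\varphi)+k^{-1}s^\varphi)$; since $\{k^{-1}s^\varphi\mid s\in S\}=S'$ by the same computation as in Lemma \ref{projequiv2}, this is precisely $\mathcal{S}'_{\overline\lambda}$ in the construction for $\mathcal{A}'$. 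Here I would also note that $\{k\alpha_i^\varphi\}$ still satisfies the hypothesis $\tfrac{(k\alpha_i^\varphi)^2}{k\alpha_4^\varphi}=k\tfrac{(\alpha_i^2)^\varphi}{\alpha_4^\varphi}=k\bigl(\tfrac{\alpha_i^2}{\alpha_4}\bigr)^\varphi\in k\langle\alpha_1,\dots,\alpha_4\rangle^\varphi=\langle k\alpha_1^\varphi,\dots,k\alpha_4^\varphi\rangle$, so the construction of $\mathcal{A}'$ is legitimate in the first place.

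The only genuinely new point compared to Lemma \ref{projequiv2} is the line at infinity: here $\mathcal{S}_0=\{(0,1,x)\mid \Tr(\alpha_i(\alpha_i+\alpha_4)x)=0,\ i=1,2,3,\ \wedge\ \Tr(\alpha_4\alpha x)=1\}$, which is an affine coset rather than a subgroup, and it depends on the auxiliary element $\alpha$. I would argue that $\mathcal{S}_0^\gamma=\mathcal{S}_0'$ as follows: $\gamma$ sends $(0,1,x)$ to $(0,k,k^{-1}x^\varphi)=(0,1,k^{-2}x^\varphi)$, and then the defining trace conditions transform as $\Tr(\alpha_i(\alpha_i+\alpha_4)x)=\Tr\bigl((\alpha_i(\alpha_i+\alpha_4)x)^\varphi\bigr)=\Tr\bigl(\alpha_i^\varphi(\alpha_i^\varphi+\alpha_4^\varphi)\cdot k^2\cdot k^{-2}x^\varphi\bigr)=\Tr\bigl((k\alpha_i^\varphi)(k\alpha_i^\varphi+k\alpha_4^\varphi)\cdot(k^{-2}x^\varphi)\bigr)$, so setting $y=k^{-2}x^\varphi$ the conditions for $x$ become exactly the conditions for $y$ with parameters $k\alpha_i^\varphi$; similarly $\Tr(\alpha_4\alpha x)=\Tr\bigl((k\alpha_4^\varphi)(k\alpha^\varphi)y\bigr)=1$, and since $\alpha\in\langle\alpha_1,\dots,\alpha_4\rangle$ gives $k\alpha^\varphi\in\langle k\alpha_1^\varphi,\dots,k\alpha_4^\varphi\rangle$, the element $k\alpha^\varphi$ is a valid choice of auxiliary element for $\mathcal{A}'$, and by the independence-of-$\alpha$ remark following Theorem \ref{cons} the set $\mathcal{S}_0'$ does not depend on which valid $\alpha$ we picked. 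Hence $\mathcal{S}_0^\gamma=\mathcal{S}_0'$ and $\mathcal{A}^\gamma=\mathcal{A}'$. I do not anticipate a real obstacle here; the main care is bookkeeping the $k$-powers so that $\{\alpha_i(\alpha_i+\alpha_4)\}$ maps correctly to $\{(k\alpha_i^\varphi)(k\alpha_i^\varphi+k\alpha_4^\varphi)\}=\{k^2\alpha_i^\varphi(\alpha_i+\alpha_4)^\varphi\}$, which matches because the conjugating matrix contributes exactly the compensating $k^{-2}$ on the last coordinate.
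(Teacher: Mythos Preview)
Your proposal is correct and follows exactly the approach indicated in the paper: the paper's own proof just records that the admissibility condition $\alpha_i^2/\alpha_4\in\langle\alpha_1,\dots,\alpha_4\rangle$ and the auxiliary-element condition transfer to $(k\alpha_1^\varphi,\dots,k\alpha_4^\varphi)$ with $k\alpha^\varphi$, and then declares the rest analogous to Lemma~\ref{projequiv2}. You have simply written out those analogous computations in full, including the careful bookkeeping for $\mathcal{S}_0$ (where your use of the independence-of-$\alpha$ remark is exactly what is needed), so nothing is missing or different.
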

\begin{proof}
	We note that the condition $\forall i: \frac{\alpha^{2}_{i}}{\alpha_{4}}\in\left\langle\alpha_{1},\alpha_{2},\alpha_{3},\alpha_{4}\right\rangle$ and the condition $\forall i: \frac{(k\alpha^{\varphi}_{i})^{2}}{k\alpha^{\varphi}_{4}}\in\left\langle k\alpha_{1},k\alpha_{2},k\alpha_{3},k\alpha_{4}\right\rangle$ are equivalent. We also note that if $\alpha\in\left\langle\alpha_{1},\alpha_{2},\alpha_{3},\alpha_{4}\right\rangle$ is such that $\{\alpha_{1}(\alpha_{1}+\alpha_{4}),\alpha_{2}(\alpha_{2}+\alpha_{4}),\alpha_{3}(\alpha_{3}+\alpha_{4}),\alpha_{4}\alpha\}$ is an $\F_{2}$-independent set, then $k\alpha^{\varphi}\in\left\langle k\alpha^{\varphi}_{1},k\alpha^{\varphi}_{2},k\alpha^{\varphi}_{3},k\alpha^{\varphi}_{4}\right\rangle$ is such that $\{k\alpha^{\varphi}_{1}(k\alpha^{\varphi}_{1}+k\alpha^{\varphi}_{4}),k\alpha^{\varphi}_{2}(k\alpha^{\varphi}_{2}+k\alpha^{\varphi}_{4}),k\alpha^{\varphi}_{3}(k\alpha^{\varphi}_{3}+k\alpha^{\varphi}_{4}),(k\alpha^{\varphi}_{4})(k\alpha^{\varphi})\}$ is an $\F_{2}$-independent set. The rest of the proof is analogous to the proof of Lemma \ref{projequiv2}.
\end{proof}

We proved before that the KM-arcs of type $q/8$ constructed in Theorem \ref{cons} are not translation KM-arcs. This also true for the KM-arcs of type $q/16$.

\begin{theorem}\label{nottranslation16}
	Any KM-arc in $\PG(2,q)$ constructed through Theorem \ref{consq8} is not a translation KM-arc.
\end{theorem}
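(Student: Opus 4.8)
The statement is to be read as concerning the family of Theorem~\ref{cons} (the construction of KM-arcs of type $q/16$), and the plan is to run the argument of Theorem~\ref{nottranslation} verbatim, with Lemma~\ref{nottrans16} and Lemma~\ref{projequiv2bis} taking over the roles of Lemma~\ref{nottrans} and Lemma~\ref{projequiv2}. So let $\mathcal{A}$ be the KM-arc produced by Theorem~\ref{cons} from an admissible tuple $(\alpha_{1},\alpha_{2},\alpha_{3},\alpha_{4})$, with elation line $X=0$, $q/16$-nucleus $(0,0,1)$ and affine-secant subgroup $S=\{x\in\F_{q}\mid\forall i\colon\Tr(\alpha_{i}x)=0\}$; set $A=\left\langle\alpha_{1},\alpha_{2},\alpha_{3},\alpha_{4}\right\rangle$ and $B=\left\langle\alpha_{1}(\alpha_{1}+\alpha_{4}),\alpha_{2}(\alpha_{2}+\alpha_{4}),\alpha_{3}(\alpha_{3}+\alpha_{4}),\alpha_{4}\alpha\right\rangle$, so that $S=A^{\perp}$ and the set $\mathcal{S}_{0}$ of points of $\mathcal{A}$ on the elation line is a coset of $S_{0}:=B^{\perp}$ (both $A$ and $B$ are $4$-dimensional $\F_{2}$-subspaces of $\F_{q}$). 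Assume, for a contradiction, that $\mathcal{A}$ is a translation KM-arc. Since $h>5$ forces $q\geq64$, we have $|S|=q/16<q$ and the type $q/16\geq4$ is always $>2$; thus $\mathcal{A}$ is a genuine KM-arc, never a hyperoval, and there is no analogue here of the small field case $q=16$ that has to be treated apart in Theorem~\ref{nottranslation}.

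First I would locate the translation line. By Theorem~\ref{elationlinesecant} (or \cite[Prop.~6.2]{km}) it is a $q/16$-secant, and by Lemma~\ref{nottrans16} it is not the elation line $X=0$; hence it is one of the remaining $q/16$-secants of $\mathcal{A}$. The key step --- and the main obstacle --- is to turn this into the equality $S=S_{0}$, equivalently $A=B$. This is precisely the point carried out for type $q/8$ in the proof of Theorem~\ref{nottranslation}: after mapping the translation line to infinity, $\mathcal{A}$ becomes (up to $\PGL$-equivalence) a translation KM-arc with translation line $X=0$, so by Lemma~\ref{Sinvoeren} the points of $\mathcal{A}$ on \emph{every} one of its $q/16$-secants --- in particular on the image of the old elation line and on the images of the old affine $q/16$-secants --- are cosets of one single additive subgroup; comparing this with the coset structure of $\mathcal{A}$ in its original coordinates, where the affine $q/16$-secants carry cosets of $S$ and the elation line carries a coset of $S_{0}$, one is forced to conclude $S=S_{0}$, whence $A=B$.

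Given $A=B$, the contradiction is obtained by the scaling argument of Theorem~\ref{nottranslation}, now using $\GL_{4}(\F_{2})$ together with Lemma~\ref{projequiv2bis}. For any $k\in\F^{*}_{q}$ the tuple $(k\alpha_{1},k\alpha_{2},k\alpha_{3},k\alpha_{4})$ is again admissible (with $k\alpha$ a valid choice of the auxiliary parameter), and by Lemma~\ref{projequiv2bis} the resulting KM-arc is $\PGammaL$-equivalent to $\mathcal{A}$, hence also a translation KM-arc. Applying the key step to it gives $\left\langle k\alpha_{1},k\alpha_{2},k\alpha_{3},k\alpha_{4}\right\rangle=\left\langle k^{2}\alpha_{1}(\alpha_{1}+\alpha_{4}),k^{2}\alpha_{2}(\alpha_{2}+\alpha_{4}),k^{2}\alpha_{3}(\alpha_{3}+\alpha_{4}),k^{2}\alpha_{4}\alpha\right\rangle$, that is $kA=k^{2}B$. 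Since $A=B$ this reads $kA=k^{2}A$, i.e.\ $A=kA$, and this holds for every $k\in\F^{*}_{q}$. But then $A\setminus\{0\}$ is a union of orbits of the transitive multiplicative action of $\F^{*}_{q}$ on itself, so $A=\{0\}$ or $A=\F_{q}$, contradicting $|A|=16$ and $q\geq64$. Hence no KM-arc from Theorem~\ref{cons} is a translation KM-arc.

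The only genuinely delicate ingredient is the implication ``translation line a $q/16$-secant different from the elation line $\Rightarrow S=S_{0}$''; this is where the argument of Theorem~\ref{nottranslation} is imported, and it is the step I expect to require the most care (it rests on the fact that, with the translation line at infinity, a translation KM-arc carries cosets of one fixed subgroup on all of its $q/16$-secants, so that the two a priori different subgroups $S$ and $S_{0}$ attached to $\mathcal{A}$ get identified). Everything else --- the localisation of the translation line, the scaling via Lemma~\ref{projequiv2bis}, and the final counting contradiction --- is routine and is essentially the proof of Theorem~\ref{nottranslation} with $3$ replaced by $4$.
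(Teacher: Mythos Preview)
Your proposal is correct and takes essentially the same approach as the paper: both defer to the argument of Theorem~\ref{nottranslation}, substituting Lemmas~\ref{nottrans16} and~\ref{projequiv2bis} for Lemmas~\ref{nottrans} and~\ref{projequiv2}. The paper additionally cites Remark~\ref{smallq}, but as you observe, the hypothesis $h>5$ in Theorem~\ref{cons} already rules out the hyperoval degeneration, so no separate base case is strictly needed.
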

\begin{proof}
	The proof is similar to the proof of Theorem \ref{nottranslation}, now using Remark \ref{smallq} and Lemmas \ref{nottrans16} and \ref{projequiv2bis}.
\end{proof}

The following result is the analogue of Theorems \ref{gwremark} and \ref{iteratie}. Its proof is similar to the proof of Theorem \ref{iteratie}. 

\begin{theorem}\label{iteratie16}Let $\mathcal{A}_{0}$ be the KM-arc of type $q/16$ with $q/16$-nucleus $N(0,0,1)$ and elation line $X=0$ constructed from Theorem \ref{consq8} by the admissible tuple $(\alpha_{1},\alpha_{2},\alpha_{3},\alpha_{4})$ and let $\beta$ be a collineation that stabilises $N$. Let $\mathcal{A'}$ be a KM-arc of type $q^{h}/16$ in $\PG(2,q^{h})$ obtained from $\mathcal{A}_0^\beta$ through Construction \ref{gwc} (B) or (C). Then $\mathcal{A'}$ is $\PGammaL$-equivalent to a KM-arc obtained by the construction in Theorem \ref{consq8} using the admissible tuple $(\alpha_{1},\alpha_{2},\alpha_{3},\alpha_{4})$.
\end{theorem}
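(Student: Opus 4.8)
The plan is to imitate the proof of Theorem \ref{iteratie} almost line by line; the only genuinely new ingredients are the presence of the fourth parameter $\alpha_{4}$ and the fact that the set $\mathcal{S}_{0}$ of Theorem \ref{cons} is cut out by the inhomogeneous condition $\Tr(\alpha_{4}\alpha x)=1$ together with the homogeneous conditions $\Tr(\alpha_{i}(\alpha_{i}+\alpha_{4})x)=0$, rather than by homogeneous conditions alone. First I would fix notation: write $\Tr_{q^{h}}$, $\Tr_{q}$ and $\Tr_{q^{h},q}$ for the three relevant trace functions, put $S=\{x\in\F_{q}\mid\forall i\colon\Tr_{q}(\alpha_{i}x)=0\}$, and let $\beta_{1},\beta_{2},\beta_{3}\in\F_{q}$ and $\alpha\in\langle\alpha_{1},\dots,\alpha_{4}\rangle$ be as in Theorem \ref{cons}, so that $\mathcal{A}_{0}=\mathcal{S}_{0}\cup\bigcup_{v\in\F^{4}_{2}}\mathcal{S}_{v}$. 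Since $\beta$ fixes $N(0,0,1)$, it is induced by a matrix $C=\left(\begin{smallmatrix}a_{00}&a_{01}&0\\a_{10}&a_{11}&0\\a_{20}&a_{21}&1\end{smallmatrix}\right)$ and an automorphism $\phi$ of $\F_{q}$; this is exactly the setting of Theorem \ref{iteratie}, so the bookkeeping describing how $C$ and $\phi$ act on the affine part of $\mathcal{A}_{0}$ and on the line $X=0$ (where $\mathcal{A}_{0}^{\beta}$ may now have $q/16$ points, namely the image of one of the $\mathcal{S}_{v}$) is taken over unchanged.

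Next I would choose $k\in\F_{q^{h}}$ with $\Tr_{q^{h},q}(k)=1$ and set $S'=\{x\in\F_{q^{h}}\mid\forall i\colon\Tr_{q^{h}}(k\alpha_{i}x)=0\}$ and $I=\{x\in\F_{q^{h}}\mid\Tr_{q^{h},q}(kx)=0\}$. Using $\Tr_{q^{h}}(k\mu x)=\Tr_{q}(\mu x)$ for $x\in\F_{q}$, $\mu\in\F_{q}$, and $\Tr_{q^{h}}(k\mu i')=\Tr_{q}(\mu\,\Tr_{q^{h},q}(ki'))=0$ for $i'\in I$, $\mu\in\F_{q}$, one shows, exactly as in Theorem \ref{iteratie}, that $I$ is a direct complement of $\F_{q}$ in $\F_{q^{h}}$, that $S'\cap\F_{q}=S$ and that $S'=\langle S,I\rangle$. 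Extending $\phi$ to an automorphism $\phi'$ of $\F_{q^{h}}$, the set $I^{\phi'}$ is again a direct complement, so by Remark \ref{directcomplements} I may use $I^{\phi'}$ when forming $\mathcal{A'}$ from $\mathcal{A}_{0}^{\beta}$ through Construction \ref{gwc} (C) (the case (B), which would only occur if $q/16=2$, does not arise under the hypothesis $h>5$). This yields $\mathcal{A'}=\mathcal{S}'_{0}\cup\bigcup_{v}\mathcal{S}'_{v}$, where $\mathcal{S}'_{v}=\{(1,\sum_{i}\lambda_{i}\alpha^{\phi}_{i},\sum_{i}f_{i}(\widetilde{\lambda})\beta^{\phi}_{i}+s^{\phi'})C^{t}\mid s\in S'\}$ (using $S^{\phi}+I^{\phi'}=(S')^{\phi'}$) and $\mathcal{S}'_{0}$ is the corresponding image of $\mathcal{S}_{0}$ with $I^{\phi'}$ added to the last coordinate.

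Then I would introduce the intermediate KM-arc $\mathcal{A}''$ produced by Theorem \ref{cons} from the tuple $(k\alpha_{1},k\alpha_{2},k\alpha_{3},k\alpha_{4})$; its hypotheses hold since $\frac{(k\alpha_{i})^{2}}{k\alpha_{4}}=k\,\frac{\alpha_{i}^{2}}{\alpha_{4}}\in\langle k\alpha_{1},\dots,k\alpha_{4}\rangle$ and $\{k\alpha_{1}(k\alpha_{1}+k\alpha_{4}),\dots,(k\alpha_{4})(k\alpha)\}=k^{2}\{\alpha_{1}(\alpha_{1}+\alpha_{4}),\dots,\alpha_{4}\alpha\}$ is still $\F_{2}$-independent, so by the remark following Theorem \ref{cons} I may take $k\alpha$ as its auxiliary element and keep $\beta_{1},\beta_{2},\beta_{3}$ (note $\Tr_{q^{h}}((k\alpha_{i})\beta_{j})=\Tr_{q}(\alpha_{i}\beta_{j})=\delta_{i,j}$). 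Let $\gamma$ be the collineation induced by $\phi'$ and the matrix $C'=C\left(\begin{smallmatrix}1&0&0\\0&(k^{\phi'})^{-1}&0\\0&0&1\end{smallmatrix}\right)$. A verification identical to the one in Theorem \ref{iteratie} gives $(\mathcal{S}''_{v})^{\gamma}=\mathcal{S}'_{v}$ for every $v\in\F^{4}_{2}$; for the line $X=0$, after the substitution $y=kx$ one obtains $(\mathcal{S}''_{0})^{\gamma}=\{(0,1,y^{\phi'})C^{t}\mid\Tr_{q^{h}}(k\alpha_{i}(\alpha_{i}+\alpha_{4})y)=0\ (i=1,2,3),\ \Tr_{q^{h}}(k\alpha_{4}\alpha y)=1\}$, and this equals $\mathcal{S}'_{0}=\{(0,1,(x+i')^{\phi'})C^{t}\mid x\in\F_{q},\ \Tr_{q}(\alpha_{i}(\alpha_{i}+\alpha_{4})x)=0\ (i=1,2,3),\ \Tr_{q}(\alpha_{4}\alpha x)=1,\ i'\in I\}$, because for all $x\in\F_{q}$, $i'\in I$ and $\mu\in\{\alpha_{1}(\alpha_{1}+\alpha_{4}),\alpha_{2}(\alpha_{2}+\alpha_{4}),\alpha_{3}(\alpha_{3}+\alpha_{4}),\alpha_{4}\alpha\}\subseteq\F_{q}$ one has $\Tr_{q^{h}}(k\mu(x+i'))=\Tr_{q}(\mu x\,\Tr_{q^{h},q}(k))+\Tr_{q}(\mu\,\Tr_{q^{h},q}(ki'))=\Tr_{q}(\mu x)$; hence the right-hand set is contained in the left-hand one, and equality follows by comparing cardinalities ($q^{h}/16$ on each side). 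Therefore $\mathcal{A'}=(\mathcal{A}'')^{\gamma}$, and since $(k\alpha_{1},\dots,k\alpha_{4})$ and $(\alpha_{1},\dots,\alpha_{4})$ yield $\PGammaL$-equivalent KM-arcs by Lemma \ref{projequiv2bis}, the theorem follows.

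I expect the one genuinely delicate point to be this last identity between $(\mathcal{S}''_{0})^{\gamma}$ and $\mathcal{S}'_{0}$: one must check that adding an element of the complement $I^{\phi'}$ and passing through the trace-compression formula preserves the \emph{value} $1$ of $\Tr(\alpha_{4}\alpha x)$, not merely a coset of some subgroup, which is precisely what the computation $\Tr_{q^{h}}(k\mu(x+i'))=\Tr_{q}(\mu x)$ provides, thanks to $\Tr_{q^{h},q}(k)=1$ and $\Tr_{q^{h},q}(ki')=0$. All the remaining steps are the routine matrix-and-automorphism bookkeeping of the proof of Theorem \ref{iteratie}, now carried out with four parameters $\alpha_{1},\dots,\alpha_{4}$ instead of three and with $\mathcal{S}_{0}$ described by the conditions above.
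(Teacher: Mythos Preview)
Your proposal is correct and follows exactly the approach the paper intends: the paper's own proof of Theorem \ref{iteratie16} consists solely of the sentence ``similar to the proof of Theorem \ref{iteratie}'', and you have faithfully carried out that programme, correctly identifying and handling the two genuinely new features (the fourth parameter $\alpha_{4}$ and the inhomogeneous condition $\Tr(\alpha_{4}\alpha x)=1$ defining $\mathcal{S}_{0}$). Your verification that $\Tr_{q^{h}}(k\mu(x+i'))=\Tr_{q}(\mu x)$ for $\mu\in\F_{q}$, $x\in\F_{q}$, $i'\in I$ is exactly the computation needed to show that the \emph{value} of the inhomogeneous trace condition is preserved, and your appeal to Lemma \ref{projequiv2bis} at the end is the correct replacement for Lemma \ref{projequiv2}.
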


We now discuss the existence of the family of KM-arcs presented in Theorem \ref{cons}.

\begin{theorem}\label{admissible}
	A KM-arc $\mathcal{A}$ of type $q/16$ in $\PG(2,q)$, $q=2^{h}$, constructed through Theorem \ref{cons} exists if and only if
	\begin{itemize}
		\item $4\mid h$ and $\mathcal{A}$ is $\PGammaL$-equivalent to the KM-arc constructed through Theorem \ref{cons} using an admissible tuple $(\alpha_{1},\alpha_{2},\alpha_{3},1)$ with $\left\langle\alpha_{1},\alpha_{2},\alpha_{3},1\right\rangle=\F_{16}\subset\F_{q}$,
		\item $6\mid h$ and $\mathcal{A}$ is $\PGammaL$-equivalent to the KM-arc constructed through Theorem \ref{cons} using an admissible tuple $(\alpha_{1},\alpha_{2},\alpha_{3},1)$ with $\left\langle\alpha_{1},\alpha_{2},\alpha_{3},1\right\rangle=\left\langle\F_{4},\F_{8}\right\rangle\subseteq\F_{q}$ or
		\item $7\mid h$ and $\mathcal{A}$ is $\PGammaL$-equivalent to the KM-arc constructed through Theorem \ref{cons} using the admissible tuple $(z,z^{2},z^{4},1)$ with $z\in\F_{q}$ admitting $z^{7}=z+1$ or to the KM-arc constructed through Theorem \ref{cons} using the admissible tuple $(z^{11},z^{22},z^{44},1)$ with $z\in\F_{q}$ admitting $z^{7}=z+1$.
	\end{itemize}
    Here we consider the subfields as additive subgroups of $\F_{q},+$.
\end{theorem}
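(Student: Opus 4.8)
The plan is to translate the statement into a classification of $4$-dimensional Frobenius-closed $\F_2$-subspaces of $\F_q$ and then push it back through Lemmas \ref{projequiv1bis} and \ref{projequiv2bis}. Write $\sigma\colon\F_q\to\F_q,\ x\mapsto x^2$, an $\F_2$-linear bijection with $\sigma^h=\mathrm{id}$. First I would use Lemma \ref{projequiv2bis} with $\varphi=\mathrm{id}$ and $k=\alpha_4^{-1}$ to replace an arbitrary admissible tuple $(\alpha_1,\alpha_2,\alpha_3,\alpha_4)$ by one of the form $(\alpha_1,\alpha_2,\alpha_3,1)$. Put $U=\langle\alpha_1,\alpha_2,\alpha_3,1\rangle$; the $\F_2$-independence gives $\dim_{\F_2}U=4$, while the conditions $\frac{\alpha_i^2}{\alpha_4}=\alpha_i^2\in U$ $(i=1,2,3)$ together with $1^2=1\in U$ say precisely that $\sigma$ maps a spanning set of $U$ into $U$, hence that $\sigma(U)=U$. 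Conversely every $4$-dimensional $\sigma$-invariant subspace $U$ with $1\in U$ gives an admissible tuple: take $\alpha_4=1$ and complete $1$ to an $\F_2$-basis $\{1,\alpha_1,\alpha_2,\alpha_3\}$ of $U$ (the auxiliary element $\alpha$ of Lemma \ref{voorfactor} then exists automatically), and by Lemma \ref{projequiv1bis} the KM-arc obtained depends only on $U$. So the theorem is equivalent to classifying all $4$-dimensional $\sigma$-invariant $\F_2$-subspaces $U\subseteq\F_q$ with $1\in U$.

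The classification step is where I would use the $\F_2[t]$-module structure of $\F_q$, with $t$ acting as $\sigma$. By the normal basis theorem there is $\theta$ with $\{\theta,\theta^2,\dots,\theta^{2^{h-1}}\}$ an $\F_2$-basis, giving an isomorphism $\F_2[t]/(t^h+1)\xrightarrow{\ \sim\ }\F_q$ of $\F_2[t]$-modules, $t^i\mapsto\theta^{2^i}$. Under it the $\sigma$-invariant subspaces are the ideals $(g)$ with $g\mid t^h+1$, of $\F_2$-dimension $h-\deg g$, and the element $1\in\F_q$ corresponds to $\frac{t^h+1}{t+1}$ (because $\Tr(\theta)=1$). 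Thus a $4$-dimensional $\sigma$-invariant $U\ni1$ corresponds to a degree-$(h-4)$ divisor of $\frac{t^h+1}{t+1}$, equivalently to a degree-$4$ divisor of $t^h+1$ divisible by $t+1$. Writing $h=2^ch_1$ with $h_1$ odd, $t^h+1=(t+1)^{2^c}\prod_j p_j^{2^c}$ with $p_j$ the distinct irreducible factors of $t^{h_1}+1$ other than $t+1$; since the only irreducibles of degree $\le3$ over $\F_2$ are $t,\,t+1,\,t^2+t+1,\,t^3+t+1,\,t^3+t^2+1$, such a divisor must be (i) $(t+1)^4$, which needs $2^c\ge4$, i.e.\ $4\mid h$, giving $U=\mathrm{Ann}(\sigma^4+1)=\F_{16}$; (ii) $(t+1)^2(t^2+t+1)$, which needs $2\mid h$ and $t^2+t+1\mid t^h+1$, i.e.\ $6\mid h$, giving $U=\mathrm{Ann}((\sigma+1)^2(\sigma^2+\sigma+1))=\F_4+\F_8=\langle\F_4,\F_8\rangle$; or (iii) $(t+1)p$ with $p$ an irreducible cubic, which needs $p\mid t^h+1$, and since the roots of $t^3+t+1$ and $t^3+t^2+1$ have order $7$, $\mathrm{ord}_7(2)=3$ and $\Phi_7=(t^3+t+1)(t^3+t^2+1)$, this happens exactly when $7\mid h$, giving the two subspaces $U_1=\mathrm{Ann}((\sigma+1)(\sigma^3+\sigma+1))$ and $U_2=\mathrm{Ann}((\sigma+1)(\sigma^3+\sigma^2+1))$. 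These exhaust all possibilities, so a suitable $U$ exists iff $4\mid h$, $6\mid h$ or $7\mid h$.

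Finally I would match each $U$ with an explicit admissible tuple. For $4\mid h$, $\F_{16}\subseteq\F_q$ is $\sigma$-invariant of dimension $4$, so any basis of it containing $1$ gives a tuple $(\alpha_1,\alpha_2,\alpha_3,1)$ as in the statement; for $6\mid h$, $\langle\F_4,\F_8\rangle$ has dimension $2+3-1=4$ (as $\F_4\cap\F_8=\F_2$) and is $\sigma$-invariant. For $7\mid h$, choose $z\in\F_q$ with $z^7=z+1$ (possible since $t^7+t+1$ is irreducible of degree $7\mid h$): then $z^8=z\cdot z^7=z^2+z$, so $\langle z,z^2,z^4\rangle$ is $\sigma$-invariant, and since $z$ has degree $7$ the set $\{1,z,z^2,z^4\}$ is independent; this makes $(z,z^2,z^4,1)$ admissible with span $\mathrm{Ann}((\sigma+1)(\sigma^3+\sigma+1))=U_1$, and a short computation (checking $z^{11}$ satisfies $x^7=x^3+1$, i.e.\ $z^{77}=z^{33}+1$) does the same for $(z^{11},z^{22},z^{44},1)$ with span $U_2$, with $U_1\ne U_2$ since they are annihilators of distinct cubics. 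By Lemma \ref{projequiv1bis} any admissible tuple with span $U$ yields a KM-arc $\PGammaL$-equivalent to the one built from these standard bases, giving both directions of the statement. I expect the main obstacle to be the classification step — fixing the $\F_2[\sigma]$-module picture correctly (the normal basis isomorphism, the location of $1$, and the count of degree-$4$ divisors of $t^h+1$ divisible by $t+1$ as a function of $h$) — together with the small explicit checks identifying $U_1,U_2$ with the tuples $(z,z^2,z^4,1)$ and $(z^{11},z^{22},z^{44},1)$.
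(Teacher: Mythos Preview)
Your argument is correct and takes a genuinely different route from the paper's. The paper proceeds by direct case analysis: after normalising $\alpha_4=1$ it sets $T=\langle\alpha_1,\alpha_2,\alpha_3,1\rangle$ and splits according to whether $\F_4\subset T$, then within each branch examines whether $x^4$ falls back into $\langle x,x^2,1\rangle$ for suitable $x\in T$, repeatedly invoking Lemma~\ref{projequiv1bis} to normalise choices and eventually forcing $T$ to be $\F_{16}$, $\langle\F_4,\F_8\rangle$, or one of the two $\F_{128}$-subspaces. Your approach is more structural: you observe that admissibility of $(\alpha_1,\alpha_2,\alpha_3,1)$ is precisely Frobenius-invariance of $U=\langle\alpha_1,\alpha_2,\alpha_3,1\rangle$, and then use the normal-basis isomorphism $\F_q\cong\F_2[t]/(t^h+1)$ of $\F_2[\sigma]$-modules to identify $4$-dimensional $\sigma$-invariant subspaces containing $1$ with degree-$4$ divisors of $t^h+1$ that are multiples of $t+1$. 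The factorisation of $t^h+1$ over $\F_2$ then yields the three cases $(t+1)^4$, $(t+1)^2(t^2+t+1)$, $(t+1)p$ (with $p$ an irreducible cubic) in one stroke, together with the divisibility conditions on $h$. What your approach buys is conceptual clarity and an evident template for other codimensions (replace $4$ by $k$); what the paper's hands-on analysis buys is freedom from any module-theoretic input and an explicit trail of exclusions. Both arguments leave the identification of the second $\F_{128}$-subspace with the tuple $(z^{11},z^{22},z^{44},1)$ as a short direct verification, which is also how the paper handles it.
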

\begin{proof}
	There exists a KM-arc constructed through Theorem \ref{cons} in $\PG(2,q)$ if we can find a tuple $(\alpha_{1},\alpha_{2},\alpha_{3},\alpha_{4})\in\F^{4}_{q}$ such that $\left\langle\alpha_{1},\alpha_{2},\alpha_{3},\alpha_{4}\right\rangle$ has order 16 and such that $\frac{\alpha^{2}_{i}}{\alpha_{4}}\in\left\langle\alpha_{1},\alpha_{2},\alpha_{3},\alpha_{4}\right\rangle$ for $i=1,2,3$. So we look for all admissible tuples $(\alpha_{1},\alpha_{2},\alpha_{3},\alpha_{4})\in\F^{4}_{q}$. By Lemma \ref{projequiv2bis} we can assume that $\alpha_{4}=1$. We denote $T=\left\langle\alpha_{1},\alpha_{2},\alpha_{3},1\right\rangle$ We distinguish between different cases and subcases. In this discussion we denote the trace function $\F_{q'}\to\F_{2}$ by $\Tr_{q'}$.
	\begin{enumerate}
		\item \textit{We assume that $\F_{4}\subset T$.} In this case $2\mid h$. By Lemma \ref{projequiv1bis} we may assume that $\left\langle\alpha_{1},1\right\rangle=\F_{4}$. It follows that $\frac{\alpha^{2}_{1}}{\alpha_{4}}=\alpha^{2}_{1}\in\left\langle\alpha_{1},1\right\rangle\subset T$. It is immediate that for any $x\in\F_{q}\setminus\F_{4}$ also $x^{2}\in\F_{q}\setminus\F_{4}$. Further, if $x^{2}+x\in\F_{4}$ for an element $x\in\F_{q}\setminus\F_{4}$, then $x^{2}+x\in\F_{4}\setminus\F_{2}$. 
		\begin{enumerate}
			\item \textit{There is an element $x\in T\setminus\F_{4}$ such that $x^{2}\in\left\langle\F_{4},x\right\rangle$.} By Lemma \ref{projequiv1bis} we can put $\alpha_{2}=x$. By the arguments above we know that $\alpha^{2}_{2}\notin\F_{4}$ and that $\alpha^{2}_{2}+\alpha_{2}\in\F_{4}\setminus\F_{2}$, hence $(\alpha^{2}_{2}+\alpha_{2})^{2}+\alpha^{2}_{2}+\alpha_{2}=\alpha^{4}_{2}+\alpha_{2}=1$. So, $\F_{16}\subset\F_{q}$ equivalently $4\mid h$, and $\left\langle\alpha_{1},\alpha_{2},1\right\rangle=\left\langle\alpha_{2},\alpha^{2}_{2},1\right\rangle$ equals $\{x\in\F_{16}\subset\F_{q}\mid\Tr_{16}(x)=0\}$.
			\par The element $\alpha_{3}\in T\setminus\left\langle\alpha_{1},\alpha_{2},1\right\rangle$ must fulfil $\alpha^2_{3}\in T$, hence $\alpha^2_{3}\in\left\langle\alpha_{1},\alpha_{2},1\right\rangle$ or $\alpha^2_{3}+\alpha_{3}\in\left\langle\alpha_{1},\alpha_{2},1\right\rangle$. Both conditions imply that $\alpha_{3}\in\F_{16}$. Consequently, $T=\F_{16}\subset\F_{q}$.
			\item \textit{For any element $x\in T\setminus\F_{4}$ we have $x^{2}\notin\left\langle\F_{4},x\right\rangle$.} We know that $\alpha^{2}_{2}\in T$ and since it is not contained in $\left\langle\alpha_{1},\alpha_{2},1\right\rangle$, we can write $T=\left\langle\alpha_{1},\alpha_{2},\alpha^{2}_{2},1\right\rangle$. Now we must have $\alpha^{4}_{2}\in T$. So we can find $a,b\in\F_{2}$ and $t\in\F_{4}$ such that $\alpha^{4}_{2}+a\alpha^{2}_{2}+b\alpha_{2}+t=0$. If $b=0$, then $\alpha^{2}_{2}+a\alpha_{2}+t^{2}=0$, contradicting the assumption. If $(a,b)=(0,1)$, then $0=(\alpha^{4}_{2}+\alpha_{2}+t)^{4}+\alpha^{4}_{2}+\alpha_{2}+t=\alpha^{16}_{2}+\alpha_{2}$ and hence $T=\F_{16}$, contradicting the assumption.
			\par So, we may assume that $\alpha^{4}_{2}+\alpha^{2}_{2}+\alpha_{2}+t=0$ for some $t\in\F_{4}$. It follows that $(\alpha_{2}+t^{2})^{4}+(\alpha_{2}+t^{2})^{2}+(\alpha_{2}+t^{2})=0$, hence that
			\begin{align*}
			0&=\left[(\alpha_{2}+t^{2})^{4}+(\alpha_{2}+t^{2})^{2}+(\alpha_{2}+t^{2})\right]^{2}+(\alpha_{2}+t^{2})^{4}+(\alpha_{2}+t^{2})^{2}+(\alpha_{2}+t^{2})\\&=(\alpha_{2}+t^{2})^{8}+(\alpha_{2}+t^{2})\;.
			\end{align*}
			The element $\alpha_{2}+t^{2}\in T\setminus\F_{2}$ is thus a generator of $\F_{8}$. Consequently, on the one hand $3\mid h$ and on the other hand $\F_{8}=\left\langle\alpha_{2}+t^{2},(\alpha_{2}+t^{2})^{2},1\right\rangle\subset T$. So, since also $2\mid h$, we have $6\mid h$ and since also $\F_{4}\subset T$, we have that $T=\left\langle\F_{4},\F_{8}\right\rangle$.
		\end{enumerate}
		\item \textit{We assume that $\F_{4}\not\subset T$.} In this case $x^{2}\notin\left\langle x,1\right\rangle$ for any $x\in T\setminus\{0,1\}$, but by the assumption on $T$ we have $x^{2}\in T$. Since also $x^{4}\notin\left\langle x^{2},1\right\rangle$, we have two possibilities. 
		\begin{enumerate}
			\item \textit{There is an element $x\in T\setminus\{0,1\}$ such that $x^{4}+x\in\left\langle x^{2},1\right\rangle$.} By Lemma \ref{projequiv2} we may assume without loss of generality that $\alpha^{4}_{1}+\alpha_{1}\in\left\langle\alpha^{2}_{1},1\right\rangle$ and that $\alpha_{2}=\alpha^{2}_{1}$.
			\par If $\alpha^{4}_{1}+\alpha_{1}=0$ or $\alpha^{4}_{1}+\alpha_{1}=1$, then we have $\left\langle\alpha_{1},1\right\rangle=\F_{4}$ or $\left\langle\alpha^{2}_{1}+\alpha_{1},1\right\rangle=\F_{4}$, respectively, a contradiction. So, $\alpha^{4}_{1}+\alpha_{1}=\alpha^{2}_{1}$ or $\alpha^{4}_{1}+\alpha_{1}=\alpha^{2}_{1}+1$. In both cases we find that $\left\langle\alpha_{1},\alpha^{2}_{1},1\right\rangle=\F_{8}$ and hence that $3\mid h$. The element $\alpha_{3}\in T\setminus\F_{8}$ either fulfils $\alpha^{2}_{3}\in\F_{8}$ or $\alpha^{2}_{3}+\alpha_{3}\in\F_{8}$. The former would imply that $\alpha_{3}\in\F_{8}$, a contradiction, so $\alpha^{2}_{3}+\alpha_{3}+t=0$ for some $t\in\F_{8}$, where $\Tr_{8}(t)=1$ since otherwise $\alpha_3^2+\alpha_3+t=0$ would have a solution $\alpha_3$ in $\F_8$. Furthermore, $\Tr_{q}(t)=0$. Since for $t\in \F_8$, $\Tr_{q}(t)=\frac{h}{3}\Tr_{8}(t)$, we know that $6\mid h$.
			\par Moreover, from $\alpha^{2}_{3}+\alpha_{3}=t$ it follows that $(\alpha_{3}+\alpha_{1})^{2}+(\alpha_{3}+\alpha_{1})=t+\alpha^{2}_{1}+\alpha_{1}$, that $(\alpha_{3}+\alpha^{2}_{1})^{2}+(\alpha_{3}+\alpha^{2}_{1})=t+\alpha^{4}_{1}+\alpha^{2}_{1}$ and that $(\alpha_{3}+\alpha^{2}_{1}+\alpha_{1})^{2}+(\alpha_{3}+\alpha^{2}_{1}+\alpha_{1})=t+\alpha^{4}_{1}+\alpha_{1}$. The elements $t,t+\alpha^{2}_{1}+\alpha_{1},t+\alpha^{4}_{1}+\alpha^{2}_{1},t+\alpha^{4}_{1}+\alpha_{1}$ are the elements of the set $\{x\in\F_{8}\mid\Tr_{8}(x)=1\}$, among which is $1$. By Lemma \ref{projequiv1bis} we can replace $\alpha_{3}$ by $\alpha_{3}+\alpha_{1}$, $\alpha_{3}+\alpha^{2}_{1}$ or $\alpha_{3}+\alpha^{2}_{1}+\alpha_{1}$, so without loss of generality we may assume $t=1$. However, now it immediately follows that $\left\langle\alpha_{3},1\right\rangle=\F_{4}$, a contradiction.
			\item \textit{For any element $x\in T\setminus\{0,1\}$ we have $x^{4}\notin\left\langle x,x^{2},1\right\rangle$}. In particular we have that $\alpha^{4}_{1}\notin\left\langle\alpha_{1},\alpha^{2}_{1},1\right\rangle$. In this case clearly $T=\left\langle\alpha_{1},\alpha^{2}_{1},\alpha^{4}_{1},1\right\rangle$ and also $\alpha^{8}_{1}\in T$. Hence, there exist $a,b,c,d\in\F_{2}$ such that $\alpha^{8}_{1}=a\alpha^{4}_{1}+b\alpha^{2}_{1}+c\alpha_{1}+d$. If $c=0$, then $\alpha^{4}_{1}=a\alpha^{2}_{1}+b\alpha_{1}+d$ contradicting the assumption. We now look at all cases with $c=1$.
			\par If $\alpha^{8}_{1}=\alpha_{1}$, then $\alpha_{1}$ generates the subfield $\F_{8}$, and hence  $\alpha^{4}_{1}\in\left\langle\alpha_{1},\alpha^{2}_{1},1\right\rangle$, a contradiction. If $\alpha^{8}_{1}=\alpha_{1}+1$, then $\{x\in\F_{q}\mid x^{8}+x+1=0\}=\left\langle\alpha_{1},\alpha^{2}_{1},\alpha^{4}_{1}\right\rangle\subset T$. Clearly, this set is a coset of $\F_{8}$ (considered as an additive subgroup of $\F_{q}$). So, $T=\left\langle\alpha_{1},\alpha^{2}_{1},\alpha^{4}_{1},1\right\rangle$ contains $\F_{8}$ and we can find an element in $T$ contradicting the assumption.
			\par If $\alpha^{8}_{1}=\alpha^{2}_{1}+\alpha_{1}$, then $\alpha^{7}_{1}=\alpha_{1}+1$. The element $\alpha_{1}$ thus generates a subfield $\F_{128}$, and so $7\mid h$. We find the first example from the third bullet point in the statement of the theorem. If $\alpha^{8}_{1}=\alpha^{2}_{1}+\alpha_{1}+1$, then $(\alpha_{1}+1)^{8}=(\alpha_{1}+1)^{2}+(\alpha_{1}+1)=0$, hence the element $\alpha_{1}+1\in T$ generates a subfield $\F_{128}$. By Lemma \ref{projequiv1bis} we find the same conclusion as in the previous case.
			\par If $\alpha^{8}_{1}=\alpha^{4}_{1}+\alpha_{1}$, then $\alpha^{7}_{1}=\alpha^{3}_{1}+1$. The element $\alpha_{1}$ thus generates a subfield $\F_{128}$, and so $7\mid h$. If $z\in\F_{128}$ is an element admitting $z^{7}=z+1$, then one can check that $\alpha_{1}=z^{11\cdot2^{k}}$ for some $k=0,\dots,6$. Using Lemmas \ref{projequiv1bis} and \ref{projequiv2bis} we find the second example from the third bullet point in the statement of the theorem. If $\alpha^{8}_{1}=\alpha^{4}_{1}+\alpha_{1}+1$, then $(\alpha_{1}+1)^{8}=(\alpha_{1}+1)^{4}+(\alpha_{1}+1)$, hence the element $\alpha_{1}+1\in T$ generates a subfield $\F_{128}$. By Lemma \ref{projequiv1bis} we find the same conclusion as in the previous case.
			\par If $\alpha^{8}_{1}=\alpha^{4}_{1}+\alpha^{2}_{1}+\alpha_{1}$, then $(\alpha^{2}_{1}+\alpha_{1})^{4}=\alpha^{2}_{1}+\alpha_{1}$, hence $\F_{4}=\left\langle1,\alpha^{2}_{1}+\alpha_{1}\right\rangle\subset T$, a contradiction. If $\alpha^{8}_{1}=\alpha^{4}_{1}+\alpha^{2}_{1}+\alpha_{1}+1$, then $(\alpha^{4}_{1}+\alpha_{1})^{2}=(\alpha^{4}_{1}+\alpha_{1})+1$, hence $\F_{4}=\left\langle1,\alpha^{4}_{1}+\alpha_{1}\right\rangle\subset T$, a contradiction.
	    \end{enumerate}	
	\end{enumerate}
    We conclude that in all cases we find an admissible tuple corresponding to one of the KM-arcs of type $q/16$ given in the statement of the theorem. It also follows from the proof that these tuples are indeed admissible.
\end{proof}

\begin{remark}\label{nieuw} In the case where $7\mid h$, we have found in the previous theorem that $\mathcal{A}$ is $\PGammaL$-equivalent to the KM-arc constructed through Theorem \ref{cons} using the admissible tuple $(z,z^{2},z^{4},1)$ or to the KM-arc constructed through Theorem \ref{cons} using the admissible tuple $(z^{11},z^{22},z^{44},1)$ with $z\in\F_{128}$ admitting $z^{7}=z+1$. We now check that these two possible KM-arcs, say $\mathcal{A}_1$ constructed using $(z,z^{2},z^{4},1)$ and $\mathcal{A}_2$ constructed using $(z^{11},z^{22},z^{44},1)$, are not $\PGammaL$-equivalent.
\par Let  $S_1$ be the subgroup used in the construction of $\mathcal{A}_1$, i.e. the set $\{x\in \F_{q}\mid \Tr(z^{i}x)=0, i=0,1,2,4\}$, and let $S_2$ be the subgroup used in the construction of $\mathcal{A}_2$, i.e. the set $\{x\in \F_{q}\mid \Tr(z^{i}x)=0, i=0,11,22,44\}$. In order for $\mathcal{A}_1$ and $\mathcal{A}_2$ to be $\PGammaL$-equivalent, there has to exist a collineation mapping the points of $\mathcal{A}_1$ on a $q/16$-secant of $\mathcal{A}_1$ onto the points of $\mathcal{A}_2$ on a $q/16$-secant of $\mathcal{A}_2$. This in turn implies that there has to be an automorphism $\phi\in \Aut(\F_{q})$ and an element $k\in\F_{q}$ such that $kS_{1}^{\phi}=S_{2}$. Hence, for such couple $(\phi,k)$ we have
\begin{align*}
  \left\langle z^{11},z^{22},z^{44},1\right\rangle&=\{x\in\F_{q}\mid \forall s\in S_{2}:\Tr(sx)=0\}=\{x\in\F_{q}\mid \forall s\in kS^{\phi}_{1}:\Tr(sx)=0\}\\
  &=\{x\in\F_{q}\mid \forall s\in S_{1}:\Tr(ks^{\phi}x)=0\}=\{k^{-1}y^{\phi}\in\F_{q}\mid \forall s\in S_{1}:\Tr(s^{\phi}y^{\phi})=0\}\\
  &=\{k^{-1}y^{\phi}\in\F_{q}\mid \forall s\in S_{1}:\Tr(sy)=0\}=\{k^{-1}y^{\phi}\in\F_{q}\mid y\in\left\langle z,z^{2},z^{4},1\right\rangle\}\\
  &=k^{-1}\left\langle z,z^{2},z^{4},1\right\rangle^{\phi}
\end{align*}
Both $\left\langle z^{11},z^{22},z^{44},1\right\rangle$ and $\left\langle z^{11},z^{22},z^{44},1\right\rangle$ are contained in $\F_{128}\subseteq\F_{q}$. Any automorphism of $\F_{q}$ fixes the subfield $\F_{128}$, hence $k\in\F_{128}$ and for the restriction $\phi'\in \Aut(\F_{128})$ of $\phi$ to $\F_{128}$, we have $\left\langle z^{11},z^{22},z^{44},1\right\rangle=k^{-1}\left\langle z,z^{2},z^{4},1\right\rangle^{\phi'}$. One can check by computer that there is no couple $(\phi',k)\in\Aut(\F_{128})\times\F_{128}$, hence no couple $(\phi,k)$.
\end{remark}

The above theorem makes clear that the construction from Theorem \ref{cons} can only be applied for specific values of $q$. We now look at some small values of $q$.

\begin{remark}\label{smallq}
	The construction in Theorem \ref{cons} requires $q>32$, but it can be seen that applying this construction for $q=32$ would yield an elation hyperoval. However, it follows immediately from Theorem \ref{admissible} that there exists no admissible tuple in $\F^{4}_{32}$, hence we cannot apply the construction in Theorem \ref{cons}.
	\par By Theorem \ref{admissible} we can, up to $\PGammaL$-equivalence, construct a unique KM-arc of type $4$  in $\PG(2,64)$ through the construction in Theorem \ref{cons}. In \cite{vdd2} already a KM-arc of type $4$ in $\PG(2,64)$ was described. It can be checked that this KM-arc is an elation KM-arc. Moreover, the KM-arc described in \cite{vdd2} is $\PGammaL$-equivalent to the KM-arc that can be constructed through Theorem \ref{cons} using the subgroup $\left\langle\F_{4},\F_{8}\right\rangle$. The automorphism group $G_{64}$ of this KM-arc of type $4$ in $\PG(2,64)$ has size 192. Its subgroup $H_{64}=G_{64}\cap\PGL(3,64)$ of collineations with the identity mapping as field automorphism (subgroup of projectivities) has size $32$ and its subgroup $E_{64}$ of elations has size 8. These 8 elations are the ones described by Theorem \ref{elationsubgroup}. Both the group $G_{64}$ and the group $H_{64}$ have three orbits on the points of the KM-arcs: one orbit containing the four points on the line at infinity, one orbit containing the 32 points of the KM-arc on the $4$-secants with equation $Y=aX$ with $a\in\F_{8}$ and one orbit containing the 32 points of the KM-arc on the $4$-secants with equation $Y=aX$ with $a\notin\F_{8}$.
	\par By Theorem \ref{admissible} and Remark \ref{nieuw}, we know that, up to $\PGammaL$-equivalence, we can construct two different KM-arcs of type $8$  in $\PG(2,128)$ through the construction in Theorem \ref{cons}, say $\mathcal{A}_{128,1}$ and $\mathcal{A}_{128,2}$. Denote the automorphism group of $\mathcal{A}_{128,i}$ by $G_{128,i}$ and denote $H_{128,i}=G_{128,i}\cap\PGL(3,128)$, $i=1,2$. The automorphism groups $G_{128,1}$ and $G_{128,2}$ have order $896$, and their subgroups $H_{128,1}$ and $H_{128,2}$ have order 128. Both the group $G_{128,i}$ and its subgroup $H_{128,i}$ act transitively on the set of affine points of $\mathcal{A}_{128,i}$, $i=1,2$. It should be noted that these KM-arcs are not translation KM-arcs, since the respective groups of elations stabilising the KM-arcs only have size 16 (they equal the subgroup described in Theorem \ref{elationsubgroup}).
\end{remark}

\begin{theorem}\label{LSen2A}
	If $\mathcal{A}$ is a KM-arc of type $q/16$ in $\PG(2,q)$, $q=2^{h}$ and $h\mid 4$, obtained by the construction in Theorem \ref{cons} using an admissible tuple $(\alpha_{1},\alpha_{2},\alpha_{3},1)$ with $\left\langle\alpha_{1},\alpha_{2},\alpha_{3},1\right\rangle=\F_{16}$, then $\mathcal{A}$ also arises by applying Construction \ref{gwc} (A) in $\PG(2,16)$ on the Lunelli-Sce hyperoval.
\end{theorem}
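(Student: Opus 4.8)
The goal is to identify the KM-arc $\mathcal{A}$ of type $q/16$ in $\PG(2,q)$, $4\mid h$, built from an admissible tuple whose span is $\F_{16}$, with a KM-arc produced by Construction \ref{gwc} (A) applied to the Lunelli--Sce hyperoval of $\PG(2,16)$. Since Construction \ref{gwc} (A) is equivalent to Construction \ref{kmc} (by the remark following Construction \ref{gwc}), and since Theorem \ref{q=16} already tells us that the $q=16$ instance of Theorem \ref{consq8} \emph{is} the Lunelli--Sce hyperoval, the natural strategy is to show that $\mathcal{A}$ arises from the $q/16$-analogue of the ``extension'' procedure: i.e. that $\mathcal{A}$ is obtained from the Lunelli--Sce hyperoval $\mathcal{H}\subseteq\PG(2,16)$ via Construction \ref{gwc} (A), where the o-polynomial $g$ in Construction \ref{kmc} is the o-polynomial of $\mathcal{H}$ and the relative trace is $L:\F_q\to\F_{16}$ (this is possible precisely because $h-4\mid h$ when $16=2^4$ and, say, $h$ is a suitable multiple of $4$; one should double-check the divisibility, but with $\left\langle\alpha_1,\alpha_2,\alpha_3,1\right\rangle=\F_{16}\subseteq\F_q$ we do have $\F_{16}\leq\F_q$, so $4\mid h$).

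\textbf{Step 1: normalise the tuple.} By Lemma \ref{projequiv2bis} we may take $\alpha_4=1$, and by Lemma \ref{projequiv1bis} the KM-arc depends only on the additive subgroup $\left\langle\alpha_1,\alpha_2,\alpha_3,1\right\rangle$, which here is $\F_{16}$. So we may pick any convenient $\F_2$-basis $\{\alpha_1,\alpha_2,\alpha_3,1\}$ of $\F_{16}$; a good choice is one adapted to the o-polynomial description of Theorem \ref{consq8} at $q=16$. The point is that $S=\{x\in\F_q\mid\Tr(\alpha_i x)=0,\ i=1,2,3,4\}=\{x\in\F_q\mid \Tr_{q}(\alpha x)=0\ \forall\alpha\in\F_{16}\}=\{x\in\F_q\mid \Tr_{q,16}(x)=0\}=\ker(L)$, where $L=\Tr_{q,16}$ is the relative trace. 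So the subgroup $S$ of $\mathcal{A}$ is exactly the kernel of the relative trace, which is precisely the subgroup $S$ appearing in Construction \ref{kmc} with $q'=16$.

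\textbf{Step 2: match the affine point set.} Construction \ref{gwc} (A) / Construction \ref{kmc} produces the affine set $\{(1,g(L(x)),x)\mid x\in\F_q\}$, which by the argument in Lemma \ref{alleselation} can be rewritten, coset-by-coset over $S=\ker L$, as $\bigcup_{\mu\in\F_{16}}\{(1,g(\mu),z)\mid z\in\F_q,\ L(z)=\mu'\}$ for an appropriate bookkeeping of which coset $\mu'$ maps to $g(\mu)$; crucially the ``$Y$-coordinate on the $t$-secant $Y=g(\mu)X$'' is determined by $g(\mu)$ alone and the ``$z$-coordinate ranges over a full coset of $S$.'' On the other side, the affine part of $\mathcal{A}$ from Theorem \ref{cons} is $\bigcup_{\overline\lambda\in\F_2^4}\mathcal{S}_{\overline\lambda}$, where on the $t$-secant $Y=\left(\sum\lambda_i\alpha_i\right)X$ the $z$-coordinate ranges over the coset $\sum_{i=1}^3 f_i(\widetilde\lambda)\beta_i+S$. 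So to prove the two affine point sets coincide (after a $\PGL$-map fixing the nucleus and the elation line) it suffices to exhibit a bijection $\F_2^4\to\F_{16}$, $\overline\lambda\mapsto\sum\lambda_i\alpha_i$ (which is just the given $\F_2$-basis identification), under which the coset leader $\sum_{i=1}^3 f_i(\widetilde\lambda)\beta_i$ equals $g\bigl(\sum\lambda_i\alpha_i\bigr)$ modulo $S$, for the Lunelli--Sce o-polynomial $g$; but by Theorem \ref{q=16} we already know that precisely these cosets/coset-leaders describe the Lunelli--Sce hyperoval in $\PG(2,16)$, so this identity holds by construction. Finally, Step 2 must also handle the line at infinity: the points of $\mathcal{S}_0$ in Theorem \ref{cons} are $\{(0,1,x)\mid \Tr(\alpha_i(\alpha_i+1)x)=0,\ i=1,2,3,\ \Tr(\alpha x)=1\}$, and one checks — again using that $\left\langle\alpha_i(\alpha_i+\alpha_4)\right\rangle_{i\le3}$ together with $\alpha_4\alpha$ spans $\F_{16}$ — that this is the $q/16$-coset on $X=0$ forced by the unique extension of the affine set; on the Construction \ref{gwc} (A) side the extension on $X=0$ is likewise the unique $q^{h-1}/\,(\dots)$-sized set needed, and uniqueness of the KM-arc extension (the parenthetical ``can uniquely be extended'' in both constructions) then forces the two to agree once the affine parts agree.

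\textbf{The main obstacle.} The genuinely non-routine part is \emph{not} the abstract matching — that is forced by uniqueness of the extension plus Theorem \ref{q=16} — but rather verifying cleanly that the specific coset structure in Theorem \ref{cons} (the interplay of the $\alpha_i$, the $\beta_i$ with $\Tr(\alpha_i\beta_j)=\delta_{ij}$, the squares $\alpha_i^2/\alpha_4$ lying in $\F_{16}$, and the auxiliary $\alpha$ whose $\F_2$-independence condition is exactly Lemma \ref{voorfactor}) collapses, when $\left\langle\alpha_1,\dots,\alpha_4\right\rangle=\F_{16}$, to the naturally-occurring ``relative-trace on top of an $\F_{16}$-hyperoval'' structure of Construction \ref{gwc} (A). In particular one should verify that the condition $\alpha_i^2/\alpha_4\in\F_{16}$ is automatic here (since $\F_{16}$ is a field, squaring and division stay inside it), so the hypothesis of Theorem \ref{cons} is satisfied, and that the set $\mathcal{S}_0$ on $X=0$ matches the $q/16$ points on $X=0$ coming from the single extra point of the Lunelli--Sce hyperoval on $X=0$ in $\PG(2,16)$ (as in the note at the end of Construction \ref{gwc}). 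Once these bookkeeping identities are pinned down, the theorem follows: $\mathcal{A}$ has the same affine part as the Construction \ref{gwc} (A) KM-arc built on Lunelli--Sce (up to a $\PGL$-change of coordinates fixing $(0,0,1)$ and $X=0$), hence — by uniqueness of the extension to a KM-arc — is that KM-arc.
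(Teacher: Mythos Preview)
Your overall strategy --- normalise the tuple so that $\langle \alpha_1,\alpha_2,\alpha_3,1\rangle = \F_{16}$, identify $S$ with the kernel of the relative trace $\Tr_{q,16}$, recognise the ``base layer'' in $\PG(2,16)$ as a hyperoval, and then invoke uniqueness of the extension --- is exactly what the paper does. However, your Step~2 contains a genuine gap.

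You write that ``by Theorem~\ref{q=16} we already know that precisely these cosets/coset-leaders describe the Lunelli--Sce hyperoval in $\PG(2,16)$.'' But Theorem~\ref{q=16} concerns the construction of Theorem~\ref{consq8} (the $q/8$ family), not Theorem~\ref{cons}. The hyperoval produced by Theorem~\ref{consq8} at $q=16$ has its affine $Y$-coordinates ranging over the $8$-element subspace $\langle\alpha_1,\alpha_2,\alpha_3\rangle$, with two points on each of these eight secants through the nucleus. By contrast, the ``base'' point set you actually need is
\[
\mathcal{H}'=\Bigl\{\Bigl(1,\ \sum_{i=1}^4\lambda_i\alpha_i,\ \sum_{i=1}^3 f_i(\widetilde\lambda)\beta_i\Bigr)\ \Big|\ \overline\lambda\in\F_2^4\Bigr\},
\]
which has one affine point on each of the $16$ secants $Y=cX$, $c\in\F_{16}$; note in particular that the third coordinate depends only on $\widetilde\lambda$ and so takes only eight values. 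These are genuinely different parametrisations in $\PG(2,16)$, and Theorem~\ref{q=16} does not tell you they are $\PGammaL$-equivalent. The paper does not attempt to link them: it instead fixes the explicit choice $\alpha_i=\zeta^i$, $\beta_j=\zeta^{3-j}$ (with $\zeta^4=\zeta+1$) and verifies \emph{directly} that $\mathcal{H}'\cup\{(0,1,1),(0,0,1)\}$ is the Lunelli--Sce hyperoval. That direct check is the step your argument is missing.

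There is also a secondary technical point you gloss over. You want $S=\ker(\Tr_{q,16})$ to serve as the direct complement $I$ in Construction~\ref{gwc}~(A), and the coset leaders $\beta_j$ to lie in $\F_{16}$ so that $\mathcal{H}'$ sits inside $\PG(2,16)$. When $8\mid h$, however, $\Tr_{q,16}$ vanishes identically on $\F_{16}$, so $S$ is \emph{not} a direct complement of $\F_{16}$, and no $\beta_j\in\F_{16}$ can satisfy $\Tr_q(\alpha_i\beta_j)=\delta_{ij}$. The paper handles both parities of $h/4$ uniformly by choosing $k\in\F_q$ with $\Tr_{q,16}(k)=1$, passing to the rescaled tuple $(k\alpha_1,k\alpha_2,k\alpha_3,k)$ (which is $\PGammaL$-equivalent to the original by Lemma~\ref{projequiv2bis}), and taking $I=\{x\mid\Tr_{q,16}(kx)=0\}$; with this choice the $\beta_j$ can be taken in $\F_{16}$ and the affine matching with Construction~\ref{gwc}~(A) becomes literal. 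Your phrase ``after a $\PGL$-map fixing the nucleus and the elation line'' is not enough here without actually exhibiting such a map; the $k$-scaling is precisely the missing ingredient.
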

\begin{proof}
	We denote the trace function $\F_{q}\to\F_{2}$ by $\Tr_{q}$, the trace function  $\F_{16}\to\F_{2}$ by $\Tr_{16}$ and the trace function $\F_{q}\to\F_{16}$ by $\Tr_{q,16}$.
	Let $\zeta$ be a generator of $\F_{16}$ admitting $\zeta^{4}=\zeta+1$. By Lemma \ref{projequiv1bis} we may assume that $\alpha_{i}=\zeta^{i}$, $i=1,2,3$. Then, $(\beta_{1},\beta_{2},\beta_{3})=(\zeta^{2},\zeta,1)$ admits $\Tr_{16}(\zeta^{i}\beta_{j})=\delta_{i,j}$, $i,j\in\{1,2,3\}$ and $\Tr_{16}(1\cdot\beta_{j})=0$. Let $\mathcal{S}$ be the set $\{x\in \F_q\mid\forall t\in\F_{16}:\Tr_{q}(tx)=0\}$.
	\par Let $\alpha\in\F_{16}$ be such that $\{\zeta(\zeta+1),\zeta^{2}(\zeta^{2}+1),\zeta^{3}(\zeta^{3}+1),\alpha\}$ is an $\F_{2}$-independent set. Considering $\F_{16}$ as a subfield of $\F_{q}$, we may assume that $\mathcal{A}$ is given by $\mathcal{A}=\mathcal{S}_{0}\cup\bigcup_{v\in\F^{4}_{2}}\mathcal{S}_{v}$ with
	\begin{align*}
	\mathcal{S}_{0}&=\{(0,1,x)\mid \Tr_{q}(\zeta^{i}(\zeta^{i}+1)x)=0,\: i=1,2,3\ \wedge\ \Tr_{q}(\alpha x)=1\}\\
	&=\{(0,1,x)\mid \Tr_{q}(\zeta^{i}x)=0,\: i=0,1,2\ \wedge\ \Tr_{q}(\zeta^{3} x)=1\}	
	\end{align*}
	and 
	\begin{align*}
	\mathcal{S}_{\overline{\lambda}}&=\left\{\left(1,\sum^{3}_{i=1}\lambda_{i}\alpha_{i}+\lambda_{4},\sum^{3}_{i=1}f_{i}(\lambda_{1},\lambda_{2},\lambda_{3})\beta_{i}+s\right)\Bigg|\ s\in S\right\}\\
	&=\left\{\left(1,\sum^{3}_{i=1}\lambda_{i}\zeta^{i}+\lambda_{4},\sum^{3}_{i=1}f_{i}(\lambda_{1},\lambda_{2},\lambda_{3})\zeta^{3-i}+s\right)\Bigg|\ s\in S\right\}
	\end{align*}
	for any $\overline{\lambda}=(\lambda_{1},\dots,\lambda_{4})\in\F^{4}_{2}$.	$\mathcal{A}$ is a KM-arc of type $q/16$ in $\PG(2,q)$ with $q/16$-nucleus $(0,0,1)$.
	\par We define the point set $\mathcal{H}$ in $\PG(2,16)$ by $\mathcal{H}=\mathcal{H}'\cup\{(0,1,1),(0,0,1)\}$ and
	\[
	\mathcal{H}'=\left\{\left(1,\sum^{3}_{i=1}\lambda_{i}\zeta^{i}+\lambda_{4},\sum^{3}_{i=1}f_{i}(\lambda_{1},\lambda_{2},\lambda_{3})\zeta^{3-i}\right) \Bigg| \ (\lambda_{1},\lambda_{2},\lambda_{3},\lambda_{4})\in\F^{4}_{2}\right\}\;.
	\]
	It can readily be checked that $\mathcal{H}$ is the Lunelli-Sce hyperoval, having an automorphism group of size $144$.
	\par Let $k\in\F_{q}$ be such that $\Tr_{q,16}(k)=1$; such an element can always be found. Let $I$ be the additive subgroup of $\F_{q}$ given by $\{x\mid\Tr_{q,16}(kx)=0\}$; it has size $q/16$. If $x\in\F_{16}\subseteq\F_{q}$ admits $\Tr_{q,16}(kx)=0$ then $x=0$. So, $I$ is a direct complement of $\F_{16}$ in $\F_{q}$. Moreover, for any element $x\in\F_{q}$ with $\Tr_{q,16}(kx)=0$ we know that $\Tr_{q}(ktx)=\Tr_{16}(\Tr_{q,16}(kx)t)=0$, for all $t\in\F_{16}\subseteq\F_{q}$. Hence $I$ equals the set $\{x\mid\forall t\in\F_{16}:\Tr_{q}(kxt)=0\}$.
	\par We now apply Construction \ref{gwc} (A) using $\mathcal{H}$ and  $I$ to construct the KM-arc $\mathcal{A'}$ of type $q/16$ in $\PG(2,q)$. The point set of $\mathcal{A'}$ is given by $\mathcal{S}'_{0}\cup\bigcup_{v\in\F^{4}_{2}}\mathcal{S}'_{v}$ with $\mathcal{S}'_{0}=\{(0,1,1+i)\mid i\in I\}$ and
	\begin{align*}
	\mathcal{S}'_{(\lambda_{1},\lambda_{2},\lambda_{3},\lambda_{4})}&=\left\{\left(1,\sum^{3}_{i=1}\lambda_{i}\zeta^{i}+\lambda_{4},\sum^{3}_{i=1}f_{i}(\lambda_{1},\lambda_{2},\lambda_{3})\zeta^{3-i}+i\right)\Bigg|\ i\in I\right\}
	\end{align*}
	for any $(\lambda_{1},\lambda_{2},\lambda_{3},\lambda_{4})\in\F^{4}_{2}$.
	We define the KM-arc $\mathcal{A}''$ in $\PG(2,q)$ using the tuple $(k\zeta,k\zeta^{2},k\zeta^{3},k)$. Its point set is given by $\mathcal{S}''_{0}\cup\bigcup_{v\in\F^{4}_{2}}\mathcal{S}''_{v}$ with
	\[
	\mathcal{S}''_{0}=\{(0,1,x)\mid \Tr_{q}(k^{2}\zeta^{i}x)=0,\: i=0,1,2\ \wedge\ \Tr_{q}(k^{2}\zeta^{3} x)=1\}
	\]
	and
	\[
	\mathcal{S}''_{(\lambda_{1},\lambda_{2},\lambda_{3},\lambda_{4})}=\left\{\left(1,k\sum^{3}_{i=1}\lambda_{i}\zeta^{i}+k\lambda_{4},\sum^{3}_{i=1}f_{i}(\lambda_{1},\lambda_{2},\lambda_{3})\zeta^{3-i}+i\right)\Bigg|\ i\in I\right\}
	\]
	since $\{x\mid\Tr_{q}(k\zeta^{i}x)=0, i=0,1,2,3\}=\{x\mid\forall t\in\F_{16}:\Tr_{q}(ktx)=0\}$ and  $\Tr_{q}((k\zeta^{i})\beta_{j})=\Tr_{16}(\zeta^{i}\zeta^{3-j})=\delta_{i,j}$ for $i=0,1,2,3$ and $j=1,2,3$. Let $\gamma$ be the collineation induced by the trivial field automorphism and the matrix $C'=C\left(\begin{smallmatrix}
	1&0&0\\0&k&0\\0&0&1 
	\end{smallmatrix}\right)$ where we interpret $C$ over $\F_{q}$. It is immediate that $\left(\mathcal{S}'_{\overline{\lambda}}\right)^{\gamma}=\mathcal{S}''_{\overline{\lambda}}$ for all $\overline{\lambda}\in\F^{4}_{2}$. Furthermore,
	\begin{align*}
	\left(\mathcal{S}'_{0}\right)^{\gamma}&=\{(0,k,1+i)\mid i\in I\}=\{(0,1,k^{-1}(1+i))\mid \forall t\in\F_{16}:\Tr_{q}(kit)=0\}\\
	&=\{(0,1,x)\mid\forall t\in\F_{16}: \Tr_{q}(k^{2}xt+kt)=0\}\\
	&=\{(0,1,x)\mid\forall t\in\F_{16}: \Tr_{q}(k^{2}xt)=\Tr_{16}(t)\}\\
	&=\{(0,1,x)\mid\Tr_{q}(k^{2}\zeta^{i}x)=0,\: i=0,1,2\ \wedge\ \Tr_{q}(k^{2}\zeta^{3} x)=1\}\\
	&=\mathcal{S}''_{0}
	\end{align*}
	In the penultimate step we used that $\Tr_{q}$ is $\F_{2}$-linear, that $\left\langle\zeta,\zeta^{2},\zeta^{3},1\right\rangle=\F_{16}$ and that $\Tr_{16}(1)=\Tr_{16}(\zeta)=\Tr_{16}(\zeta^{2})=0$ and $\Tr_{16}(\zeta^{3})=1$.
	\par We conclude that $\mathcal{A'}=(\mathcal{A}'')^{\gamma}$. We also know that $\mathcal{A}$ and $\mathcal{A}''$ are isomorphic since the tuples $(\alpha_{1},\alpha_{2},\alpha_{3},1)$ and $(k\alpha_{1},k\alpha_{2},k\alpha_{3},k)$ give rise to $\PGammaL$-equivalent KM-arcs by Lemma \ref{projequiv2}. This proves the theorem.
\end{proof}

\begin{remark}
	The previous theorem also shows that when applying Theorem \ref{cons} for $q=16$ we get a set of 17 points which forms a hyperoval together with $(0,0,1)$. This hyperoval is the Lunelli-Sce hyperoval.
\end{remark}

We end this section with a discussion on the existence of KM-arcs of type $q/16$.

\begin{remark}\label{q16exist}
	Previously KM-arcs of type $q/16$ in $\PG(2,q)$, $q=2^{h}$ were known to exist for $4\mid h$, $5\mid h$ and $6\mid h$ through Constructions \ref{kmc} and \ref{gwc} (A),  Construction \ref{gwc} (B) and  Construction \ref{gwc} (C) applied on the example of a KM-arc of type $4$ in $\PG(2,64)$ \cite{vdd2}, respectively.
	\par By Theorem \ref{admissible} the construction from Theorem \ref{cons} can only be applied for $4\mid h$, $6\mid h$ and $7\mid h$ and given admissible tuples. The KM-arcs of type $2^{h-4}$ in $\PG(2,2^{h})$ with $4\mid h$, constructed through Theorem \ref{cons} using an admissible tuple $(\alpha_{1},\alpha_{2},\alpha_{3},1)$ with $\left\langle\alpha_{1},\alpha_{2},\alpha_{3},1\right\rangle=\F_{16}\subset\F_{q}$ were already known to exist since they are by Theorem \ref{LSen2A} $\PGammaL$-equivalent to the KM-arcs of type $q/16$ obtained by applying Construction \ref{gwc} (A) on a Lunelli-Sce hyperoval. Note that Construction \ref{gwc} (A) can also be applied on a regular hyperoval. In this case we find a translation KM-arc of type $q/16$, which cannot arise from the construction in Theorem \ref{cons} by Theorem \ref{nottranslation16}.
	\par The KM-arcs of type $2^{h-4}$ in $\PG(2,2^{h})$ with $6\mid h$, constructed through Theorem \ref{cons} using an admissible tuple $(\alpha_{1},\alpha_{2},\alpha_{3},1)$ with $\left\langle\alpha_{1},\alpha_{2},\alpha_{3},1\right\rangle=\left\langle\F_{4},\F_{8}\right\rangle\subset\F_{q}$ were already known to exist since they are by Theorem \ref{iteratie16} and Remark \ref{smallq} $\PGammaL$-equivalent to the KM-arcs of type $q/16$ obtained by applying Construction \ref{gwc} (C) on the KM-arc of type 4 in $\PG(2,64)$ described in \cite{vdd2}. Note that no other KM-arcs of type $2^{h-4}$ in $\PG(2,2^{h})$ with $6\mid h$, are known (unless $h$ is also a multiple of $4$, $5$ or $7$).
	\par The KM-arcs of type $2^{h-4}$ in $\PG(2,2^{h})$ with $7\mid h$, constructed through Theorem \ref{cons} using an admissible tuple $(\alpha_{1},\alpha_{2},\alpha_{3},1)$ with $\left\langle\alpha_{1},\alpha_{2},\alpha_{3},1\right\rangle=\left\langle z,z^{2},z^{4},1\right\rangle\subset\F_{q}$ or $\left\langle\alpha_{1},\alpha_{2},\alpha_{3},1\right\rangle=\left\langle z^{11},z^{22},z^{44},1\right\rangle\subset\F_{q}$ were not described before, so are two new families of examples (the KM-arcs of both families are inequivalent by Remark \ref{nieuw}).
\end{remark}

\end{document}